\newtheorem{theorem}{Theorem}[section]
\newtheorem{corollary}[theorem]{Corollary}
\newtheorem{lemma}[theorem]{Lemma}
\newtheorem{remark}[theorem]{Remark}
\newtheorem{proposition}[theorem]{Proposition}
\newtheorem{definition}[theorem]{Definition}
\numberwithin{equation}{section}
\newcommand{\Ker}{\operatorname{Ker}}
\def\Z{\mathbb{Z}}
\def\Q{\mathbb{Q}}
\def\R{\mathbb{R}}
\def\C{\mathbb{C}}
\def\Fp{\mathbb{F}_{p}}
\newcommand{\cX}{{\mathcal X}}
\newcommand{\cY}{{\mathcal Y}}
\newcommand{\cZ}{{\mathcal Z}}
\newcommand{\OK}{{\mathcal O}_K}
\newcommand{\Spec}{{\rm Spec\, }}
\newcommand{\Lbar}{{\overline{\mathcal L}}}
\newcommand{\Mbar}{{\overline{\mathcal M}}}
\newcommand{\Bbar}{{\overline{\mathcal B}}}
\newcommand{\Ebar}{{\overline{\mathcal E}}}
\newcommand{\Abar}{{\overline{\mathcal A}}}
\newcommand{\Obar}{{\overline{\mathcal O}}}
\newcommand{\ra}{\rightarrow}
\title{Arithmetic ampleness and an arithmetic Bertini theorem}
\author{Fran\c cois Charles}
\address{Fran\c cois Charles, Laboratoire de mathématiques d'Orsay, UMR 8628 du CNRS, Universit{é}
Paris-Sud,
B{â}timent 425, 91405 Orsay cedex, France}
\email{francois.charles@math.u-psud.fr}
\begin{document}

\maketitle

\begin{abstract}
Let $\cX$ be a projective arithmetic variety of dimension at least $2$. If $\Lbar$ is an ample hermitian line bundle on $\cX$, we prove that the proportion of those effective sections of $\Lbar^{\otimes n}$ that define an irreducible divisor on $\cX$ tends to $1$ as $n$ tends to $\infty$. We prove variants of this statement for schemes mapping to such an $\cX$.

On the way to these results, we discuss some general properties of arithmetic ampleness, including restriction theorems, and upper bounds for the number of effective sections of hermitian line bundles on arithmetic varieties.
\end{abstract}


\section{Introduction}

\subsection{Bertini theorems over fields}
Let $k$ be an infinite field, and let $X$ be an irreducible variety over $k$ with dimension at least $2$. Given an embedding of $X$ in some projective space over $k$, the classical Bertini theorem \cite[Theorem 3.3.1]{Lazarsfeld04} shows, in its simplest form, that infinitely many hyperplane sections of $X$ are irreducible. 

In the case where $k$ is finite, the Bertini theorem can fail, since the finitely many hyperplane sections of $X$ can all be reducible. As was first explained in \cite{Poonen04} in the setting of smoothness theorems, this phenomenon can be dealt with by replacing hyperplane sections with ample hypersurfaces of higher degree. We can state the main result of \cite{CharlesPoonen16} -- see Theorem 1.6 and the discussion in the proof of Theorem \ref{theorem:finite-fields} -- as follows: let $k$ be a finite field, let $X$ be a projective variety over $k$ and let $L$ be an ample line bundle on $X$. Let $Y$ be a scheme of finite type over $k$ together with a morphism $f : Y\ra X$. Assume that the image of $f$ is irreducible of dimension at least $2$. If $Z$ is a subscheme of $Y$, write $Z_{horiz}$ for the union of those irreducible components of $Z$ that do not map to a closed point of $X$. Then the set 
$$\mathcal P= \{\sigma\in \bigcup_{n>0} H^0(X, L^{\otimes n}), \,\mathrm{div}(f^*\sigma)_{horiz}\,\textup{is irreducible}\}$$
has density $1$, in the sense that 
$$\lim_{n\ra\infty}\frac{|\mathcal P\cap H^0(X, L^{\otimes n})|}{|H^0(X, L^{\otimes n})|}=1.$$
When $Y$ is a subscheme of $X$, we can disregard the horizontality subscript.

\subsection{The arithmetic case}

In this paper, we deal with an arithmetic version of Bertini theorems as above. Let $\cX$ be an arithmetic variety, that is, an integral scheme which is separated, flat of finite type over $\Spec\Z$. Assume that $\cX$ is projective, and let $\mathcal L$ be a relatively ample line bundle on $\cX$. As is well known, sections of $\mathcal L$ over $\cX$ are not the analogue of global sections of a line bundle over a projective variety over a field. Indeed, it is more natural to consider a hermitian line bundle $\Lbar$ with underlying line bundle $\mathcal L$ and consider the sets
$$H^0_{Ar}(\cX, \Lbar^{\otimes n})$$
of sections with norm at most $1$ everywhere. We discuss ampleness for hermitian line bundles in section \ref{section:ampleness}, which we refer to for definitions.

Given finite sets $(X_n)_{n>0}$, and a subset $\mathcal P$ of $\bigcup_{n>0}X_n$, say that $\mathcal P$ has density $\rho$ if the following equality holds:
$$\lim_{n\ra\infty}\frac{|\mathcal P\cap X_n|}{|X_n|}=\rho.$$

The main result of this paper is the following arithmetic Bertini theorem. Again, given a morphism of schemes $f : Y\ra X$, and if $Z$ is a subscheme of $Y$, we denote by $Z_{horiz}$ the union of those irreducible components of $Z$ that do not map to a closed point of $X$. If $\Lbar=(\mathcal L, ||.||)$ is a hermitian line bundle and $\delta$ is a real number, write $\Lbar(\delta)$ for the hermitian line bundle $(\mathcal L, e^{-\delta}||.||)$.

\begin{theorem}\label{thm:main}
Let $\cX$ be a projective arithmetic variety, and let $\Lbar$ be an ample hermitian line bundle on $\cX$. Let $\cY$ be an integral scheme of finite type over $\Spec\Z$ together with a morphism $f : \cY\ra \cX$ which is generically smooth over its image. Assume that the image of $Y$ has dimension at least $2$. Let $\varepsilon>0$ be a real number. Then the set 
$$\{\sigma\in\bigcup_{n> 0} H^0_{Ar}(\cX, \Lbar(\varepsilon)^{\otimes n}), \, ||\sigma_{|f(\cY(\C))}||_\infty<1 \,\textup{and div}(f^*\sigma)_{horiz}\,\textup{is irreducible}\}$$
has density $1$ in $\{\sigma \in\bigcup_{n> 0} H^0_{Ar}(\cX, \Lbar(\varepsilon)^{\otimes n}), \, ||\sigma_{|f(\cY(\C))}||_\infty<1\}.$
\end{theorem}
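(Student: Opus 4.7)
The approach follows the union-bound strategy of Charles--Poonen \cite{CharlesPoonen16} for finite fields, translated into Arakelov-theoretic language: the arithmetic Hilbert--Samuel formula gives the denominator in the density computation, and the restriction theorems for hermitian line bundles established earlier in this paper allow one to bound, for each horizontal codimension-one integral subscheme $\cZ\subset\cY$, the proportion of sections whose pullback vanishes along $\cZ$.

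First I would reduce to counting directly on $\cY$: replacing $\cX$ by the scheme-theoretic image $\cX':=\overline{f(\cY)}$, the restriction theorem together with the $\ve$-buffer allow one to count (up to negligible error in the density) sections of $\Lbar|_{\cX'}(\ve)^{\otimes n}$ with sup norm $<1$ on $\cX'(\C)$, which via $f^*$ corresponds to sections $\tau\in H^0_{Ar}(\cY,\overline\cN(\ve)^{\otimes n})$, where $\overline\cN:=f^*\Lbar$. Set $d=\dim\cY\geq 2$. Arithmetic Hilbert--Samuel produces
\[
\log|H^0_{Ar}(\cX,\Lbar(\ve)^{\otimes n})|=\tfrac{n^d}{d!}\,\widehat{\deg}(c_1(\Lbar(\ve))^d)+o(n^d),
\]
which furnishes the denominator.

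Next, $\sigma$ is bad iff there exists a horizontal integral codimension-one closed subscheme $\cZ\subsetneq\cY$, distinct from $\mathrm{div}(f^*\sigma)_{horiz}$, with $\cZ\subset\mathrm{div}(f^*\sigma)$. Associate to such a $\cZ$ the arithmetic degree $h(\cZ):=\widehat{\deg}(c_1(\overline\cN(\ve))^{d-1}\cdot\cZ)$. The arithmetic restriction theorem applied to $\sigma\mapsto(f^*\sigma)|_\cZ$, combined with arithmetic Hilbert--Samuel on $\cZ$, yields uniformly in $\cZ$
\[
\#\{\sigma:\cZ\subset\mathrm{div}(f^*\sigma)\}\leq|H^0_{Ar}(\cX,\Lbar(\ve)^{\otimes n})|\cdot\exp\!\Bigl(-\tfrac{n^{d-1}}{(d-1)!}\,h(\cZ)+o(n^{d-1})\Bigr).
\]
Arithmetic B\'ezout bounds every horizontal component of $\mathrm{div}(f^*\sigma)$ by $h(\cZ)\leq Cn$, and a Northcott-type estimate gives $\#\{\cZ:h(\cZ)\leq K\}\leq e^{O(K)}$. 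A union bound then yields
\[
\frac{\#\{\sigma\text{ bad}\}}{|H^0_{Ar}(\cX,\Lbar(\ve)^{\otimes n})|}\leq\sum_{K\geq 1}e^{O(K)}\exp\!\Bigl(-\tfrac{n^{d-1}}{(d-1)!}\,K+o(n^{d-1})\Bigr)=o(1),
\]
since the exponential damping in $n$ eventually dominates the Northcott growth for every fixed $K$ and the tail sums geometrically.

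The main obstacle will be securing the uniformity of the restriction estimate: the $o(n^{d-1})$ error must not depend on $\cZ$ in a way that outpaces the damping factor $h(\cZ)$. The generic-smoothness hypothesis on $f$ together with a stratification of $\cY$ by the dimension and degree of $f(\cZ)$ should let one absorb the degenerate strata (where $\cZ$ is singular, or where $f|_\cZ$ fails to be finite over its image) into an exponentially small contribution compatible with the $(d{-}1)$-dimensional intersection numbers on $\cZ$. Coordinating the B\'ezout-type degree bound with the Northcott exponential growth in $h(\cZ)$ then closes the argument.
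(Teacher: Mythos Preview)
Your strategy is genuinely different from the paper's and, as written, does not close in the critical case $\dim \overline{f(\cY)}=2$.

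\textbf{What the paper actually does.} The paper does \emph{not} run a global union bound over all candidate components $\cZ$. Instead it first establishes the special case $\cY=\cX$ (Theorem~\ref{thm:X=Y}). In relative dimension $\geq 2$ this is obtained cheaply by restricting to a single good fiber $\cX_p$ and invoking the finite-field Bertini theorem together with Corollary~\ref{corollary:density-and-restriction}. The hard work is the arithmetic surface case, which requires the norm estimates of \S\ref{subsection:norm-estimates}, the $\theta$-invariant upper bounds of \S\ref{subsection:upper-bound-theta}, the effective-cone count of \S\ref{subsection:ample-cone}, and the mod-$p$ degree analysis of \S\ref{subsection:finite-fields}. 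Only after Theorem~\ref{thm:X=Y} is in hand does the paper pass to general $f:\cY\to\cX$: one factors $f$, after shrinking $\cY$, as a finite \'etale map followed by a projection, and uses a Chebotarev argument (Lemma~\ref{lemma:etale-maps}) to show that irreducibility on $\cX$ propagates to $\cY$. Finally, when $f$ is not dominant, the restriction theorem (Corollary~\ref{corollary:density-and-restriction}) transfers the density statement from $\overline{f(\cY)}$ back to $\cX$.

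\textbf{Where your argument breaks.} The Northcott input you invoke, $\#\{\cZ:h(\cZ)\leq K\}\leq e^{O(K)}$, is false for divisors on an arithmetic surface: Moriwaki's count (used in Proposition~\ref{proposition:divisors-with-low-height}) gives only $e^{O(K^2)}$. With the correct exponent and your damping $e^{-nK}$, the union bound becomes $\sum_{K\leq cn} e^{C K^2-nK}$, whose terms near $K\asymp n$ are of size $e^{c'n^2}$ and do not go to zero; the constants are not under your control. This is precisely why the paper cannot treat surfaces by a naive union bound and instead restricts the union bound to the regime $h(\cZ)\leq n^\alpha$ with $\alpha<\tfrac12$, handling the complementary regime by the explicit decomposition of Lemma~\ref{lemma:properties-of-decomposition} and Corollary~\ref{corollary:upper-bound-arakelov}. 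A second gap is the uniformity you flag at the end: the paper's uniform restriction bound (Theorem~\ref{theorem:independent-restriction}) gives only $e^{-\eta n^{d}}$ with $d=\dim\cZ$, independent of $h(\cZ)$; getting a damping that scales like $e^{-c\,n^{d-1}h(\cZ)}$ uniformly in $\cZ$ would require a uniform arithmetic Hilbert--Samuel on the varying $\cZ$, which is not available. Without it, your sum over $K$ cannot even be set up.
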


\begin{remark}
A Weil divisor is said to be irreducible if it comes from an integral codimension $1$ subscheme. 
\end{remark}

\begin{remark}
The hypothesis that $f$ is generically smooth over its image is necessary: when $f$ is the Frobenius morphism of a fiber $\cX_p$, all $\mathrm{div}(f^*\sigma)$ have components with multiplicities divisible by $p$. Of course, it holds when $\cY$ is flat over $\Spec\Z$. Without this hypothesis on $f$, the conclusion is only that the support of $\mathrm{div}(f^*\sigma)$ is irreducible for a density $1$ set of $\sigma$. \end{remark}

An important special case of the theorem deals with the special case where $f$ is dominant. 

\begin{theorem}\label{thm:main-dominant}
Let $\cX$ be a projective arithmetic variety, and let $\Lbar$ be an ample hermitian line bundle on $\cX$. Let $\cY$ be an integral scheme of finite type over $\Spec\Z$ together with a morphism $f : \cY\ra \cX$. Assume that the image of $\cY$ has dimension at least $2$ and $f$ is dominant, generically smooth over its image. Then the set 
$$\{\sigma\in\bigcup_{n> 0} H^0_{Ar}(\cX, \Lbar^{\otimes n}), \,\mathrm{div}(f^*\sigma)_{horiz}\,\textup{is irreducible}\}$$
has density $1$ in $\bigcup_{n> 0} H^0_{Ar}(\cX, \Lbar^{\otimes n})$.
\end{theorem}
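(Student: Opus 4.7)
The plan is to derive Theorem \ref{thm:main-dominant} from Theorem \ref{thm:main} by a volume comparison argument, at the cost of introducing a small auxiliary perturbation $\varepsilon>0$. The dominance of $f$ is used precisely to dispose of the restriction condition on $f(\cY(\C))$ appearing in Theorem \ref{thm:main}: since $\cY$ is integral and $f$ is dominant, $f(\cY(\C))$ is dense in $\cX(\C)$, so $\|\sigma_{|f(\cY(\C))}\|_\infty=\|\sigma\|_\infty$ for every continuous section $\sigma$.

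Fix $\varepsilon>0$ and set $A_n=H^0_{Ar}(\cX,\Lbar^{\otimes n})$ and $B_n=\{\sigma\in H^0_{Ar}(\cX,\Lbar(\varepsilon)^{\otimes n}):\|\sigma\|_{\Lbar(\varepsilon)^{\otimes n},\infty}<1\}$. Multiplying the metric by $e^{-\varepsilon}$ scales the sup norm on $\Lbar^{\otimes n}$-sections by $e^{-n\varepsilon}$, so $B_n$ equals $\{\sigma:\|\sigma\|_{\Lbar^{\otimes n},\infty}<e^{n\varepsilon}\}$, and $A_n\subset B_n$. Let $\mathcal{R}_n\subset B_n$ consist of those $\sigma$ with $\mathrm{div}(f^*\sigma)_{horiz}$ reducible. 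Then
$$\frac{|\mathcal{R}_n\cap A_n|}{|A_n|}\leq \frac{|\mathcal{R}_n|}{|B_n|}\cdot\frac{|B_n|}{|A_n|}.$$
By Theorem \ref{thm:main} the first factor is $o(1)$, and the arithmetic Hilbert--Samuel formula applied to $\Lbar$ and $\Lbar(\varepsilon)$ gives $|B_n|/|A_n|\leq\exp(c\varepsilon n^d+o(n^d))$ for some constant $c>0$ depending only on $\Lbar$ and $d=\dim\cX$ (to leading order in $\varepsilon$, one expects $c=\deg_\Lbar(\cX_\Q)/(d-1)!$).

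The argument therefore requires a quantitative strengthening of Theorem \ref{thm:main} asserting that $|\mathcal{R}_n|/|B_n|\leq\exp(-\alpha n^d+o(n^d))$ for some $\alpha>0$ that remains bounded away from zero as $\varepsilon$ is taken small. Granted this, choose $\varepsilon<\alpha/c$; the right-hand side tends to zero, proving the theorem. Obtaining this quantitative rate is the main obstacle. It should emerge naturally from the proof of Theorem \ref{thm:main}, which is expected to follow the sieve strategy of \cite{Poonen04, CharlesPoonen16}: one partitions reducible sections according to the codimension-$2$ subscheme of $\cY$ supporting the extra irreducible component of $\mathrm{div}(f^*\sigma)_{horiz}$, and handles the small, medium, and large regimes by combining the restriction theorems for arithmetic ampleness with the upper bounds on the number of effective sections of hermitian line bundles that the paper develops. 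Each contribution should yield an $\exp(-\alpha n^d)$-type saving whose rate depends only on the intrinsic geometry of $\cX$, $\Lbar$, and $f$, not on the choice of small $\varepsilon$.
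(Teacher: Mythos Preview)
Your approach has a genuine gap, and it also runs against the logical flow of the paper.

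First, the logical direction is inverted. In the paper, Theorem~\ref{thm:main} is \emph{deduced from} Theorem~\ref{thm:main-dominant} (via Corollary~\ref{corollary:density-and-restriction} applied to the closure of $f(\cY)$), not the other way around. So invoking Theorem~\ref{thm:main} to prove Theorem~\ref{thm:main-dominant} is circular in this context.

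Second, and more seriously, the quantitative rate you need does not come out of the proofs. For arithmetic surfaces (the hard case), Proposition~\ref{proposition:arithmetic-surface} gives
\[
\log|Z_n|\ \le\ \tfrac{1}{2}n^2\,\Bbar.\Bbar\ -\ n^{1+\delta}\ +\ O(n^{1+\gamma}),\qquad 0<\gamma<\delta<\tfrac12,
\]
so the saving over the main term is only of size $n^{1+\delta}$ with $\delta<\tfrac12$, not of size $\alpha n^{2}$. Your volume comparison produces $|B_n|/|A_n|\le\exp\bigl(c\varepsilon n^{d}+o(n^{d})\bigr)$, and for $d=2$ no choice of $\varepsilon>0$ makes $c\varepsilon n^{2}$ dominated by $n^{1+\delta}$. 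The sieve in the paper (Propositions~\ref{proposition:divisors-with-low-height}, \ref{proposition:geometric-irreducibility}, and Lemma~\ref{lemma:properties-of-decomposition}) is calibrated to exponents strictly below $1$ in the secondary term, and pushing it to a genuine $\alpha n^{d}$ saving would require a completely different argument.

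The paper instead proves Theorem~\ref{thm:main-dominant} directly from Theorem~\ref{thm:X=Y}. After shrinking $\cY$ (Lemma~\ref{lemma:open-dense}), one factors $f$ as a finite \'etale map $\pi:\cY\to\cZ$ followed by a projection $\psi:\cZ\to\cX$ from an open in $\mathbb{A}^s_{\cX}$. Irreducibility of $\mathrm{div}(\psi^*\sigma)$ is inherited from $\mathrm{div}(\sigma)$ since $\psi$ has irreducible fibers; Lemma~\ref{lemma:etale-maps}, a Chebotarev argument, then transfers irreducibility across the \'etale cover $\pi$ for a density~$1$ set of $\sigma$. No $\varepsilon$-perturbation or quantitative rate is needed.
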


The case where $\cY=\cX$ is particularly significant. We state it independently below. Most of this paper will be devoted to its proof, and we will prove \ref{thm:main} and \ref{thm:main-dominant} as consequences.

\begin{theorem}\label{thm:X=Y}
Let $\cX$ be a projective arithmetic variety of dimension at least $2$, and let $\Lbar$ be an ample hermitian line bundle on $\cX$. Then the set 
$$\{\sigma\in\bigcup_{n> 0} H^0_{Ar}(\cX, \Lbar^{\otimes n}), \,\mathrm{div}(\sigma)\,\textup{is irreducible}\}$$
has density $1$ in $\bigcup_{n> 0} H^0_{Ar}(\cX, \Lbar^{\otimes n})$.
\end{theorem}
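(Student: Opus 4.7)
The plan is to adapt the sieve method of Poonen to the arithmetic setting. Write $d = \dim \cX \geq 2$, set $H_n := H^0_{Ar}(\cX, \Lbar^{\otimes n})$, and let $\mathcal{B}_n \subset H_n$ denote the set of $\sigma$ for which $\mathrm{div}(\sigma)$ is reducible or non-reduced. By arithmetic Hilbert--Samuel, ampleness of $\Lbar$ yields $\log|H_n| = c \, n^d + O(n^{d-1}\log n)$ for some $c > 0$, so the task is to prove $|\mathcal{B}_n|/|H_n| \to 0$.

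The combinatorial core is the observation that $\sigma \in \mathcal{B}_n$ precisely when $\mathrm{div}(\sigma) \geq D$ for some nonzero effective divisor $D \lneq \mathrm{div}(\sigma)$. For a fixed $D$, equipped with an auxiliary metric on $\cO(D)$, sections $\sigma \in H_n$ divisible by the defining section of $D$ inject (up to a harmless sup-norm shift) into $H^0_{Ar}(\cX, \Lbar^{\otimes n} \otimes \cO(-D))$. An arithmetic Hilbert--Samuel computation should then give an estimate of the form
$$\frac{|\{\sigma \in H_n : \mathrm{div}(\sigma) \geq D\}|}{|H_n|} \leq \exp\bigl(-\alpha(D) \, n^{d-1} + o(n^{d-1})\bigr),$$
where $\alpha(D) > 0$ depends on the arithmetic size of $D$ (a suitable combination of its geometric degree and its height on the generic fibre).

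I would then sieve by partitioning the effective divisors $D$ into three ranges, controlled by a parameter $m = m(n) \to \infty$ to be optimized. In the \emph{small} range --- vertical components $\cX_p$ with $p \leq m$, and horizontal components whose generic fibre has $\Lbar_\Q$-degree at most $m$ --- a Northcott-type finiteness limits the number of candidates, and the bound above controls the total contribution. The \emph{large} range, where the complementary divisor $\mathrm{div}(\sigma) - D$ is small, is handled symmetrically. Both estimates depend on the upper bounds for $h^0_{Ar}$ and on the restriction theorems for arithmetic ampleness developed earlier in the paper, which ensure that $\alpha(D)$ stays bounded below as $D$ varies in these ranges.

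The main obstacle will be the \emph{medium} range, where too many divisors $D$ appear for term-by-term summation to close. Here I would proceed by induction on $\dim \cX$: use the arithmetic restriction theorem to pass to the generic member of a well-chosen horizontal slice $H \subset \cX$, and argue geometrically that if $\mathrm{div}(\sigma)$ has a proper factor of medium size, then its restriction to many slices $H$ is itself reducible, a situation the induction handles. Choosing $m(n)$ so that all three contributions are simultaneously $o(|H_n|)$ then completes the argument. The medium-range estimate is the technical heart of the proof and the place where the arithmetic ampleness machinery of the earlier sections is used essentially, rather than merely as a positivity assumption.
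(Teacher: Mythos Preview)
Your proposal misses the structure of the argument and, more seriously, has no workable base case.

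The paper splits on $\dim\cX$. When $\dim\cX\geq 3$, the proof is short: pick a large prime $p$ with $\cX_p$ reduced, apply the finite-field Bertini irreducibility theorem of \cite{CharlesPoonen16} to $\cX_p$, and transfer density $1$ back up via the restriction results of Section~\ref{subsection:restriction}. A separate lemma rules out vertical components. No induction on dimension, no medium-range sieve --- the geometric Bertini over $\F_p$ does the work.

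The genuine content is the arithmetic surface case $\dim\cX=2$, and here your scheme breaks. Your medium-range step proposes to restrict to a horizontal slice and induct, but a horizontal slice of a surface is one-dimensional, where no irreducibility statement can hold. So the induction has no base. More substantively, for surfaces the paper does not sieve over divisors $D$ at all in the naive sense you describe. Instead:
\begin{itemize}
\item First, reduction modulo \emph{many} primes simultaneously (roughly $n^\gamma$ primes, using Proposition~\ref{proposition:varying-quotient}) shows that for a density-$1$ set of $\sigma$, $\mathrm{div}(\sigma)_\Q$ already has an irreducible component of generic degree at least $n\deg\mathcal L_\Q-rn^\beta$. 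This is the step that replaces your medium range, and it uses nothing like slicing; it uses Lang--Weil counts on the fibres $\cX_p$ and a surjectivity result for restriction to $\cX_N$ with $N$ a product of primes.
\item After also discarding $\sigma$ that vanish on a divisor of height $\leq n^\alpha$ (your ``small'' range, handled as you suggest), any remaining bad $\sigma$ factors as $\sigma_1\sigma_2$ where one factor has tiny generic degree ($\leq rn^\beta$) and both have height $\geq n^\alpha$. The delicate point you do not mention is that one must \emph{construct} metrics on the factors so that $||\sigma_i||$ is controlled; this is the content of the norm estimates in Section~\ref{subsection:norm-estimates}, and without them the factorization gives no arithmetic information.
\item The final count uses the upper bound $h^0_{Ar}(\Mbar)\leq (\Bbar.\Mbar)^2/(2\Bbar.\Bbar)+\text{lower order}$ from Corollary~\ref{corollary:upper-bound-arakelov}, whose quadratic (convex) shape, together with the constraint $n^\delta\leq \Lbar_1.\Bbar\leq n\Bbar.\Bbar-n^\delta$, forces the count of pairs $(\sigma_1,\sigma_2)$ to fall short of $|H_n|$ by a factor $\exp(-n^{1+\delta}+o(n^{1+\delta}))$. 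The number of possible line bundle classes $\Lbar_1$ is only polynomial by Proposition~\ref{proposition:cone}.
\end{itemize}
Your outline has the right flavour for the low-height range but does not contain the two ideas that actually close the surface case: the multi-prime reduction argument that forces one factor to be geometrically small, and the norm estimates that make the factorization metrically meaningful.
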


Theorem \ref{thm:X=Y} is stronger than the Bertini irreducibility theorem of \cite[Theorem 1.1]{CharlesPoonen16}, as we explain in Section 3. Note however that we use the results of \cite{CharlesPoonen16} in our proofs.

In Theorem \ref{thm:X=Y}, the case where $\cX$ has dimension at least $3$ -- that is, relative dimension at least $2$ over $\Spec\Z$ -- is easier. Indeed, if $p$ is a large enough prime number, we can apply the Bertini irreducibility theorems over finite fields to the reduction of $\cX$ modulo $p$, which with moderate work is enough to prove the theorem. However, when $\cX$ is an arithmetic surface, Theorem \ref{thm:X=Y} is genuinely different from its finite field counterpart. Note that the hardest case of the main result of \cite{CharlesPoonen16} is the surface case as well.

Theorem \ref{thm:main} should be compared to the Hilbert irreducibility theorem, which implies, if $\mathcal L$ is very ample on the generic fiber of $\cX$ and $Y$ is flat over $\Z$, the existence of many sections $\sigma$ of $\mathcal L$ such that the generic fiber of div($f^*\sigma)$ is irreducible. However, the Hilbert irreducibility theorem does not guarantee that these sections have small norm. To our knowledge, Theorem \ref{thm:main} does not imply the Hilbert irreducibility theorem, nor does it follow from it.

\bigskip

Arithmetic Bertini theorems, in the setting of both general arithmetic geometry and Arakelov geometry, have appeared in the literature. In \cite{Poonen04}, Poonen is able to prove a Bertini regularity theorem for ample line bundles on regular quasi-projective schemes over $\Spec\Z$ under the abc conjecture and technical assumptions. The statement does not involve hermitian metrics but still involves a form of density.

In \cite{Moriwaki95}, Moriwaki proves a Bertini theorem showing the existence of at least one effective section of large powers of an arithmetically ample line bundle that defines a generically smooth divisor -- this was reproved and generalized in \cite{Ikoma15}. As a byproduct of our discussion of arithmetic ampleness in section 2 and Poonen's result over finite fields, we will give a short proof of a more precise version of this result.

\begin{theorem}\label{theorem:generic-smoothness}
Let $\cX$ be a projective arithmetic variety with smooth generic fiber, and let $\Lbar$ be an ample hermitian line bundle on $\cX$. Then the set 
$$\{\sigma\in\bigcup_{n> 0} H^0_{Ar}(\cX, \Lbar^{\otimes n}), \,\mathrm{div}(\sigma)_\Q\,\textup{is smooth}\}$$
has density $1$ in $\bigcup_{n> 0} H^0_{Ar}(\cX, \Lbar^{\otimes n})$.
\end{theorem}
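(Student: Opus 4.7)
The plan is to deduce Theorem~\ref{theorem:generic-smoothness} from Poonen's Bertini smoothness theorem over finite fields~\cite{Poonen04} by reducing at a prime $p$ of good reduction, then transferring the resulting density back to the arithmetic setting via equidistribution of effective sections modulo~$p$. Because the mod-$p$ smoothness density $\zeta_{\cX_{\F_p}}(\dim\cX_{\F_p}+1)^{-1}$ is in general strictly less than~$1$, the sharp density $1$ statement is obtained by letting $p\to\infty$ at the end, since that density tends to~$1$ along good primes.

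Concretely, since $\cX_\Q$ is smooth, spreading out yields infinitely many primes $p$ at which $\cX_{\F_p}$ is smooth and $\mathcal L_{\F_p}$ is ample. For such a $p$, let $G_n^p\subseteq H^0(\cX_{\F_p},\mathcal L_{\F_p}^{\otimes n})$ be the set of $\bar\sigma$ with $\mathrm{div}(\bar\sigma)$ smooth over $\F_p$; by the version of~\cite{Poonen04} for ample line bundles on smooth projective varieties, $|G_n^p|/|H^0(\cX_{\F_p},\mathcal L_{\F_p}^{\otimes n})|\to\zeta_{\cX_{\F_p}}(\dim\cX_{\F_p}+1)^{-1}$ as $n\to\infty$. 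If $\sigma\in H^0(\cX,\mathcal L^{\otimes n})$ reduces mod $p$ into $G_n^p$, then $\sigma$ does not vanish identically on $\cX_{\F_p}$, so $\mathrm{div}(\sigma)$ is flat over $\Spec\Z$ near $p$ with smooth special fiber; by openness of the smooth locus, $\mathrm{div}(\sigma)_\Q$ is smooth. Granting asymptotically uniform equidistribution of effective sections across $H^0(\cX_{\F_p},\mathcal L_{\F_p}^{\otimes n})$ under reduction mod $p$, the density of bad effective sections is then bounded above in the limit by $1-\zeta_{\cX_{\F_p}}(\dim\cX_{\F_p}+1)^{-1}$, and letting $p\to\infty$ drives this to~$0$.

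The main obstacle is the equidistribution mod~$p$, which is where the arithmetic ampleness results of Section~\ref{section:ampleness} enter. By the arithmetic Hilbert--Samuel theorem, $\log|H^0_{\mathrm{Ar}}(\cX,\Lbar^{\otimes n})|$ grows like $n^{\dim\cX}$, whereas $\dim_{\F_p}H^0(\cX_{\F_p},\mathcal L_{\F_p}^{\otimes n})$ grows only like $n^{\dim\cX-1}$; for fixed $p$ and $n$ large, the sup-norm unit ball thus contains far more than $p^{\dim_{\F_p}H^0(\cX_{\F_p},\mathcal L_{\F_p}^{\otimes n})}$ lattice points, which is the index of $p\,H^0(\cX,\mathcal L^{\otimes n})$. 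The arithmetic ampleness results of Section~\ref{section:ampleness} furnish enough short sections in $H^0(\cX,\mathcal L^{\otimes n})$ that translating the unit ball by any element of $p\,H^0(\cX,\mathcal L^{\otimes n})$ changes its lattice-point count only to lower order, uniformly in the coset; a standard convex-body count then yields the uniform equidistribution needed to close the argument.
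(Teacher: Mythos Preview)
Your proposal is correct and follows essentially the same route as the paper: apply Poonen's Bertini smoothness theorem to $\cX_p$ for a good prime, transfer the resulting density to $H^0_{Ar}(\cX,\Lbar^{\otimes n})$ via equidistribution of effective sections modulo $p$ (this is exactly Corollary~\ref{corollary:uniform-surjectivity-p}/Corollary~\ref{corollary:density-and-restriction} in the paper), and then let $p\to\infty$ using $\zeta_{\cX_p}(\dim\cX)^{-1}\to 1$. The equidistribution step you sketch in your last paragraph is precisely what the paper packages as Corollary~\ref{corollary:uniform-surjectivity-p}, resting on Proposition~\ref{proposition:exponentially-decreasing-sections} and the lattice-point counts of \S\ref{subsection:restriction}.
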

Of course, this result can be combined with Theorem \ref{thm:X=Y} if $\cX$ has dimension at least $2$. 

Weaker Bertini theorems over rings of integers in number fields have been used in higher class-field theory, under the form of the Bloch-Raskind-Kerz approximation lemma proved in \cite{Bloch81, Raskind95, Wiesend06, Kerz11} -- see \cite[Lemme 5.2]{Szamuely10} for a discussion. These results can be obtained easily as a special case of our Corollary \ref{corollary:Taylor} (or its variant corresponding to Theorem \ref{thm:main} for Wiesend's version) -- this corollary allows us furthermore to control the cohomology class of the irreducible subvarieties involved. 

An arithmetic Bertini theorem has been proved by Autissier in \cite{Autissier01, Autissier02}. Counts of irreducible divisors on arithmetic varieties have been provided by many authors, starting with Faltings in \cite{Faltings84}.

\subsection{Strategy of the proofs}
The starting point of our proofs is that, as in \cite{CharlesPoonen16}, the Bertini irreducibility theorem is susceptible to a counting approach: to show that most divisors are irreducible, simply bound the number of the reducible ones. 

To carry on this approach, we need to translate in Arakelov geometry results from classical geometry. The two key results in that respect are the study of restriction maps for powers of ample hermitian line bundles we prove in \ref{subsection:restriction} and bounds for sections of hermitian line bundles on surfaces in \ref{subsection:upper-bound-theta}. We hope that these results have independent interest.

Even with these tools at our disposal, we are not able to adapt the methods of \cite{CharlesPoonen16}, for two reasons. First, the error terms in the various estimates we deal with (including arithmetic Hilbert-Samuel) are big enough that we need a more involved sieving technique than in \cite{CharlesPoonen16} involving the analysis of simultaneous restriction of sections modulo infinite families of subschemes. Second, given a section of a hermitian line bundle with reducible divisor on a regular arithmetic surface, we need to construct a corresponding decomposition of the hermitian line bundle, which involves constructing suitable metrics. The relevant analysis is dealt with in \ref{subsection:norm-estimates} and can only be applied when sufficient geometric bounds hold. To get a hold of the geometry, we need a careful analysis dealing with infinite families of curves over finite fields -- coming from the reduction of our given arithmetic surface modulo many primes. This is the content of \ref{subsection:finite-fields}.

\subsection{Notation and definitions} 
By an arithmetic variety, we mean an integral scheme which is flat of finite type over $\Spec\Z$. A projective arithmetic variety of dimension $2$ is an arithmetic surface. If $\cX$ is a scheme over $\Spec\Z$ and if $p$ is a prime number, we will denote by $\cX_p$ the reduction of $\cX$ modulo $p$. If $f : X\ra Y$ is a morphism of noetherian schemes, we say that an irreducible component of $X$ is vertical if its image is a closed point of $Y$, and horizontal if not. We denote by $X_{horiz}$ the union of the horizontal components of $X$.

If $\cX$ is a scheme of finite type over $\Spec\Z$ with reduced generic fiber -- in particular, if $\cX$ is an arithmetic variety -- a hermitian line bundle $\Lbar$ on $\cX$ is a pair $(\mathcal L, ||.||)$ consisting of a line bundle $\mathcal L$ on $\cX$, and a smooth hermitian metric $||.||$ on $\mathcal L_\C$ which is invariant under complex conjugation. If $\Lbar$ is a hermitian line bundle over an arithmetic variety $\cX$, we will denote by $\mathcal L$ the underlying line bundle. Note that if the generic fiber $\cX_\Q$ is empty, a hermitian line bundle on $\cX$ is nothing but a line bundle.

Let $\Lbar$ be a hermitian line bundle on a proper scheme $\cX$ over $\Z$ with reduced generic fiber. If $\sigma$ is a section of $\mathcal L$ on $\cX$, we will often denote by $||\sigma||$ the sup norm of $\sigma$. If $||\sigma||\leq 1$ (resp. $||\sigma||<1$), we say that $\sigma$ is effective (resp. strictly effective). We denote by $H^0_{Ar}(\cX, \Lbar)$ the finite set of effective sections of $\Lbar$, and write 
$$h^0_{Ar}(\cX, \Lbar)=\log |H^0_{Ar}(\cX, \Lbar)|.$$
Note again that if $\cX_\Q$ is empty, then 
$$H^0_{Ar}(\cX, \Lbar)=H^0(\cX, \mathcal L).$$
We say that a hermitian line bundle on $\cX$ is effective if it has a nonzero effective section.

\subsection{Outline of the paper}

Section \ref{section:ampleness} is devoted to a general discussion of arithmetic ampleness. After setting definitions, we recall aspects of the arithmetic Hilbert-Samuel theorem, taking care of error terms. We then prove a number of results concerning the image of restriction maps for sections of large powers of ample hermitian line bundles. 

In the short section 3, we make use of the previous section to discuss consequences and variants of the main theorems. We prove Theorem \ref{theorem:generic-smoothness}. 

In section 4, we gather general results -- both analytic and arithmetic -- dealing with hermitian line bundles on Riemann surfaces and arithmetic surfaces. We prove norm estimates in the spirit of \cite{BostGilletSoule94}, and we prove a basic upper bound on the number of effective sections for positive line bundles -- in some sense -- on arithmetic surfaces, making use of the $\theta$-invariants of Bost, as well as a result on the effective cone of arithmetic surfaces. 

Section 5 is devoted to the proof of Theorem \ref{thm:X=Y}, and section 6 to the remaining theorems of the introduction.

\subsection{Acknowledgements} This paper arose from a question raised by Hrushovski at the SAGA in Orsay. It is a great pleasure to thank him for the impetus for this work. Working with Bjorn Poonen on \cite{CharlesPoonen16} has obviously been crucial to this paper, and we gratefully acknowledge his influence. While writing this paper, we benefited from a very large number of conversations with Jean-Benoît Bost whom we thank heartfully -- it should be clear to the reader that this paper owes him a large intellectual debt. We thank Jean-Louis Colliot-Thélène and Offer Gabber for helpful remarks.

This project has received funding from the European Research Council (ERC) under the European Union's Horizon 2020 research and innovation programme (grant agreement No 715747).

\section{Some results on arithmetic ampleness}\label{section:ampleness}

In this section, we gather some results on ample line bundles on arithmetic varieties. Most of the results are well-known and can be found in a similar form in the literature. Aside from a precise statement regarding error terms in the arithmetic Hilbert-Samuel theorem, our main original contribution consists in the results of \ref{subsection:restriction} that deals with restriction maps for sections of ample line bundles.

\subsection{Definitions and basic properties} We give a definition of arithmetic ampleness which is a little bit more restrictive than usual. 

\begin{definition}
Let $X$ be a complex analytic space, and let $\overline L=(L, ||.||)$ be a hermitian line bundle on $X$. Let $\mathbb V(L)$ be the total space of $L$, and $Z\subset \mathbb V(L)$ be the zero section. We say that $\overline L$ is \emph{positive} if the function 
$$(\mathbb V(L)\setminus Z)\ra \mathbb R^*_+, \, v\mapsto -||v||^2$$
is strictly plurisubharmonic.
\end{definition}

\begin{remark}
A positive hermitian line bundle is called "metrically pseudoconcave" in \cite{AndreottiTomassini69}. If $X$ is smooth, then $\overline L$ is positive if and only if the curvature form $c_1(\overline L)$ is pointwise positive.
\end{remark}

\begin{definition}
Let $\cX$ be a projective arithmetic variety, and let $\Lbar$ be a hermitian line bundle on $\cX$. We say that $\Lbar$ is \emph{ample} if $\mathcal L$ is relatively ample over $\Spec\Z$, $\Lbar$ is positive on $\cX(\C)$ and for any large enough integer $n$, there exists a basis of $H^0(\cX, \mathcal L^{\otimes n})$ consisting of strictly effective sections.
\end{definition}

\begin{remark}
Departing slightly from the definition of \cite{Zhang95}, we insist on strict positivity for the hermitian metric. This makes ampleness an open condition, see e.g. Corollary \ref{corollary:ampleness-open} below. 
\end{remark}

\begin{proposition}\label{proposition:exponentially-decreasing-sections}
Let $\cX$ be a projective arithmetic variety, and let $\Lbar$ be an ample hermitian line bundle on $\cX$. Let $\Mbar=(\mathcal M, ||.||)$ be a hermitian vector bundle on $\cX$, and let $\mathcal F$ be a coherent subsheaf of $\mathcal M$. Then there exists a positive real number $\varepsilon$ such that for any large enough integer $n$, there exists a basis of $H^0(\cX, \mathcal L^{\otimes n}\otimes\mathcal F)\subset H^0(\cX, \mathcal L^{\otimes n}\otimes\mathcal M)$ consisting of sections with norm at most $e^{-n\varepsilon}$.
\end{proposition}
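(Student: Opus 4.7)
My strategy is to combine, on the one hand, the exponential decay of norms provided by the ampleness of a small twist of $\Lbar$ with, on the other hand, standard surjectivity of multiplication maps for sections of ample coherent sheaves, so as to obtain a $\Z$-spanning set of $H^0(\cX,\mathcal L^{\otimes n}\otimes\mathcal F)$ of small norm; then I would extract from it a $\Z$-basis using the geometry of numbers.

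First, by openness of ampleness (the forthcoming Corollary \ref{corollary:ampleness-open}), the twisted hermitian bundle $\Lbar(3\delta)=(\mathcal L,e^{-3\delta}\|\cdot\|)$ remains ample for some sufficiently small $\delta>0$. By the very definition of ampleness, it follows that for all $m$ large enough, the $\Z$-module $H^0(\cX,\mathcal L^{\otimes m})$ admits a $\Z$-basis $s_1,\ldots,s_N$ whose elements have norm at most $e^{-3m\delta}$ in the original hermitian metric on $\mathcal L$. Next I would invoke standard results on projective morphisms---Serre's vanishing together with Castelnuovo--Mumford regularity for the pair $(\cX,\mathcal L)$ applied to the coherent sheaf $\mathcal F$---to produce integers $m_1,m_0$ and finitely many global sections $t_1,\ldots,t_T$ of $\mathcal L^{\otimes m_1}\otimes\mathcal F$ that generate this sheaf and such that, for every $m\geq m_0$, the multiplication map
\[ H^0(\cX,\mathcal L^{\otimes m})\otimes_{\Z} H^0(\cX,\mathcal L^{\otimes m_1}\otimes\mathcal F)\longrightarrow H^0(\cX,\mathcal L^{\otimes(m+m_1)}\otimes\mathcal F) \]
is surjective. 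For $n=m+m_1$ with $m$ large, the products $s_it_j$, viewed as sections of $\mathcal L^{\otimes n}\otimes\mathcal M$ via the embedding $\mathcal F\hookrightarrow\mathcal M$, then form a $\Z$-spanning set of $H^0(\cX,\mathcal L^{\otimes n}\otimes\mathcal F)$ with sup-norms at most $e^{-3m\delta}\cdot\max_j\|t_j\|_\infty\leq e^{-2n\delta}$ for $n$ large enough.

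The remaining step is to extract a $\Z$-basis from this spanning set without losing an exponential factor. Since the spanning set contains a maximal family of linearly independent vectors of norm at most $e^{-2n\delta}$, every successive minimum of the lattice $H^0(\cX,\mathcal L^{\otimes n}\otimes\mathcal F)$ (equipped with the sup-norm) is bounded by $e^{-2n\delta}$. A classical lemma in the geometry of numbers, producing a basis of a lattice whose norms are controlled by its successive minima up to a factor polynomial in the rank, then yields a $\Z$-basis of the lattice whose sup-norms are at most $P(r(n))\cdot e^{-2n\delta}$, where $r(n):=\mathrm{rk}_\Z H^0(\cX,\mathcal L^{\otimes n}\otimes\mathcal F)$ and $P$ is a polynomial. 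By the arithmetic Hilbert--Samuel theorem (which I take for granted from the earlier subsections of section \ref{section:ampleness}), $r(n)$ grows polynomially in $n$, and hence the factor $P(r(n))$ is absorbed by the exponential for any $\varepsilon<2\delta$ and all $n$ large enough, yielding the conclusion.

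The main obstacle in this argument is the last step: turning a controlled spanning set into a controlled $\Z$-basis. The polynomial growth of the rank versus the exponential decay of the norms leaves room for the unavoidable polynomial loss inherent in any geometry-of-numbers argument; the rest is a direct packaging of the definition of ampleness with standard projective techniques.
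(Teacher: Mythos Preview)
Your overall strategy---surjectivity of multiplication maps, multiply small sections of $\mathcal L^{\otimes m}$ against a fixed generating set for $\mathcal L^{\otimes m_1}\otimes\mathcal F$, then extract a $\Z$-basis by geometry of numbers with only a polynomial loss in the rank---is exactly the paper's approach. The paper phrases the last step as Zhang's lemma \cite[Lemma~1.7]{Zhang92}, but that is the same geometry-of-numbers fact you invoke.

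There is, however, a genuine gap in your first step. With the convention $\Lbar(3\delta)=(\mathcal L,e^{-3\delta}\|\cdot\|)$, ampleness of $\Lbar(3\delta)$ only says that for large $m$ there is a basis with $e^{-3m\delta}\|\sigma\|<1$, i.e.\ $\|\sigma\|<e^{3m\delta}$ in the original metric---the wrong direction. What you actually need is ampleness of $\Lbar(-3\delta)$, i.e.\ of the bundle with \emph{increased} metric. But the assertion ``$\Lbar(-\varepsilon)$ is ample for some $\varepsilon>0$'' is literally the case $\mathcal F=\mathcal M=\mathcal O_{\cX}$ of the proposition you are proving, and Corollary~\ref{corollary:ampleness-open}, which you cite, is itself proved using Proposition~\ref{proposition:exponentially-decreasing-sections}. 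So this step is circular.

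The fix is the one the paper uses and costs nothing: do not try to get a uniform rate $e^{-3m\delta}$ for \emph{all} large $m$ up front. Instead, fix a single $a$ for which, directly from the definition of ampleness, $H^0(\cX,\mathcal L^{\otimes a})$ has a basis of norm at most some $\alpha<1$; fix finitely many $b_1,\ldots,b_a$ forming a complete residue system modulo $a$ and large enough that the multiplication maps
\[
H^0(\cX,\mathcal L^{\otimes a})^{\otimes k}\otimes H^0(\cX,\mathcal L^{\otimes b_i}\otimes\mathcal F)\longrightarrow H^0(\cX,\mathcal L^{\otimes(ak+b_i)}\otimes\mathcal F)
\]
are surjective; then iterate. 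Writing $n=ak+b_i$ you get a full-rank sublattice generated by elements of norm $\le \alpha^{k}\beta$, and your geometry-of-numbers step finishes exactly as you wrote.
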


\begin{proof}
Since $\mathcal L$ is relatively ample, for any large enough integers $a$ and $b$, the morphism
$$H^0(\cX, \mathcal L^{\otimes a})\otimes H^0(\cX, \mathcal L^{\otimes b}\otimes\mathcal F)\ra H^0(\cX, \mathcal L^{\otimes a+b}\otimes\mathcal F)$$
is surjective. As a consequence, for any two large enough integers $a$ and $b$, and any positive integer $n$, the morphism 
$$H^0(\cX, \mathcal L^{\otimes a})^{\otimes n}\otimes H^0(\cX, \mathcal L^{\otimes b}\otimes\mathcal F)\ra H^0(\cX, \mathcal L^{\otimes an+b}\otimes\mathcal F)$$
is surjective.

Choose $a$ large enough so that the space $H^0(\cX, \mathcal L^{\otimes a})$ has a basis consisting of sections with norm at most $\alpha$ for some $\alpha<1$. Choose $b_1, \ldots, b_a$ large enough integers that form a complete residue system modulo $l$. We can assume that the maps 
$$H^0(\cX, \mathcal L^{\otimes a})^{\otimes n}\otimes H^0(\cX, \mathcal L^{\otimes b_i}\otimes\mathcal F)\ra H^0(\cX, \mathcal L^{\otimes an+b_i}\otimes\mathcal F)$$
are surjective for all positive integer $n$ and all $i$ between $1$ and $a$. Now choose bases for the spaces $H^0(\cX, \mathcal L^{\otimes b_i}\otimes\mathcal F)$ as $i$ varies between $1$ and $a$, and let $\beta$ be an upper bound for the norm of any element of these bases. Taking products of elements of these bases, we find a subspace of full rank in $H^0(\cX, \mathcal L^{\otimes an+b_i}\otimes\mathcal F)$ which has a basis whose elements have norm at most $\alpha^n\beta$. By \cite[Lemma 1.7]{Zhang92}, this implies that $H^0(\cX, \mathcal L^{\otimes an+b_i}\otimes\mathcal F)$ has basis whose elements have norm at most $r\alpha^n\beta$, where $r$ is the rank of $H^0(\cX, \mathcal L^{\otimes an+b_i}\otimes\mathcal F)$.

The theory of Hilbert polynomials shows that the rank of $H^0(\cX, \mathcal L^{an+b_i})$ is bounded above by a polynomial in $an+b_i$. Since $\alpha<1$, the number $r\alpha^n\beta$ decreases exponentially as $an+b_i$ grows, which shows the result since any integer can be written as $an+b_i$ for some $i$ and $n$.
\end{proof}

\begin{corollary}\label{corollary:ampleness-open}
Let $\cX$ be a projective arithmetic variety. Let $\Lbar$ be an ample hermitian line bundle on $\cX$ and let $\Mbar$ be a hermitian line bundle on $\cX$. Then for any large enough integer $n$, the hermitian line bundle $\Lbar^{\otimes n}\otimes \Mbar$ is ample.
\end{corollary}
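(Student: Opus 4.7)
The plan is to verify the three defining conditions of ampleness for $\Lbar^{\otimes n}\otimes \Mbar$ when $n$ is sufficiently large. Relative ampleness of the underlying line bundle $\mathcal L^{\otimes n}\otimes \mathcal M$ over $\Spec\Z$ is a standard consequence of the relative ampleness of $\mathcal L$. For positivity, in local holomorphic trivializations write the metrics of $\Lbar$ and $\Mbar$ as $e^{-\phi}$ and $e^{-\psi}$ respectively, where $\phi$ is strictly plurisubharmonic (by positivity of $\Lbar$) and $\psi$ is smooth. The metric of $\Lbar^{\otimes n}\otimes \Mbar$ has weight $n\phi + \psi$, and compactness of $\cX(\C)$ together with a uniform lower bound on the strict plurisubharmonicity of $\phi$ gives that $n\phi + \psi$ remains strictly plurisubharmonic for $n$ large.

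The substantive condition is the third one: for $m$ sufficiently large, $H^0(\cX,(\mathcal L^{\otimes n}\otimes\mathcal M)^{\otimes m})$ must admit a basis of strictly effective sections of $(\Lbar^{\otimes n}\otimes\Mbar)^{\otimes m}$. I would apply Proposition \ref{proposition:exponentially-decreasing-sections} to $\Lbar$, the hermitian line bundle $\Mbar$, and $\mathcal F = \mathcal M$, obtaining $\varepsilon > 0$ and $n_0 \geq 1$ such that for every $k \geq n_0$ the space $H^0(\cX, \mathcal L^{\otimes k}\otimes \mathcal M)$ admits a basis $B_k = \{s_1^{(k)},\ldots,s_{r_k}^{(k)}\}$ with $\|s_i^{(k)}\| \leq e^{-k\varepsilon}$.

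Next I would fix $n \geq n_0$ large enough so that, in addition, $\mathcal L^{\otimes n}\otimes \mathcal M$ is \emph{normally generated}, i.e.\ the multiplication maps
\[
H^0(\cX, \mathcal L^{\otimes n}\otimes\mathcal M)^{\otimes m}\to H^0(\cX,(\mathcal L^{\otimes n}\otimes\mathcal M)^{\otimes m})
\]
are surjective for every $m\geq 1$; this is a classical consequence of ampleness of $\mathcal L$. Taking products of $m$ elements of $B_n$ then yields a spanning family of the target with sup-norm at most $e^{-nm\varepsilon}$. Applying \cite[Lemma 1.7]{Zhang92}, one extracts a basis of sup-norm at most $R_m\cdot e^{-nm\varepsilon}$, where $R_m$ is the rank of $H^0(\cX, (\mathcal L^{\otimes n}\otimes\mathcal M)^{\otimes m})$. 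Since $R_m$ grows polynomially in $m$ while $e^{-nm\varepsilon}$ decays exponentially, this basis consists of strictly effective sections for every $m$ large.

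The main obstacle lies in the uniformity in $m$ needed to turn the single-degree basis $B_n$ into a spanning family in each sufficiently large power. The normal generation argument above is the cleanest route; alternatively, one can sidestep it by adapting the proof of Proposition \ref{proposition:exponentially-decreasing-sections} directly, with $\mathcal F = \mathcal M^{\otimes m}$ and $\Mbar$ replaced by $\Mbar^{\otimes m}$. Here one observes that the exponential rate $\varepsilon$ in that proof depends only on $\Lbar$ (through a basis of $H^0(\cX,\mathcal L^{\otimes a})$ with norms $\leq \alpha <1$), while the surjectivity thresholds and auxiliary constants depending on $\mathcal M^{\otimes m}$ grow only linearly in $m$ by Castelnuovo--Mumford regularity, so that taking $n$ large implies $nm$ exceeds these thresholds for all $m\geq 1$. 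Either route delivers the conclusion.
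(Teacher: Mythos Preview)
Your proof is correct and follows essentially the same route as the paper's: positivity via local potentials $n\phi+\psi$ and compactness, relative ampleness and normal generation of $\mathcal L^{\otimes n}\otimes\mathcal M$ for large $n$, then Proposition~\ref{proposition:exponentially-decreasing-sections} to get a small basis in degree $n$, products to span degree $nm$, and Zhang's lemma to extract a basis whose norm is polynomial times $e^{-nm\varepsilon}$. The alternative route you sketch at the end is unnecessary, as the first argument already suffices.
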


\begin{proof}
For large enough $n$, the line bundle $\Lbar^{\otimes n}\otimes \Mbar$ is positive on $\mathcal X(\C)$. Indeed, by compactness, we only have to check that for any point $x$ of $\cX(\C)$, there exists a neighborhood $U$ of $x$ such that for any large enough integer $n$, $\mathcal L^{\otimes n}\otimes \mathcal M$ is positive on $U$. Pick a neighborhood $V$ of $U$ such that the restrictions of $\mathcal L$ and $\mathcal M$ to $U$ are both trivial. Let $U\subset V$ be a neighborhood of $x$, relatively compact in $V$. To check that $\mathcal L^{\otimes n}\otimes \mathcal M$ is positive on $U$, it is now enough to notice that if $\phi$ is a smooth, strictly plurisubharmonic function on $V$, and $\psi$ is any smooth function on $V$, then $n\phi+\psi$ is a strictly plurisubharmonic smooth function on the relatively compact $U$ for any large integer $n$.

Since $\mathcal L$ is relatively ample, for any large enough integer $n$, the line bundle $\mathcal L^{\otimes n}\otimes\mathcal M$ is relatively ample and the morphisms 
$$H^0(\cX, \mathcal L^{\otimes n}\otimes\mathcal M)^{\otimes m}\ra H^0(\cX, (\mathcal L^{\otimes n}\otimes\mathcal M)^{\otimes m})$$
are surjective for any positive integer $m$. 

For large enough $n$, Proposition \ref{proposition:exponentially-decreasing-sections} guarantees that there is a basis for $H^0(\cX, \mathcal L^{\otimes n}\otimes\mathcal M)$ consisting of sections with norm at most $e^{-n\varepsilon}$ for some positive number $\varepsilon$. As a consequence, we can find a subspace of full rank in $H^0(\cX, (\mathcal L^{\otimes n}\otimes\mathcal M)^{\otimes m})$ with a basis consisting of sections with norm at most $e^{-mn\varepsilon}$. By \cite[Lemma 7.1]{Zhang92}, this implies that $H^0(\cX, (\mathcal L^{\otimes n}\otimes\mathcal M)^{\otimes m})$ itself has a basis whose elements have norm at most $re^{-mn\varepsilon}$, where $r$ is the rank of $H^0(\cX, (\mathcal L^{\otimes n}\otimes\mathcal M)^{\otimes m})$. Since again $r$ is bounded above by a polynomial in $mn$, this shows the result.
\end{proof}

\begin{corollary}\label{corollary:sections-pullback}
Let $f : \cY\ra\cX$ be a morphism of projective arithmetic varieties, and let $\Lbar$ be an ample hermitian line bundle on $\cX$. Then there exists a positive real number $\varepsilon$ such that for any large enough integer $n$, there exists a basis of $H^0(\cY, f^*\mathcal L^{\otimes n})$ consisting of sections with norm bounded above by $e^{-n\varepsilon}$.
\end{corollary}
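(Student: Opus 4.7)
The plan is to reduce sections on $\cY$ to sections on $\cX$ via the projection formula and then apply the previously established exponential decay for sections of powers of $\Lbar$ on $\cX$. First I would write
$$H^0(\cY, f^*\mathcal L^{\otimes n}) \;=\; H^0(\cX, f_*\cO_\cY \otimes \mathcal L^{\otimes n}),$$
using that $f$ is proper so $f_*\cO_\cY$ is a coherent $\cO_\cX$-module. Since $\mathcal L$ is relatively ample on $\cX$ (and $\cX$ is projective over $\Z$, so $\mathcal L$ is ample on $\cX$), Serre's theorem produces, for some $k \geq 0$, a surjection
$$\varphi : (\mathcal L^{\otimes -k})^{\oplus r} \twoheadrightarrow f_*\cO_\cY.$$
By the projection formula applied to the $r$ components of $\varphi$, this datum is equivalent to $r$ sections $\tau_1, \dots, \tau_r$ of $f^*\mathcal L^{\otimes k}$ on $\cY$. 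Let $C := \max_i \|\tau_i\|_{\sup,\cY(\C)}$.

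Next, letting $\mathcal K := \ker \varphi$, Serre vanishing (applicable since $\mathcal L$ is ample on the projective scheme $\cX$) gives $H^1(\cX, \mathcal K \otimes \mathcal L^{\otimes n}) = 0$ for large $n$. Tensoring the defining short exact sequence with $\mathcal L^{\otimes n}$ and taking cohomology therefore yields a surjection
$$H^0(\cX, \mathcal L^{\otimes(n-k)})^{\oplus r} \twoheadrightarrow H^0(\cX, \mathcal L^{\otimes n} \otimes f_*\cO_\cY) = H^0(\cY, f^*\mathcal L^{\otimes n}),$$
and this surjection sends $(s_1, \dots, s_r)$ to $\sum_i (f^*s_i) \cdot \tau_i$.

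By Proposition \ref{proposition:exponentially-decreasing-sections} (applied to $\mathcal F = \mathcal M = \cO_\cX$), there exists $\varepsilon > 0$ such that for $n$ large, $H^0(\cX, \mathcal L^{\otimes(n-k)})$ has a basis consisting of sections of norm at most $e^{-(n-k)\varepsilon}$. Feeding these basis elements (and zeros) into the surjection produces a generating family of $H^0(\cY, f^*\mathcal L^{\otimes n})$ as a $\Z$-module, whose elements have sup norm on $\cY(\C)$ bounded by $rC e^{-(n-k)\varepsilon}$, since $\|f^*s\|_{\sup,\cY(\C)} \leq \|s\|_{\sup,\cX(\C)}$.

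Finally I would apply \cite[Lemma 1.7]{Zhang92}, exactly as in the proofs of Proposition \ref{proposition:exponentially-decreasing-sections} and Corollary \ref{corollary:ampleness-open}, to extract an honest basis of the lattice $H^0(\cY, f^*\mathcal L^{\otimes n})$ whose elements have sup norm at most $\rho \cdot rC e^{-(n-k)\varepsilon}$, where $\rho$ is the rank of this lattice. Since $\rho$ grows at most polynomially in $n$ (Hilbert polynomial on the projective scheme $\cY_\Q$ and its fibers), replacing $\varepsilon$ by any smaller positive constant absorbs this polynomial factor for all sufficiently large $n$, yielding the desired basis with norms bounded by $e^{-n\varepsilon'}$. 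The only nontrivial input beyond bookkeeping is the existence of the surjection $\varphi$ and the Serre vanishing step; both are standard once one has noticed that $\mathcal L$ is ample (not merely relatively ample) on the projective arithmetic variety $\cX$.
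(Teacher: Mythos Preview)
Your proof is correct and follows essentially the same approach as the paper: both use the projection formula $H^0(\cY, f^*\mathcal L^{\otimes n}) \simeq H^0(\cX, \mathcal L^{\otimes n}\otimes f_*\cO_\cY)$, then combine a surjectivity statement with the exponential decay from Proposition~\ref{proposition:exponentially-decreasing-sections}. The only difference is in how the surjectivity is obtained: the paper invokes directly the surjectivity of the multiplication map $H^0(\cX,\mathcal L^{\otimes n})\otimes H^0(\cY,f^*\mathcal L^{\otimes k})\to H^0(\cY,f^*\mathcal L^{\otimes(n+k)})$ for large $n,k$, whereas you derive the analogous surjection from a presentation $(\mathcal L^{\otimes -k})^{\oplus r}\twoheadrightarrow f_*\cO_\cY$ together with Serre vanishing---which is in fact one standard way to justify the paper's claim. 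Your version is also more explicit about extracting an actual basis via \cite[Lemma 1.7]{Zhang92}, a step the paper leaves implicit.
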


\begin{proof}
By the projection formula, for any integer $k$, we have a canonical isomorphism
$$H^0(\cY, f^*\mathcal L^{\otimes k})\simeq H^0(\cX, \mathcal L^{\otimes k}\otimes f_*\mathcal O_{\cY}).$$
Since $\mathcal L$ is relatively ample, for any two large enough integers $n$ and $k$, the map
$$H^0(\cX, \mathcal L^{\otimes n})\otimes H^0(\cX, \mathcal L^{\otimes k}\otimes f_*\mathcal O_\cY)\ra H^0(\cX, \mathcal L^{\otimes (n+k)}\otimes f_*\mathcal O_\cY)$$
is surjective, which means that the natural map
$$H^0(\cX, \mathcal L^{\otimes n})\otimes H^0(\cY, f^*\mathcal L^{\otimes k})\ra H^0(\cY, f^*\mathcal L^{\otimes (n+k)})$$
is surjective. 

Fix a large enough integer $k$ for the previous assumption to hold. By Proposition \ref{proposition:exponentially-decreasing-sections}, the space $H^0(\cX, \mathcal L^{\otimes n})$ admits a basis consisting of elements with norm decreasing exponentially with $n$, which shows that the same property holds for $H^0(\cY, f^*\mathcal L^{\otimes (n+k)})$.
\end{proof}

\begin{corollary}\label{corollary:pull-back-ample}
Let $f$ be a finite morphism between arithmetic varieties such that $f_\C$ is unramified. The pullback of an ample hermitian line bundle by f is ample.
\end{corollary}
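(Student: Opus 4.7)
The plan is to verify the three defining properties of arithmetic ampleness for $f^*\Lbar$ directly.

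First, $f^*\mathcal L$ is relatively ample over $\Spec\Z$: this is the standard fact that relative ampleness is preserved under pullback by a finite (hence affine) morphism, together with the assumption that $\mathcal L$ is relatively ample on $\cX$.

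Second, $f^*\Lbar$ is positive on $\cY(\C)$. Here I would observe that the total space $\mathbb V(f^*\mathcal L)$ equals the fiber product $\cY(\C)\times_{\cX(\C)}\mathbb V(\mathcal L)$, and the function $v\mapsto -\|v\|^2$ on $\mathbb V(f^*\mathcal L)\setminus Z$ is the pullback of the corresponding function on $\mathbb V(\mathcal L)\setminus Z$ under the natural projection. Since $f_\C$ is finite and unramified, it is étale in the complex-analytic sense, hence a local biholomorphism; pullback by a local biholomorphism preserves strict plurisubharmonicity, so positivity descends from $\Lbar$ to $f^*\Lbar$.

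Third, for any sufficiently large $n$ the space $H^0(\cY, f^*\mathcal L^{\otimes n})$ admits a basis consisting of strictly effective sections. This is immediate from Corollary \ref{corollary:sections-pullback}, which produces a basis of sections of norm bounded above by $e^{-n\varepsilon}$ for some $\varepsilon>0$ independent of $n$; such sections are strictly effective as soon as $n\varepsilon>0$.

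The substantive input is Corollary \ref{corollary:sections-pullback}, which already packages the delicate surjectivity and exponential-decay arguments. The only point requiring a little care is the positivity step, and even there the hypothesis that $f_\C$ is finite and unramified is exactly what is needed to make the pullback of a strictly plurisubharmonic function strictly plurisubharmonic; without unramifiedness (e.g.\ for a ramified cover), one would only obtain weak plurisubharmonicity at the ramification locus and the argument would fail.
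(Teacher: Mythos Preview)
Your overall structure is correct, and steps one and three are fine. The gap is in step two: the implication ``finite and unramified $\Rightarrow$ \'etale $\Rightarrow$ local biholomorphism'' is false. A closed immersion of a subvariety into a higher-dimensional variety is finite and unramified but certainly not a local biholomorphism; likewise the normalization of a nodal curve is finite, surjective, unramified, and not flat, hence not \'etale. The corollary is in fact \emph{applied} in the paper to closed immersions (e.g.\ restricting $\Lbar$ to a closed subvariety, or pulling back the Fubini--Study metric along a projective embedding), so this case cannot be excluded.

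The fix, which is what the paper does, is to use the correct local structure of unramified morphisms: an unramified morphism is, locally on the source, a closed immersion (more precisely, it factors locally as a closed immersion followed by an \'etale map). Thus one may assume $f$ is a closed immersion; then $\mathbb V(f^*\mathcal L)$ sits as a closed analytic subspace of $\mathbb V(\mathcal L)$, and the function $v\mapsto -\|v\|^2$ on the former is the restriction of the corresponding function on the latter. Since the restriction of a strictly plurisubharmonic function to a closed complex subspace is again strictly plurisubharmonic, positivity follows. Your observation about what fails in the ramified case remains valid: at a ramification point one would only get plurisubharmonicity, not strict plurisubharmonicity.
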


\begin{proof}
By the previous results, we only have to show that if $f : X\ra Y$ is an unramified finite map between complex projective varieties, and if $\overline L$ is a positive hermitian line bundle on $Y$, then $f^*\overline L$ is positive. The problem being local on $X$, we may assume that $f$ is a closed immersion.

The total space of $f^*\overline L$ can be identified with a closed subvariety of the total space of $\overline L$. Since the restriction of a strictly plurisubharmonic function is strictly plurisubharmonic, this shows that $f^*\overline L$ is positive.
\end{proof}

\begin{remark}
It is not true in general that the pullback of an ample hermitian line bundle by a finite morphism is ample. Indeed, the curvature form of the pullback vanishes at any point where the morphism is ramified. The analogue of this phenomenon at non-archimedean places is not surprising: given a finite morphism $f$ between two smooth curves over $\Q_p$ with stable reduction, it is not true in general that $f$ extends to a finite morphism between the stable models over $\Z_p$ -- but it is if $f$ is étale.
\end{remark}

\bigskip

The following is the basic example of an ample hermitian line bundle.

\begin{proposition}\label{proposition:FS-ample}
Let $n$ be a positive integer, and let $\overline{\mathcal O(1)}$ be the hermitian line bundle on $\mathbb P^n_\Z$ corresponding to the line bundle $\mathcal O(1)$ endowed with the Fubini-Study metric. Then the hermitian line bundle $\overline{\mathcal O(1)}$ is ample on $\mathbb P^n_\Z$.
\end{proposition}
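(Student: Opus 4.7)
My plan is to check the three defining conditions of arithmetic ampleness (Definition 2.2) for $\overline{\mathcal O(1)}$ on $\mathbb P^n_\Z$: relative ampleness of the underlying line bundle, positivity of the metric at the archimedean place, and the existence of a $\Z$-basis of strictly effective sections of $\mathcal O(m)$ for all large $m$.

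The relative ampleness of $\mathcal O(1)$ over $\Spec\Z$ is classical, as $\mathcal O(1)$ is the tautological very ample bundle on $\mathbb P^n_\Z = \Proj\Z[X_0,\ldots,X_n]$. Positivity of the Fubini--Study metric reduces to a local computation: in the affine chart $\{X_0\ne 0\}$ with coordinates $w_j = X_j/X_0$, one has $-\log\|X_0\|^2 = \log(1 + |w_1|^2 + \cdots + |w_n|^2)$ up to an additive constant, and $\frac{i}{2\pi}\partial\bar\partial$ of this expression is the Fubini--Study K\"ahler form, which is pointwise positive. This shows that $v \mapsto -\log\|v\|^2$ is strictly plurisubharmonic off the zero section of $\mathbb V(\mathcal O(1))$, which is the positivity condition.

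For the strictly effective basis, the natural candidate is the monomial $\Z$-basis $\{X_0^{a_0}\cdots X_n^{a_n} : \sum a_i = m\}$ of $H^0(\mathbb P^n_\Z, \mathcal O(m))$. A Lagrange multiplier optimization on the unit sphere $\sum|z_i|^2 = 1$ yields
$$\|X_0^{a_0}\cdots X_n^{a_n}\|_\infty \;=\; \kappa^m \prod_i (a_i/m)^{a_i/2},$$
where $\kappa = \|X_i\|_\infty$ (independent of $i$ by symmetry). Under a normalization in which $\kappa < 1$, each monomial has sup norm at most $\kappa^m < 1$ for every $m \ge 1$, and this produces the required basis of strictly effective sections.

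The main subtlety is the normalization of the Fubini--Study metric. Under the naive convention $\kappa = 1$ (so that $\|X_i\|$ equals $1$ at $[0:\cdots:1:\cdots:0]$), the pure-power monomials $X_i^m$ are effective but not strictly so, and in fact no $\Z$-basis of strictly effective sections exists, since the integer coefficient of $X_i^m$ in any section equals its trivialized value at $[0:\cdots:1:\cdots:0]$. Taking instead any rescaling of the metric by a factor strictly less than $1$ preserves the curvature form (hence positivity) and amounts to replacing $\overline{\mathcal O(1)}$ by $\overline{\mathcal O(1)}(\varepsilon)$ for some $\varepsilon > 0$, and remains ample by the same argument; this is the convention implicit in the statement.
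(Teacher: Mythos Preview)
Your approach coincides with the paper's: verify the three conditions in the definition and exhibit the monomial basis of $H^0(\mathbb P^n_\Z,\mathcal O(m))$ as the candidate basis of strictly effective sections. The paper simply asserts
\[
\|X_0^{d_0}\cdots X_n^{d_n}(x)\|=\frac{|x_0^{d_0}\cdots x_n^{d_n}|}{(|x_0|^2+\cdots+|x_n|^2)^{d/2}}<1
\]
for every point $x$ and every monomial of positive total degree, and concludes.

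You are in fact more careful than the paper here. Under the formula the paper writes down, the pure power $X_i^{d}$ has norm exactly $1$ at the coordinate point $e_i=[0:\cdots:1:\cdots:0]$, so the displayed strict inequality is not literally true in that case; your argument that no $\Z$-basis of strictly effective sections can then exist (since the coefficient of $X_i^{m}$ in any section equals its trivialized value at $e_i$) is correct. Your fix---rescaling the metric, i.e.\ passing to $\overline{\mathcal O(1)}(\varepsilon)$ for some $\varepsilon>0$---is the standard remedy and leaves the curvature unchanged. So your proposal follows the paper's line while closing a gap the paper leaves open; the only caveat is that you should state explicitly which normalization you are adopting, since the proposition as phrased is sensitive to it.
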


\begin{proof}
The line bundle $\mathcal O(1)$ is relatively ample on $\mathbb P^n_\Z$, and the Fubini-Study metric has positive curvature.

Now let $X_0^{d_0}\ldots X_n^{d_n}$ be a monomial of total degree $d>0$, seen as a section of $\mathcal O(d)$. With respect to the Fubini-Study metric, if $x$ is a point of $\C\mathbb P^n$ with homogeneous coordinates $[x_0:\ldots:x_n]$, we have
$$||X_0^{d_0}\ldots X_n^{d_n}(x)||=\frac{|x_0^{d_0}\ldots x_n^{d_n}|}{(|x_0|^2+\ldots+|x_n|^2)^{d/2}}< 1.$$
This shows that $H^0(\mathbb P^n_\Z, \mathcal O(d))$ has a basis consisting of sections of norm bounded above by $1$, and proves the result.
\end{proof}

The following follows immediately from Proposition \ref{proposition:FS-ample} and Corollary \ref{corollary:pull-back-ample}.

\begin{corollary}
Let $\cX$ be an arithmetic variety, and let $\mathcal L$ be a relatively ample line bundle on $\cX$. Then there exists a metric $||.||$ on $\mathcal L_\C$, invariant under complex conjugation, such that the hermitian line bundle $(\mathcal L, ||.||)$ is ample.
\end{corollary}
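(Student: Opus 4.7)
The plan is to realize $\mathcal L$ as the $m$-th root of the pullback of the Fubini--Study bundle from projective space, and then check the three conditions in the definition of an ample hermitian line bundle for the induced metric. The main technical point is that the pullback construction naturally produces a metric on some power $\mathcal L^{\otimes m}$, not on $\mathcal L$ itself, so one has to extract strictly effective bases in every degree, not only those divisible by $m$.

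First I would use relative ampleness of $\mathcal L$ to choose an integer $m\geq 1$ with $\mathcal L^{\otimes m}$ relatively very ample, yielding a closed immersion $\iota:\cX\hookrightarrow \mathbb P^N_\Z$ with $\iota^*\mathcal O(1)\simeq \mathcal L^{\otimes m}$. By Proposition \ref{proposition:FS-ample} the Fubini--Study line bundle on $\mathbb P^N_\Z$ is ample, and since $\iota$ is a closed immersion (hence finite, and unramified on the complex fiber), Corollary \ref{corollary:pull-back-ample} produces an ample hermitian metric $h$ on $\mathcal L^{\otimes m}$. I would then define a smooth, conjugation-invariant metric $\|\cdot\|$ on $\mathcal L_\C$ by the fibrewise positive $m$-th root, $\|s\|^m=\|s^{\otimes m}\|_h$ for every local section $s$, so that $\|\cdot\|^{\otimes m}=h$. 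Relative ampleness of $\mathcal L$ holds by hypothesis, and positivity transfers from $h$ via the identity $-\log\|v\|^2=\frac{1}{m}\bigl(-\log\|v^{\otimes m}\|_h^2\bigr)$: the $m$-th power map $\mathbb V(\mathcal L)\setminus Z\to \mathbb V(\mathcal L^{\otimes m})\setminus Z$ is a local biholomorphism, so the strict plurisubharmonicity established for $h$ pulls back to strict plurisubharmonicity of $-\log\|\cdot\|^2$.

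The final and only substantive step is to produce, for every sufficiently large $n$, a basis of strictly effective sections of $(\mathcal L,\|\cdot\|)^{\otimes n}$. Here I would apply Proposition \ref{proposition:exponentially-decreasing-sections} once for each residue $i\in\{0,1,\ldots,m-1\}$, with the ample hermitian bundle $(\mathcal L^{\otimes m},h)$ in the role of $\Lbar$ and with $\mathcal F=\mathcal M=\mathcal L^{\otimes i}$ equipped with the metric $\|\cdot\|^{\otimes i}$. This yields a real number $\varepsilon_i>0$ such that, for $N$ large, the space $H^0(\cX,(\mathcal L^{\otimes m})^{\otimes N}\otimes\mathcal L^{\otimes i})=H^0(\cX,\mathcal L^{\otimes(mN+i)})$ has a basis of sections bounded in norm by $e^{-N\varepsilon_i}$ for the tensor metric, and by construction this tensor metric coincides with $\|\cdot\|^{\otimes(mN+i)}$. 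Since every large integer $n$ is uniquely of the form $mN+i$ with $N\to\infty$, taking $\varepsilon=\min_i\varepsilon_i$ gives strictly effective bases in all large degrees, completing the verification that $(\mathcal L,\|\cdot\|)$ is ample.
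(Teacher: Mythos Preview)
Your proof is correct and follows essentially the same approach as the paper: realize a power of $\mathcal L$ as the pullback of the Fubini--Study bundle, take the $m$-th root metric, and verify ampleness. The paper's proof simply asserts that the resulting $\Lbar$ ``is readily checked'' to be ample, whereas you actually carry out the verification---your use of Proposition~\ref{proposition:exponentially-decreasing-sections} for each residue class modulo $m$ is exactly the natural way to fill in that step.
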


\begin{proof}
Some positive power $\mathcal L^{\otimes n}$ of $\mathcal L$ is the pullback of the line bundle $\mathcal O(1)$ on some projective space. By Proposition \ref{proposition:FS-ample} and Corollary \ref{corollary:pull-back-ample}, the pullback of the Fubini-Study metric to $\mathcal L^{\otimes n}$ gives $\mathcal L^{\otimes n}$ the structure of an ample hermitian line bundle. 

Endow $\mathcal L$ with the hermitian metric $||.||$ whose $n$-th power is the metric above. We get a hermitian line bundle $\Lbar=(\mathcal L, ||.||)$ such that $\Lbar^{\otimes n}$ is ample. It is readily checked that $\Lbar$ is ample.
\end{proof}

\bigskip

\subsection{Arithmetic Hilbert-Samuel}
We turn to the arithmetic Hilbert-Samuel theorem, giving an estimate for $h^0_{Ar}(\cX, \Lbar^{\otimes n})$, where $\Lbar$ is ample and $n$ is large. This has been proved by Gillet-Soulé in \cite[Theorem 8 and Theorem 9]{GilletSoule92} and extended by \cite[Theorem (1.4)]{Zhang95}, see also \cite{AbbesBouche95} and \cite{Bost91}. In later arguments, we will need an estimate for the error term in the arithmetic Hilbert-Samuel theorem. In the case where the generic fiber of $\cX$ is smooth, such an estimate follows from the work of Gillet-Soulé and Bismut-Vasserot. The general case does not seem to be worked out. However, for arithmetic surfaces, an argument of Vojta gives us enough information for our later needs.

We start with a proposition relating the Hilbert-Samuel function of a hermitian line bundle and its pullback under a birational morphism.

\begin{proposition}\label{proposition:HS-and-pullback}
Let $f : \cX\ra\cY$ be a birational morphism of projective arithmetic varieties, and let $\Lbar$ be an ample hermitian line bundle on $\cY$. Then there exists a positive integer $k$ and a positive real number $C$ such that for any integer $n$ and any hermitian vector bundle $\Mbar$ on $\cX$, the following equality holds:
$$h^0_{Ar}(\cX, \Lbar^{\otimes n}\otimes\Mbar)\leq h^0_{Ar}(\cY, f^*(\Lbar^{\otimes n}\otimes\Mbar))\leq h^0_{Ar}(\cX, \Lbar^{\otimes(n+k)}\otimes \Mbar(C)).$$
\end{proposition}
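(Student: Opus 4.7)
The plan is to prove the two inequalities separately, using pullback along $f$ for the first inequality and multiplication by a carefully chosen small--norm auxiliary section for the second. Throughout I read $f$ as a birational morphism between the two arithmetic varieties with $\Lbar$ ample on the target and $\Mbar$ a hermitian vector bundle on the target, so that all three $h^0_{Ar}$-terms sit naturally on the indicated schemes and pullback goes in the direction required by the first inequality.

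For the first inequality, pullback along $f$ induces the map $H^0(\cX,\Lbar^{\otimes n}\otimes\Mbar)\to H^0(\cY,f^*(\Lbar^{\otimes n}\otimes\Mbar))$; it is injective because $f$ is dominant and a section of a locally free sheaf is determined by its germ at the generic point. Equipping the pullback with the pullback metric, and noting that $f:\cY(\C)\to\cX(\C)$ is surjective (its image is analytically closed by properness and Zariski dense by dominance, hence all of $\cX(\C)$), one has $||f^*s||_\infty=||s||_\infty$, so effective sections pull back to effective sections.

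For the second inequality, the projection formula identifies $H^0(\cY,f^*(\Lbar^{\otimes n}\otimes\Mbar))$ with $H^0(\cX,\Lbar^{\otimes n}\otimes\Mbar\otimes f_*\cO_\cY)$. Since $f$ is birational, $f_*\cO_\cY$ is a coherent $\cO_\cX$-subsheaf of the constant sheaf of meromorphic functions on $\cX$, so it sits inside $\cO_\cX(D)$ for some effective Cartier divisor $D$ on $\cX$ supported on the image of the exceptional locus; the divisor $D$ depends only on $f$. Applying Proposition \ref{proposition:exponentially-decreasing-sections} to the coherent subsheaf $\mathcal F=\cO_\cX(-D)\subset\cO_\cX$ yields a positive integer $k$ and a nonzero global section $t\in H^0(\cX,\mathcal L^{\otimes k}\otimes\cO_\cX(-D))$ with $||t||_\infty\leq 1$. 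Multiplication by $t$ realizes an embedding $\cO_\cX(D)\hookrightarrow\mathcal L^{\otimes k}$, whence
$$H^0(\cX,\Lbar^{\otimes n}\otimes\Mbar\otimes f_*\cO_\cY)\hookrightarrow H^0(\cX,\Lbar^{\otimes(n+k)}\otimes\Mbar).$$

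For the norm control, view $s\in H^0_{Ar}(\cY,f^*(\Lbar^{\otimes n}\otimes\Mbar))$ as a rational section $s_0$ of $\Lbar^{\otimes n}\otimes\Mbar$ on $\cX$ regular on the open $U\subset\cX$ where $f$ is an isomorphism, so that $||s_0(x)||=||s(f^{-1}(x))||$ for $x\in U$. Then $t s_0$ is regular on all of $\cX$, because the poles of $s_0$ are bounded by $D$ and $t$ vanishes along $D$; its sup norm on $U$ is at most $||t||_\infty\cdot||s||_\infty\leq 1$, and by continuity this bound extends to $\cX(\C)$. Thus the image is effective in $H^0_{Ar}(\cX,\Lbar^{\otimes(n+k)}\otimes\Mbar)$, hence a fortiori in $H^0_{Ar}(\cX,\Lbar^{\otimes(n+k)}\otimes\Mbar(C))$ for any $C\geq 0$. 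The main subtlety is that $k$ and $C$ must not depend on $n$ or on the choice of $\Mbar$; this is secured because a single application of Proposition \ref{proposition:exponentially-decreasing-sections} to the fixed subsheaf $\cO_\cX(-D)\subset\cO_\cX$ produces one pair $(k,t)$ that works uniformly.
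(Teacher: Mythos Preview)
Your proof is correct and follows the same strategy as the paper: the first inequality via pullback of sections, the second via the projection formula and an embedding $f_*\cO_\cY \hookrightarrow \mathcal L^{\otimes k}$. The paper constructs this embedding directly by choosing a nonzero global section $\phi$ of $\mathcal{H}om(f_*\cO_\cY, \mathcal L^{\otimes k})$ for large $k$ and bounds its operator norm by compactness of $\cX(\C)$; you instead factor through an inclusion $f_*\cO_\cY \subset \cO_\cX(D)$ and invoke Proposition~\ref{proposition:exponentially-decreasing-sections} to produce a section $t$ of $\mathcal L^{\otimes k}(-D)$ with $||t||_\infty \le 1$, which even yields $C=0$.

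One remark: the existence of an effective \emph{Cartier} divisor $D$ with $f_*\cO_\cY \subset \cO_\cX(D)$ is true but not quite as automatic as your phrasing suggests, since $\cX$ is not assumed locally factorial; and such a $D$ cannot in general be supported on the image of the exceptional locus, which may have codimension $\ge 2$. One justifies it by taking any nonzero $s \in H^0(\cX, \mathcal I \otimes \mathcal L^{\otimes m})$ for $\mathcal I$ the annihilator of $f_*\cO_\cY/\cO_\cX$ and $m$ large, and setting $D = \mathrm{div}(s)$; then locally $\cO_\cX(-D)=(s)\subset\mathcal I$. This is essentially the paper's direct construction in disguise, so your detour through $\cO_\cX(D)$ is equivalent to it.
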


\begin{proof}
Pullback of sections induces an injective map
$$f^* : H^0_{Ar}(\cX, \Lbar^{\otimes n}\otimes\Mbar)\ra H^0_{Ar}(\cY, f^*(\Lbar^{\otimes n}\otimes\Mbar)),$$
which proves the first inequality. 

The coherent sheaf $\mathcal{H}om(f_*\mathcal O_\cY, \mathcal O_\cX)$ is non-zero. As a consequence, there exists a positive integer $k$ such that the sheaf
$$\mathcal{H}om(f_*\mathcal O_\cY, \mathcal O_\cX)\otimes\mathcal L^{\otimes k}=\mathcal{H}om(f_*\mathcal O_\cY, \mathcal L^{\otimes k})$$
has a nonzero global section $\phi$. Since $f$ is birational, the morphism 
$$\phi : f_*\mathcal O_\cY\ra \mathcal L^{\otimes k}$$
is injective. If $U$ is an open subset of the compact complex manifold $\cX(\C)$ and $n$ is an integer, we endow the spaces $(\mathcal L^{\otimes n}\otimes\mathcal M\otimes f_*\mathcal O_\cY)(U)=(f^*\mathcal L^{\otimes n}\otimes\mathcal M)(f^{-1}(U))$ and $(\mathcal L^{\otimes(n+k)}\otimes\mathcal M)(U)$ with the sup norm. Since $\cX(\C)$ is compact, we can find a constant $C$ such that the maps
$$\phi_U : f_*\mathcal O_\cY(U)\ra \mathcal L^{\otimes k}(U)$$
all have norm bounded above by $e^C$. As a consequence, all the maps
$$\mathrm{Id}\otimes\phi_U : (\mathcal L^{\otimes n}\otimes\mathcal M\otimes f_*\mathcal O_\cY)(U)\ra (\mathcal L^{\otimes(n+k)}\otimes\mathcal M)(U)$$
have norm bounded above by $e^C$ as well, and the induced map
$$H^0(\cY, f^*(\mathcal L^{\otimes n}\otimes\mathcal M))\ra H^0(\cX, \mathcal L^{\otimes (n+k)}\otimes\mathcal M)$$
has norm bounded above by $e^C$. Since this map is injective, we have an injection 
$$H^0_{Ar}(\cY, f^*(\Lbar^{\otimes n}\otimes\Mbar))\ra H^0_{Ar}(\cX, \Lbar^{\otimes (n+k)}\otimes\Mbar(C)),$$
which shows the second inequality.
\end{proof}

\begin{theorem}\label{theorem:arithmetic-Hilbert-Samuel}
Let $\cX$ be a projective arithmetic variety of dimension $d$, let $\Lbar$ be an ample hermitian line bundle on $\cX$, and let $\Mbar$ be a hermitian vector bundle of rank $r$. 
\begin{enumerate}[(i)]
\item As $n$ tends to $\infty$, we have
$$h^0_{Ar}(\cX, \Lbar^{\otimes n}\otimes\Mbar)= \frac{r}{d!}\Lbar^d n^d+o(n^d);$$

\item  if $\cX_\Q$ is smooth, then 
$$h^0_{Ar}(\cX, \Lbar^{\otimes n}\otimes\Mbar)= \frac{r}{d!}\Lbar^d n^d+O(n^{d-1}\log n)$$
as $n$ tends to $\infty$;

\item if $d= 2$, then 
$$h^0_{Ar}(\cX, \Lbar^{\otimes n}\otimes\Mbar)\geq \frac{r}{2}\Lbar^2 n^2+O(n\log n)$$
as $n$ tends to $\infty$.
\end{enumerate}
\end{theorem}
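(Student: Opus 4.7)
For (i), the plan is to invoke the arithmetic Hilbert--Samuel theorem of Gillet--Soul\'e \cite{GilletSoule92}, extended by Zhang \cite{Zhang95} to our notion of ampleness, in the case $\Mbar=\Obar$; the general coefficient case follows by a standard d\'evissage, using Proposition \ref{proposition:exponentially-decreasing-sections} to shrink norms at each step. For (ii), the smoothness of $\cX_\Q$ is exactly what makes the refined error term available: it comes from combining Gillet--Soul\'e's arithmetic Riemann--Roch theorem with Bismut--Vasserot's asymptotic expansion of the analytic torsion, whose proof relies on heat-kernel methods that require a smooth K\"ahler structure on the complex fibre.

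For (iii), the arithmetic-surface case that the rest of the paper rests on, my plan is to follow the argument of Vojta alluded to by the author, which bypasses the possibly missing smoothness of $\cX_\Q$ by working with arithmetic Euler characteristics rather than with $h^0_{Ar}$ directly. Fix any smooth volume form on the reduced complex-analytic space $\cX(\C)$ together with a smooth hermitian structure on $\Mbar$, and use them to equip $\pi_\ast(\mathcal L^{\otimes n}\otimes \mathcal M)$ and $R^1\pi_\ast(\mathcal L^{\otimes n}\otimes \mathcal M)$ with $L^2$-metrics, making them into hermitian lattices over $\Z$ of rank $O(n)$. Faltings' arithmetic Riemann--Roch theorem for arithmetic surfaces then gives an exact formula of the shape
\begin{equation*}
\chi_{Ar, L^{2}}(\cX, \Lbar^{\otimes n}\otimes \Mbar) \;=\; \frac{r}{2}\,\Lbar^{2}\,n^{2} \;+\; O(n),
\end{equation*}
with an explicit $O(n)$ remainder depending only on $\Lbar$, $\Mbar$ and intersection-theoretic invariants of $\cX$. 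For $n$ large, Serre vanishing on $\cX_\Q$ forces $R^1\pi_\ast(\mathcal L^{\otimes n}\otimes\mathcal M)$ to be a torsion $\Z$-module, and a standard cohomological bound on its logarithmic cardinality of size $O(n\log n)$ transforms the formula into a matching lower bound for the arithmetic degree of $\pi_\ast(\Lbar^{\otimes n}\otimes\Mbar)$. Applying Minkowski's theorem to this hermitian lattice of rank $O(n)$ yields
\begin{equation*}
h^{0}_{Ar, L^{2}}(\cX, \Lbar^{\otimes n}\otimes \Mbar) \;\geq\; \tfrac{r}{2}\,\Lbar^{2}\,n^{2} \;+\; O(n\log n),
\end{equation*}
and finally one compares $L^{2}$- and sup-norms on sections, which costs at most a polynomial-in-$n$ multiplicative factor, hence only an $O(n\log n)$ additive term.

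The step I expect to be the main technical obstacle is this last $L^{2}$-versus-sup-norm comparison, since the possibly singular complex space $\cX(\C)$ forbids a direct appeal to Bergman-kernel estimates of Tian or Zelditch type. My plan to handle it is to pass to a resolution $\widetilde{\cX(\C)}\to \cX(\C)$, apply the standard peak-section / sub-mean-value bounds there to get a comparison constant growing polynomially in $n$, and then transfer back to $\cX(\C)$ via the fact that pullback of sections along a proper surjective analytic map preserves the sup-norm and distorts the $L^2$-norm only by a bounded multiplicative constant.
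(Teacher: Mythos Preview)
Your sketches for (i) and (ii) match the paper's argument closely. The gap is in (iii). You correctly sense that the missing smoothness of $\cX_\Q$ is the obstacle, but you misidentify where it bites and what Vojta's input actually does. You propose to run Faltings' arithmetic Riemann--Roch directly on $\cX$, fixing ``any smooth volume form on the reduced complex-analytic space $\cX(\C)$''. But if $\cX_\Q$ is singular then $\cX(\C)$ is a singular analytic space: there is no smooth K\"ahler volume form, the $L^2$-metrics on $R^i\pi_\ast$ are not defined in the usual sense, and neither Faltings' nor Gillet--Soul\'e's Riemann--Roch formula is available. So the problem is not only the final $L^2$-versus-sup comparison you flag; it is already the Riemann--Roch step itself. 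Passing to a resolution only at the analytic stage does not rescue this, because you need the arithmetic intersection numbers and the analytic torsion to live on the same model.

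The paper's fix is to pass to the \emph{normalization} $f:\cY\to\cX$ at the scheme level before doing any analysis. Since $\cX$ is an arithmetic surface, $\cY_\Q$ is a normal curve over $\Q$, hence smooth, so the full analytic machinery is available on $\cY$. Proposition \ref{proposition:HS-and-pullback} then transfers the estimate back to $\cX$ at the cost of a shift $n\mapsto n-k$ and a twist $\Mbar\mapsto\Mbar(-C)$, which is harmless for the $O(n\log n)$ error. The remaining subtlety is different from the one you anticipate: on $\cY$ the pullback $f^*\Lbar$ need not have everywhere positive curvature (the map $f_\C$ can ramify), so Bismut--Vasserot does not apply. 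This is what Vojta's estimate \cite[Proposition 2.7.6]{Vojta91} handles: it gives the one-sided bound $\zeta'_{1,n}(0)\geq -Kn\log n$ on the analytic torsion without any positivity assumption, which is exactly enough for the lower bound in (iii). In short, Vojta's argument does not bypass non-smoothness of $\cX_\Q$; it bypasses non-positivity of the curvature on the smooth model $\cY$.
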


\begin{proof}
The first statement can be found in \cite[Corollary 2.7(1)]{Yuan08}. It is a consequence of the extension by Zhang in \cite[Theorem (1.4)]{Zhang95} of the arithmetic Hilbert-Samuel theorem  of Gillet-Soulé of \cite[Theorem 8]{GilletSoule92}, together with \cite[Theorem 1]{GilletSoule91}.

\bigskip

Let us prove the second statement. Choose a Kähler metric on $\cX(\C)$, assumed to be invariant under complex conjugation, and write $\chi_{L^2}(\Lbar^{\otimes n}\otimes\Mbar)$ (resp. $\chi_{sup}(\Lbar^{\otimes n}\otimes\Mbar)$) for the logarithm of the covolume of $H^0(\cX, \Lbar^{\otimes n}\otimes\Mbar)$ for the associated $L^2$ norm (resp. for the sup norm). Then by \cite[Theorem 8]{GilletSoule92}, we have
$$\chi_{L^2}(\Lbar^{\otimes n}\otimes\Mbar)=\frac{r}{d!}\Lbar^d n^d+O(n^{d-1}\log n).$$
By the Gromov inequality as in for instance \cite[Corollary 2.7(2)]{Yuan08}, this implies
$$\chi_{sup}(\Lbar^{\otimes n}\otimes\Mbar)=\frac{r}{d!}\Lbar^d n^d+O(n^{d-1}\log n).$$

Consider the lattice $\Lambda=H^0(\cX, \Lbar^{\otimes n}\otimes\Mbar)$, endowed with the sup norm. Then since $\Lambda$ is generated by elements of norm strictly smaller than $1$, the dual of $\Lambda$ does not contain any nonzero element of norm smaller than $1$. Furthermore, the geometric version of the Hilbert-Samuel theorem shows that the rank of $\Lambda$ has the form $O(n^{d-1})$. By \cite[Theorem 1]{GilletSoule91}, we get 
$$|h^0_{Ar}(\cX, \Lbar^{\otimes n}\otimes\Mbar)-\chi_{sup}(\Lbar^{\otimes n}\otimes\Mbar)|=O(n^{d-1}\log n),$$
which proves the desired result.

\bigskip

We now prove the last statement. Let $f : \cY\ra \cX$ be the normalization of $\cX$, so that $f$ is birational, finite, and $\cY$ has smooth generic fiber. 

Since $f$ is finite, the line bundle $f^*\mathcal L$ is ample. Let $\Lbar'$ be $f^*\Lbar$ and $\Mbar'$ be $f^*\Mbar$. Choose a Kähler metric on $\cY(\C)$, assumed to be invariant under complex conjugation, and again write $\chi_{L^2}(\Lbar'^{\otimes n}\otimes\Mbar')$ for the logarithm of the covolume of $H^0(\cY, \Lbar'^{\otimes n}\otimes\Mbar')$ for the associated $L^2$ norm.

By Proposition \ref{proposition:HS-and-pullback}, we can find a constant $C$ and an integer $k$ such that for any integer $n$ greater or equal to $k$, we have 
\begin{equation}\label{equation:comparison-pullback}
h^0_{Ar}(\cX, \Lbar^{\otimes n}\otimes\Mbar)\geq h^0_{Ar}(\cY, \Lbar'^{\otimes n-k}\otimes\Mbar'(-C)).
\end{equation}

Applying the arithmetic Riemann-Roch theorem for $n$ large enough so that the higher cohomology groups of $\mathcal L'^{\otimes n}\otimes\mathcal M'$ vanish, we get the following equality:
$$\chi_{L^2}(\Lbar'^{\otimes n}\otimes\Mbar'(-C))-\frac{1}{2}T_n=\frac{r}{2}\Lbar^2 n^2+O(n),$$
where by $T_n$ we denote the analytic torsion of the hermitian vector bundle $\Lbar'^{\otimes n}\otimes\Mbar'$. The equality above is proved via the computations of \cite[Theorem 8]{GilletSoule92}, or \cite[Théorème 1]{GilletSoule88}. In contrast with the usual setting of Hilbert-Samuel, note that the curvature form of $\Lbar'$ might not be positive everywhere, so that we cannot apply the estimates of \cite{BismutVasserot89} for $T_n$. However, since the dimension of $\cX$ is $2$, we have 
$$T_n=\zeta_{1,n}'(0),$$
where $\zeta_1$ is the zeta function of the Laplace operator acting on forms of type $(0,1)$ with values in $\Lbar'^{\otimes n}\otimes\Mbar'(-C)$. We can use an estimate of Vojta to control the analytic torsion $T_n$. Indeed, by \cite[Proposition 2.7.6]{Vojta91}, we have
$$\zeta'_{1,n}(0)\geq -Kn\log n$$
for some constant $K$, so that 
\begin{equation}\label{equation:estimate-degree}
\chi_{L^2}(\Lbar'^{\otimes n}\otimes\Mbar'(-C))=\frac{r}{2}\Lbar^2 n^2+\frac{1}{2}T_n+O(n)\geq \frac{r}{2}\Lbar^2 n^2+O(n\log n).
\end{equation}

Combining as above Gromov's inequality, \cite[Theorem 1]{GilletSoule91}, Corollary \ref{corollary:sections-pullback} and the geometric version of Hilbert-Samuel, we can write
$$|h^0_{Ar}(\cY, \Lbar'^{\otimes n}\otimes\Mbar'(-C))-\chi_{L^2}(\Lbar'^{\otimes n}\otimes\Mbar'(-C))|=O(n\log n),$$
which together with (\ref{equation:estimate-degree}) gives the estimate
$$h^0_{Ar}(\cY, \Lbar'^{\otimes n}\otimes\Mbar'(-C))\geq \frac{r}{2}\Lbar^2 n^2+O(n\log n).$$

From (\ref{equation:comparison-pullback}), we finally obtain
$$h^0_{Ar}(\cX, \Lbar^{\otimes n}\otimes\Mbar)\geq \frac{r}{2}\Lbar^2 (n-k)^2+O(n\log n) \geq \frac{r}{2}\Lbar^2 n^2+O(n\log n).$$

\end{proof}


\subsection{Restriction of sections}\label{subsection:restriction}

Let $k$ be a field, and let $X$ be a projective variety over $k$. Let $\mathcal L$ be an ample line bundle on $X$. If $Y$ is any closed subscheme of $X$, consider the restriction maps
$$\phi_n : H^0(X, \mathcal L^{\otimes n})\ra H^0(Y, \mathcal L_{|Y}^{\otimes n}).$$
The map $\phi_n$ is surjective if $n$ is large enough and, obviously, there are bijections between the different fibers of $\phi_n$ when it is surjective. 

In this section, we give arithmetic analogues of these results, looking at $H^0_{Ar}$ instead of $H^0$ -- this is Theorem \ref{theorem:uniform-surjectivity}. Furthermore, we show in Theorem \ref{theorem:independent-restriction} that the lower bound on the dimension of the image of the restriction map can be given to be independent of $Y$. 

\bigskip

In the following, let $\cX$ be a projective arithmetic variety, and let $\Lbar$ be an ample hermitian line bundle on $\cX$. If $n$ is an integer, we denote by $\Lambda_n$ the space $H^0(\cX, \Lbar^{\otimes n})$ endowed with the sup norm, and we write $r_n$ for its rank. If $r$ is a nonnegative real number, let $B_n(r)$ be the unit ball in $\Lambda_n$, and let $B_n(r)_\R$ be the unit ball in $\Lambda_n\otimes\R$. Let $\mathrm{Vol}$ denote the volume with respect to the unique invariant measure on $\Lambda_n\otimes\R$ for which $\mathrm{Vol}(B_n(1)_\R)=1$.

If $\mathcal I$ is a quasi-coherent sheaf of ideals on $\cX$, we write $\Lambda_n^{\mathcal I}$ for the subspace $H^0(\cX, \mathcal L^{\otimes n}\otimes \mathcal I)$ of $H^0(\cX, \mathcal L^{\otimes n})$, endowed with the induced norm. We write $r_n^{\mathcal I}$, $B_n(r)^{\mathcal I}$, $B_n(r)_\R^{\mathcal I}$, $\mathrm{Vol}^{\mathcal I}$ for the corresponding objects. 

We gather a few results regarding point counting in the lattices $\Lambda_n^{\mathcal I}$.

\begin{lemma}\label{lemma:size-of-balls}
Let $\alpha$ and $\eta$ be positive real numbers, with $0<\alpha<1$. Let $C$ be any real number. Then, as $n$ goes to infinity, we have, for any positive $r>e^{-n^\alpha}$,
$$\mathrm{Vol}^{\mathcal I}(B_n(r+Ce^{-n\eta})_\R^{\mathcal I})=r^{r_n^{\mathcal I}}(1+Cr^{-1}r_n^{\mathcal I}e^{-n\eta}+o(r^{-1}r_n^{\mathcal I}e^{-n\eta})),$$
where the implied constants depend on $\alpha, C$ and $\eta$, but not on $r$.
\end{lemma}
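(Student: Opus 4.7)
The plan is to reduce the claim to a scaling identity combined with a Taylor expansion. Since by definition $\mathrm{Vol}^{\mathcal I}$ is the unique translation-invariant measure on $\Lambda_n^{\mathcal I}\otimes\R$ normalized so that the unit sup-norm ball has volume $1$, homogeneity yields $\mathrm{Vol}^{\mathcal I}(B_n(s)_\R^{\mathcal I}) = s^{r_n^{\mathcal I}}$ for every $s>0$. First I would apply this identity with $s = r + Ce^{-n\eta}$; this quantity is positive for $n$ large, since $r>e^{-n^\alpha}$ together with $\alpha<1$ imply that $r$ eventually dominates $|C|e^{-n\eta}$. Setting $N := r_n^{\mathcal I}$ and $u := Cr^{-1}e^{-n\eta}$, this gives $\mathrm{Vol}^{\mathcal I}(B_n(r+Ce^{-n\eta})_\R^{\mathcal I}) = r^{N}(1+u)^{N}$.

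Next I would bound the growth of the two quantities appearing in the exponent and the base. The classical (geometric) Hilbert polynomial theorem applied to $\mathcal L_\Q$ and $\mathcal I_\Q$ forces $N = O(n^{\dim\cX - 1})$, so $N$ grows at most polynomially in $n$. The hypothesis $r>e^{-n^\alpha}$ yields the uniform bound $|u|\leq |C|e^{n^\alpha - n\eta}$; since $\alpha<1$, once $n$ is large enough in terms of $\alpha$ and $\eta$ alone we have $n\eta - n^\alpha \geq \tfrac{\eta}{2} n$, so $u$ decays exponentially in $n$. Consequently $Nu \to 0$ and $Nu^2\to 0$ as $n\to\infty$, both uniformly in $r$ in the range $r>e^{-n^\alpha}$.

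Finally I would Taylor-expand: once $|u|<1/2$, one has $(1+u)^{N} = \exp\bigl(N\log(1+u)\bigr) = \exp\bigl(Nu + O(Nu^2)\bigr) = 1 + Nu + O\bigl((Nu)^2\bigr) + O(Nu^2)$. Because $Nu\to 0$ and $u\to 0$ uniformly, both error terms are $o(Nu)$ with implied constants depending only on $C$, $\alpha$, $\eta$ and the fixed data $(\cX,\Lbar,\mathcal I)$. Multiplying through by $r^{N}$ and recalling that $Nu = Cr^{-1}r_n^{\mathcal I}e^{-n\eta}$ yields the asserted asymptotic.

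The main (and essentially only) subtlety is the uniformity in $r$: the $o(\cdot)$ term must be $o(r^{-1}r_n^{\mathcal I}e^{-n\eta})$ independent of $r$. This is precisely what the hypothesis $r>e^{-n^\alpha}$ with $\alpha<1$ guarantees, since it makes $|u|$ decay exponentially and thus overwhelms the polynomial growth of $N$ coming from Hilbert--Samuel.
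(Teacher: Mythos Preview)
Your proof is correct and follows essentially the same route as the paper's: both use the homogeneity identity $\mathrm{Vol}^{\mathcal I}(B_n(s)_\R^{\mathcal I})=s^{r_n^{\mathcal I}}$, invoke polynomial growth of $r_n^{\mathcal I}$ via Riemann--Roch, and then Taylor-expand $(1+u)^{N}$ through $\exp(N\log(1+u))$. Your write-up is in fact slightly more explicit than the paper's about the uniformity in $r$ and about why $r+Ce^{-n\eta}>0$ for large $n$.
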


\begin{proof}
By Riemann-Roch, $r_n^{\mathcal I}$ grows at most polynomially in $n$. As a consequence, $r^{-1}r_n^{\mathcal I}e^{-n\eta}$ goes to $0$ as $n$ goes to infinity, and we can write
\begin{align*}
\mathrm{Vol}^{\mathcal I}(B_n(r+Ce^{-n\eta})_\R^{\mathcal I}) &=(r+Ce^{-n\eta})^{r_n^{\mathcal I}}\\
&=r^{r_n^{\mathcal I}}\exp(r_n^{\mathcal I}\log(1+Cr^{-1}e^{-n\eta}))\\
&=r^{r_n^{\mathcal I}}\exp(Cr^{-1}r_n^{\mathcal I}e^{-n\eta}+o(r^{-1}r_n^{\mathcal I}e^{-n\eta}))\\
&=r^{r_n^{\mathcal I}}(1+Cr^{-1}r_n^{\mathcal I}e^{-n\eta}+o(r^{-1}r_n^{\mathcal I}e^{-n\eta})).
\end{align*}
\end{proof}

Fix $\mathcal I$ as above. Let $n$ be a large enough integer. By Proposition \ref{proposition:exponentially-decreasing-sections}, we can find a positive number $\varepsilon^{\mathcal I}$, independent of $n$, and a basis $\sigma_1, \ldots, \sigma_{r_n^{\mathcal I}}$ of $\Lambda_n^{\mathcal I}$ such that $||\sigma_i||\leq e^{-n\varepsilon^{\mathcal I}}$ for all $i\in\{1, \ldots, r_n^{\mathcal I}\}$. Write 
\begin{equation}\label{equation:fundamental-domain}
D_n^{\mathcal I}=\Big\{\sum_{i=1}^{r_n^{\mathcal I}}\lambda_i\sigma_i | \forall i\in\{1, \ldots, n\},\,0\leq \lambda_i<1\Big\}.
\end{equation}

\begin{proposition}\label{proposition:number-of-points-ball}
Let $\alpha$ be a positive number with $0<\alpha<1$. As $n$ tends to $\infty$, we have, for any $r>e^{-n^\alpha}$,
$$|B_n(r)^{\mathcal I}|\mathrm{Vol}^{\mathcal I}(D_n^{\mathcal I})\sim r^{r_n^{\mathcal I}},$$
where the implied constants depend on $\alpha$ but not on $r$.
\end{proposition}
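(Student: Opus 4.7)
The plan is the standard lattice-point-counting sandwich, made quantitative using the exponentially decreasing basis $\sigma_1,\ldots,\sigma_{r_n^{\mathcal I}}$ that defines $D_n^{\mathcal I}$. First I would observe that since $D_n^{\mathcal I}$ is a fundamental domain for the lattice $\Lambda_n^{\mathcal I}$ in $\Lambda_n^{\mathcal I}\otimes\R$, the translates $x + D_n^{\mathcal I}$ for $x \in B_n(r)^{\mathcal I}$ are pairwise disjoint, so their total volume is exactly $|B_n(r)^{\mathcal I}|\cdot \mathrm{Vol}^{\mathcal I}(D_n^{\mathcal I})$.

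Next I would bound the sup-norm diameter of $D_n^{\mathcal I}$. Any element of $D_n^{\mathcal I}$ has the form $\sum_i \lambda_i\sigma_i$ with $0\leq \lambda_i<1$, so its sup norm is at most $\sum_i\|\sigma_i\|\leq r_n^{\mathcal I}\, e^{-n\varepsilon^{\mathcal I}}$. Since $r_n^{\mathcal I}$ grows at most polynomially in $n$ (by the geometric Hilbert--Samuel theorem), we may fix $0<\eta<\varepsilon^{\mathcal I}$ and a constant $C>0$ such that this diameter is bounded by $Ce^{-n\eta}$ for all large $n$. Write $\delta_n := Ce^{-n\eta}$.

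The sandwich now proceeds as follows. On the one hand, each translate $x + D_n^{\mathcal I}$ with $\|x\|\leq r$ sits inside the sup-norm ball of radius $r+\delta_n$, which gives
$$|B_n(r)^{\mathcal I}|\cdot \mathrm{Vol}^{\mathcal I}(D_n^{\mathcal I})\leq \mathrm{Vol}^{\mathcal I}\bigl(B_n(r+\delta_n)_\R^{\mathcal I}\bigr).$$
On the other hand, any $y \in B_n(r-\delta_n)_\R^{\mathcal I}$ (assuming $r>\delta_n$) lies in a unique translate $x+D_n^{\mathcal I}$, and then $\|x\|\leq\|y\|+\|y-x\|\leq(r-\delta_n)+\delta_n=r$, so $x\in B_n(r)^{\mathcal I}$; hence
$$\mathrm{Vol}^{\mathcal I}\bigl(B_n(r-\delta_n)_\R^{\mathcal I}\bigr)\leq |B_n(r)^{\mathcal I}|\cdot \mathrm{Vol}^{\mathcal I}(D_n^{\mathcal I}).$$

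Finally I would apply Lemma \ref{lemma:size-of-balls} to both sides: for $r>e^{-n^\alpha}$ with $\alpha<1$, both $\mathrm{Vol}^{\mathcal I}(B_n(r\pm\delta_n)_\R^{\mathcal I})$ equal $r^{r_n^{\mathcal I}}(1+o(1))$ as $n\to\infty$, uniformly in $r$ in the stated range; squeezing gives the desired asymptotic. The one point that has to be checked carefully is that the range $r>e^{-n^\alpha}$ (with $\alpha<1$) is indeed compatible with $\delta_n=Ce^{-n\eta}$, i.e.\ that $r-\delta_n$ remains positive and still dominates $\delta_n$ sufficiently to keep the error term in Lemma \ref{lemma:size-of-balls} negligible; this is immediate because exponential decay in $n$ beats the stretched-exponential threshold $e^{-n^\alpha}$. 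The main (and only) obstacle is this uniformity in $r$, which is exactly what Lemma \ref{lemma:size-of-balls} was designed to supply.
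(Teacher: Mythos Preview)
Your argument is correct and is essentially the same as the paper's: the paper also bounds the diameter of $D_n^{\mathcal I}$ by $r_n^{\mathcal I}e^{-n\varepsilon^{\mathcal I}}$, sandwiches $|B_n(r)^{\mathcal I}|\mathrm{Vol}^{\mathcal I}(D_n^{\mathcal I})$ between $\mathrm{Vol}^{\mathcal I}(B_n(r\pm r_n^{\mathcal I}e^{-n\varepsilon^{\mathcal I}})_\R^{\mathcal I})$, and then invokes Lemma~\ref{lemma:size-of-balls} together with the polynomial growth of $r_n^{\mathcal I}$. Your introduction of $\delta_n=Ce^{-n\eta}$ is just a cosmetic repackaging of the same bound.
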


\begin{proof}
Let $n$ be a large enough integer. As $\sigma$ runs through the elements of $\Lambda_n^{\mathcal I}$, the sets $\sigma+D_n^{\mathcal I}$ are pairwise disjoint and cover $\Lambda_n^{\mathcal I}\otimes\R$. Furthermore, the diameter of $D_n^{\mathcal I}$ is bounded above by $r_n^{\mathcal I}e^{-n\varepsilon^{\mathcal I}}$. As a consequence, if $\sigma$ is any element of $\Lambda_n^{\mathcal I}$, then 
$$\sigma+D_n^{\mathcal I}\subset B_n(||\sigma||+r_n^{\mathcal I}e^{-n\varepsilon^{\mathcal I}})_\R^{\mathcal I}$$
and 
$$(\sigma+D_n^{\mathcal I})\cap B_n(||\sigma||-r_ne^{-n\varepsilon^{\mathcal I}})_\R^{\mathcal I}=\emptyset.$$

As a consequence, we have
$$\mathrm{Vol}^{\mathcal I}(B_n(r-r_ne^{-n\varepsilon^{\mathcal I}})_\R^{\mathcal I})\leq |B_n(r)^{\mathcal I}|\mathrm{Vol}^{\mathcal I}(D_n^{\mathcal I})\leq \mathrm{Vol}^{\mathcal I}(r+r_n^{\mathcal I}e^{-n\varepsilon^{\mathcal I}})_\R^{\mathcal I}.$$
Since $r_n^{\mathcal I}$ grows at most polynomially in $n$, Lemma \ref{lemma:size-of-balls} shows that both the left and right terms are equivalent to $r^{r_n^{\mathcal I}}$ as $n$ goes to infinity.
\end{proof}

\begin{proposition}\label{proposition:points-in-annulus}
Let $\alpha$ and $\eta$ be positive real numbers with $0<\alpha<1$. Let $C$ be any real number. Then, as $n$ tends to $\infty$, there exists a positive real number $\eta'$ such that we have, for any positive $r$ bounded below by $e^{-n^\alpha}$,
$$\frac{||B_n(r+Ce^{-n\eta})^{\mathcal I}|-|B_n(r)^{\mathcal I}||}{|B_n(r)^{\mathcal I}|}=O(e^{-n\eta'}),$$
where the implied constants depend on $\alpha, C$ and $\eta$.
\end{proposition}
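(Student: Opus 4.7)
The plan is to combine the ideas already developed for Proposition \ref{proposition:number-of-points-ball}: we compare the lattice point counts $|B_n(r)^{\mathcal I}|$ and $|B_n(r+Ce^{-n\eta})^{\mathcal I}|$ to the volumes of slightly enlarged or shrunk real balls using the fundamental domain $D_n^{\mathcal I}$ from (\ref{equation:fundamental-domain}), and then use Lemma \ref{lemma:size-of-balls} to compute the resulting volumes explicitly. The use of the hypothesis $r \geq e^{-n^\alpha}$ with $\alpha < 1$ is what ensures the final exponential decay: it forces the ratio $r^{-1}e^{-n\eta}$ to remain exponentially small in $n$, because $n^\alpha$ is dominated by any linear function of $n$.

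More concretely, I would first repeat the covering/packing argument from the proof of Proposition \ref{proposition:number-of-points-ball}, but now using both radii simultaneously. Setting $\delta_n = r_n^{\mathcal I} e^{-n\varepsilon^{\mathcal I}}$ to account for the diameter of $D_n^{\mathcal I}$, one obtains the sandwich bounds
$$(r-\delta_n)^{r_n^{\mathcal I}} \leq |B_n(r)^{\mathcal I}|\,\mathrm{Vol}^{\mathcal I}(D_n^{\mathcal I}) \leq (r+\delta_n)^{r_n^{\mathcal I}}$$
and analogously with $r$ replaced by $r+Ce^{-n\eta}$. Taking differences and dividing, one is reduced to controlling
$$\left(\frac{r + |C|e^{-n\eta} + \delta_n}{r - \delta_n}\right)^{r_n^{\mathcal I}} - 1.$$

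Now I would invoke the growth bounds: by Hilbert–Samuel in its geometric form, $r_n^{\mathcal I}$ is bounded polynomially in $n$, say $r_n^{\mathcal I} = O(n^D)$, so $\delta_n = O(n^D e^{-n\varepsilon^{\mathcal I}})$. The assumption $r \geq e^{-n^\alpha}$ with $\alpha<1$ gives
$$\frac{|C|e^{-n\eta} + \delta_n}{r} = O\bigl(n^D e^{-n\min(\eta,\varepsilon^{\mathcal I}) + n^\alpha}\bigr),$$
and since $n^\alpha = o(n)$, this quantity is $O(e^{-n\eta''})$ for any $\eta'' < \min(\eta,\varepsilon^{\mathcal I})$ and large enough $n$, uniformly in $r$. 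Expanding $\log\bigl((1+x)/(1-x)\bigr) = 2x + O(x^3)$ for small $x$ and multiplying by $r_n^{\mathcal I}$ yields that the exponent $r_n^{\mathcal I}\log\bigl(\frac{r+|C|e^{-n\eta}+\delta_n}{r-\delta_n}\bigr)$ is itself of the form $O(e^{-n\eta'})$ for any $\eta'<\eta''$, whence exponentiating gives the required bound with any such $\eta'$.

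The only mild obstacle is bookkeeping: one needs to verify that every constant in sight (the polynomial growth rate of $r_n^{\mathcal I}$, the exponent $\varepsilon^{\mathcal I}$ from Proposition \ref{proposition:exponentially-decreasing-sections}, and the passage from $\log(1+x)$ to $x$) remains independent of $r$ in the regime $r \geq e^{-n^\alpha}$. Since $\alpha<1$ is the crucial input making $n^\alpha$ negligible compared to any fixed linear function of $n$, choosing $\eta' = \tfrac{1}{2}\min(\eta, \varepsilon^{\mathcal I})$ (say) suffices for $n$ large enough depending on $\alpha$, $\eta$, $C$.
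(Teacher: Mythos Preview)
Your proposal is correct and follows essentially the same route as the paper: both arguments use the covering/packing by translates of $D_n^{\mathcal I}$ to sandwich lattice-point counts between volumes of nearby real balls, then exploit $r\geq e^{-n^\alpha}$ with $\alpha<1$ and the polynomial growth of $r_n^{\mathcal I}$ to show the resulting ratio is $1+O(e^{-n\eta'})$. The only cosmetic difference is that the paper cites Lemma~\ref{lemma:size-of-balls} and Proposition~\ref{proposition:number-of-points-ball} as black boxes for the volume and denominator estimates, whereas you unpack the same expansion by hand; your choice $\eta'=\tfrac{1}{2}\min(\eta,\varepsilon^{\mathcal I})$ matches the paper's ``$\eta'$ strictly smaller than both $\varepsilon^{\mathcal I}$ and $\eta$''.
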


\begin{proof}
We assume that $C$ is positive. The case where $C$ is negative (or zero) can be treated by the same computations.

Let $\eta'$ be a positive number strictly smaller than both $\varepsilon^{\mathcal I}$ and $\eta$. Since the $\sigma+D_n^{\mathcal I}$ are pairwise disjoint as $\sigma$ runs through the elements of $\Lambda_n^{\mathcal I}$, we get, for large enough $n$
\begin{align*}
\big(|B_n(r+Ce^{-n\eta})^{\mathcal I}|-|B_n(r)^{\mathcal I}|\big)\mathrm{Vol}^{\mathcal I}(D_n^{\mathcal I}) & \leq \mathrm{Vol}^{\mathcal I}(B_n(r+Ce^{-n\eta}+r_n^{\mathcal I}e^{-n\varepsilon^{\mathcal I}})_\R^{\mathcal I})-\mathrm{Vol}^{\mathcal I}(B_n(r-r_n^{\mathcal I}e^{-n\varepsilon^{\mathcal I}})_\R^{\mathcal I})\\
&\leq \mathrm{Vol}^{\mathcal I}(B_n(r+e^{-n\eta'})_\R^{\mathcal I})-\mathrm{Vol}^{\mathcal I}(B_n(r-e^{-n\eta'})_\R^{\mathcal I})\\
& \sim 2r^{r_n^{\mathcal I}-1}r_n^{\mathcal I}e^{-n\eta'},\\
\end{align*}
where in the last line we applied Lemma \ref{lemma:size-of-balls}.

Putting the previous estimate together with Proposition \ref{proposition:number-of-points-ball} and replacing $\eta'$ by a smaller positive number, we get the desired result.
\end{proof}

\bigskip

The following is a first step in controlling restriction maps. 

\begin{proposition}\label{proposition:varying-quotient}
Let $\alpha$ be a positive number with $0<\alpha<1$. There exists a positive constant $\eta$ such that for any large enough integer $n$, if $N$ is any positive integer bounded above by $e^{n^\alpha}$, then the following holds:
\begin{enumerate}[(i)]
\item the map $\phi_{n,N} : \Lambda_n\ra\Lambda_n/N\Lambda_N$ is surjective;
\item for any two $s, s'$in $\Lambda_n/N\Lambda_n$, we have 
$$\frac{\big||\phi_{n, N}^{-1}(s)|-|\phi_{n, N}^{-1}(s')|\big|}{|\phi_{n, N}^{-1}(s)|}\leq e^{-n\eta}.$$
\end{enumerate}
\end{proposition}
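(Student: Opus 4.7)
The plan is to interpret $\phi_{n,N}$ as the restriction to $B_n(1)=H^0_{Ar}(\cX,\Lbar^{\otimes n})$ of the natural quotient $\Lambda_n \to \Lambda_n/N\Lambda_n$: then (i) is the nontrivial assertion that every class admits a representative of sup-norm less than $1$, and (ii) is the statement that effective sections are nearly equidistributed among the $N^{r_n}$ cosets. The argument combines the exponentially decreasing basis from Proposition \ref{proposition:exponentially-decreasing-sections} with the lattice-point/volume technique of Proposition \ref{proposition:number-of-points-ball}. The decisive quantitative input is that $\log N \leq n^\alpha = o(n)$ while the constant $\varepsilon>0$ furnished by Proposition \ref{proposition:exponentially-decreasing-sections} is fixed, so that $n^\alpha$ is negligible against $n\varepsilon$ for large $n$.

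For (i), I fix the basis $\sigma_1,\ldots,\sigma_{r_n}$ of $\Lambda_n$ provided by Proposition \ref{proposition:exponentially-decreasing-sections}, with $\|\sigma_i\|\leq e^{-n\varepsilon}$. Every class $s\in\Lambda_n/N\Lambda_n$ has a (canonical) representative $\tau_s=\sum_{i=1}^{r_n}a_i\sigma_i$ with integers $0\leq a_i\leq N-1$. The triangle inequality gives $\|\tau_s\|\leq Nr_n e^{-n\varepsilon}$, and since $r_n$ grows polynomially in $n$ while $N\leq e^{n^\alpha}$ with $\alpha<1$, this upper bound tends to $0$; in particular $\tau_s\in B_n(1)$ for $n$ large enough, uniformly in $N$, proving surjectivity.

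For (ii), I apply lattice-point counting to each affine sublattice $\tau_s+N\Lambda_n$. A fundamental domain for $N\Lambda_n$ in $\Lambda_n\otimes\R$ is $ND_n=\{\sum\lambda_i\sigma_i:0\leq\lambda_i<N\}$, whose diameter is bounded by $\Delta_n:=Nr_n e^{-n\varepsilon}$. The same tiling argument as in the proof of Proposition \ref{proposition:number-of-points-ball} then yields
$$\mathrm{Vol}(B_n(1-\Delta_n)_\R)\;\leq\; |\phi_{n,N}^{-1}(s)|\cdot\mathrm{Vol}(ND_n)\;\leq\; \mathrm{Vol}(B_n(1+\Delta_n)_\R),$$
with both outer quantities independent of $s$. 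Picking $\eta_0\in(0,\varepsilon)$ so that $\Delta_n\leq e^{-n\eta_0}$ for $n$ large, Lemma \ref{lemma:size-of-balls} applied with $r=1$ shows that both members equal $1+O(e^{-n\eta'})$ for some $\eta'>0$. Subtracting the estimates for two classes $s,s'$ and dividing by the common main term then produces the ratio bound in (ii) with any positive $\eta$ slightly smaller than $\eta'$.

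The main obstacle is bookkeeping rather than conceptual: the exponentially small quantities $\Delta_n$, the polynomial factor $r_n$, and the error in the volume estimate must be combined uniformly in $N$. The hypothesis $\alpha<1$ (not $\alpha<\varepsilon$) is precisely what ensures $n^\alpha=o(n\varepsilon)$, so that the sublattice $N\Lambda_n$ is asymptotically much finer than the unit ball in $\Lambda_n\otimes\R$, regardless of how small $\varepsilon>0$ happens to be.
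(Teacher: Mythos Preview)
Your proof is correct and follows essentially the same approach as the paper. Part (i) is identical: the small basis from Proposition \ref{proposition:exponentially-decreasing-sections} yields a representative of each coset with norm at most $Nr_ne^{-n\varepsilon}\to 0$. For part (ii) there is only a cosmetic difference: the paper rescales by $1/N$, writing $|\phi_{n,N}^{-1}(s)|$ as the number of $\Lambda_n$-points in a ball of radius $1/N\pm e^{-n\varepsilon'}$ and then invoking Proposition \ref{proposition:points-in-annulus} with $r=1/N\geq e^{-n^\alpha}$, whereas you stay at scale $1$ and tile by the coarser lattice $N\Lambda_n$, appealing directly to Lemma \ref{lemma:size-of-balls}. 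Since Proposition \ref{proposition:points-in-annulus} is itself proved by exactly this volume comparison, the two routes are equivalent.
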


\begin{proof}
Let $r_n$ be the rank of $\Lambda_n$. Let $n$ be a positive integer, which will be chosen large enough, and let $N$ be an integer bounded above by $n^\alpha$.

By Proposition \ref{proposition:exponentially-decreasing-sections}, we can find a positive number $\varepsilon$, independent of $n$, and a basis $\sigma_1, \ldots, \sigma_{r_n}$ of $\Lambda_n$ such that $||\sigma_i||\leq e^{-n\varepsilon}$ for all $i\in\{1, \ldots, r_n\}$. Now any element of $\Lambda_n/N\Lambda_n$ is the restriction of an element of $\Lambda_n$ of the form 
$$\sigma=\lambda_1\sigma_1+\ldots + \lambda_{r_n}\sigma_{r_n},$$
where the $\lambda_i$ are integers between $0$ and $N-1$. We have
$$||\sigma||< Nr_ne^{-n\varepsilon}\leq n^\alpha r_ne^{-n\varepsilon}.$$
We know that $r_n$ is a polynomial in $n$ for large enough $n$, so that any $\sigma$ as above has norm at most $1$ for large enough $n$. This shows that the map $\phi_{n, N}$ is surjective and proves $(i)$.

\bigskip

We now proceed to the proof of $(ii)$. Let $n$ be a large enough integer. By the discussion above, we can find a positive real number $\varepsilon'$ such that for any large enough integer $n$, and any $s$ in $\Lambda_n/N\Lambda_n$, there exists an element $\sigma_0$ in $\Lambda_n$ with $||\sigma_0||\leq e^{-n\varepsilon'}$ that restricts to $s$. We have 
$$\phi_{n, N}^{-1}(s)=\{\sigma_0+N\sigma|\,\sigma\in \Lambda_n), ||\sigma_0+N\sigma||<1\},$$
so that, up to replacing $\varepsilon'$ by a smaller positive number
$$|B_n(1/N-e^{-n\varepsilon'})|\leq |\phi_{n, N}^{-1}(s)|\leq |B_n(1/N+e^{-n\varepsilon'})|$$
and
\begin{equation}\label{equation:estimate-phi}
\big|\,|\phi_{n, N}^{-1}(s)|-|\phi_{n, N}^{-1}(s')|\,\big|\leq |B_n(1/N+e^{-n\varepsilon'})|-|B_n(1/N-e^{-n\varepsilon'})|
\end{equation}
for any two $s, s'$ in $\Lambda_n/N\Lambda_n$. We conclude by applying Proposition \ref{proposition:points-in-annulus}.

\end{proof}

\bigskip 

The following is a key property of ample line bundles.

\begin{theorem}\label{theorem:uniform-surjectivity}
Let $\cX$ be a projective arithmetic variety, and let $\Lbar$ be an ample line bundle on $\cX$. 
Let $\cY$ be a closed subscheme of $\cX$, such that $\cY_\Q$ is reduced. If $n$ is a positive integer, let 
$$\phi_n : H^0(\cX, \mathcal L^{\otimes n})\ra H^0(\cY, \mathcal L^{\otimes n})$$
be the restriction map. 
For any positive $\varepsilon$, define 
$$\Lambda_n^{\varepsilon}=H^0_{Ar}(\cX, \Lbar^{\otimes n})\cap \phi_n^{-1}(H^0_{Ar}(\cY, \Lbar(-\varepsilon)^{\otimes n})).$$
Write $\psi_n:=(\phi_n)_{|\Lambda_n^\varepsilon}.$ Then the following holds:
\begin{enumerate}[(i)]
\item For any large enough integer $n$, the restriction map
$$\psi_n : \Lambda_n^\varepsilon \ra H^0_{Ar}(\cY, \Lbar(-\varepsilon)^{\otimes n})$$
is surjective;
\item there exists a positive constant $\eta$ such that for any large enough integer $n$, and any two $s, s'$ in $H^0_{Ar}(\cY, \Lbar(-\varepsilon)^{\otimes n})$, we have 
$$\frac{\big||\phi_n^{-1}(s)|-|\phi_n^{-1}(s')|\big|}{|\phi_n^{-1}(s)|}\leq e^{-n\eta};$$
\item for any $s\in H^0_{Ar}(\cY, \Lbar(-\varepsilon)^{\otimes n})$, we have
$$\Big|\phi_n^{-1}(s)-\frac{|\Lambda_n^{\varepsilon}|}{|H^0_{Ar}(\cY, \Lbar(-\varepsilon)^{\otimes n})|} \Big| \leq e^{-n\eta}|\phi_n^{-1}(s)|.$$
\end{enumerate}
\end{theorem}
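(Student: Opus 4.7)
The plan is to prove (i) first; parts (ii) and (iii) then follow by combining (i) with the lattice-point estimates of Proposition \ref{proposition:points-in-annulus}. The heart of the matter is a lifting lemma: for some $\eta_0>0$ (depending on $\varepsilon$ but not on the section) and all large $n$, every $s\in H^0_{Ar}(\cY,\Lbar(-\varepsilon)^{\otimes n})$ admits a lift $\tilde s\in H^0(\cX,\mathcal L^{\otimes n})$ with $||\tilde s||\le e^{-n\eta_0}$. The $\varepsilon$-slack in the hypothesis is precisely what makes this possible: viewed as a section of $\mathcal L^{\otimes n}|_\cY$, $s$ has sup norm at most $e^{-n\varepsilon}$, leaving room for the lifting. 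Once such a controlled lift is in hand, (i) is immediate, since $\tilde s\in H^0_{Ar}(\cX,\Lbar^{\otimes n})$ and $\phi_n(\tilde s)=s$, so $\tilde s\in \Lambda_n^\varepsilon$.

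To construct the lift I would proceed in two steps. First, produce a holomorphic complex extension $\tilde s^{\C}\in H^0(\cX(\C),\mathcal L^{\otimes n})$ of $s$ with sup-norm estimate $||\tilde s^{\C}||\le C\,n^{\kappa}e^{-n\varepsilon}$ for constants $C,\kappa$ independent of $n$ and $s$. This is an application of an Ohsawa--Takegoshi type $L^2$ extension theorem for the positive hermitian line bundle $\Lbar^{\otimes n}$, combined with a Gromov-type inequality to pass from $L^2$ to sup norms with at most polynomial loss in $n$; invariance of the metric and of $s$ under complex conjugation allows one to take $\tilde s^{\C}$ to be real. Second, integralise: choose any integer lift $\tilde s_0\in H^0(\cX,\mathcal L^{\otimes n})$, which exists for $n$ large by relative ampleness of $\mathcal L$ (the map $\phi_n$ is surjective once $H^1(\cX,\cI_\cY\otimes\mathcal L^{\otimes n})$ vanishes). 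Then $\tilde s^{\C}-\tilde s_0\in H^0(\cX,\cI_\cY\otimes\mathcal L^{\otimes n})\otimes\R$, and Proposition \ref{proposition:exponentially-decreasing-sections} applied to the coherent subsheaf $\cI_\cY\subset \mathcal O_\cX$ furnishes a $\Z$-basis $\tau_1,\dots,\tau_k$ of this lattice with $||\tau_i||\le e^{-n\delta}$ for some $\delta>0$. Writing $\tilde s^{\C}-\tilde s_0=\sum a_i\tau_i$ and rounding each $a_i\in\R$ to the nearest integer $n_i$ produces an integer lift $\tilde s:=\tilde s_0+\sum n_i\tau_i$ of $s$ satisfying
$$||\tilde s||\le ||\tilde s^{\C}||+\tfrac{k}{2}\max_i||\tau_i||\le C\,n^{\kappa}e^{-n\varepsilon}+\tfrac{k}{2}e^{-n\delta},$$
which is at most $e^{-n\eta_0}$ for any fixed $\eta_0<\min(\varepsilon,\delta)$ and $n$ large.

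For (ii), let $s,s'\in H^0_{Ar}(\cY,\Lbar(-\varepsilon)^{\otimes n})$ and let $\tilde s,\tilde s'$ be lifts furnished by the lemma, both of sup norm at most $e^{-n\eta_0}$. The fibre $\phi_n^{-1}(s)$ equals the coset $\tilde s+\{\tau\in \Lambda_n^{\cI_\cY}:||\tilde s+\tau||\le 1\}$, and the triangle inequality gives
$$B_n(1-e^{-n\eta_0})^{\cI_\cY}\subseteq \phi_n^{-1}(s)-\tilde s\subseteq B_n(1+e^{-n\eta_0})^{\cI_\cY},$$
and likewise for $s'$. Proposition \ref{proposition:points-in-annulus} applied with $\mathcal I=\cI_\cY$ and $r=1$ then shows that each of $|\phi_n^{-1}(s)|$ and $|\phi_n^{-1}(s')|$ equals $|B_n(1)^{\cI_\cY}|\,(1+O(e^{-n\eta}))$ for some $\eta>0$ independent of $s,s'$, whence their relative difference is $O(e^{-n\eta})$; shrinking $\eta$ slightly absorbs the implied constant and yields the claim. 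Statement (iii) is obtained by summing (ii) over $s'\in H^0_{Ar}(\cY,\Lbar(-\varepsilon)^{\otimes n})$: the total of $|\phi_n^{-1}(s')|$ is $|\Lambda_n^\varepsilon|$ by (i), so the average fibre size equals $|\Lambda_n^\varepsilon|/|H^0_{Ar}(\cY,\Lbar(-\varepsilon)^{\otimes n})|$ up to the same multiplicative error.

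The main obstacle is the analytic extension step producing $\tilde s^{\C}$. Establishing a holomorphic extension from $\cY(\C)$ to $\cX(\C)$ with norm bounded by a uniform, at most polynomial in $n$, multiple of the norm on $\cY$ is the technical heart of the proof; an Ohsawa--Takegoshi theorem applied to $\Lbar^{\otimes n}$ (which has strictly positive curvature by ampleness) is the natural tool, but some care is required since $\cY$ is only assumed reduced on its generic fibre and may be singular. The remainder of the argument is a lattice-point estimate made quantitative by the exponentially small bases of Proposition \ref{proposition:exponentially-decreasing-sections}.
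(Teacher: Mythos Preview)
Your approach is correct and essentially identical to the paper's: analytic extension (for which the paper cites \cite[Theorem A.1]{Bost04}, with \cite{Randriam06} supplying the extension to the case where $\cY_\C$ is merely reduced) plus integralisation via the exponentially small basis of Proposition~\ref{proposition:exponentially-decreasing-sections} for (i), then the annulus estimate of Proposition~\ref{proposition:points-in-annulus} for (ii), and summation for (iii). The only cosmetic difference is that the paper first treats the torsion part of $H^0(\cY,\mathcal L^{\otimes n})$ separately via Proposition~\ref{proposition:varying-quotient}, whereas your rounding argument handles torsion and free parts uniformly (since $\tilde s_0|_\cY=s$ on the nose and the correction lies in $\Lambda_n^{\cI_\cY}$), which is arguably cleaner.
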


\begin{proof}
Fix $\varepsilon>0$. The group
$$\{\sigma\in H^0_{Ar}(\cY, \Lbar(-\varepsilon)^{\otimes n}), ||\sigma_\C||=0\}$$
is the torsion subgroup of $H^0_{Ar}(\cY, \Lbar(-\varepsilon)^{\otimes n})$, which we denote by $H^0_{Ar}(\cY, \mathcal L^{\otimes n})^{0}$ -- note that this group does not depend on $\varepsilon$ nor the hermitian metric. Let $N$ be a positive integer with
$$NH^0_{Ar}(\cY, \mathcal L^{\otimes n})^{0}=0.$$

Assume $n$ is large enough. The restriction map 
$$\phi_n : \Lambda_n = H^0(\cX, \mathcal L^{\otimes n})\ra H^0(\cY, \mathcal L^{\otimes n})$$ 
is surjective since $\mathcal L$ is relatively ample. Then the map 
$$\Lambda_n/N\Lambda_n\ra H^0_{Ar}(\cY, \mathcal L^{\otimes n})^{0}$$
is well-defined and surjective as well. Applying Proposition \ref{proposition:varying-quotient}, this shows that the image of $\psi_n$ contains $H^0_{Ar}(\cY, \mathcal L^{\otimes n})^{0}$.

\bigskip

Let $s$ be an element of $H^0_{Ar}(\cY, \Lbar(-\varepsilon)^{\otimes n})$. Let $\varepsilon'$ be a real number with $0<\varepsilon'<\varepsilon.$ Apply\footnote{The only assumption necessary in \cite{Bost04} is that $\cY_\C$ and $\cX_\C$ are reduced, see \cite{Randriam06} for a statement that makes this explicit.} \cite[Theorem A.1]{Bost04} to $\cY_\C\hookrightarrow \cX_\C$. If $n$ is large, we can find a section $\sigma$ of $\mathcal L^{\otimes n}$ on $\cX_\C$ with $||\sigma||\leq e^{-n\varepsilon'}$ and $\sigma_{|\cY_\C}=s_\C$. Up to replacing $\sigma$ with $\sigma+\overline\sigma$, and applying the argument to $\varepsilon'\leq \varepsilon$, we may assume that $\sigma$ is a section of $\mathcal L^{\otimes n}$ over $\cX_\R$, that is, 
$$\sigma\in B_n(e^{-n\varepsilon'})_\R.$$

Let $\mathcal I$ be the ideal of $\cY$ in $\cX$. The kernel of the -- surjective when $n$ is large enough -- restriction map 
$$\phi_n : \Lambda_n\ra H^0(\cY, \mathcal L^{\otimes n})$$
is $\Lambda_n^{\mathcal I}$. Let $\sigma'$ be an element of $\Lambda_n$ mapping to $s$.  Then $\sigma\in(\Lambda_n^{\mathcal I})_\R+\sigma'$.

The fundamental domain $D_n^{\mathcal I}$ defined in (\ref{equation:fundamental-domain}) has diameter bounded above by $r_ne^{-n\varepsilon^{\mathcal I}}.$ In particular, we can find $\sigma''\in \Lambda_n^{\mathcal I}+\sigma'$ with 
$$||\sigma''-\sigma||\leq r_ne^{-n\varepsilon^{\mathcal I}},$$
so that 
$$||\sigma''||\leq e^{-n\varepsilon'}+r_ne^{-n\varepsilon^{\mathcal I}}<1$$
for large enough $n$. We have $\psi_n(\sigma)_\C=s_\C$, i.e., $\psi_n(\sigma)-\sigma$ is torsion. This shows that the image of $\psi_n$ maps surjectively onto the quotient of $H^0_{Ar}(\cY, \Lbar(-\varepsilon)^{\otimes n})$ by $H^0_{Ar}(\cY, \mathcal L^{\otimes n})^{0}$. Since we showed above that it contains $H^0_{Ar}(\cY, \mathcal L^{\otimes n})^{0}$, this proves that $\psi_n$ is surjective.

\bigskip

Apply statement $(i)$ after replacing $\Lbar$ with $\Lbar(-\delta)$, where $\delta>0$ is chosen small enough so that $\Lbar(-\delta)$ is ample. Then if $\varepsilon>\delta$ and $n$ is large enough, for any $s\in H^0_{Ar}(\cY, \Lbar(-\varepsilon)^{\otimes n})$, we can find $\sigma_0\in H^0_{Ar}(\cX, \Lbar(-\delta)^{\otimes n})$ that restricts to $s$.

To prove $(ii)$, we argue as in Proposition \ref{proposition:varying-quotient}. Let $s$ and $\sigma_0$ be as above. Then 
$$\psi_n^{-1}(s)=\{\sigma_0+\sigma | \sigma\in \Lambda_n^{\mathcal I}, ||\sigma_0+\sigma||<1\}$$
and 
$$|B_n(1-e^{-n\delta})^{\mathcal I}|\leq |\psi_n^{-1}(s)|\leq B_n(1+e^{-n\delta})^{\mathcal I}.$$
Using Proposition \ref{proposition:points-in-annulus} again, this proves $(ii)$. 

\bigskip

To prove $(iii)$, write 
$$|\Lambda_n^\varepsilon|=\sum_{s\in H^0_{Ar}(\cY, \Lbar(-\varepsilon^{\otimes n})} |\psi_n^{-1}(s)|,$$
so that for any large enough $n$ and any $s\in H^0_{Ar}(\cY, \Lbar(-\varepsilon)^{\otimes n})$, we have 
$$\big| |\Lambda_n^\varepsilon|-|\psi_n^{-1}(s)|\,|H^0_{Ar}(\cY, \Lbar(-\varepsilon)^{\otimes n})|\big|\leq e^{-n\eta}|\psi_n^{-1}(s)| |H^0_{Ar}(\cY, \Lbar(-\varepsilon)^{\otimes n})|.$$
\end{proof}

We keep the notations of the theorem. 

\begin{corollary}\label{corollary:density-and-restriction}
Let $E$ be a subset of $\bigcup_{n>0} H^0_{Ar}(\cY, \Lbar(-\varepsilon)^{\otimes n})$. Set
$$E':=\{\sigma\in \bigcup_{n>0} \Lambda_n^{\varepsilon}, \sigma_{|\cY}\in E\}.$$

For any $0\leq\rho\leq 1$, the set $E$ has density $\rho$ if and only if $E'$ has density $\rho$.
\end{corollary}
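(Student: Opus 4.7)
The plan is to deduce the corollary directly from part (iii) of Theorem \ref{theorem:uniform-surjectivity}, which says that the fibers of $\psi_n$ over distinct points of $H^0_{Ar}(\cY, \Lbar(-\varepsilon)^{\otimes n})$ all have essentially the same cardinality, up to a multiplicative error of $1 + O(e^{-n\eta})$.

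Concretely, first I would observe that by construction $E'\cap \Lambda_n^\varepsilon$ is the preimage under $\psi_n$ of $E\cap H^0_{Ar}(\cY, \Lbar(-\varepsilon)^{\otimes n})$, so that
$$|E'\cap\Lambda_n^\varepsilon| = \sum_{s\in E\cap H^0_{Ar}(\cY, \Lbar(-\varepsilon)^{\otimes n})} |\psi_n^{-1}(s)|.$$
Then I would apply Theorem \ref{theorem:uniform-surjectivity}(iii) termwise: for $n$ large and every $s$ in $H^0_{Ar}(\cY, \Lbar(-\varepsilon)^{\otimes n})$,
$$\Big|\,|\psi_n^{-1}(s)| - \frac{|\Lambda_n^\varepsilon|}{|H^0_{Ar}(\cY, \Lbar(-\varepsilon)^{\otimes n})|}\,\Big| \leq e^{-n\eta}|\psi_n^{-1}(s)|,$$
so that summing over $s\in E\cap H^0_{Ar}(\cY, \Lbar(-\varepsilon)^{\otimes n})$ yields
$$\Big|\,|E'\cap\Lambda_n^\varepsilon| - |E\cap H^0_{Ar}(\cY, \Lbar(-\varepsilon)^{\otimes n})|\cdot\frac{|\Lambda_n^\varepsilon|}{|H^0_{Ar}(\cY, \Lbar(-\varepsilon)^{\otimes n})|}\,\Big| \leq e^{-n\eta}|E'\cap\Lambda_n^\varepsilon|.$$
Dividing through by $|\Lambda_n^\varepsilon|$ gives
$$\frac{|E'\cap\Lambda_n^\varepsilon|}{|\Lambda_n^\varepsilon|} = \frac{|E\cap H^0_{Ar}(\cY, \Lbar(-\varepsilon)^{\otimes n})|}{|H^0_{Ar}(\cY, \Lbar(-\varepsilon)^{\otimes n})|}\bigl(1 + O(e^{-n\eta})\bigr).$$

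Since $e^{-n\eta}\to 0$, the two ratios have the same limit (and one exists if and only if the other does); letting $n\to\infty$ proves that $E$ has density $\rho$ if and only if $E'$ does. There is no real obstacle here: the corollary is essentially a bookkeeping reformulation of Theorem \ref{theorem:uniform-surjectivity}(iii), and the only point to be mildly careful about is ensuring that $n$ is large enough for (iii) to apply uniformly in $s$, which is precisely what part (iii) guarantees.
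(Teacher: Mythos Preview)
Your proposal is correct and follows essentially the same route as the paper: write $|E'\cap\Lambda_n^\varepsilon|$ as a sum of fiber sizes $|\psi_n^{-1}(s)|$ over $s\in E\cap H^0_{Ar}(\cY,\Lbar(-\varepsilon)^{\otimes n})$, apply Theorem~\ref{theorem:uniform-surjectivity}(iii) termwise, sum, divide, and let $n\to\infty$. The only cosmetic difference is that you phrase the final comparison multiplicatively as $(1+O(e^{-n\eta}))$ whereas the paper states an additive bound on the difference of the two ratios; both are equivalent here.
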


\begin{proof}
For any positive integer $n$, define
$$E_n:=E\cap H^0_{Ar}(\cY, \Lbar(-\varepsilon)^{\otimes n}), \, E'_n:=E'\cap H^0_{Ar}(\cX, \Lbar^{\otimes n}).$$
Denoting by $\psi_n$ the restriction maps as before, we can write 
$$|E'_n|=\sum_{s\in E_n} \psi_n^{-1}(s).$$
By Theorem \ref{theorem:uniform-surjectivity}, we can find a positive constant $\eta$ such that, for large enough $n$,
$$\Big||E'_n|-\frac{|E_n|}{|H^0_{Ar}(\cY, \Lbar(-\varepsilon)^{\otimes n})|}|H^0_{Ar}(\cX, \Lbar^{\otimes n})| \Big| \leq e^{-n\eta}|E'_n|$$
and
$$\Big| \frac{|E'_n|}{|H^0_{Ar}(\cX, \Lbar^{\otimes n})|}-\frac{|E_n|}{|H^0_{Ar}(\cY, \Lbar(-\varepsilon)^{\otimes n})|}\Big| \leq e^{-n\eta}.$$
Letting $n$ tend to $\infty$ gives us the result we were looking for.
\end{proof}

As a special case of the theorem, we get the following.

\begin{corollary}\label{corollary:uniform-surjectivity-p}
Let $\cX$ be a projective arithmetic variety, and let $\Lbar$ be an ample line bundle on $\cX$. Let $Y$ be a closed subscheme of $\cX$ lying over $\Z/N\Z$. Then for any large enough integer $n$, the restriction map 
$$\phi_n : H^0_{Ar}(\cX, \Lbar^{\otimes n})\ra H^0(Y, \mathcal L^{\otimes n})$$
is surjective and there exists a positive constant $\eta$ such that for any $s\in H^0(Y, \mathcal L^{\otimes n})$, we have
$$\Big|\phi_n^{-1}(s)-\frac{|H^0_{Ar}(\cX, \Lbar^{\otimes n})|}{|H^0(Y, \mathcal L^{\otimes n})|} \Big| \leq e^{-n\eta}|\phi_n^{-1}(s)|.$$
\end{corollary}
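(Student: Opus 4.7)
The plan is to deduce this directly from Theorem \ref{theorem:uniform-surjectivity} by specializing to $\cY = Y$. The key observation is that since $Y$ is a closed subscheme of $\cX$ lying over $\Spec\Z/N\Z$, its generic fiber $Y_\Q$ is empty, and hence trivially reduced, so the hypotheses of Theorem \ref{theorem:uniform-surjectivity} are satisfied.

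First, I would record that because $Y(\C) = \emptyset$, the sup-norm condition defining $H^0_{Ar}$ on $Y$ is vacuous: by the convention set up in the notation subsection, $H^0_{Ar}(Y, \Lbar(-\varepsilon)^{\otimes n}) = H^0(Y, \mathcal L^{\otimes n})$ for every $\varepsilon > 0$ and every $n$. Similarly, for arbitrary positive $\varepsilon$, the subset
\[
\Lambda_n^{\varepsilon} = H^0_{Ar}(\cX, \Lbar^{\otimes n}) \cap \phi_n^{-1}\big(H^0_{Ar}(Y, \Lbar(-\varepsilon)^{\otimes n})\big)
\]
of Theorem \ref{theorem:uniform-surjectivity} equals all of $H^0_{Ar}(\cX, \Lbar^{\otimes n})$, because $\phi_n^{-1}$ applied to the full group $H^0(Y, \mathcal L^{\otimes n})$ imposes no condition.

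With these two identifications in place, the map $\psi_n$ of Theorem \ref{theorem:uniform-surjectivity} coincides with the map $\phi_n$ in the statement of the corollary, and its target is $H^0(Y, \mathcal L^{\otimes n})$. Part $(i)$ of that theorem then immediately gives surjectivity of $\phi_n$ for large $n$, while part $(iii)$ yields the bound
\[
\Big| |\phi_n^{-1}(s)| - \frac{|H^0_{Ar}(\cX, \Lbar^{\otimes n})|}{|H^0(Y, \mathcal L^{\otimes n})|}\Big| \leq e^{-n\eta}|\phi_n^{-1}(s)|
\]
for some $\eta > 0$ independent of $s$, which is exactly the claimed estimate.

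The argument is essentially formal; there is no real obstacle, since all substantive work — the analysis of the fundamental domain $D_n^{\mathcal I}$, the volume estimates of Lemma \ref{lemma:size-of-balls} and Proposition \ref{proposition:points-in-annulus}, and the application of Bost's extension result \cite[Theorem A.1]{Bost04} — has already been carried out in Theorem \ref{theorem:uniform-surjectivity}. The only mild point to check is that the choice of $\varepsilon$ is genuinely irrelevant here: any fixed positive $\varepsilon$ works, which reflects the fact that on a scheme concentrated on finitely many closed fibers no archimedean metric information is lost.
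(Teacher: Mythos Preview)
Your proof is correct and matches the paper's approach: the paper simply states this corollary ``as a special case of the theorem,'' and you have supplied exactly the right justification for why it is one, namely that $Y_\Q=\emptyset$ forces $H^0_{Ar}(Y,\Lbar(-\varepsilon)^{\otimes n})=H^0(Y,\mathcal L^{\otimes n})$ and $\Lambda_n^\varepsilon=H^0_{Ar}(\cX,\Lbar^{\otimes n})$.
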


\bigskip

We now turn to uniform lower bounds on the image of restriction maps. We first deal with a geometric result.

\begin{proposition}\label{proposition:uniform-lower-bound-geometric}
Let $S$ be a noetherian scheme, and let $X$ be a projective scheme over $S$. Let $\mathcal L$ be a line bundle on $X$, relatively ample over $S$. Then there exists an integer $N$ and a positive constant $C$ such that for any point $s$ of $S$, any closed subscheme $Y$ of $X_s$ of positive dimension $d$, and any $n\geq N$, the image of the restriction map 
$$H^0(X_s, \mathcal L^{\otimes n})\ra H^0(Y, \mathcal L^{\otimes n})$$
has dimension at least $Cn^d$.
\end{proposition}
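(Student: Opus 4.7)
The plan is to produce, uniformly across $s$ and $Y$, a finite surjection $\pi\colon Y\to \mathbb{P}^d_{k(s)}$ defined by sections of a fixed power of $\mathcal{L}$, and then use pullback of monomials to exhibit $\sim n^d$ linearly independent elements in the image of the restriction map. I fix $k_0\geq 1$ so that $\mathcal{L}^{\otimes k_0}$ is relatively very ample over $S$, giving an $S$-closed immersion $X\subset\mathbb{P}^M_S$ with $\mathcal{O}(1)|_X=\mathcal{L}^{\otimes k_0}$.

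First I reduce to $Y$ integral of dimension $d$: picking a $d$-dimensional irreducible component $Y'\subset Y$ with its reduced structure, the restriction $H^0(X_s,\mathcal{L}^{\otimes n})\to H^0(Y,\mathcal{L}^{\otimes n})$ factors through $H^0(Y',\mathcal{L}^{\otimes n})$, so its image has dimension at least that of the image on $Y'$. The central claim is a uniform Noether normalization: there exists $k_1\geq 1$, depending only on $X/S$ and $\mathcal{L}$, such that for every $s\in S$ and every integral $Y\subset X_s$ of positive dimension $d$, one can find sections $\ell_0,\ldots,\ell_d\in H^0(X_s,\mathcal{L}^{\otimes k_0 k_1})$ whose common zero locus on $Y$ is empty and whose ratios $\ell_1/\ell_0,\ldots,\ell_d/\ell_0$ are algebraically independent in $k(Y)$. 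The induced morphism $\pi\colon Y\to\mathbb{P}^d_{k(s)}$ is then dominant and proper; any positive-dimensional fiber would contain a curve $C$ with $0=\pi^*\mathcal{O}(1)\cdot C=(\mathcal{L}^{\otimes k_0 k_1}|_Y)\cdot C>0$ by Nakai--Moishezon applied to the ample line bundle $\mathcal{L}^{\otimes k_0 k_1}|_Y$, a contradiction. Hence $\pi$ is a finite surjection with $\pi^*\mathcal{O}(1)=\mathcal{L}^{\otimes k_0 k_1}|_Y$.

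Granted such $\pi$, the pullback $\pi^*\colon H^0(\mathbb{P}^d_{k(s)},\mathcal{O}(m))\hookrightarrow H^0(Y,\mathcal{L}^{\otimes k_0 k_1 m}|_Y)$ is injective for every $m\geq 0$, and each monomial $T_0^{a_0}\cdots T_d^{a_d}$ is the restriction to $Y$ of $\ell_0^{a_0}\cdots\ell_d^{a_d}\in H^0(X_s,\mathcal{L}^{\otimes k_0 k_1 m})$. Hence the image of restriction on $\mathcal{L}^{\otimes k_0 k_1 m}$ has dimension at least $\binom{m+d}{d}\geq m^d/d!$. For an arbitrary exponent $n\geq 2k_0 k_1$, I write $n=k_0 k_1 m+r$ with $k_0 k_1\leq r<2k_0 k_1$, so $m\geq 1$; since $\mathcal{L}^{\otimes r}$ is very ample on $X_s$ its base locus is empty, so I choose $\tau\in H^0(X_s,\mathcal{L}^{\otimes r})$ with $\tau|_Y\neq 0$, and multiplying the previous sections by $\tau$ yields elements of $H^0(X_s,\mathcal{L}^{\otimes n})$ whose restrictions to $Y$ remain linearly independent, because multiplication by the nonzero $\tau|_Y$ is injective on sections of a line bundle over the integral scheme $Y$. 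Taking $N=3k_0 k_1$ and $C$ equal to the minimum over $d\in\{1,\ldots,\dim X\}$ of $1/\bigl((2k_0 k_1)^d d!\bigr)$ yields the conclusion.

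The main obstacle is the uniform Noether normalization. Over residue fields $k(s)$ with sufficiently many elements, $d+1$ generic linear forms on $\mathbb{P}^M_{k(s)}$ already work, since the $(d+1)$-tuples with common zero on $Y$ or algebraically dependent ratios form a proper closed subset of the parameter space of codimension at least one. The delicate case is small finite residue fields $\mathbb{F}_q$, where genericity arguments can fail and one must pass to homogeneous forms of higher, uniform degree $k_1>1$; noetherianness of $S$ bounds the relevant residue characteristics, and for each an explicit estimate (in the spirit of Lang--Weil or of the Poonen-type counting invoked later in the paper for finite-field Bertini) produces a uniform $k_1$ valid across all fibers.
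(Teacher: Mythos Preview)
Your overall strategy---project $Y$ to $\mathbb{P}^d$ by sections of a fixed power of $\mathcal{L}$ and pull back monomials---is essentially the paper's, and the reduction to integral $Y$ and the handling of general exponents $n$ via multiplication by a nonvanishing $\tau$ are fine. The gap is the ``uniform Noether normalization'' itself: you assert the existence of a single $k_1$ that works for \emph{every} point $s$ and \emph{every} integral $Y\subset X_s$, but your justification fails. First, the sentence ``noetherianness of $S$ bounds the relevant residue characteristics'' is simply false---already for $S=\Spec\Z$, the case the paper cares about, the residue characteristics are all primes. Second, and more seriously, even over a fixed finite field $\mathbb{F}_q$ the subscheme $Y$ has unbounded degree, and the degree $k_1$ needed for a base-point-free $(d+1)$-tuple of forms on $Y$ typically grows with $\deg Y$: for any fixed $k_1$ there are only finitely many $(d+1)$-tuples of degree-$k_1$ forms over $\mathbb{F}_q$, hence the union of their common zero loci is a fixed finite subscheme of $\mathbb{P}^M_{\mathbb{F}_q}$, and one can choose an integral $Y$ meeting every such locus. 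A vague appeal to Lang--Weil or Poonen-type counting does not produce a bound independent of $\deg Y$.

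The paper avoids this difficulty by a one-line observation you are missing: the dimension of the image of the linear map $H^0(X_s,\mathcal{L}^{\otimes n})\to H^0(Y,\mathcal{L}^{\otimes n})$ is unchanged under field extension, so one may replace $k(s)$ by an infinite overfield $k$. Over an infinite field, a generic $(d+1)$-dimensional subspace $V\subset H^0(X_k,\mathcal{L}^{\otimes N})$ already gives a rational map $\phi:X_k\dashrightarrow\mathbb{P}(V^*)$ dominant on $Y$, with $N$ the fixed integer making $\mathcal{L}^{\otimes N}$ very ample on every fibre. The paper then produces the independent sections by taking, for each polynomial $P$ of degree $\leq r$, the global section $(P\circ\phi)\sigma$ where $\sigma\in H^0(X_k,\mathcal{L}^{\otimes n})$ vanishes to order $r$ along $\mathrm{div}(\sigma_0)$ but not on $Y$; this is the analogue of your monomial pullback, adapted to a rational rather than finite projection. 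If you insert the base-change-to-infinite-field step, your argument can be repaired with $k_1=1$.
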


\begin{proof}
Since $S$ is noetherian, we can find an integer $N$ such that for any point $s$ of $S$ and any integer $n\geq N$, the restriction of $\mathcal L^{\otimes n}$ to $X_s$ is very ample. 

Let $s$ be a point of $S$, and let $Y$ be a positive-dimensional closed subscheme of $X_s$. Let $k$ be an infinite field containing the residue field of $s$, and write $X_k$ for the base change of $X_s$ to $k$. 

Since $\mathcal L^{\otimes N}$ is very ample on $X_k$ and $k$ is infinite, we can find a $d+1$-dimensional subspace $V\subset H^0(X, \mathcal L^{\otimes n})$ such that the restriction to $Y$ of the rational map
$$\phi : X\dashrightarrow \mathbb{P}(V^*)$$
is dominant. Let $\sigma_0, \ldots, \sigma_d$ be a basis of $V$, and let $H_0$ be the divisor $\mathrm{div}(\sigma_0)$. Identify the subspace of $\mathbb{P}(V^*)$ defined by $\sigma_0\neq 0$ to the standard affine space $\mathbb A^d_k$ with coordinates $x_1, \ldots, x_d$. Then the map $\phi$ is defined outside $H_0$ -- as certainly the base locus of $V$ is contained in $H_0$, and maps onto $\mathbb A^d_k$.

For any positive integer $r$ and any integer $n\geq (r+1)N$, the line bundle $\mathcal L^{\otimes n}(-rH_0)\simeq\mathcal L^{\otimes n-rN}$ is very ample. In particular, we can find a section $\sigma$ of $\mathcal L^{\otimes n}$ that vanishes to the order $r$ along $H_\infty$, but does not vanish on $Y$. 

Let $P\in k[x_1, \ldots, x_d]$ be a polynomial of degree at most $r$, considered as a morphism $\mathbb A^d_k\ra\mathbb A^1_k$. Since $\sigma$ vanishes to the order $r$ along $H_\infty$, the section $(P\circ\phi) \sigma$ of $\mathcal L^{\otimes n}$, which is a priori defined only outside $H_0$, defines a global section of $\mathcal L^{\otimes n}$. Because $\sigma$ does not vanish on $Y$, the restrictions $(P\circ\phi)\sigma_{|Y}$ are linearly independent as sections of $L^{\otimes n}_{|Y}$ as $P$ varies. In particular, the image of the restriction map 
$$H^0(X, \mathcal L^{\otimes n})\ra H^0(Y, \mathcal L^{\otimes n})$$
has dimension at least equal to the dimension of the space of polynomials of degree at most $r$ in $x_1, \ldots, x_d$, so that it has dimension at least 
$${{r+d}\choose{d}}=\frac{1}{d!}r^d+O(r^{d-1})$$
for any $r$ with $r+1\leq n/N$. This proves the result.
\end{proof}

\begin{theorem}\label{theorem:independent-restriction}
Let $\cX$ be a projective arithmetic variety, and let $\Lbar$ be an ample hermitian line bundle on $\cX$. If $\cY$ is a subscheme of $\cX$, let 
$$\phi_{n, \cY} : H^0(\cX, \mathcal L^{\otimes n})\ra H^0(Y, \mathcal L^{\otimes n})$$
be the restriction map.

Then there exists an integer $N$ and a positive real number $\eta$ such that for any positive-dimensional closed subscheme $\cY$ of $\cX$,
we have 
$$\frac{|\Ker(\phi_{n, \cY})\cap H^0_{Ar}(\cX, \Lbar^{\otimes n})|}{|H^0_{Ar}(\cX, \Lbar^{\otimes n})|}=O(e^{-n^d\eta}),$$
where the implied constant depends on $\cX$ and $\Lbar$, but not on $\cY$.
\end{theorem}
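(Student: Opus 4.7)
The plan is to bound $|\Lambda_n^{\mathcal I}\cap B_n(1)|/|\Lambda_n\cap B_n(1)|$ uniformly in the positive-dimensional closed subscheme $\cY$, where $\mathcal I=\mathcal I_\cY$. First I reduce to $\cY$ integral of dimension $d\geq 1$: for any closed subscheme $\cY'\subset\cY$ of the same dimension, $\mathcal I_\cY\subset\mathcal I_{\cY'}$ and hence $\Ker(\phi_{n,\cY})\subset\Ker(\phi_{n,\cY'})$, so it suffices to treat irreducible components of maximal dimension. I then split into the vertical case, in which $\cY$ lies in a single fiber $\cX_p$, and the horizontal case, in which $\cY$ dominates $\Spec\Z$. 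In both, the essential geometric input is Proposition \ref{proposition:uniform-lower-bound-geometric} applied to the noetherian base $\Spec\Z$, which provides a uniform lower bound $Cn^d$ on the rank of the image of restriction to $\cY_s$ in any fiber $\cX_s$, with constants independent of the point $s$.

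In the vertical case, $p\Lambda_n\subset\Lambda_n^{\mathcal I}$, so $\Lambda_n/\Lambda_n^{\mathcal I}$ is an $\F_p$-vector space; combining Proposition \ref{proposition:uniform-lower-bound-geometric} with the uniform surjectivity $\Lambda_n\to H^0(\cX_p,\mathcal L^{\otimes n})$ (cohomology and base change for a relatively ample $\mathcal L$, uniform in $p$ once $n$ is large enough) yields $[\Lambda_n:\Lambda_n^{\mathcal I}]\geq p^{Cn^d}\geq 2^{Cn^d}$. For $p\leq e^{n^\alpha}$ with a fixed $\alpha<1$, I apply Proposition \ref{proposition:varying-quotient} to the reduction $\Lambda_n\to\Lambda_n/p\Lambda_n$: its fibers meet $B_n(1)$ in nearly equal numbers, and summing over the fibers contained in $\Lambda_n^{\mathcal I}/p\Lambda_n$ gives a ratio of order $O(p^{-Cn^d})$. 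For larger $p$ I use the direct Minkowski estimate $|\Lambda_n^{\mathcal I}\cap B_n(1)|\leq(1+o(1))/\mathrm{covol}(\Lambda_n^{\mathcal I})$, based on the exponentially short basis of $\Lambda_n$ supplied by Proposition \ref{proposition:exponentially-decreasing-sections} with $\mathcal F=\mathcal O_\cX$ (so the rate is uniform in $\cY$), combined with $\mathrm{covol}(\Lambda_n^{\mathcal I})=[\Lambda_n:\Lambda_n^{\mathcal I}]\cdot\mathrm{covol}(\Lambda_n)$.

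In the horizontal case, after passing to the normalization $\Lbar_{|\cY}$ becomes an ample hermitian line bundle on $\cY$ by Corollary \ref{corollary:pull-back-ample}, so Theorem \ref{theorem:arithmetic-Hilbert-Samuel}(i) gives $h^0_{Ar}(\cY,\Lbar_{|\cY}^{\otimes n})\sim\frac{1}{d!}\Lbar_{|\cY}^d\, n^d$, which is of order $n^d$ with positive leading coefficient. Theorem \ref{theorem:uniform-surjectivity} applied on $\cY$ with small $\varepsilon>0$ shows that the restriction $\Lambda_n\cap B_n(1)\to H^0_{Ar}(\cY,\Lbar(-\varepsilon)^{\otimes n})$ is nearly equidistributed, so the kernel has size of order $|\Lambda_n\cap B_n(1)|/|H^0_{Ar}(\cY,\Lbar(-\varepsilon)^{\otimes n})|=O(e^{-cn^d})$ for some $c>0$.

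The hard part will be uniformity in $\cY$. The constants in Theorem \ref{theorem:uniform-surjectivity} come from Proposition \ref{proposition:exponentially-decreasing-sections} applied with $\mathcal F=\mathcal I_\cY$, and the coefficient $\Lbar_{|\cY}^d/d!$ must be bounded below uniformly over all positive-dimensional integral $\cY$. I expect to bypass the first dependency by a direct lattice-point comparison that uses only the short basis of $\Lambda_n$ coming from Proposition \ref{proposition:exponentially-decreasing-sections} with $\mathcal F=\mathcal O_\cX$, and to obtain the uniform lower bound on $\Lbar_{|\cY}^d$ from a Nakai--Moishezon-type positivity statement for ample hermitian line bundles. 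Unifying these estimates so that the final constants $N$ and $\eta$ depend only on $(\cX,\Lbar)$ is the technical heart of the proof.
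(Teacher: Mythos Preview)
Your overall structure (reduce to integral $\cY$, split horizontal/vertical, invoke Proposition~\ref{proposition:uniform-lower-bound-geometric} for a uniform lower bound on the image) matches the paper. But your primary line of attack in the horizontal case is a detour that creates exactly the uniformity problems you flag at the end, while the paper's argument is precisely the ``direct lattice-point comparison'' you only mention as a fallback.

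Concretely: invoking arithmetic Hilbert--Samuel on $\cY$ and Theorem~\ref{theorem:uniform-surjectivity} brings in the constants $\Lbar_{|\cY}^d$, the error term in Hilbert--Samuel for $\cY$, and $\varepsilon^{\mathcal I_\cY}$ from Proposition~\ref{proposition:exponentially-decreasing-sections}, all of which depend on $\cY$. A Nakai--Moishezon lower bound on $\Lbar_{|\cY}^d$ would not control the error terms. The paper avoids all of this: it applies Proposition~\ref{proposition:uniform-lower-bound-geometric} only to $\cX_\Q$ and $\cY_\Q$, giving a uniform lower bound $k_n\geq Cn^{d-1}$ on the $\Q$-corank of $H_n:=\Ker\phi_{n,\cY}$. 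Then, using the short basis of $\Lambda_n$ from Proposition~\ref{proposition:exponentially-decreasing-sections} with $\mathcal F=\mathcal O_\cX$ (so the rate $\varepsilon$ is uniform), it picks $\sigma_1,\dots,\sigma_{k_n}$ independent modulo $H_n$ with $\|\sigma_i\|\leq e^{-n\varepsilon}$; for $\sigma\in H_n\cap B_n(1)$ and integers $|\lambda_i|\leq e^{n\eta}$ the elements $\sigma+\sum\lambda_i\sigma_i$ are distinct and lie in $B_n(1+k_ne^{-n(\varepsilon-\eta)})$, whence $e^{nk_n\eta}|H_n\cap B_n(1)|\leq|B_n(1+o(1))|$ and Proposition~\ref{proposition:points-in-annulus} finishes. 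No arithmetic input on $\cY$ is used beyond $\dim\cY_\Q\geq d-1$.

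In the vertical case your split into small and large $p$ is unnecessary. The paper runs the same translate argument with the $\sigma_i$ chosen to be $\F_p$-independent in $H^0(\cY,\mathcal L^{\otimes n})$ and with coefficients $\lambda_i\in\{0,1\}$; this yields $2^{k_n}|H_n\cap B_n(1)|\leq|B_n(1+k_ne^{-n\varepsilon})|$ with $k_n\geq Cn^d$, and $p$ never appears in the estimate. Your covolume argument for large $p$ would also require controlling $\mathrm{covol}(\Lambda_n)$ and the shape of $B_n(1)$ uniformly, which is more work than the binary-coefficient trick.
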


\begin{proof}
We only have to consider those $\cY$ that are irreducible. Let us first assume that $\cY$ is flat over $\Spec\Z$. By Proposition \ref{proposition:uniform-lower-bound-geometric} applied to $\cX_\Q$ and $\cY_\Q$, there exists a positive constant $C$, independent of $\cY$, such that if $n$ is larger than some integer $N$, independent of $\cY$, then the kernel of the restriction map 
$$\Lambda_n=H^0(\cX, \mathcal L^{\otimes n})\ra H^0(\cY, \mathcal L^{\otimes n})$$
has corank at least $Cn^{d-1}$ for large enough $n$. Let $H_n$ denote this kernel.
Let $k_n$ be the corank of $H_n$. For $n\geq N$, we have
\begin{equation}\label{equation:bound-corank}
k_n\geq Cn^{d-1}.
\end{equation}

Up to enlarging $N$, Proposition \ref{proposition:exponentially-decreasing-sections} allows us to assume that for $n\geq N$, $\Lambda_n$ has a basis consisting of elements with norm at most $e^{-n\varepsilon}$ for some $\varepsilon>0$. For $n\geq N$, we can
find elements $\sigma_1, \ldots, \sigma_{k_n}$ of $\Lambda_n$ that are linearly independent in $\Lambda_n/H_n$ and satisfy $||\sigma_i||\leq e^{-n\varepsilon}$ for $i\in\{1, \ldots, k_n\}$.

Let $\eta$ be a positive number smaller than $\varepsilon$. Then for any $\sigma\in H_n\cap B_n(1)_\R$ and any integers $\lambda_1, \ldots, \lambda_{k_n}$ with $|\lambda_i|\leq e^{n\eta}$ for $i\in\{1, \ldots, k_n\}$, we have 
$$||\sigma+\sum_{i=1}^{k_n}\lambda_i\sigma_i||\leq 1+k_ne^{-n(\varepsilon-\eta)}.$$
Furthermore, as $\sigma$ runs through the elements of $H_n$, and $\lambda_1, \ldots, \lambda_{k_n}$ run through the integers, the $\sigma+\sum_{i=1}^{k_n}\lambda_i\sigma_i$ are pairwise distinct. As a consequence, we have
$$e^{nk_n \eta}|H\cap B_n(1)_\R|\leq B_n(1+k_ne^{-n(\varepsilon-\eta)}).$$

Applying Proposition \ref{proposition:points-in-annulus} and noting that $k_n$ is bounded above by $r_n$, which is a polynomial in $n$, we get 
$$\frac{|B_n(1)\cap H_n|}{|B_n(1)|}=O(e^{-nk_n\eta}).$$
Together with (\ref{equation:bound-corank}), this shows the required estimate.

\bigskip

Now assume that $\cY$ is not flat over $\Z$. Since $\cY$ is irreducible, it lies over a closed point $p$ of $\Spec\Z$. By Proposition \ref{proposition:uniform-lower-bound-geometric}, we can find an integer $N$ and a constant $C$, independent of $\cY$ and $p$, such that for any $n\geq N$, the kernel of the restriction map
$$H^0(\cX_p, \mathcal L^{\otimes n})\ra H^0(\cY, \mathcal L^{\otimes n})$$
has codimension at least $Cn^d$ as a vector space over $\mathbb F_p$. Let $k_n$ be this codimension. Then 
\begin{equation}\label{equation:bound-corank-2}
k_n\geq Cn^{d}.
\end{equation}

Again, by Proposition \ref{proposition:exponentially-decreasing-sections}, up to enlarging $N$, we can find a positive number $\varepsilon$, depending only on $\cX$ and $\Lbar$, such that for any $n\geq N$, there exist sections $\sigma_1, \ldots, \sigma_{k_n}$ of $H^0(\cX, \mathcal L^{\otimes n})$ with $||\sigma_i||\leq e^{-n\varepsilon}$ for all $i\in\{1, \ldots, k_n\}$, such that the images of $\sigma_1, \ldots, \sigma_{k_n}$ in $H^0(\cY, \mathcal L^{\otimes n})$ are linearly independent over $\mathbb F_p$. 

Let $H_n$ be the kernel of the restriction map $\phi_{n, \cY}$. If $\sigma$ is an element of $H_n$, and if $\lambda_1, \ldots, \lambda_{k_n}$ are integers running through $\{0, \ldots, p-1\}$, then $\sigma+\lambda_1\sigma_1+\ldots+\lambda_{k_n}\sigma_{k_n}$ belongs to $H_n$ if and only if all the $\lambda_i$ vanish. Furthermore, the elements $\sigma+\lambda_1\sigma_1+\ldots+\lambda_{k_n}\sigma_{k_n}$ are pairwise disjoint. As a consequence, considering only those $\lambda_i$ that are $0$ or $1$, we have 
$$2^{k_n}|H_n\cap H^0_{Ar}(\cX, \Lbar^{\otimes n})|\leq |B_n(1+k_ne^{-n\varepsilon})|.$$
Again, applying Proposition \ref{proposition:points-in-annulus} and noting that $k_n$ is bounded above by $r_n$, which is a polynomial in $n$, we get 
$$\frac{|B_n(1)\cap H_n|}{|B_n(1)|}=O(e^{-k_n\eta}).$$
Together with (\ref{equation:bound-corank-2}), this shows the required estimate.

\end{proof}

\section{Variants and consequences}

\subsection{The irreducibility theorem over finite fields}

The arithmetic Bertini theorems we prove are stronger than their finite fields counterparts. Since the latter are known, we give only an example to illustrate how one can deduce them.

\begin{proposition}
Assume Theorem \ref{thm:main}. Let $k$ be a finite field, and let $X$ be an irreducible projective variety over $k$. Let $L$ be a very ample line bundle on $X$. Then the set 
$$\{\sigma\in \bigcup_{n>0} H^0(X, L^{\otimes n}), \,\mathrm{div}(\sigma)\,\textup{is irreducible}\}$$
has density $1$ in $\bigcup_{n>0} H^0(X, L^{\otimes n})$.
\end{proposition}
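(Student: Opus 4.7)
The plan is to reduce the statement to an instance of Theorem~\ref{thm:main} by viewing $X$ as a closed subscheme of an ambient arithmetic projective space, and then to transfer the resulting density statement back to $H^0(X, L^{\otimes n})$ via the equidistribution of restriction maps supplied by Corollary~\ref{corollary:uniform-surjectivity-p}. Throughout I assume $\dim X \geq 2$, which is the substantive range of the proposition and the range in which Theorem~\ref{thm:main} applies.

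First I would use the very ample line bundle $L$ to fix a closed immersion $X \hookrightarrow \P^N_k$, and view $X$ as a closed subscheme $\cY$ of $\cX := \P^N_\Z$ via the natural embedding $\P^N_k \hookrightarrow \P^N_\Z$. Equip $\cX$ with the ample hermitian line bundle $\Lbar = \overline{\mathcal{O}(1)}$ given by the Fubini--Study metric, which is ample by Proposition~\ref{proposition:FS-ample}. The inclusion $f : \cY \hookrightarrow \cX$ is a closed immersion and hence generically smooth over its image; since $\dim X \geq 2$, the image of $f$ has dimension at least $2$. Fix any $\varepsilon > 0$, so that $\Lbar(\varepsilon)$ is again ample.

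Next apply Theorem~\ref{thm:main} to this data. Because $\cY$ lies over the finite prime $p$ equal to the characteristic of $k$, we have $\cY(\C) = \emptyset$, so the sup-norm condition $\|\sigma_{|f(\cY(\C))}\|_\infty < 1$ is vacuous. The theorem then yields that the set of $\sigma \in \bigcup_n H^0_{Ar}(\cX, \Lbar(\varepsilon)^{\otimes n})$ for which $\mathrm{div}(f^*\sigma)_{horiz}$ is irreducible has density $1$. Every irreducible component of $\mathrm{div}(f^*\sigma)$ is a codimension-one closed subscheme of $X$, hence has dimension $\dim X - 1 \geq 1$; its image in $\cX$ therefore has positive dimension and in particular is not a closed point, so every such component is horizontal. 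Consequently the condition on $\sigma$ simplifies to asking that $\mathrm{div}(f^*\sigma) = \mathrm{div}(\sigma|_X)$ be irreducible.

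It remains to push this density down to $H^0(X, L^{\otimes n})$. Since $\cY$ lies over $\Spec \Z/p\Z$, Corollary~\ref{corollary:uniform-surjectivity-p} applied to $\cY \subset \cX$ with the ample bundle $\Lbar(\varepsilon)$ shows that for $n$ large the restriction map
\[
\phi_n : H^0_{Ar}(\cX, \Lbar(\varepsilon)^{\otimes n}) \longrightarrow H^0(X, L^{\otimes n})
\]
is surjective with all fibers of sizes agreeing up to a factor $1 + O(e^{-n\eta})$ for some $\eta > 0$. Summing fiber sizes over any subset $A_n \subset H^0(X, L^{\otimes n})$ yields
\[
\frac{|\phi_n^{-1}(A_n)|}{|H^0_{Ar}(\cX, \Lbar(\varepsilon)^{\otimes n})|} = \frac{|A_n|}{|H^0(X, L^{\otimes n})|}\bigl(1 + O(e^{-n\eta})\bigr),
\]
so the two densities agree in the limit. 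Taking $A_n = \{s \in H^0(X, L^{\otimes n}) : \mathrm{div}(s) \textup{ is irreducible}\}$, whose preimage under $\phi_n$ is exactly the density-one set produced in the previous paragraph, yields the proposition. The only mildly delicate point is the bookkeeping to identify horizontal components correctly in this mixed-characteristic setup; the rest is a direct assembly of the results of Section~\ref{section:ampleness}.
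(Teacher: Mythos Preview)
Your argument follows the same strategy as the paper's: embed $X$ into an arithmetic projective space, invoke Theorem~\ref{thm:main}, and transfer the density back via the restriction results of Section~\ref{subsection:restriction}. There is, however, one genuine slip.

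You assert a ``natural embedding $\P^N_k \hookrightarrow \P^N_\Z$'' and treat $X$ as a closed subscheme of $\P^N_\Z$. This is only correct when $k=\F_p$ is the prime field. For a general finite field $k=\F_{p^r}$ with $r>1$, the morphism $\Spec k \to \Spec \Z$ is not a closed immersion (the ring map $\Z\to k$ is not surjective), so $\P^N_k\to\P^N_\Z$ is finite but not a closed immersion. Consequently $X$ is not a closed subscheme of $\P^N_\Z$, and Corollary~\ref{corollary:uniform-surjectivity-p} does not apply as stated; the identification of the restriction map with a surjection onto $H^0(X,L^{\otimes n})$ breaks down.

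The paper's proof addresses exactly this point by choosing a number field $K$ and a finite prime $\mathfrak p$ with $\OK/\mathfrak p=k$, and working with $\cX=\P^N_{\OK}$ instead of $\P^N_\Z$. Then $\P^N_k$ is literally the fibre over $\mathfrak p$, hence a closed subscheme, and $X\hookrightarrow\P^N_{\OK}$ is a genuine closed immersion. One then checks (via Proposition~\ref{proposition:FS-ample} and Corollary~\ref{corollary:pull-back-ample}, using that $\P^N_{\OK}\to\P^N_\Z$ is finite and unramified at the archimedean places) that the pulled-back Fubini--Study bundle is ample on $\P^N_{\OK}$, and concludes with Corollary~\ref{corollary:density-and-restriction}. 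With this modification your argument is complete and coincides with the paper's; your handling of the horizontal components and of the vacuous norm condition is correct.
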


\begin{proof}
Since $L$ is very ample, we can find a positive integer $N$ a closed embedding $i : X\ra\mathbb P^N_k$ such that $L=i^*\mathcal O(1)$. 
Apply Theorem \ref{thm:main} to the composition 
$$ f : X\ra \mathbb P^N_k\ra \mathbb P^N_{\OK},$$
where $K$ is a number field together with a finite prime $\mathfrak p$ such that $\OK/\mathfrak p=k$, and the line bundle $\Lbar=\overline{\mathcal O_{\mathbb P^N_{\OK}}(1)}$ endowed with the Fubini-Study metric. The hermitian line bundle $\Lbar$ is the pullback of $\overline{\mathcal O(1)}$ by the map $\mathbb P^N_{\OK}\ra\mathbb P^N_\Z$, which is finite and unramified over the archimedean place. By Proposition \ref{proposition:FS-ample} and Corollary \ref{corollary:pull-back-ample}, $\Lbar$ is ample.

Since $f(X)$ is supported over a closed point of $\Spec\Z$, Theorem \ref{thm:main} guarantees that the set 
$$\{\sigma\in \bigcup_{n>0} H^0_{Ar}(\mathbb P^N_{\OK},\Lbar^{\otimes n}), \,\mathrm{div}(\sigma_{|X})\,\textup{is irreducible}\}$$
has density $1$ in $\bigcup_{n>0} H^0_{Ar}(\mathbb P^N_{\OK}, \Lbar^{\otimes n})$. By Corollary \ref{corollary:density-and-restriction}, the theorem holds.
\end{proof}

Note that since on a scheme $X$ defined over a finite field, every line bundle is a hermitian line bundle, and every section is effective, we can remove the flatness assumptions on the theorems of the introduction and have uniform statements that cover both the results of this paper and those of \cite{CharlesPoonen16}.

\subsection{Generic smoothness}

We first state the Bertini smoothness theorem of Poonen \cite{Poonen04} in the form we need -- see \cite{ErmanWood15} for the proof of this version. 

\begin{theorem}\label{theorem:bertini-smoothness}
Let $X$ be a smooth projective variety over a finite field $k$, and let $L$ be an ample line bundle on $X$. Then the density of those $\sigma\in\bigcup_{n>0} H^0(X, L^{\otimes n})$ such that $\mathrm{div}(\sigma)$ is smooth is equal to $\zeta(1+\mathrm{dim}(X))^{-1}$.
\end{theorem}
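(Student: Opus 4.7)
The plan is to carry out the closed-point sieve of Poonen. Write $d=\dim X$. A section $\sigma\in H^0(X,L^{\otimes n})$ defines a smooth divisor iff, at every closed point $P\in X$, the image of $\sigma$ in $H^0(P^{(1)}, L^{\otimes n}|_{P^{(1)}})$ avoids the ``bad'' locus of sections that vanish together with all their first-order derivatives at $P$. Since $X$ is smooth of dimension $d$, this bad locus is a $\kappa(P)$-subspace of codimension $d+1$ in the free rank-$1$ $\mathcal{O}_{P^{(1)}}$-module of sections, and therefore has density $q_P^{-(d+1)}$ in $H^0(P^{(1)}, L^{\otimes n}|_{P^{(1)}})$, where $q_P := |\kappa(P)|$.

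The main term comes from low-degree points. Fix $r$ and consider the finite set of closed points $P$ with $\deg P\le r$. For $n$ large (uniformly in $P$, since there are finitely many), Serre vanishing gives surjectivity of the joint restriction map $H^0(X,L^{\otimes n})\to\prod_{\deg P\le r} H^0(P^{(1)}, L^{\otimes n}|_{P^{(1)}})$, so the smoothness conditions at distinct $P$ become independent. Hence the density of $\sigma$ that are smooth at every $P$ of degree at most $r$ equals $\prod_{\deg P\le r}(1-q_P^{-(d+1)})$. Letting first $n\to\infty$ and then $r\to\infty$, this product converges to $\prod_{P\text{ closed}}(1-q_P^{-(d+1)})=\zeta_X(d+1)^{-1}$, which yields the claimed main term.

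It remains to show that the density of $\sigma$ singular at some closed point $P$ with $\deg P>r$ tends to $0$ as $r\to\infty$, uniformly in $n$. I would split this into a ``medium-degree'' regime ($r<\deg P\le\eta n$ for a small constant $\eta$) and a ``high-degree'' regime ($\deg P>\eta n$). In the medium regime, a singular section at $P$ lies in a subset of $H^0(X,L^{\otimes n})$ of density at most $q_P^{-(d+1)\deg P}$ after choosing an appropriate thickening; summing the geometric series over degrees $>r$ gives a contribution $o_r(1)$ that is independent of $n$. The high-degree regime is Poonen's local-coordinate trick: choose an affine cover, write $\sigma$ in local coordinates as $f_0^{p^e}+p_1f_1^{p^e}+\cdots+p_df_d^{p^e}+g$ for a prime power $p^e$ comparable to $n/\deg P$, and observe that the partial derivatives of $\sigma$ are determined by a piece of the decomposition of much smaller degree; a Bezout count then bounds the number of singular $\sigma$ by a negligible fraction of $|H^0(X,L^{\otimes n})|$. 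The hard step is this last one, which is the technical heart of \cite{Poonen04} and whose generalization from very ample to ample $L$ is carried out in \cite{ErmanWood15}.
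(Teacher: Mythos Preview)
The paper does not give its own proof of this statement: it is stated as a known result of Poonen, with the extension from very ample to arbitrary ample $L$ attributed to Erman--Wood, and then used as a black box. Your proposal is a faithful outline of Poonen's closed-point sieve, so there is no divergence to discuss---you are sketching exactly the argument the paper cites.

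Two small corrections to your sketch. In the medium-degree regime you write the bad density at $P$ as $q_P^{-(d+1)\deg P}$; this should be $q_P^{-(d+1)}$, i.e.\ $q^{-(d+1)\deg P}$ with $q=|k|$. Combined with the Lang--Weil count of $O(q^{de})$ closed points of degree $e$, the tail sum is $\sum_{e>r} O(q^{-e})$, which indeed goes to $0$. In the high-degree regime, Poonen's decoupling uses the characteristic $p$ (writing pieces as $p$-th powers so their partial derivatives vanish), with the threshold degree a fixed fraction of $n$ rather than depending on $\deg P$; your formulation ``$p^e$ comparable to $n/\deg P$'' is not quite how the argument runs, though the underlying idea---separate the randomness governing the value from that governing the derivatives via Frobenius---is correct.
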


Applying the above result together with the restriction results of Corollary \ref{corollary:density-and-restriction}, we find the following. 

\begin{proposition}\label{proposition:smoothness-at-p}
Let $\cX$ be a projective arithmetic variety, and let $\Lbar$ be an ample hermitian line bundle on $\cX$. Let $p$ be a prime number such that $\cX_p$ is smooth over $\mathbb F_p$. Then the density of those $\sigma\in\bigcup_{n>0} H^0(X, L^{\otimes n})$ such that $\mathrm{div}(\sigma_{|\cX_p})$ is smooth is equal to $\zeta_p(\mathrm{dim}(\cX))^{-1}$, where $\zeta_p$ is the zeta function of $\cX_p$.
\end{proposition}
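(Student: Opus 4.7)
The plan is to reduce the statement directly to Poonen's finite-field Bertini smoothness theorem via the uniform restriction results already established in Section~\ref{section:ampleness}. The key observation is that whether $\mathrm{div}(\sigma_{|\cX_p})$ is smooth depends only on the image of $\sigma$ under the restriction map
$$\phi_n : H^0_{Ar}(\cX, \Lbar^{\otimes n}) \longrightarrow H^0(\cX_p, \mathcal L^{\otimes n}),$$
so the desired density on $\cX$ is a weighted average, over fibers of $\phi_n$, of the density on $\cX_p$.

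First, I would apply Theorem~\ref{theorem:bertini-smoothness} to the smooth projective variety $\cX_p$ over $\mathbb F_p$ with the ample line bundle $\mathcal L_{|\cX_p}$. Since $\dim(\cX_p) = \dim(\cX) - 1$, this yields that the set
$$\mathcal S = \{s \in \textstyle\bigcup_{n>0} H^0(\cX_p, \mathcal L^{\otimes n}),\ \mathrm{div}(s)\ \text{is smooth}\}$$
has density $\zeta(1+\dim \cX_p)^{-1} = \zeta_p(\dim \cX)^{-1}$ in $\bigcup_{n>0} H^0(\cX_p, \mathcal L^{\otimes n})$.

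Second, I would transfer this density statement to $\cX$ using Corollary~\ref{corollary:uniform-surjectivity-p}, applied with $Y = \cX_p$ (so $N = p$). For all large enough $n$, the restriction map $\phi_n$ is surjective and all its nonempty fibers have cardinality equal to $|H^0_{Ar}(\cX, \Lbar^{\otimes n})|/|H^0(\cX_p, \mathcal L^{\otimes n})|$ up to a multiplicative error of size $1 + O(e^{-n\eta})$ for some $\eta > 0$ depending only on $\cX$ and $\Lbar$. Writing
$$\phi_n^{-1}(\mathcal S \cap H^0(\cX_p, \mathcal L^{\otimes n})) = \{\sigma \in H^0_{Ar}(\cX, \Lbar^{\otimes n}),\ \mathrm{div}(\sigma_{|\cX_p})\ \text{is smooth}\},$$
summation over the fibers of $\phi_n$ then gives
$$\frac{|\{\sigma \in H^0_{Ar}(\cX, \Lbar^{\otimes n}),\ \mathrm{div}(\sigma_{|\cX_p})\ \text{smooth}\}|}{|H^0_{Ar}(\cX, \Lbar^{\otimes n})|} = \frac{|\mathcal S \cap H^0(\cX_p, \mathcal L^{\otimes n})|}{|H^0(\cX_p, \mathcal L^{\otimes n})|}\bigl(1 + O(e^{-n\eta})\bigr),$$
and letting $n \to \infty$ yields the claimed density $\zeta_p(\dim\cX)^{-1}$.

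There is essentially no obstacle: all the nontrivial work has been done, either in Poonen's theorem (which provides the density on $\cX_p$) or in Section~\ref{subsection:restriction} (which guarantees that the restriction map equidistributes effective arithmetic sections over finite-field sections up to negligible error). The only minor point to verify is the elementary fact that the property "$\mathrm{div}(\sigma_{|\cX_p})$ is smooth" only depends on $\phi_n(\sigma)$, which is immediate since $\sigma_{|\cX_p} = \phi_n(\sigma)$.
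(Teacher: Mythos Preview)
Your proposal is correct and follows exactly the approach the paper indicates: apply Theorem~\ref{theorem:bertini-smoothness} to the smooth projective variety $\cX_p$ over $\mathbb F_p$, then transfer the resulting density to $H^0_{Ar}(\cX,\Lbar^{\otimes n})$ via the equidistribution of restriction fibers. The paper cites Corollary~\ref{corollary:density-and-restriction} rather than Corollary~\ref{corollary:uniform-surjectivity-p}, but the latter is precisely the relevant special case, and your explicit summation over fibers is just the content of that corollary's proof.
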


\begin{proof}[Proof of Theorem \ref{theorem:generic-smoothness}]
In the situation of the theorem, we know that $\cX_p$ is smooth for all large enough $p$. Furthermore, denoting again the zeta function of $\cX_p$ by $\zeta_p$, we have 
$$\lim_{p\ra\infty}\zeta_p(x)=1$$
for any $x>1$ by \cite[1.3]{Serre65}. This shows that the density of those $\sigma\in\bigcup_{n>0} H^0_{Ar}(\cX, \Lbar^{\otimes n})$ such that there exists $p$ with $\cX_p$ smooth and $\mathrm{div}(\sigma_{|\cX_p})$ smooth is equal to $1$. For any such $\sigma$, the divisor $\mathrm{div}(\sigma)_\Q$ is smooth, which proves the result.
\end{proof}

\subsection{Irreducibility theorems with local conditions}

We can give variants of the irreducibility theorems with conditions at prescribed subschemes. For an easier formulation, we give them in the setting of Theorem \ref{thm:X=Y}.

\begin{proposition}\label{proposition:condition-on-subschemes}
Let $\cX$ be a projective arithmetic variety, and let $\Lbar$ be an ample hermitian line bundle on $\cX$. Let $Z_1$ be a finite subscheme of $\cX$, and let $Z_2$ be a positive-dimensional subscheme of $\cX$. Choose a trivialization $\phi : \mathcal L_{|Z_1}\simeq \mathcal O_{Z_1}$, and let $T$ be a subset of $H^0(Z_1, \mathcal O_{Z_1})$. Then the density of those $\sigma\in\bigcup_{n>0} H^0_{Ar}(\cX, \Lbar^{\otimes n})$ such that $\sigma_{|Z_1}$ belongs to $T$ (under the trivialization $\phi$) and $\sigma$ does not vanish identically on any component of $Z_2$ is equal to 
$$\frac{|T|}{|H^0(Z_1, \mathcal O_{Z_1})|}.$$
\end{proposition}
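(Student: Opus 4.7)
\bigskip

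\noindent\textbf{Proof plan for Proposition~\ref{proposition:condition-on-subschemes}.}
The strategy is to decouple the two conditions: the condition at $Z_1$ is a congruence/fiber condition handled by the restriction equidistribution results of Section~\ref{subsection:restriction}, while the non-vanishing condition on the components of $Z_2$ is a negligible (exponentially small) obstruction by the uniform lower bound on restriction maps. Throughout, I will write $A_n$ for the set of $\sigma\in H^0_{Ar}(\cX,\Lbar^{\otimes n})$ with $\sigma_{|Z_1}\in T$ (under $\phi^{\otimes n}$), and $B_n$ for the set of $\sigma\in H^0_{Ar}(\cX,\Lbar^{\otimes n})$ that vanish identically on at least one irreducible component of $Z_2$; the goal is to prove
$$\lim_{n\to\infty}\frac{|A_n\setminus B_n|}{|H^0_{Ar}(\cX,\Lbar^{\otimes n})|}=\frac{|T|}{|H^0(Z_1,\mathcal O_{Z_1})|}.$$

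\emph{Step 1: the condition at $Z_1$.} Since $Z_1$ is a finite subscheme of $\cX$, it is supported on finitely many closed points of $\cX$, so it lies over $\Spec(\Z/N\Z)$ for some positive integer $N$. The trivialization $\phi$ induces, for every $n$, an isomorphism of sets $\phi^{\otimes n}:H^0(Z_1,\mathcal L^{\otimes n})\simeq H^0(Z_1,\mathcal O_{Z_1})$, which sends $T$ bijectively onto a subset $T_n\subset H^0(Z_1,\mathcal L^{\otimes n})$ with $|T_n|=|T|$. Applying Corollary~\ref{corollary:uniform-surjectivity-p} to $Z_1$, the restriction map
$$\phi_n:H^0_{Ar}(\cX,\Lbar^{\otimes n})\longrightarrow H^0(Z_1,\mathcal L^{\otimes n})$$
is surjective for large $n$, and there exists $\eta>0$ such that every fiber has cardinality $\frac{|H^0_{Ar}(\cX,\Lbar^{\otimes n})|}{|H^0(Z_1,\mathcal O_{Z_1})|}\bigl(1+O(e^{-n\eta})\bigr)$. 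Summing over the $|T|$ elements of $T_n$ yields
$$\frac{|A_n|}{|H^0_{Ar}(\cX,\Lbar^{\otimes n})|}=\frac{|T|}{|H^0(Z_1,\mathcal O_{Z_1})|}\bigl(1+O(e^{-n\eta})\bigr).$$

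\emph{Step 2: the condition at $Z_2$ is negligible.} Write $Z_2=\bigcup_{i=1}^{k}Z_2^{(i)}$ for the finitely many irreducible components, each of positive dimension $d_i\geq 1$ by hypothesis. Apply Theorem~\ref{theorem:independent-restriction} to each $Z_2^{(i)}$: there exist $N_i$ and $\eta_i>0$ such that for $n\geq N_i$,
$$\frac{|\Ker(\phi_{n,Z_2^{(i)}})\cap H^0_{Ar}(\cX,\Lbar^{\otimes n})|}{|H^0_{Ar}(\cX,\Lbar^{\otimes n})|}=O(e^{-n^{d_i}\eta_i}).$$
Since there are only finitely many components, summing gives $|B_n|/|H^0_{Ar}(\cX,\Lbar^{\otimes n})|\to 0$ exponentially fast.

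\emph{Step 3: combine.} The set we are interested in is $A_n\setminus B_n$, and the elementary bounds $|A_n|-|B_n|\leq |A_n\setminus B_n|\leq |A_n|$ combined with Steps~1 and 2 give
$$\frac{|T|}{|H^0(Z_1,\mathcal O_{Z_1})|}+o(1)\ \leq\ \frac{|A_n\setminus B_n|}{|H^0_{Ar}(\cX,\Lbar^{\otimes n})|}\ \leq\ \frac{|T|}{|H^0(Z_1,\mathcal O_{Z_1})|}+o(1),$$
which is the desired density statement. There is no genuine obstacle: both inputs (Corollary~\ref{corollary:uniform-surjectivity-p} and Theorem~\ref{theorem:independent-restriction}) were designed precisely for this kind of statement, and the only mild subtlety is to check that Corollary~\ref{corollary:uniform-surjectivity-p} applies to $Z_1$, which reduces to the observation that a finite subscheme of an arithmetic variety is automatically supported over $\Spec(\Z/N\Z)$ for some $N$.
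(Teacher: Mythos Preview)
Your proof is correct and follows essentially the same approach as the paper: the paper's proof simply cites Corollary~\ref{corollary:density-and-restriction} for the $Z_1$ condition and Theorem~\ref{theorem:independent-restriction} for the $Z_2$ condition, while you use the equivalent Corollary~\ref{corollary:uniform-surjectivity-p} (which applies since $Z_1$ lies over some $\Spec(\Z/N\Z)$) and the same Theorem~\ref{theorem:independent-restriction}, then combine them with the same elementary density argument. The only difference is that you spell out the bookkeeping in more detail.
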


\begin{proof}
By Corollary \ref{corollary:density-and-restriction}, the density of those $\sigma$ such that $\sigma_{|Z_1}$ belongs to $T$ is indeed $\frac{|T|}{|H^0(Z_1, \mathcal O_{Z_1})|}.$ On the other hand, Theorem \ref{theorem:independent-restriction} ensures that the density of those $\sigma$ that do not vanish identically on any component of $Z_2$ is equal to $1$.
\end{proof}

Given Theorem \ref{thm:X=Y} -- proven in the last section of this paper -- and Theorem \ref{theorem:generic-smoothness}, we find the two following results.

\begin{corollary}
Let $\cX$ be a projective arithmetic variety of dimension at least $2$, and let $\Lbar$ be an ample hermitian line bundle on $\cX$. Let $Z_1$ be a finite subscheme of $\cX$, and let $Z_2$ be a positive-dimensional subscheme of $\cX$. Choose a trivialization $\phi : \mathcal L_{|Z_1}\simeq \mathcal O_{Z_1}$, and let $T$ be a subset of $H^0(Z_1, \mathcal O_{Z_1})$. Then the density of those $\sigma\in\bigcup_{n>0} H^0_{Ar}(\cX, \Lbar^{\otimes n})$ such that the following condition hold:
\begin{enumerate}[(i)]
\item  $\sigma_{|Z_1}$ belongs to $T$ (under the trivialization $\phi$);
\item $\sigma$ does not vanish identically on any component of $Z_2$;
\item $\mathrm{div}(\sigma)$ is irreducible,
\end{enumerate}
is equal to 
$$\frac{|T|}{|H^0(Z_1, \mathcal O_{Z_1})|}.$$
\end{corollary}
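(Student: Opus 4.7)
The plan is to show that the three conditions are essentially independent in the density-theoretic sense: condition (i) contributes the stated density, while conditions (ii) and (iii) each hold on a set of density $1$, and so do not affect the asymptotic proportion.

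More concretely, denote by $A_n, B_n, C_n \subset H^0_{Ar}(\cX, \Lbar^{\otimes n})$ the subsets of sections satisfying (i), (ii), (iii) respectively. First, the density of $\bigcup_n A_n$ equals $|T|/|H^0(Z_1, \mathcal O_{Z_1})|$: this follows from the proof of Proposition \ref{proposition:condition-on-subschemes}, which itself is an application of Corollary \ref{corollary:density-and-restriction} applied to the closed subscheme $Z_1$ and the set $E = \phi^{-1}(T) \cdot \mathbf{1}$ inside $\bigcup_n H^0(Z_1, \mathcal L^{\otimes n}_{|Z_1})$ (after trivialization). Second, the density of $\bigcup_n B_n$ equals $1$: by Theorem \ref{theorem:independent-restriction} applied to each irreducible component of $Z_2$ (finitely many), the proportion of sections vanishing identically on some component decays like $O(e^{-n^d \eta})$, hence the complement has density $1$. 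Third, the density of $\bigcup_n C_n$ equals $1$ by Theorem \ref{thm:X=Y}.

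To conclude, I would observe the elementary fact: if $(X_n)$ is a sequence of finite sets, $\mathcal P \subset \bigcup_n X_n$ has density $\rho$, and $\mathcal Q \subset \bigcup_n X_n$ has density $1$, then $\mathcal P \cap \mathcal Q$ has density $\rho$. Indeed,
\[
\frac{|\mathcal P \cap \mathcal Q \cap X_n|}{|X_n|} = \frac{|\mathcal P \cap X_n|}{|X_n|} - \frac{|\mathcal P \cap (X_n \setminus \mathcal Q)|}{|X_n|},
\]
and the second term is bounded by $|X_n \setminus \mathcal Q|/|X_n| \to 0$. Applying this twice, first with $\mathcal P = \bigcup_n A_n$ and $\mathcal Q = \bigcup_n B_n$, then with the resulting set and $\mathcal Q = \bigcup_n C_n$, yields that the set in the corollary has density $|T|/|H^0(Z_1, \mathcal O_{Z_1})|$.

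There is no genuine obstacle here since all the hard work is packaged into the three quoted results; the only mild subtlety is checking that Theorem \ref{theorem:independent-restriction}, which is stated for a single closed subscheme, can be summed over the finitely many irreducible components of $Z_2$ without losing the density-$1$ conclusion, which is immediate from a union bound.
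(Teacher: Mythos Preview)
Your proposal is correct and matches the paper's approach: the paper states this corollary immediately after Proposition~\ref{proposition:condition-on-subschemes} with no separate proof, simply noting that it follows from that proposition together with Theorem~\ref{thm:X=Y}. Your write-up makes explicit the elementary density-intersection argument that the paper leaves to the reader.
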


\begin{corollary}\label{corollary:Taylor}
Let $\cX$ be a projective arithmetic variety with smooth generic fiber, and let $\Lbar$ be an ample hermitian line bundle on $\cX$. Let $Z_1$ be a finite subscheme of $\cX$, and let $Z_2$ be a positive-dimensional subscheme of $\cX$. Choose a trivialization $\phi : \mathcal L_{|Z_1}\simeq \mathcal O_{Z_1}$, and let $T$ be a subset of $H^0(Z_1, \mathcal O_{Z_1})$. Then the density of those $\sigma\in\bigcup_{n>0} H^0_{Ar}(\cX, \Lbar^{\otimes n})$ such that the following condition hold:
\begin{enumerate}[(i)]
\item  $\sigma_{|Z_1}$ belongs to $T$ (under the trivialization $\phi$);
\item $\sigma$ does not vanish identically on any component of $Z_2$;
\item $\mathrm{div}(\sigma)_\Q$ is smooth,
\end{enumerate}
is equal to 
$$\frac{|T|}{|H^0(Z_1, \mathcal O_{Z_1})|}.$$
\end{corollary}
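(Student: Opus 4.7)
The plan is to deduce this corollary by combining Proposition \ref{proposition:condition-on-subschemes} with Theorem \ref{theorem:generic-smoothness}, following the same template that gave the previous corollary from Proposition \ref{proposition:condition-on-subschemes} together with Theorem \ref{thm:X=Y}. The only thing to check is that intersecting a density-$\rho$ set with a density-$1$ set yields a set of density $\rho$, which is formal.

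More precisely, let $X_n := H^0_{Ar}(\cX, \Lbar^{\otimes n})$. Let $A \subset \bigcup_{n>0} X_n$ be the set of sections satisfying conditions (i) and (ii) of the statement, and let $B \subset \bigcup_{n>0} X_n$ be the set of sections $\sigma$ such that $\mathrm{div}(\sigma)_\Q$ is smooth. By Proposition \ref{proposition:condition-on-subschemes} applied to the data $(Z_1, Z_2, \phi, T)$, the set $A$ has density $|T|/|H^0(Z_1, \mathcal O_{Z_1})|$. Since $\cX$ has smooth generic fiber, Theorem \ref{theorem:generic-smoothness} applies and shows that $B$ has density $1$.

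It remains to note that $A \cap B$ has density $|T|/|H^0(Z_1, \mathcal O_{Z_1})|$. Writing $A_n := A \cap X_n$ and $B_n := B \cap X_n$, we have the elementary inequality
\[
|A_n| - (|X_n| - |B_n|) \;\leq\; |A_n \cap B_n| \;\leq\; |A_n|.
\]
Dividing by $|X_n|$ and letting $n$ tend to infinity, the left-hand side tends to $|T|/|H^0(Z_1, \mathcal O_{Z_1})| - 0$ and the right-hand side to $|T|/|H^0(Z_1, \mathcal O_{Z_1})|$, yielding the claimed density.

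There is no serious obstacle here: the substantive work has been done in establishing the restriction control (Corollary \ref{corollary:density-and-restriction}) underlying Proposition \ref{proposition:condition-on-subschemes}, and in proving the generic smoothness statement of Theorem \ref{theorem:generic-smoothness} via reduction modulo a large prime and Poonen's Bertini smoothness theorem over finite fields. The corollary is simply their conjunction.
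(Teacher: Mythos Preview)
Your proposal is correct and matches the paper's intended argument exactly: the paper does not spell out a proof of this corollary, but simply records it as an immediate consequence of Proposition~\ref{proposition:condition-on-subschemes} combined with Theorem~\ref{theorem:generic-smoothness}, which is precisely the combination you carry out. Your explicit verification that intersecting a density-$\rho$ set with a density-$1$ set yields density $\rho$ is the only missing formal step, and you handle it correctly.
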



\section{Preliminary estimates}

This section gathers preliminary material on hermitian line bundles on arithmetic surfaces, which will be used in the proof of Theorem \ref{thm:X=Y}. In \ref{subsection:norm-estimates}, we give lower bounds for the norm of products of sections of hermitian line bundles. In \ref{subsection:upper-bound-theta}, we give an upper bound for the number of effective sections of a hermitian line bundle in terms of its degree with respect to a positive enough hermitian line bundle. Such a result is closely related to the effective bounds of \cite{YuanZhang13}. Our proof is better expressed in terms of the $\theta$-invariants of Bost \cite{Bost15}, which we only consider in a finite-dimensional setting. In \ref{subsection:ample-cone}, we bound the number of effective hermitian line bundles satisfying certain boundedness properties. 

\subsection{Norm estimates for sections of hermitian line bundles}\label{subsection:norm-estimates}

Let $X$ be a compact connected Riemann surface. Let $\omega$ be a real $2$-form of type $(1,1)$ on $X$, which is positive except on a finite number of points of $X$ and satisfies 
$$\int_X\omega=1.$$
Define 
$$d^c=\frac{1}{2i\pi}(\partial-\overline\partial),$$
so that 
$$dd^c=\frac{i}{\pi}\partial\overline\partial.$$

Let $s$ be a section of a hermitian line bundle on $X$. In what follows, in order to avoid confusion, we will write $||s||$ for the function $P\mapsto ||s(P)||$ and $||s||_\infty$ for the sup norm of $s$.

Let $\overline L=(L, ||.||)$ is a hermitian line bundle on $X$. If $s$ is a nonzero section of $L$, the Lelong-Poincaré formula gives us the equality of currents
$$-dd^c\log||s||=c_1(\overline L)-\delta_D,$$
where $c_1(\overline L)$ is the curvature form of $\overline L$, $D$ is the divisor of $s$ and $\delta_D$ is the current of integration along $D$. 

Define, following \cite[(1.4.8)]{BostGilletSoule94}
$$||s||_0=\exp\Big(\int_X\log ||s||\omega\Big).$$
Since $\int_X \omega=1,$ the following inequality holds:
$$||s||_0\leq ||s||_\infty.$$

Say that $\overline L$ is \emph{admissible} if $c_1(\overline L)$ is proportional to $\omega$. If $\overline L$ is admissible, then the Gauss-Bonnet formula shows
$$c_1(\overline L)=(\deg L)\omega.$$

Let $M$ be any line bundle on $X$. By the $\partial\overline\partial$ lemma, we can find a hermitian metric $||.||$ on $M$ such that the hermitian line bundle $(M, ||.||)$ is admissible. Given a nonzero global section $s$ of $M$, there exists a unique such metric such that $||s||_0=1$.

If $D$ is an effective divisor on $X$, let $\sigma_D$ be the section of $\mathcal O(D)$ that is the image of $1$ under the natural morphism $\mathcal O_X\ra\mathcal O(D)$. The discussion above shows that there exists a unique admissible hermitian line bundle $\overline{\mathcal O(D)}=(\mathcal O(D), ||.||)$ on $X$ such that $||\sigma_D||_0=1$. Of course, if $D_1$ and $D_2$ are effective line bundles, we have 
$$\overline{\mathcal O(D_1+D_2)}=\overline{\mathcal O(D_1)}\otimes\overline{\mathcal O(D_2)}.$$

The functions $\sigma_P$ satisfy basic uniformities in $P$ which are readily proved by the following argument using Green functions.

\begin{proposition}\label{proposition:uniformity-green}
Endow $X$ with a Riemannian metric with induced geodesic distance $d$. Then there exist positive constants $C, C'$ and $\eta$ such that the following inequalities hold:
\begin{enumerate}[(i)]
\item $\forall P\in X,\, ||\sigma_P||_{\infty}\leq C;$
\item $\forall (P, Q)\in X\times X,\, \sigma_P(Q)\geq \min(C'd(P, Q), \eta).$
\end{enumerate}
\end{proposition}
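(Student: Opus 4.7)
The plan is to identify $-\log\|\sigma_P\|$ with the Arakelov Green's function of $X$ against $\omega$, with pole at $P$, and then to read both estimates off from the classical regularity properties of this Green's function. Since $\overline{\cO(P)}$ is admissible of degree $1$, its curvature form equals $\omega$; by the Lelong-Poincar\'e formula recalled above, the function $g_P(Q) := -\log\|\sigma_P(Q)\|$ satisfies $dd^c g_P = \omega - \delta_P$ on $X$, while the normalization $\|\sigma_P\|_0 = 1$ is precisely $\int_X g_P\,\omega = 0$. These two conditions characterize $g_P$ uniquely as the Green's function of the elliptic operator $dd^c$ on the compact Riemann surface $X$, against the smooth measure $\omega$, with pole at $P$.

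Next I would invoke the standard regularity theory. Working in a finite chart atlas $(U_\alpha, z_\alpha)$ of $X$ and using the local parametrix $(P,Q) \mapsto -\log|z_\alpha(Q) - z_\alpha(P)|$ for $(P,Q) \in U_\alpha \times U_\alpha$, one writes $G(P, Q) := g_P(Q)$ as this parametrix plus a remainder that solves an elliptic equation on $X$ whose data varies smoothly with the parameter $P$. Elliptic regularity in families, combined with a partition of unity, then yields that $G$ is smooth on $(X \times X) \setminus \Delta$ and admits a global decomposition
\[
G(P,Q) = -\log d(P,Q) + \phi(P,Q),
\]
for some continuous function $\phi \colon X \times X \to \R$, which is then bounded by compactness of $X \times X$.

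From this decomposition both bounds are immediate. For (i), $\|\sigma_P(Q)\| = d(P,Q)\, e^{-\phi(P,Q)} \leq \mathrm{diam}(X)\cdot e^{-\inf \phi}$, giving a uniform constant $C$. For (ii), $\|\sigma_P(Q)\| \geq d(P,Q)\, e^{-\sup \phi}$, so taking $C' := e^{-\sup \phi}$ already produces the stronger bound $\|\sigma_P(Q)\| \geq C' d(P,Q)$ for all $P, Q$; the stated lower bound $\min(C' d(P,Q), \eta)$ then follows a fortiori for any $\eta > 0$ (one may take, for instance, $\eta := C' \cdot \mathrm{diam}(X)$).

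The main obstacle will be the decomposition of $G$ in the second paragraph, i.e.\ the joint regularity and global boundedness of the off-diagonal remainder $\phi$ in both variables $P$ and $Q$. This is a standard application of elliptic theory on a compact surface, but does require carrying out the parametrix construction in families rather than at a single fixed pole. The fact that $\omega$ is only assumed to be positive off a finite set causes no trouble here, since ellipticity of $dd^c$ is independent of $\omega$ and since $\omega$ itself is smooth by hypothesis.
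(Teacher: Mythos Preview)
Your argument is correct. Both proofs establish the same underlying fact---that the Arakelov Green function $G(P,Q)=-\log\|\sigma_P(Q)\|$ has a logarithmic singularity along the diagonal with a remainder that is bounded uniformly in $(P,Q)$---but they package it differently. You invoke elliptic regularity and a parametrix construction in families to obtain the decomposition $G(P,Q)=-\log d(P,Q)+\phi(P,Q)$ with $\phi$ continuous on $X\times X$, and then read off both bounds by compactness. The paper instead works geometrically on $X\times X$: it builds, via the $\partial\overline\partial$-lemma, a hermitian metric on $\mathcal O(\Delta)$ whose curvature restricts to $\omega$ on every slice $\{P\}\times X$, so that the canonical section $\sigma_\Delta$ restricts to $\sigma_P$ up to a continuous scalar $\lambda(P)$; the bounds then follow because $\sigma_\Delta$ is a smooth section vanishing to order one along the compact $\Delta$. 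The paper's route avoids naming elliptic theory as a black box and stays within the hermitian-line-bundle language of the rest of the section, while yours is more direct if one is willing to cite the standard regularity of Green functions on compact Riemann surfaces. Your final remark that $\omega$ being degenerate at finitely many points is harmless is well taken and applies equally to the paper's argument.
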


\begin{proof}
Let $\Delta\subset X\times X$ be the diagonal. Let $\alpha$ be a real closed form of type $(1,1)$ on $X\times X$ of the form 
$$\alpha=p_1^*\omega+p_2^*\omega+\sum_{i\in I}p_1^*\beta_i\wedge p_2^*\gamma_i,$$
where $p_1$ and $p_2$ are the two projections from $X\times X$ to $X$ and the $\beta_i$ (resp. $\gamma_i$) are $1$-forms on $X$. Choose the $\beta_i$ and $\gamma_i$ so that $\alpha$ is symmetric with respect to the involution of $X\times X$ that exchanges the two factors, and that it is cohomologous to the class of the diagonal $\Delta$ in the de Rham cohomology of $X\times X$. By the $\partial\overline\partial$ lemma, we can find a hermitian metric on the line bundle $\mathcal O(\Delta)$ with curvature form $\alpha$. For any $P$ in $X$, this hermitian metric induces a hermitian metric on $\mathcal O(P)$ by restriction to $\{P\}\times X$. 

Let $\sigma_\Delta$ be the global section of $\mathcal O(\Delta)$ corresponding to the constant function $1$. For any $P$ in $X$, write $\tau_P$ for the section of $\mathcal O(P)$. 
$$\tau_P : Q\mapsto \sigma_\Delta(P, Q).$$
Then 
$$-dd^c\log ||\tau_P||=\alpha_{|\{P\}\times X}-\delta_P=\omega-\delta_P,$$
which shows that the metric on $\mathcal O(P)$ coming from that on $\mathcal O(\Delta)$ differs from the canonical one defined above by a homothethy. In particular, we can find a continuous function $X\ra \R^*_+, \,P\mapsto \lambda(P)$ such that 
$$\forall (P, Q)\in X\times X, ||\sigma_P(Q)|| =\lambda(P) ||\sigma_\Delta(P, Q)||.$$
Since $(P, Q)\mapsto \sigma_\Delta(P, Q)$ is a smooth section of $\mathcal O(\Delta)$ that vanishes with the order $1$ along $\Delta$, this shows the result\footnote{Actually, a straightforward computation shows that $||\tau_P||_0$ is a constant function of $P$, so that $\lambda$ is constant.}.\end{proof}

We will make use of the uniformity above to prove inequalities between norms. The following is a variant of \cite[Corollary 1.4.3]{BostGilletSoule94}.

\begin{proposition}\label{proposition:bound-low-degree}
Let $\overline L=(L, ||.||)$ be an admissible hermitian line bundle on $X$. Let $P$ be a point of $X$, and let $s$ be a section of $L$. Then 
$$||s(P)||\leq ||s||_0 \,||\sigma_P||_\infty^{\deg L}.$$
In particular, there exists a positive constant $C_1$ such that 
$$||s||_\infty\leq C_1^{\deg L}||s||_0.$$
\end{proposition}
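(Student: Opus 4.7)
Write $D=\mathrm{div}(s)=\sum_i n_i P_i$, so that $\deg D=\deg L$. There is a unique isomorphism $\iota:\mathcal O(D)\lrasim L$ sending $\sigma_D$ to $s$. Via $\iota$, pull the admissible metric of $\overline L$ back to $\mathcal O(D)$; since both $\overline L$ and $\overline{\mathcal O(D)}$ are admissible and of the same degree, the pulled-back metric and the admissible metric on $\mathcal O(D)$ share the same curvature form $(\deg L)\omega$, hence differ by a positive constant. Equivalently, the ratio $\|s\|_{\overline L}/\|\sigma_D\|_{\overline{\mathcal O(D)}}$ is a constant function on $X$ (this is a one-line Lelong--Poincar\'e argument: the log of this ratio is smooth and $dd^c$-harmonic on $X$, hence constant by compactness).

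\textbf{Evaluation of the constant.} Integrate $\log$ against $\omega$ on both sides: since $\int_X\omega=1$ and $\|\sigma_D\|_0=1$ by construction of the admissible metric on $\overline{\mathcal O(D)}$, the constant equals $\|s\|_0$. Therefore, for every $Q\in X$,
$$\|s(Q)\|_{\overline L}=\|s\|_0\,\|\sigma_D(Q)\|_{\overline{\mathcal O(D)}}=\|s\|_0\prod_i \|\sigma_{P_i}(Q)\|^{n_i}.$$
Specializing to $Q=P$ gives an exact formula for $\|s(P)\|$ in terms of the $\|\sigma_{P_i}(P)\|$.

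\textbf{Symmetry step.} The statement asks for a bound involving $\|\sigma_P\|_\infty$, not $\|\sigma_{P_i}\|_\infty$, so the crucial ingredient is the symmetry
$$\|\sigma_P(Q)\|=\|\sigma_Q(P)\|\qquad\forall\,P,Q\in X.$$
This follows from the construction given in the proof of Proposition \ref{proposition:uniformity-green}: the form $\alpha$ there is chosen symmetric under the involution swapping the factors of $X\times X$, so the section $\sigma_\Delta$ of the line bundle $\mathcal O(\Delta)$ (with metric of curvature $\alpha$) can be taken symmetric, and the footnote's observation that the rescaling function $\lambda(P)$ is constant then yields the symmetry of $(P,Q)\mapsto\|\sigma_P(Q)\|$. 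With this in hand, $\|\sigma_{P_i}(P)\|=\|\sigma_P(P_i)\|\leq \|\sigma_P\|_\infty$, so
$$\|s(P)\|\leq \|s\|_0\prod_i \|\sigma_P\|_\infty^{n_i}=\|s\|_0\,\|\sigma_P\|_\infty^{\deg L},$$
which is the first claim. For the second, combine with Proposition \ref{proposition:uniformity-green}(i), which provides a constant $C$ (independent of $P$) such that $\|\sigma_P\|_\infty\leq C$; taking $C_1:=C$ and the sup over $P$ gives $\|s\|_\infty\leq C_1^{\deg L}\|s\|_0$.

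\textbf{Main obstacle.} The only non-trivial ingredient is the symmetry of the Arakelov Green function $(P,Q)\mapsto\|\sigma_P(Q)\|$. If one does not want to rely on the symmetric construction of $\sigma_\Delta$ from the previous proposition, an alternative is to work directly with the symmetrized Green function: one could define the metric on $\mathcal O(D)$ so that $-\log\|\sigma_D\|$ is the symmetric Green function (normalized by $\int\log\|\sigma_D\|\omega=0$), and verify symmetry via the standard integration-by-parts argument on $X\times X$. All other steps (the constancy of $\|s\|/\|\sigma_D\|$, the integration to identify the constant, and the final uniform bound) are routine.
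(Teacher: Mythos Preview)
Your argument is correct, but the paper's route is more direct and avoids what you call the ``main obstacle'' entirely. Instead of comparing $\|s\|$ to $\|\sigma_D\|$ and then invoking the symmetry $\|\sigma_{P_i}(P)\|=\|\sigma_P(P_i)\|$, the paper applies the Stokes identity
\[
\int_X g\,dd^c g_P=\int_X g_P\,dd^c g,\qquad g=-\log\|s\|,\ g_P=-\log\|\sigma_P\|,
\]
which (using $\|\sigma_P\|_0=1$) gives at once
\[
\log\|s(P)\|-\log\|s\|_0=\sum_i n_i\log\|\sigma_P(P_i)\|,
\]
already expressed in terms of $\|\sigma_P(P_i)\|$ rather than $\|\sigma_{P_i}(P)\|$. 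The bound by $\|\sigma_P\|_\infty^{\deg L}$ is then immediate, with no symmetry step needed. Your approach and the paper's are of course closely related: running the same Stokes identity with $g=-\log\|\sigma_Q\|$ in place of $-\log\|s\|$ is exactly how one proves the symmetry of the Green function. So you have factored the computation through an auxiliary statement that the paper's single integration by parts absorbs; both are valid, but the paper's organization is leaner and does not depend on the symmetric choices in Proposition~\ref{proposition:uniformity-green} or its footnote.
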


\begin{proof}
We can assume that $s$ is nonzero.
Let $D$ be the divisor of $s$. Define 
$$g=-\log ||s||$$
and 
$$g_P=-\log ||\sigma_P||.$$
By Lelong-Poincaré, we have 
$$dd^c g=(\deg L)\omega-\delta_D$$
and 
$$dd^c g_P=\omega-\delta_P.$$
The Stokes formula 
$$\int_X g\,dd^c g_P=\int_X g_P\, dd^c g$$ 
gives us
$$-\log ||s||_0+\log ||s(P)||=-\deg L\log ||\sigma_P||_0+\log||\sigma_P(D)||=\log||\sigma_P(D)||,$$
where, if $D=\sum_i n_iP_i$, we wrote 
$$||\sigma_P(D)||=\Pi_i||\sigma(P_i)^{n_i}||.$$
Since the degree of $D$ is equal to the degree of $L$, we get the first inequality. The second one follows from the first and Proposition \ref{proposition:uniformity-green}.
\end{proof}

\begin{lemma}\label{lemma:comparison-2}
Let $\overline L=(L, ||.||)$ be an admissible hermitian line bundle on $X$ with positive degree. Then for any section $s$ of $L$, and any $P$ in $X$, the following inequality holds:
$$||s||_\infty\leq C_2(\deg L) ||s\sigma_P||_\infty,$$
where $C_2$ is a positive constant depending only on $X$ and $\omega$.
\end{lemma}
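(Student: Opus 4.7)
The plan is to pick a point $Q_0\in X$ achieving $||s(Q_0)||=||s||_\infty$ and either work with $Q_0$ directly (if it is not too close to $P$) or transfer the bound to a nearby point on a small circle around $P$ (if it is). With $C',\eta$ the constants from Proposition \ref{proposition:uniformity-green}(ii), set $r_0:=\eta/C'$ and $r:=\min(r_0,1/\deg L)$. This choice has two virtues: first, on $\overline{B_r(P)}$ the bound $||\sigma_P(Q)||\geq C'\,d(P,Q)$ of Proposition \ref{proposition:uniformity-green}(ii) holds (since $C'r\leq\eta$); second, on the same disk the metric of $\overline L$ oscillates by an amount independent of $\deg L$. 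Indeed, admissibility gives $c_1(\overline L)=(\deg L)\omega$, so on a fixed simply connected chart $U$ around $P$ one may pick a holomorphic frame $e$ of $L|_U$ (possibly twisting by a nonvanishing holomorphic function to absorb the harmonic part) so that $||e||^2=e^{-2(\deg L)\phi}$ where $\phi$ is a smooth local potential of $\omega$ independent of $\overline L$. Covering $X$ by finitely many such charts, we obtain a uniform Lipschitz bound on $\phi$, so that the oscillation of $\phi$ on $B_r(P)$ is $O(r)$ and $(||e||^2)_{\max}/(||e||^2)_{\min}\leq e^{O(r\deg L)}$ is bounded by a constant $A$ depending only on $X,\omega$.

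If $d(Q_0,P)\geq r$, Proposition \ref{proposition:uniformity-green}(ii) gives $||\sigma_P(Q_0)||\geq \min(C'r,\eta)\geq \min(C',\eta)/\deg L$, hence $||s\sigma_P||_\infty\geq (\min(C',\eta)/\deg L)\,||s||_\infty$, already better than what is claimed.

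If instead $d(Q_0,P)<r$, write $s=fe$ with $f$ holomorphic on $U$. By the maximum modulus principle for $f$, there exists $Q_1\in\partial B_r(P)$ with $|f(Q_1)|\geq|f(Q_0)|$, whence
$$||s(Q_0)||=|f(Q_0)|\cdot||e(Q_0)||\leq |f(Q_1)|\sqrt{(||e||^2)_{\max}}\leq \sqrt{A}\,||s(Q_1)||.$$
Since $Q_1\in\partial B_r(P)$, Proposition \ref{proposition:uniformity-green}(ii) yields $||\sigma_P(Q_1)||\geq C'r$, and therefore $||s\sigma_P||_\infty\geq (C'r/\sqrt{A})\,||s||_\infty$. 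Using $1/r\leq\max(1/r_0,\deg L)\leq(1+1/r_0)\deg L$, the desired inequality $||s||_\infty\leq C_2(\deg L)||s\sigma_P||_\infty$ follows with a suitable $C_2$ depending only on $X,\omega$.

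The main obstacle is the calibration of the scale $r$: too large and the curvature $(\deg L)\omega$ makes the metric of $\overline L$ swing wildly over $B_r(P)$, defeating the maximum modulus step; too small and the lower bound $||\sigma_P(Q)||\geq C'r$ on $\partial B_r(P)$ degenerates. The critical scale $r\sim 1/\deg L$ (capped at $r_0$) is the unique one that balances both constraints, and it accounts precisely for the linear dependence on $\deg L$ announced in the lemma.
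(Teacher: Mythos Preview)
Your proof is correct and follows essentially the same strategy as the paper's: localize in a finite atlas, use admissibility to write the metric as $e^{-2(\deg L)\phi}$ with $\phi$ a fixed local potential of $\omega$, and then combine the lower bound on $||\sigma_P||$ from Proposition~\ref{proposition:uniformity-green} with a maximum-modulus argument on a disk of radius $\sim 1/\deg L$, where the metric oscillation is uniformly bounded. The only cosmetic difference is that you center the disk at $P$ rather than at the maximizing point, which in fact makes the lower bound $||\sigma_P(Q_1)||\geq C'r$ on the boundary slightly cleaner to state.
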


\begin{proof}
Let $B$ be the ball $\{z\in\C|\,|z|<3\}$. Let $(U_i)_{i\in I}$ be a finite cover of $X$ by open subsets such that there exist biholomorphisms
$$f_i : B \ra U_i$$
and assume that $X$ is covered by the $f_i(\{z\in\C|\,|z|<1\})$. For all $i\in I$, choose a smooth function $\phi_i : B\ra\R$ such that $-dd^c\phi_i=f_i^*\omega$.

Since $\overline L$ is admissible, the curvature form of $\overline L$ is $(\deg L)\omega$ and for any $i\in I$, we can find an isomorphism of hermitian line bundles on $B$
$$f_i^*\overline L\simeq (\mathcal O, e^{-\lambda\phi_i}),$$
where $|.|$ is the standard absolute value and $\lambda=\deg L$.

\bigskip

Choose an element $i\in I$, and a complex number $z$ with $|z|<1$ such that 
$$|f_i^*s(z)|e^{-\lambda\phi_i(z)}=||s||_\infty,$$
where $f_i^*s(z)$ is considered as a complex number via the isomorphism above. By Proposition \ref{proposition:uniformity-green}, we can find positive constants $\varepsilon<1$ and $\eta$ depending only on $X$ and $\omega$ such that either $|\sigma_P(z)|\geq \frac{\eta}{\lambda}$ or, 
$$\forall z'\in\C, |z'-z|=\frac{\varepsilon}{\lambda}\implies |\sigma_P(z')|\geq \frac{\eta}{\lambda}.$$
If $\sigma_P(z)\geq\frac{\eta}{\lambda}$, then 
$$||s\sigma_P||_\infty\geq ||s\sigma_P(f_i(z))||\geq\frac{\eta}{\lambda}||s(f_i(z))||=\frac{\varepsilon}{\deg L}||s||_\infty.$$
Else, we can find a complex number $z'$ with $|z'-z|=\frac{\varepsilon}{\lambda}$ and $|f_i^*s(z')|\geq |f_i^*s(z)|.$ Then 
$$||s\sigma_P||_\infty\geq ||s\sigma_(f_i(z'))||\geq \frac{\eta}{\lambda}||s||_\infty e^{-\lambda(\phi_i(z)-\phi_i(z'))}\geq e^{-C}\frac{\eta}{\deg L}||s||_\infty,$$
where $C$ is an upper bound for the differential of the $\phi_i$ on the ball $\{z\in\C|\,|z|<2\}$ as $i$ varies through the finite set $I$. 
\end{proof}

\begin{proposition}\label{proposition:bound-large-degree}
Let $\overline L=(L, ||.||)$ and $\overline M=(M, ||.||)$ be two admissible hermitian line bundles on $X$. Then for any two sections $s$ and $\sigma$ of $L$ and $M$ respectively, the following inequality holds:
$$||\sigma||_0||s||_\infty\leq (C_2(\deg L + \deg M))^{\deg M} ||s\sigma||_\infty,$$
where $C_2$ is a positive constant depending only on $X$ and $\omega$.
\end{proposition}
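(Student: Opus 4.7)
The plan is to induct on $d = \deg M$, peeling off one point $\sigma_P$ at a time from $\sigma$, and applying Lemma \ref{lemma:comparison-2} at each step. Before starting, I would dispose of the trivial cases $s = 0$ or $\sigma = 0$, after which $\deg L, \deg M \geq 0$. Two standing observations will be used repeatedly. First, the \emph{multiplicativity} $||s\sigma||_0 = ||s||_0 \, ||\sigma||_0$, which is immediate from the definition $\log ||\cdot||_0 = \int_X \log ||\cdot||\, \omega$ and the fact that $\log ||s\sigma|| = \log ||s|| + \log ||\sigma||$. Second, an admissible hermitian line bundle of degree $0$ with a nonzero global section is trivial with constant metric (since its curvature vanishes and an effective degree-zero divisor is $0$), so any section is constant and its sup- and $||\cdot||_0$-norms coincide.

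I would then split off the degenerate case $\deg L = 0$. Here $s$ is constant by the second observation, so $||s||_\infty = ||s||_0$, and multiplicativity gives
$$||\sigma||_0\, ||s||_\infty = ||s||_0\, ||\sigma||_0 = ||s\sigma||_0 \leq ||s\sigma||_\infty,$$
proving the inequality with constant $1$.

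Now assume $\deg L \geq 1$ and induct on $d = \deg M$. The base $d = 0$ is analogous: $\sigma$ is constant, $||\sigma||_\infty = ||\sigma||_0$, and $||s\sigma||_\infty = ||s||_\infty ||\sigma||_\infty$. For the inductive step $d \geq 1$, pick a point $P$ in the support of $\mathrm{div}(\sigma)$ and write $\sigma = \tau \sigma_P$, where $\tau \in H^0(X, M')$ with $M' := M \otimes \mathcal O(-P)$ equipped with its unique admissible metric as a tensor product. Then $\deg M' = d-1$ and, since $||\sigma_P||_0 = 1$, multiplicativity yields $||\tau||_0 = ||\sigma||_0$. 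The inductive hypothesis applied to the admissible pair $(\overline L, \overline{M'})$ and the sections $s, \tau$ gives
$$||\sigma||_0\, ||s||_\infty = ||\tau||_0\, ||s||_\infty \leq \bigl(C_2(\deg L + d - 1)\bigr)^{d-1} ||s\tau||_\infty.$$
Next, $s\tau$ is a section of the admissible line bundle $L \otimes M'$ of degree $\deg L + d - 1 \geq 1$, so Lemma \ref{lemma:comparison-2} applies and gives
$$||s\tau||_\infty \leq C_2(\deg L + d - 1)\, ||s\tau\sigma_P||_\infty = C_2(\deg L + d - 1)\, ||s\sigma||_\infty.$$
Combining the two estimates and using the crude bound $\deg L + d - 1 \leq \deg L + d$ produces the constant $(C_2(\deg L + \deg M))^{\deg M}$, completing the induction.

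The only genuine obstacle is that Lemma \ref{lemma:comparison-2} requires a \emph{positive}-degree bundle, so I need to make sure the bundle $L \otimes M'$ appearing at each inductive step has degree at least $1$. Restricting the induction to $\deg L \geq 1$ and handling $\deg L = 0$ as a separate (and easy) case guarantees this, and the rest is bookkeeping.
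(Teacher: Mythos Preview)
Your proof is correct and follows essentially the same approach as the paper: both arguments reduce to repeated applications of Lemma \ref{lemma:comparison-2} by writing $\sigma$ as a product of the canonical sections $\sigma_P$. The paper streamlines the bookkeeping by using scaling invariance to normalize to $\sigma=\sigma_D$ (so $||\sigma||_0=1$) rather than tracking $||\sigma||_0$ through multiplicativity, and it leaves the degenerate cases $\deg L=0$, $\deg M=0$ implicit, whereas you handle them explicitly; otherwise the two arguments coincide.
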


\begin{proof}
Let $D$ be the divisor of $\sigma$. Then $M$ is isomorphic to $\mathcal O(D)$, and the hermitian line bundles $\overline M$ and $\overline{\mathcal O(D)}$, as well as the sections $\sigma$ and $\sigma_D$ differ by a homothethy. Since the inequality we want to prove is invariant under scaling, we can assume that $\overline M=\overline{\mathcal O(D)}$ and $\sigma=\sigma_D$. If $D=\sum_i n_iP_i$, we have 
$$\overline{\mathcal O(D)}=\bigotimes_i\overline{\mathcal O(P_i)}^{\otimes n_i}$$
and 
$$\sigma_D=\Pi_i \sigma_{P_i}^{n_i},$$
so that the result follows from successive applications of Lemma \ref{lemma:comparison-2}.
\end{proof}

\subsection{An upper bound for the number of sections}\label{subsection:upper-bound-theta}
Let $\cX$ be a projective arithmetic variety with smooth generic fiber. Choose a K\"ahler form on $\cX(\C)$ which is invariant under complex conjugation and has volume $1$. If $\Lbar$ is a hermitian line bundle on $\cX$, we write $h^0_\theta(\cX, \Lbar)$ for $h^0_\theta(H^0_{L^2}(\cX, \Lbar))$, where the hermitian vector bundle $H^0_{L^2}(\cX, \Lbar)$ over $\Spec \Z$ is endowed with the $L^2$ norm induced by the K\"ahler metric on $\cX$.

We will need a comparison result between the sup norm and the $L^2$ norm on the space of sections of hermitian line bundles, which we will obtain through a minor generalization of Gromov's lemma \cite[Lemma 30]{GilletSoule92}. We follow the proof of Gillet-Soulé and start with a local result. 

\begin{lemma}\label{lemma:local-estimate-gromov}
Let $d$ be a positive integer, and let $B$ be the open ball $\{z\in \C^d |\, |z|<3\}$ in $\C^d$. Let $\phi$ be a real-valued smooth function on $B$, and let $g$ be a smooth positive function on $B$. Then there exists a positive constant $C$ depending only on $\phi$ and $g$ such that for any $\lambda\geq 1$, any holomorphic function $f$ on $B$ and any $w$ in $B$ with $|w|<1$, 
$$\int_{|z-w|<1} |f(z)|^2 e^{-2\lambda\phi(z)}g(z) dx_1\ldots dy_d\geq C |f(w)|^2e^{-2\lambda\phi(w)}\lambda^{-2d}.$$
\end{lemma}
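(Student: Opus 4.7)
The plan is to prove this via the sub-mean-value inequality for plurisubharmonic functions combined with a Taylor-style control on the weight $e^{-2\lambda\phi}$ on a ball of radius of order $1/\lambda$ around $w$. The key observation is that, although $\lambda$ may be large, if we shrink the integration ball to a scale comparable to $1/\lambda$, then the variation of $\lambda\phi$ is bounded, while $|f|^2$ is plurisubharmonic and so controlled from below by its value at the center.

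Concretely, I would proceed as follows. First, set $M := \sup_{|z|\leq 2} |\nabla\phi(z)|$ and $g_{\min} := \inf_{|z|\leq 2} g(z)$; both are finite and positive because $g>0$ is smooth and $\phi$ is smooth on the compact set $\{|z|\leq 2\}\subset B$. Next, for $\lambda\geq 1$ and $w$ with $|w|<1$, choose a radius $r := \min(1,\,(2M\lambda)^{-1})$, so that in any case $r\leq 1/(2M\lambda)$ times a bounded factor, and in particular $B(w,r)\subset\{|z-w|<1\}\cap\{|z|<2\}$. On $B(w,r)$ the mean value theorem gives $|\phi(z)-\phi(w)|\leq Mr\leq 1/(2\lambda)$, hence
\[
e^{-2\lambda\phi(z)}\geq e^{-1}\,e^{-2\lambda\phi(w)}\qquad\text{for all }z\in B(w,r).
\]

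Second, use that $|f|^2$ is plurisubharmonic (as $f$ is holomorphic) so that the sub-mean-value inequality over the Euclidean ball $B(w,r)\subset\mathbb C^d\cong\mathbb R^{2d}$ of volume $c_d r^{2d}$ (with $c_d=\pi^d/d!$) yields
\[
\int_{B(w,r)} |f(z)|^2\,dx_1\cdots dy_d \;\geq\; c_d\,r^{2d}\,|f(w)|^2.
\]
Combining the two estimates together with $g\geq g_{\min}$ on $B(w,r)$, I get
\[
\int_{|z-w|<1} |f(z)|^2 e^{-2\lambda\phi(z)}g(z)\,dx_1\cdots dy_d\;\geq\; e^{-1}g_{\min}c_d\,r^{2d}\,|f(w)|^2 e^{-2\lambda\phi(w)}.
\]
Finally, substituting $r^{2d}\geq (2M)^{-2d}\lambda^{-2d}$ (for $\lambda\geq \max(1,1/(2M))$; the remaining bounded range $\lambda\in[1,1/(2M)]$ can be absorbed into the constant) yields the claim with $C := e^{-1}g_{\min}c_d(2M)^{-2d}$, which depends only on $\phi$ and $g$.

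The main obstacle is essentially bookkeeping: one must be careful that the small ball $B(w,r)$ is contained both in the domain of integration $\{|z-w|<1\}$ and in a compact subset of $B$ where the bounds $M$ and $g_{\min}$ are valid; and the correct scaling $r\sim 1/\lambda$ (rather than $r\sim 1/\sqrt\lambda$) is what produces the exponent $\lambda^{-2d}$ in the conclusion. No serious analytic difficulty arises beyond the sub-mean-value inequality for $|f|^2$.
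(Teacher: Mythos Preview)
Your argument is correct and self-contained. The paper takes a slightly different, more indirect route: it first cites the ``local statement'' in the proof of \cite[Lemma~30]{GilletSoule92} for the case where $\lambda$ is an integer, and then reduces arbitrary $\lambda\geq 1$ to the integer case by shifting $\phi$ by a constant so that it is negative on $\{|z|<2\}$, whence $e^{-2\lambda\phi}\geq e^{-2\lfloor\lambda\rfloor\phi}$ there. Your approach---shrink to a ball of radius $\sim 1/\lambda$, control the variation of $\lambda\phi$ by the mean-value theorem, and apply the sub-mean-value inequality to the subharmonic function $|f|^2$---is essentially the underlying mechanism of the Gillet--Soul\'e local estimate, but you carry it out directly for all real $\lambda\geq 1$ without the integer/non-integer split. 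This buys you a proof that is independent of the reference and avoids the somewhat artificial reduction step; the paper's version, on the other hand, avoids reproving the sub-mean-value estimate by invoking the literature. One small bookkeeping point: handle the degenerate case $M=0$ (i.e.\ $\phi$ constant on $\{|z|\leq 2\}$) separately, since then $(2M\lambda)^{-1}$ is undefined; in that case one simply takes $r=1$ and the weight factor is exactly $e^{-2\lambda\phi(w)}$.
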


\begin{proof}
If $\lambda$ is an integer, the inequality is the "local statement" proved in the beginning of the proof of \cite[Lemma 30]{GilletSoule92}. 

To prove our result, after adding a constant to $\phi$, we can assume that $\phi$ is negative on the ball $|z|<2$. Let $C'$ be an lower bound for the values of $\phi$ on the ball $|z|<2$. If $\lambda>1$ is arbitrary, write $\lambda=n+r$, with $0\leq r<1$. Then 
$$e^{-2\lambda\phi(z)}=e^{-2n\phi(z)}e^{-2r\phi(z)}\geq e^{-2n\phi(z)}$$
for any $z$ with $|z|<2$, so that 
\begin{align*}
\int_{|z-w|<1} |f(z)|^2 e^{-2\lambda\phi(z)} g(z)dx_1\ldots dy_d & \geq \int_{|z-w|<1} |f(z)|^2 e^{-2n\phi(z)} g(z)dx_1\ldots dy_d\\
&\geq C |f(w)|^2e^{-2n\phi(w)}n^{-2d}\\
&\geq C |f(w)|^2e^{-2\lambda\phi(w)}e^{2r\phi(w)}\lambda^{-2d}\\
\ &\geq Ce^{2C'}|f(w)|^2e^{-2\lambda\phi(w)}\lambda^{-2d}.
\end{align*}
Replacing $C$ with $Ce^{2C'}$, we get the result.
\end{proof}

\begin{proposition}\label{proposition:gromov-for-admissible}
Let $X$ be a compact connected riemannian complex manifold of dimension $d$, let $\omega$ be a real form of type $(1,1)$ on $X$. Then there exists a positive constant $C$ such that for any hermitian line bundle $\Lbar$ on $X$ with positive degree and curvature form $\lambda\omega$ with $|\lambda|>1$, and any section $s$ of $\mathcal L$ over $X$, we have
$$||s||_{L^2}\geq C|\lambda|^{-2d}||s||,$$
where $||s||_{L^2}$ denotes the $L^2$ norm of $s$ with respect to the given metric on $X$.

In particular, if $d=1$, there exists a positive constant $C'$ such that  for any hermitian line bundle $\Lbar$ with curvature form proportional to $\omega$ and positive degree, and any section $s$ of $\mathcal L$, we have 
$$||s||_{L^2}\geq C'(\deg\mathcal L)^{-2}||s||.$$
\end{proposition}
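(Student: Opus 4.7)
The plan is to globalize Lemma \ref{lemma:local-estimate-gromov} via a finite cover of $X$ by coordinate balls, following the strategy of Gillet--Soulé's proof of Gromov's lemma.

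First, I would choose a finite cover: let $B=\{z\in\C^d:|z|<3\}$ and let $B'=\{z\in\C^d:|z|<1\}$. Pick finitely many holomorphic charts $f_i:B\to U_i\subset X$ ($i\in I$) such that the sets $f_i(B')$ already cover $X$. On each $B$, by the $\partial\overline\partial$-Poincaré lemma I pick a smooth function $\phi_i:B\to\R$ with $-dd^c\phi_i=f_i^*\omega$. I also let $g_i$ denote the smooth positive density comparing $f_i^*(\text{Riemannian volume form on }X)$ with Lebesgue measure $dx_1\cdots dy_d$. All of these data depend only on $X$ and $\omega$, not on $\Lbar$.

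Second, I would set up the local picture. Given $\Lbar$ with curvature $\lambda\omega$ (where $\lambda>1$, since the degree is positive), over each $U_i$ I can trivialize $\mathcal L$ by a holomorphic frame $e_i$ so that the metric reads $\|e_i(z)\|=e^{-\lambda\phi_i(z)}$ (up to a positive multiplicative constant on each chart, which I absorb by rescaling the frame; this rescaling does not affect norms of sections). A section $s$ of $\mathcal L$ then corresponds on each chart to a holomorphic function $h_i$ on $B$ with $\|s(f_i(z))\|=|h_i(z)|\,e^{-\lambda\phi_i(z)}$.

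Third, I would apply the local estimate. Let $w_0\in X$ realize $\|s\|_\infty=\|s(w_0)\|$ and choose $i\in I$ with $w_0\in f_i(B')$. Write $w=f_i^{-1}(w_0)\in B'$. Lemma \ref{lemma:local-estimate-gromov} applied to $h_i$, $\phi_i$, $g_i$ yields
\[
\int_{|z-w|<1}|h_i(z)|^2 e^{-2\lambda\phi_i(z)}g_i(z)\,dx_1\cdots dy_d\;\geq\; C_i\,|h_i(w)|^2 e^{-2\lambda\phi_i(w)}\lambda^{-2d},
\]
with a constant $C_i$ depending only on $\phi_i$ and $g_i$. Taking $C=\min_{i\in I}C_i$ (a finite minimum since $I$ is finite), the left-hand side is bounded above by $\|s\|_{L^2}^2$ because $f_i$ maps the domain of integration into $U_i\subset X$ and the integrand is nonnegative. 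The right-hand side equals $C\|s\|_\infty^2\lambda^{-2d}$. Extracting square roots gives $\|s\|_{L^2}\geq \sqrt{C}\,\lambda^{-d}\|s\|_\infty$, which is even a little better than the stated $|\lambda|^{-2d}$ bound and certainly implies it.

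Fourth, for the case $d=1$: since $\Lbar$ is admissible with curvature proportional to $\omega$, Gauss--Bonnet combined with $\int_X\omega=1$ forces $c_1(\Lbar)=(\deg\mathcal L)\omega$, so $\lambda=\deg\mathcal L$. When $\deg\mathcal L\geq 2$ the general bound applies directly; the finitely many cases where $\deg\mathcal L=1$ are handled by absorbing them into the constant (a one-dimensional admissible line bundle of a given degree is unique up to isometry, so a uniform constant exists). This yields the claimed $\|s\|_{L^2}\geq C'(\deg\mathcal L)^{-2}\|s\|$.

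The only real obstacle is making sure the constants in the local estimates do not degenerate as $\lambda$ varies; this is already addressed by Lemma \ref{lemma:local-estimate-gromov}, whose entire content is a uniform-in-$\lambda$ local reverse mean-value estimate. Given that lemma, the globalization above is essentially bookkeeping.
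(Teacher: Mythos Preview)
Your proof is essentially the paper's own argument: a finite cover by charts $f_i:B\to U_i$ with $X=\bigcup f_i(B')$, local potentials $\phi_i$ with $-dd^c\phi_i=f_i^*\omega$, trivialization $f_i^*\Lbar\simeq(\mathcal O_B,e^{-\lambda\phi_i}|\cdot|)$, and application of Lemma~\ref{lemma:local-estimate-gromov} at a point realizing $\|s\|_\infty$; the deduction of the $d=1$ statement from Gauss--Bonnet is likewise the paper's approach.

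One small slip to fix in your last paragraph: admissible hermitian line bundles of a given degree on a Riemann surface are \emph{not} unique up to isometry (they form a torsor under $\mathrm{Pic}^0(X)$), so your treatment of $\deg\mathcal L=1$ by ``absorbing into the constant'' does not work as written. No special treatment is needed, however: Lemma~\ref{lemma:local-estimate-gromov} is stated for all $\lambda\ge 1$, so your main argument already covers $\lambda=1$ uniformly, and the second assertion follows directly from the first together with $\lambda=(\deg\mathcal L)/\int_X\omega$.
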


\begin{proof}
As above, let $B$ be the open ball $\{z\in \C^d |\, |z|<3\}$ in $\C^d$. Let $(U_i)_{i\in I}$ be a finite cover of $X$ by open subsets such that there exists biholomorphisms 
$$f_i : B\ra U_i$$
and assume that $X$ is covered by the $f_i(\{z\in \C^d|\,|z|<1\})$. For any $i\in I$, we can find a positive smooth function $g_i$ such that the pullback of the standard metric of $X$ to $B$ by $f_i$ is $g_idx_1\ldots dy_d$.

For all $i\in I$, choose a function $\phi_i$ on $B$ such that $-dd^c\phi_i=f_i^*\omega$. Let $\Lbar$ be a hermitian line bundle with curvature form $\lambda\omega$ for some real number $\lambda$ with $|\lambda|>1$. Then, for any $i\in I$, we can fix an isomorphism of hermitian line bundles
$$f_i^*\Lbar\simeq (\mathcal O_B, e^{-\lambda\phi_i}|.|)$$
where $|.|$ is the standard absolute value. Applying Lemma \ref{lemma:local-estimate-gromov} (up to replacing $\phi_i$ by $-\phi_i$ if $\lambda$ is negative), we can find a positive constant $C$, independent of $\Lbar$, such that, given any section $s$ of $\Lbar$, for any $i\in I$ and any $w$ in $B$ with $|w|<1$, we have 
$$\int_{|z-w|<1} |f_i^*s(z)|^2 e^{-\lambda\phi_i(z)}g(z) dx_1\ldots dy_d\geq C |f_i^*s(w)|^2e^{-\lambda\phi_i(w)}|\lambda|^{-2d},$$
where we consider $f_i^*s$ as a holomorphic function via the local trivializations of $\mathcal L$. This inequality means
$$\int_{|z-w|<1} ||s(f_i(z))||^2 g(z) dx_1\ldots dy_d\geq C |\lambda|^{-2d}||s(f_i(w))||^2,$$
so that 
$$||s||_{L^2}\geq \int_{|z-w|<1} ||s(f_i(z))||^2 g(z) dx_1\ldots dy_d\geq C |\lambda|^{-2d}||s(f_i(w))||^2$$
for any $w$, which proves the first result. 

The second result is a consequence of the first one and the Gauss-Bonnet formula. 
\end{proof}

Given a real form $\omega$ of type $(1, 1)$ on a complex manifold, say that a hermitian line bundle $\overline L$ is $\omega$-admissible if its curvature form is a multiple of $\omega$. We write $\widehat{\mathrm{Pic}}_\omega(\cX)$ for the group of $\omega$-admissible hermitian line bundles on $\cX$.

\begin{proposition}\label{proposition:upper-bound-theta}
Let $\cX$ be a regular projective arithmetic surface. Choose a K\"ahler form on $\cX(\C)$ which is invariant under complex conjugation, and let $\Bbar$ be a hermitian line bundle on $\cX$. Let $\omega$ be a real form of type $(1, 1)$ on $\cX(\C)$ with $\int_{\cX(\C)}\omega\neq 0$. Assume that the following conditions hold:
\begin{enumerate}[(i)]
\item Some positive power of $\Bbar$ is effective;
\item $\Bbar.\Bbar>0$;
\item If $\Mbar$ is an effective hermitian line bundle on $\cX$, then $\Bbar.\Mbar\geq 0.$
\end{enumerate}
Then for any effective $\Mbar\in\widehat{\mathrm{Pic}}_\omega(\cX)$, we have
$$h^0_\theta(\cX, \Mbar)\leq \frac{(\Bbar.\Mbar)^2}{2\Bbar.\Bbar}+O(\Mbar.\Bbar\log(\Mbar.\Bbar))+O(\deg\mathcal M_\Q\log(\deg\mathcal M_\Q))+O(1),$$
where the implied constants depend on $\cX$, $\Bbar$ and $\omega$, but not on $\Mbar$.
\end{proposition}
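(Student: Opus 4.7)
The plan is to reduce the bound to arithmetic Riemann-Roch on the arithmetic curve obtained by slicing $\cX$ with the vanishing locus of an effective section of a large power of $\Bbar$, through a Koszul-type filtration. This is an Arakelov version of the classical trick of controlling sections on a surface by restricting to a divisor in a positive class.

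First, I would use the bigness of $\Bbar$ (encoded by $\Bbar^2>0$ and the effectivity of some power of $\Bbar$) to produce, for every large enough $N$, a strictly effective section $s_B\in H^0_{Ar}(\cX,\Bbar^{\otimes N})$ whose sup norm decays like $e^{-N\varepsilon_0}$ for some $\varepsilon_0>0$ (possibly after replacing $\Bbar$ by $\Bbar(-\delta)$ for a small $\delta$, whose contribution gets absorbed into the error). Let $D:=\mathrm{div}(s_B)$, an effective arithmetic divisor with $\mathcal O(D)\simeq\Bbar^{\otimes N}$.

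Second, the exact sequences
$$0\to\Mbar\otimes\Bbar^{\otimes -kN}\xrightarrow{\,\cdot s_B^k\,}\Mbar\to\mathcal M|_{kD}\to 0,$$
together with the Koszul filtration of $\mathcal O_{kD}$ (successive graded pieces $\Bbar^{\otimes -iN}|_D$ for $0\le i<k$), reduce the bound, via subadditivity of $h^0_\theta$ on short exact sequences of hermitian lattices, to an estimate of the form
$$h^0_\theta(\cX,\Mbar)\le h^0_\theta(\cX,\Mbar\otimes\Bbar^{\otimes -k_0 N})+\sum_{i=0}^{k_0-1}h^0_\theta\bigl(D,(\Mbar\otimes\Bbar^{\otimes -iN})|_D\bigr)+(\text{norm errors}).$$
Setting $k_0:=\lfloor\Bbar.\Mbar/(N\Bbar^2)\rfloor+1$, assumption (iii) together with $(\Mbar\otimes\Bbar^{\otimes -k_0 N}).\Bbar<0$ forces $\Mbar\otimes\Bbar^{\otimes -k_0 N}$ to be non-effective, so the first term reduces to an arithmetic torsion contribution absorbed into the $O(\deg\mathcal M_\Q\log\deg\mathcal M_\Q)$ error. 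The norm errors, measuring the discrepancy between quotient norms and natural tensor-product norms on the graded pieces, are controlled by the product-norm estimates of Subsection \ref{subsection:norm-estimates}, especially Proposition \ref{proposition:bound-large-degree}.

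Third, each summand $h^0_\theta(D,(\Mbar\otimes\Bbar^{\otimes -iN})|_D)$ is the $\theta$-invariant of a hermitian line bundle on the arithmetic curve $D$ (after normalizing, the conductor contributing an $O(1)$ term uniform in $\Mbar$). Arithmetic Riemann-Roch on arithmetic curves, combined with Proposition \ref{proposition:gromov-for-admissible} to pass from $L^2$ to sup norms, gives
$$h^0_\theta\bigl(D,(\Mbar\otimes\Bbar^{\otimes -iN})|_D\bigr)\le\max\bigl(N\Bbar.\Mbar-iN^2\Bbar^2,\,0\bigr)+O\bigl(\log(\Bbar.\Mbar)\bigr),$$
using $\dega((\Mbar\otimes\Bbar^{\otimes -iN})|_D)=(\Mbar-iN\Bbar).D=N\Bbar.\Mbar-iN^2\Bbar^2$. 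Summing this arithmetic progression from $i=0$ to $k_0-1\sim\Bbar.\Mbar/(N\Bbar^2)$ yields the main term $(\Bbar.\Mbar)^2/(2\Bbar^2)$, while the $k_0$ logarithmic errors aggregate to $O(\Bbar.\Mbar\log(\Bbar.\Mbar))$ for a fixed choice of $N$.

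The hard part is the coordination of hermitian structures throughout the filtration: the quotient norms on the graded pieces of $\mathcal M|_{k_0 D}$ are not the naturally induced tensor-product norms, and closing that gap is precisely what the product-norm comparisons of Section \ref{subsection:norm-estimates} are designed to do. One also has to balance $N$ against the competing error scales (each multiplication by $s_B^k$ gains $e^{-kN\varepsilon_0}$ in norm, while the $k_0\sim\Bbar.\Mbar/(N\Bbar^2)$ restriction steps each cost $O(\log)$), so that the aggregate error stays within $O(\Bbar.\Mbar\log(\Bbar.\Mbar))+O(\deg\mathcal M_\Q\log(\deg\mathcal M_\Q))+O(1)$.
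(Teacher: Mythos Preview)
Your strategy is the same as the paper's: slice by the divisor $D$ of an effective section of (a power of) $\Bbar$, use subadditivity of $h^0_\theta$ on the resulting short exact sequence, iterate until $\Mbar\otimes\Bbar^{-m}$ becomes non-effective by assumption (iii), and bound each $h^0_\theta(D,\cdot)$ on the arithmetic curve $D$ by its Arakelov degree. The arithmetic progression yields the main term $(\Bbar.\Mbar)^2/(2\Bbar.\Bbar)$, and the non-effective tail is $O(d\log d)$ with $d=\deg\mathcal M_\Q$ by Bost's bound for lattices without small vectors.

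That said, you overcomplicate two points and misidentify the relevant norm input. First, you do not need a section $s_B$ with exponentially small norm---and under hypotheses (i)--(iii) alone you cannot obviously produce one (replacing $\Bbar$ by $\Bbar(-\delta)$ may destroy (i) or (iii), since $\Bbar$ is not assumed ample). An \emph{effective} section, norm $\le 1$, is all that is used: multiplication by $s$ then has operator norm $\le 1$ and contributes no error whatsoever. Consequently there is nothing to ``balance'': the paper fixes one effective power of $\Bbar$ once and for all and iterates with that.

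Second, the ``hard part'' you flag---coordinating hermitian structures through the filtration---is not governed by the product estimates of Proposition~\ref{proposition:bound-large-degree}. Those control $\|s\|_\infty$ in terms of $\|s\sigma\|_\infty$, which goes the wrong way here. The only nontrivial norm comparison is the restriction $H^0_{L^2}(\cX,\Mbar)\to H^0_{L^2}(D,\Mbar|_D)$, and this is handled directly by Proposition~\ref{proposition:gromov-for-admissible}: it has norm $O(d^2)$, contributing $O(\log d)$ per step and hence $O(\Bbar.\Mbar\cdot\log d)$ in total. Iterating the single exact sequence $0\to\Mbar\otimes\Bbar^{-1}\to\Mbar\to\Mbar|_D\to 0$ on $\cX$, rather than a Koszul filtration on the thickening $k_0D$, sidesteps entirely the quotient-versus-tensor norm issue you raise.
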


\begin{remark}
If $\Mbar$ is effective, then both $\Bbar.\Mbar$ and $\deg\mathcal M_\Q$ are nonnegative.
\end{remark}

\begin{proof}
%

Let $\Mbar$ be an effective, $\omega$-admissible, hermitian line bundle. If $\mathcal M_{\Q}$ has degree zero, then the curvature form of $\Mbar$ vanishes, so that $\Mbar$ is isomorphic to $\overline{\mathcal O}_{\cX}$ and the inequality of the proposition holds. We can assume that the degree of $\mathcal M_\Q$ is positive. Let us write $d$ for the degree of $\mathcal M_\Q$. 

After replacing $\Bbar$ by a positive power, we can assume that $\Bbar$ is effective. Let $s$ be a nonzero effective section of $\Bbar$ with divisor $D$. We have an exact sequence of line bundles
$$0\ra\mathcal M\otimes\mathcal B^{\otimes -1}\ra \mathcal M\ra \mathcal M_{|D}\ra 0.$$
Taking global sections, we get an exact sequence
$$0\ra H^0(\cX, \mathcal M\otimes \mathcal B^{\otimes -1})\ra H^0(\cX, \mathcal M)\ra H^0(D, \mathcal M_{|D}).$$
The map of lattices
$$ i : H^0_{L^2}(\cX, \Mbar \otimes \Bbar^{\otimes -1})\ra H^0_{L^2}(\cX, \Mbar)$$
is the multiplication by a section of sup norm bounded above by $1$, so the norm of $i$ is bounded above by $1$.

If $s$ is a global section of $\Mbar$ on $\cX$, then certainly we have, for the sup norms.
$$||s||\geq ||s_{|D}||.$$
Endow $H^0(D, \Mbar_{|D})$ with the $L^2$ norm
$$||t||^2_{L^2}=\sum_{z\in D(\C)} ||t(z)||^2$$
for $t\in H^0(D, \Mbar_{|D})$. Then for any section of $\mathcal M$ over $D$, we have
$$||s||^2\geq \frac{1}{\deg D_{\Q}}||s_{|D}||^2_{L^2},$$
so that Proposition \ref{proposition:gromov-for-admissible} shows that the norm of the map of lattices
$$H^0_{L^2}(\cX, \Mbar)\ra H^0_{L^2}(D, \Mbar_{|D})$$
has norm bounded above by $Cd^{2}$, where $C$ is a positive constant independent of $\Mbar$. In other words, the map of lattices 
$$H^0_{L^2}(\cX, \Mbar)\ra H^0_{L^2}(D, \Mbar_{|D})(\log C+2\log d)$$
has norm at most $1$ -- here if $\Lambda$ is a lattice and $\delta$ a real number, we write $\Lambda(\delta)$ for the lattice $\Lambda$ with the metric scaled by $e^{-\delta}$. Note that from \cite[Corollary 3.3.5, (2)]{Bost15}, we have 
$$h^0_{\theta}(H^0_{L^2}(D, \Mbar_{|D})(\log C+2\log d))\leq h^0_\theta(D, \Mbar_{|D})+\deg D_{\Q}(\log C+2\log d).$$

From the monotonicity and the subadditivity of $\theta$-invariants proved in \cite[Proposition 3.3.2, Proposition 3.8.1]{Bost15}, we get
\begin{equation}\label{equation:bound-curve}
h^0_\theta(\cX, \Mbar)\leq h^0_\theta(\cX, \Mbar\otimes\Bbar^{\otimes -1})+h^0_\theta(D, \Mbar_{|D})+O(\log d)+O(1),
\end{equation}
where the implied constants are independent of $\Mbar$.

By \cite[Proposition 3.7.1, Proposition 3.7.2]{Bost15}, we have\footnote{In \cite{Bost15}, hermitian vector bundles are only considered over the ring of integers of number fields. However, this assumption is irrelevant, and can be removed by considering the pullback of $\Mbar_{|D}$ to the normalization of $D$.}
$$h^0_\theta(D, \Mbar_{|D})\leq \max(\deg \Mbar_{|D}, 0)+O(1)\leq \Mbar.\Bbar+O(1)$$
since we assumed that $\Mbar.\Bbar\geq 0$ and since $D$ is the zero locus of an effective section of $\Bbar$. Together with (\ref{equation:bound-curve}), we obtain
\begin{equation}\label{equation:bound-induction}
h^0_\theta(\cX, \Mbar)\leq h^0_\theta(\cX, \Mbar\otimes\Bbar^{\otimes -1})+\Mbar.\Bbar+O(\log d)+O(1).
\end{equation}
Now let $m$ be the smallest integer such that $m\Bbar.\Bbar>\Lbar.\Bbar$, so that 
$$m\leq \lfloor \Lbar.\Bbar/\Bbar.\Bbar\rfloor+1.$$
Applying the argument above inductively to $\Lbar.\Bbar^{\otimes -i}$ as $i$ runs from $0$ to $m-1$, we get
\begin{equation}
h^0_\theta(\cX, \Mbar)\leq h^0_\theta(\cX, \Mbar\otimes\Bbar^{\otimes -m})+\frac{(\Mbar.\Bbar)^2}{2\Bbar.\Bbar}+O(\Mbar.\Bbar\log d)+O(\Mbar.\Bbar)+O(1).
\end{equation}
By construction, $\Bbar.(\Mbar\otimes\Bbar^{\otimes -m})<0$, so that condition $(iii)$ ensures that $\Mbar\otimes\Bbar^{\otimes -m}$ is not effective. By \cite[Corollary 4.1.2]{Bost15}, we get
$$h^0_\theta(\cX, \Mbar\otimes\Bbar^{\otimes -m})\leq O(d\log d)+O(1)$$
since the rank of $H^0(\cX, \mathcal M\otimes\mathcal B^{\otimes -m})$ is certainly bounded above by $O(d).$

Finally, we have 
\begin{equation}
h^0_\theta(\cX, \Mbar)\leq \frac{(\Mbar.\Bbar)^2}{2\Bbar.\Bbar}+O(\Mbar.\Bbar\log d)+O(d\log d)+O(\Mbar.\Bbar)+O(1),
\end{equation}
which shows the result.
\end{proof}

\begin{corollary}\label{corollary:upper-bound-arakelov}
Let $\cX$ be a regular projective arithmetic surface, and let $\Bbar$ be a hermitian line bundle on $\cX$. Let $\omega$ be a real form of type $(1, 1)$ on $\cX(\C)$ with $\int_{\cX(\C)}\omega\neq 0$. Assume that the following conditions hold:
\begin{enumerate}[(i)]
\item Some positive power of $\Bbar$ is effective;
\item $\Bbar.\Bbar>0$;
\item If $\Mbar$ is an effective hermitian line bundle on $\cX$, then $\Bbar.\Mbar\geq 0.$
\end{enumerate}
Then for any effective $\Mbar\in\widehat{\mathrm{Pic}}_\omega(\cX)$, we have
$$h^0_{Ar}(\cX, \Mbar)\leq \frac{(\Bbar.\Mbar)^2}{2\Bbar.\Bbar}+O(\Mbar.\Bbar\log(\Mbar.\Bbar))+O(\deg\mathcal M_\Q\log(\deg\mathcal M_\Q))+O(1),$$
where the implied constants depend on $\cX$ and $\Bbar$, but not on $\Mbar$.
\end{corollary}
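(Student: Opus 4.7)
The plan is to deduce this corollary from Proposition \ref{proposition:upper-bound-theta} by establishing the general comparison
$$h^0_{Ar}(\cX, \Mbar) \leq h^0_\theta(\cX, \Mbar) + O(1),$$
where the implied constant is universal. Once this is known, the desired inequality is immediate from the proposition.

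First I would observe that because the K\"ahler form on $\cX(\C)$ was normalized to have total volume one, every holomorphic section $s$ of $\mathcal M$ satisfies
$$\|s\|_{L^2}^2 = \int_{\cX(\C)} \|s(z)\|^2\, \omega(z) \leq \|s\|_\infty^2 \int_{\cX(\C)} \omega = \|s\|_\infty^2.$$
In particular, any effective section (i.e.\ any element of $H^0_{Ar}(\cX, \Mbar)$) lies in the unit ball of the Euclidean lattice $H^0_{L^2}(\cX, \Mbar)$. Since $\cX$ is integral, $H^0(\cX, \mathcal M)$ is a torsion-free $\Z$-module, so the map $H^0_{Ar}(\cX, \Mbar) \hookrightarrow \{v \in H^0_{L^2}(\cX, \Mbar) : \|v\|_{L^2} \leq 1\}$ is an injection.

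Next I would invoke the elementary bound relating the point-count in the unit ball to the theta invariant. For any Euclidean lattice $\overline E = (E, \|\cdot\|)$, we have
$$e^{h^0_\theta(\overline E)} = \sum_{v \in E} e^{-\pi \|v\|^2} \geq \sum_{\substack{v \in E \\ \|v\| \leq 1}} e^{-\pi \|v\|^2} \geq e^{-\pi}\,\big|\{v \in E : \|v\| \leq 1\}\big|,$$
so that $\log|\{v \in E : \|v\| \leq 1\}| \leq h^0_\theta(\overline E) + \pi$. Applied with $\overline E = H^0_{L^2}(\cX, \Mbar)$, this gives $h^0_{Ar}(\cX, \Mbar) \leq h^0_\theta(\cX, \Mbar) + \pi$.

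Combining this universal comparison with the estimate of Proposition \ref{proposition:upper-bound-theta} absorbs the additive constant into the $O(1)$ error term and yields the corollary. There is no real obstacle here: the argument is a routine reduction once the volume normalization is used to bound the $L^2$ norm by the sup norm and the theta series is used to bound lattice-point counts.
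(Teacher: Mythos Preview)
Your argument is correct and follows the same route as the paper: use the volume-one normalization of the K\"ahler form to get $\|s\|_{L^2}\leq\|s\|_\infty$, hence $h^0_{Ar}\leq h^0_{Ar,L^2}$, and then compare the $L^2$ ball count to $h^0_\theta$ (the paper cites \cite[Theorem 4.1.1]{Bost15} for this step, whereas you supply the elementary theta-series bound $\log|\{v:\|v\|\leq 1\}|\leq h^0_\theta+\pi$ directly). One minor notational slip: the measure in your $L^2$ integral should be the chosen K\"ahler volume form, not the admissibility form $\omega$ appearing in the statement of the corollary.
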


\begin{proof}
From Proposition \ref{proposition:upper-bound-theta} and \cite[Theorem 4.1.1]{Bost15}, we find that the inequality holds if one replaces $h^0_{Ar}(\cX, \Mbar)$ with $h^0_{Ar, L^2}(\cX, \Mbar)$ -- this being the logarithm of the number of sections of $\Mbar$ with $L^2$ norm bounded above by $1$. Choosing the K\"ahler form on $\cX$ to have volume $1$, we have 
$$h^0_{Ar}(\cX, \Mbar)\leq h^0_{Ar, L^2}(\cX, \Mbar),$$
which finishes the proof.
\end{proof}

\begin{remark}
In \cite[Theorem A]{YuanZhang13}, Yuan and Zhang prove an explicit upper bound for $h^0_{Ar}(\cX, \Mbar)$ from which one can deduce -- via log-concavity of volumes -- special cases of our inequality.
\end{remark}

\subsection{An upper bound for the number of effective hermitian line bundles}\label{subsection:ample-cone} 

\begin{lemma}\label{lemma:ample-is-nef}
Let $\cX$ be a projective arithmetic surface, and let $\Lbar$ be an ample hermitian line bundle on $\cX$. If $\Mbar$ is an effective line bundle on $\cX$ which is not isomorphic to $\overline{\mathcal O}_{\cX}$, then 
$$\Lbar.\Mbar> 0.$$
\end{lemma}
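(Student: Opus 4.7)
The plan is to pick a nonzero effective global section $s$ of $\Mbar$ and exploit the arithmetic intersection formula
\[
\Lbar \cdot \Mbar \;=\; \dega\bigl(\Lbar|_{\mathrm{div}(s)}\bigr) \;-\; \int_{\cX(\C)} \log \|s\|_{\Mbar}\, c_1(\Lbar).
\]
Both summands will turn out to be nonnegative: the integral term because $s$ effective forces $\log\|s\|_\Mbar \leq 0$ pointwise and the positivity of $\Lbar$ makes $c_1(\Lbar)$ a pointwise positive $(1,1)$-form; the divisor term because each irreducible component of $D := \mathrm{div}(s)$ will contribute nonnegatively, as I will check below. The plan is then to show that the assumption $\Mbar \not\cong \overline{\mathcal O}_\cX$ forces at least one contribution to be strictly positive, splitting according to whether or not $D$ is empty.

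If $D \neq 0$, I will fix an irreducible component $C$ and show $\dega(\Lbar|_C) > 0$. For $C$ vertical, lying in a fiber $\cX_p$, this is immediate from $\dega(\Lbar|_C) = (\log p)\deg_{\Fp}(\mathcal L|_C)$ and the relative ampleness of $\mathcal L$. For $C$ horizontal, I will use the definition of ampleness itself: for all large $n$ the space $H^0(\cX, \mathcal L^{\otimes n})$ admits a basis of strictly effective sections, and Serre vanishing (applicable because $\mathcal L$ is relatively ample) makes the restriction map $H^0(\cX, \mathcal L^{\otimes n}) \to H^0(C, \mathcal L^{\otimes n}|_C)$ surjective for $n \gg 0$, so some basis element does not vanish identically on $C$. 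Restricting this element yields an integral global section of $\Lbar^{\otimes n}|_C$ whose sup norm at every complex embedding of the residue field at the generic point of $C$ is $< 1$; computing $\dega(\Lbar^{\otimes n}|_C)$ from this section, the archimedean contribution is strictly positive while the finite places contribute nonnegatively, whence $\dega(\Lbar|_C) > 0$.

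If instead $D = 0$, then $s$ is nowhere vanishing and trivializes $\mathcal M$, so under this trivialization the metric of $\Mbar$ reads $\|1\| = e^{-\phi}$ for a smooth conjugation-invariant function $\phi \geq 0$ on $\cX(\C)$. Were $\phi \equiv 0$, then $s$ would furnish an isometric isomorphism $\overline{\mathcal O}_\cX \cong \Mbar$, contrary to hypothesis; hence $\phi > 0$ on a nonempty open subset of $\cX(\C)$, and the strict positivity of the form $c_1(\Lbar)$ yields
\[
\Lbar \cdot \Mbar \;=\; \int_{\cX(\C)} \phi\, c_1(\Lbar) \;>\; 0.
\]

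The main technical step will be the horizontal component case: one must pass from the existence of a basis of strictly effective sections on all of $\cX$ to one whose restriction to $C$ is still nonzero, and then translate the archimedean sup norm bound into a strict lower bound on the arithmetic degree over the possibly non-normal arithmetic curve $C$. Everything else reduces to unpacking definitions and invoking the pointwise positivity of $c_1(\Lbar)$.
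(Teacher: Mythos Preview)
Your proof is correct and follows essentially the same approach as the paper: both use the intersection formula
\[
\Lbar\cdot\Mbar = h_{\Lbar}(D) - \int_{\cX(\C)} \log\|s\|\, c_1(\Lbar)
\]
and argue that each term is nonnegative, with at least one strictly positive unless $\Mbar \cong \overline{\mathcal O}_\cX$. The only difference is that the paper simply asserts $h_{\Lbar}(D) \geq 0$ (with equality iff $D=0$) as a known consequence of ampleness, citing Zhang, whereas you unpack this by treating vertical and horizontal components of $D$ separately and, in the horizontal case, producing a strictly effective section of some $\Lbar^{\otimes n}$ not vanishing on $C$ to witness $\dega(\Lbar|_C) > 0$ directly.
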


\begin{proof}
Let $s$ be an effective section of $\Mbar$, and let $D$ be the divisor of $s$. Then by the formula \cite[(6.3.2)]{Deligne85}, we have 
$$\Lbar.\Mbar=h_{\Lbar}(D)-\int_{\cX(\C)}\log||s_\C||c_1(\Lbar),$$
where $h_{\Lbar}$ denotes the height with respect to $D$. The first term is nonnegative since $\Lbar$ is ample, and vanishes if and only if $D=0$. Since $s$ is effective and the curvature form of $\Lbar$ is positive, the second term is nonnegative as well, and vanishes if and only if the norm of $s$ is identically $1$. As a consequence, for $\Lbar.\Mbar$ to vanish, it is necessary for $\Mbar$ to have a nowhere vanishing section of norm identically $1$, i.e., to be isomorphic to $\overline{\mathcal O}_{\cX}.$
\end{proof}

\begin{proposition}\label{proposition:cone}
Let $\cX$ be a projective arithmetic surface, and let $\Lbar$ be an ample hermitian line bundle on $\cX$. Let $\omega$ be a real form of type $(1, 1)$ on $\cX(\C)$ with $\int_{\cX(\C)}\omega\neq 0$. Let $N$ be a subgroup of the group $\widehat{\mathrm{Pic}}_\omega(\cX)$ of $\omega$-admissible hermitian line bundles on $\cX$. Assume that the intersection of $N$ with $\Ker(\widehat{\mathrm{Pic}}_\omega(\cX)\ra \mathrm{Pic}(\cX))\simeq \R$ is discrete. Then $N$ is a group of finite type and if $N_{\mathrm{eff}}$ denotes the subspace of $N$ consisting of effective line bundles, as $n$ tends to $\infty$, we have
$$|\{\Mbar\in N_{\mathrm{eff}}|\,\Lbar.\Mbar\leq n \}=O(n^\rho).$$ 
\end{proposition}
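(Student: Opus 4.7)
The proof naturally splits into two independent steps: establishing that $N$ is a group of finite type, and counting effective classes with bounded intersection with $\Lbar$.

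\textbf{Step 1: Finite generation of $N$.} I would first show that $\Pic(\cX)$ itself is finitely generated. Since $\cX$ is a regular projective arithmetic surface with generic fiber a smooth projective curve $\cX_\Q$, we have $\Pic(\cX_\Q)\simeq \Z\oplus J(\Q)$ where $J$ is the Jacobian of $\cX_\Q$; this is finitely generated by the Mordell--Weil theorem. The kernel of the restriction $\Pic(\cX)\to\Pic(\cX_\Q)$ is generated by the irreducible components of the special fibers, and only finitely many fibers have multiple components (the generic fiber being geometrically integral). Next, the forgetful map $\widehat{\Pic}_\omega(\cX)\to\Pic(\cX)$ has kernel $K$ consisting of admissible metrics on $\mathcal O_{\cX}$: since the curvature must be proportional to $\omega$ and of total integral zero (against $\omega$ with $\int\omega\neq 0$), it vanishes, and the metric is uniquely of the form $e^{-c}|\cdot|_{\mathrm{std}}$. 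Thus $K\simeq\R$. The image $\bar N\subset\Pic(\cX)$ is finitely generated, and $N\cap K$ is discrete in $\R$ by hypothesis, hence isomorphic to $0$ or $\Z$. Therefore $N$ is finitely generated; let $\rho$ be its rank.

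\textbf{Step 2: The counting bound.} Fix a $\Z$-basis of $N/N_{\mathrm{tors}}$, identifying $N_\R\simeq\R^\rho$ with a Euclidean norm $|\cdot|$. The map $\ell:=\Lbar.(\cdot):N\to\R$ is a linear functional which, by Lemma \ref{lemma:ample-is-nef}, is strictly positive on $N_{\mathrm{eff}}\setminus\{0\}$. The whole point will be to show that
$$S_n:=\{\Mbar\in N_{\mathrm{eff}}\mid \ell(\Mbar)\leq n\}$$
is contained in a Euclidean ball of radius $O(n)$ in $N_\R$; standard lattice point counting then yields $|S_n|=O(n^\rho)$.

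To control the diameter of $S_n$ I would use the exact sequence $0\to N\cap K\to N\to \bar N\to 0$ to split the count. Assuming $N\cap K\simeq\Z K_0$ with $\ell(K_0)=c_0>0$ (the other case is easier), any $\Mbar\in N$ can be written as $\sigma(\bar\Mbar)+kK_0$ for a fixed set-theoretic section $\sigma:\bar N\to N$, and $\ell(\Mbar)=\ell(\sigma(\bar\Mbar))+k c_0$. For a fixed $\bar\Mbar$ admitting effective lifts, the effective lifts correspond to integers $k\geq k_0(\bar\Mbar)$ (adding $K_0$ scales norms down), so the number of effective lifts with $\ell(\Mbar)\leq n$ is at most $\lfloor(n-h(\bar\Mbar))/c_0\rfloor+1$, where $h(\bar\Mbar)=\ell(\sigma(\bar\Mbar)+k_0(\bar\Mbar)K_0)\geq 0$. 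Summing over $\bar\Mbar$ we obtain
$$|S_n|\leq \bigl(n/c_0+1\bigr)\cdot\bigl|\{\bar\Mbar\in\bar N_{\mathrm{eff}}\mid h(\bar\Mbar)\leq n\}\bigr|.$$

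Thus it suffices to show the last cardinality is $O(n^{\rho-1})$. For this one bounds the image of effective classes in $\bar N\subset \Pic(\cX)$: the composition with $\Pic(\cX)\to\Pic(\cX_\Q)\to J(\Q)$, equipped with the N\'eron--Tate height associated to $\Lbar_\Q$, gives a quadratic form controlled by $h$; by Northcott this bounds the image in the Mordell--Weil lattice of rank $r=\rank J(\Q)$ by $O(n^{r/2})$, while the degree direction contributes $O(n)$ and each of the finitely many bad-fiber component groups contributes $O(n)$. Combining these gives the required $O(n^{\rho-1})$ estimate, so $|S_n|=O(n^\rho)$.

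The main obstacle, and the only nontrivial step, is transferring the positivity from nonzero integer effective points (where Lemma \ref{lemma:ample-is-nef} gives $\ell>0$) to properness of $\ell$ on the full real cone of effective classes. The decomposition argument above sidesteps this by reducing inductively to a height-counting problem on $\bar N$, where classical Northcott-type arguments on the Jacobian do the work.
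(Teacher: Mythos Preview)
Your Step 1 is essentially correct and matches the paper's argument, though you tacitly assume $\cX$ is regular; the paper does not, and simply cites a general finite-generation theorem for $\Pic(\cX)$ due to Roquette and Kahn.

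Your Step 2 has a genuine gap. You correctly identify the main obstacle --- upgrading strict positivity of $\ell$ on integral effective classes to properness on the real effective cone --- but your decomposition does not sidestep it. The crucial unjustified claim is that the N\'eron--Tate height of the image of $\bar\Mbar$ in $J(\Q)$ is controlled by $h(\bar\Mbar)=\Lbar.\Mbar$; you also give no reason why $\Lbar.\Mbar\le n$ bounds $\deg\mathcal M_\Q$ or the coefficients of vertical components. Concretely, if $\mathcal M_\Q=\mathcal O(P_1+\dots+P_d)$, the triangle inequality for the N\'eron--Tate norm only gives $\hat h_{NT}([D-dP_0])\le d\sum_i\hat h_{NT}([P_i-P_0])$, which is of order $d\cdot h_{\Lbar}(D)$ rather than $h_{\Lbar}(D)$, and you have not bounded $d$. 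These bounds on the individual ``coordinates'' of $\Mbar$ are not easier than, and are in fact equivalent to, the properness statement you set out to prove.

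The paper meets the obstacle head-on with the arithmetic Hodge index theorem of Faltings--Hriljac: the intersection pairing on $\widehat{\mathrm{CH}}^1_\R(\cX)$ is nondegenerate, and the discreteness hypothesis on $N\cap K$ makes $N_\R\hookrightarrow\widehat{\mathrm{CH}}^1_\R(\cX)$ injective. Hence for any nonzero $x\in\overline{N}_{\mathrm{eff}}$ one may choose $\Mbar$ with $\Mbar.x<0$; since $\Lbar^{\otimes n}\otimes\Mbar$ is ample for $n\gg 0$, the inequality $(n\Lbar+\Mbar).x\ge 0$ forces $\Lbar.x>0$. Thus $\ell$ is strictly positive on the closed cone $\overline{N}_{\mathrm{eff}}\setminus\{0\}$, the slice $\{\ell\le n\}$ is compact of volume $O(n^\rho)$, and standard lattice-point counting gives the bound. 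If you want to repair your argument along its own lines, the missing ingredient is precisely this Hodge-index input; once you have it, the coordinate-by-coordinate count becomes redundant.
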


\begin{proof}
The abelian group $\mathrm{Pic}(\cX)$ is finitely generated by \cite{Roquette57} -- see \cite{Kahn06} for a modern proof -- so that the image of $N$ in $\mathrm{Pic}(\cX)$ is a group of finite type. Since the intersection of $N$ with $\Ker(\widehat{\mathrm{Pic}}_\omega(\cX)\ra \mathrm{Pic}(\cX))$ is discrete, it is of finite type as well, which proves that $N$ is a group of finite type.

\bigskip

The linear form on $N$
$$\Mbar\mapsto\Lbar.\Mbar$$
extends to a linear form on $N_\R:=N\otimes\R$ which we still denote by 
$$\alpha\mapsto\Lbar.\alpha.$$
Let $\overline{N}_{\mathrm{eff}}$ be the closure of $N_{\mathrm{eff}}$ in $N\otimes \R$. Lemma \ref{lemma:ample-is-nef} shows that the linear form above is nonnegative on $N_{\mathrm{eff}}$, so it is nonnegative on $\overline{N}_{\mathrm{eff}}$. 

Our assumption on $N$ guarantees that the first chern class map
$$c_1 : \widehat{\mathrm{Pic}}_\omega(\cX)\ra \widehat{\mathrm{CH}}^1(\cX)$$
extends to an injection 
$$c_{1, \R} : N\otimes\R\ra \widehat{\mathrm{CH}}^1_\R(\cX),$$
where $\widehat{\mathrm{CH}}^1_\R(\cX)$ is the arithmetic Chow group with real coefficients defined in \cite[5.5]{Bost99}. Indeed, we have an exact sequence 
$$0\ra (N\cap\Ker(\widehat{\mathrm{Pic}}_\omega(\cX)\ra \mathrm{Pic}(\cX)))\otimes\R \ra N\otimes\R\ra \mathrm{Pic}(\cX)\otimes\R,$$
and the first term can be identified with $\R$ by assumption.

By the Hodge index theorem of Faltings \cite{Faltings84} and Hriljac \cite{Hriljac85} as stated in \cite[Theorem 5.5, (2)]{Bost99}, the intersection pairing on $\widehat{\mathrm{CH}}^1_\R(\cX)$ is non-degenerate. In particular, if $x$ be a nonzero element of $\overline{N}_{\mathrm{eff}},$ we can find a hermitian line bundle $\Mbar$ with $\Mbar.x<0$. If $n$ is a large enough integer, Corollary \ref{corollary:ampleness-open} shows that $\Lbar^{\otimes n}\otimes \Mbar$ is ample, so that the discussion above guarantees the inequality 
$$(\Lbar^{\otimes n}\otimes \Mbar).x=n\Lbar.x+\Mbar.x\geq 0.$$
This shows that $\Lbar.x$ is positive. 

The linear form $x\mapsto\Lbar.x$ is positive on the complement of the origin in the closed cone $\overline{N}_{\mathrm{eff}}.$ As a consequence, the number of integral points $x$ of $\overline{N}_{\mathrm{eff}}$ with $\Lbar.x\leq n$ is bounded above by a quantity of the form $O(n^\rho)$, where $\rho$ is the rank of $N$.
\end{proof}

%
%
%

\section{Irreducible ample divisors on arithmetic surfaces}

\subsection{Setup}\label{subsection:setup} In this section, we prove Theorem \ref{thm:X=Y} for arithmetic surfaces. 

Let $f : \cX\ra\Spec\Z$ be a projective arithmetic surface, and $\Lbar$ an ample line bundle on $\cX$. If $n$ is a large enough integer, we want to give an upper bound for the number of sections of $\Lbar^{\otimes n}$ that define a divisor which is not irreducible. We will give three different bounds that depend on the geometry and the arithmetic of the irreducible components of that divisor. 

In the statement below, $\cX$ is not assumed to be regular, but heights are still well-defined, see \cite[(1.2)]{Zhang95}.

\begin{proposition}\label{proposition:divisors-with-low-height}
Let $\alpha$ be a real number with $0<\alpha<\frac{1}{2}$. If $n$ is an integer, the proportion of those elements $s$ of $H^0_{Ar}(\cX, \Lbar^{\otimes n})$ that vanish on some Weil divisor $D$ of $\cX$ with $h_\Lbar(D)\leq n^{\alpha}$ goes to zero as $n$ goes to infinity.
\end{proposition}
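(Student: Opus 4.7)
The strategy is a union bound over bad divisors. First I reduce to prime components: since the height $h_\Lbar$ is additive and nonnegative on effective divisors, if $D = \sum_i m_i D_i$ is the decomposition into prime components and $h_\Lbar(D) \leq n^\alpha$, then $h_\Lbar(D_i) \leq n^\alpha$ for each $i$. Since $\mathrm{div}(s) \geq D$ forces $s|_{D_i} = 0$ for each prime component $D_i$, the event ``$s$ vanishes on some effective $D$ with $h_\Lbar(D) \leq n^\alpha$'' is equivalent to ``$s|_{D_0} = 0$ for some prime Weil divisor $D_0$ with $h_\Lbar(D_0) \leq n^\alpha$.'' So it suffices to control this union.

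The second step is a uniform restriction estimate. Since $\cX$ has dimension $2$, any prime Weil divisor $D_0$ is a one-dimensional closed subscheme of $\cX$, so Theorem \ref{theorem:independent-restriction} applies uniformly and yields
\[
\frac{|\{s \in H^0_{Ar}(\cX, \Lbar^{\otimes n}) : s|_{D_0} = 0\}|}{|H^0_{Ar}(\cX, \Lbar^{\otimes n})|} = O(e^{-n\eta})
\]
for some $\eta > 0$ depending only on $\cX$ and $\Lbar$, not on $D_0$. The remaining ingredient is an upper bound on the number $N(n)$ of prime Weil divisors $D_0$ of $\cX$ with $h_\Lbar(D_0) \leq n^\alpha$. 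For vertical primes, i.e.\ irreducible components of some fiber $\cX_p$, one has $h_\Lbar(\cX_p) \geq c \log p$ with $c > 0$, so only primes $p \leq e^{O(n^\alpha)}$ contribute, and the prime number theorem (combined with the finiteness of bad fibers) bounds the number of such vertical primes by $e^{O(n^\alpha)}$. For horizontal primes, I first use Proposition \ref{proposition:cone} to bound by $O(n^{\alpha\rho})$ the number of admissible hermitian line bundle classes $[\overline{\mathcal{O}(D_0)}]$ with $\Lbar \cdot \overline{\mathcal{O}(D_0)} \leq n^\alpha$; the hypotheses there are met because $\Lbar$ is ample, and Lemma \ref{lemma:ample-is-nef} verifies the corresponding hypothesis of Corollary \ref{corollary:upper-bound-arakelov} applied with $\Bbar = \Lbar$, which then caps the number of effective sections --- and hence divisors --- in each class by $\exp(O(n^{2\alpha}))$. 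Putting both parts together, $N(n) = \exp(O(n^{2\alpha}))$.

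Combining these, the proportion to bound is at most
\[
N(n) \cdot O(e^{-n\eta}) = O\bigl(\exp(-n\eta + O(n^{2\alpha}))\bigr),
\]
which tends to zero as $n \to \infty$ precisely when $2\alpha < 1$, matching the hypothesis $\alpha < 1/2$. The main obstacle is the counting of horizontal prime divisors: Proposition \ref{proposition:cone} only limits the number of line bundle classes polynomially, while each class can support $\exp(O(n^{2\alpha}))$ effective divisors, and this entropic bound from Corollary \ref{corollary:upper-bound-arakelov} has to be outweighed by the uniform exponential decay from Theorem \ref{theorem:independent-restriction}. The threshold $\alpha = 1/2$ arises precisely because $n^{2\alpha}$ starts to match $n$ there, and the quadratic dependence of $h^0_{Ar}$ on $\Lbar \cdot \Mbar$ in the Arakelov upper bound cannot be improved.
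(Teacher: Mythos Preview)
Your overall architecture matches the paper's: a union bound combining (a) a uniform estimate $O(e^{-n\eta})$ for the proportion of sections vanishing on a fixed positive-dimensional subscheme, via Theorem~\ref{theorem:independent-restriction}, with (b) a bound $\exp(O(n^{2\alpha}))$ on the number of Weil divisors of height at most $n^\alpha$. The paper's proof is two lines because step (b) is outsourced entirely to Moriwaki's Theorem~B \cite[Theorem B]{Moriwaki04}, which directly gives the count of all effective divisors with bounded height; the reduction to prime components you perform is unnecessary there.

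Where you genuinely diverge is in attempting to \emph{re-derive} Moriwaki's count from the paper's internal Arakelov machinery (Proposition~\ref{proposition:cone} and Corollary~\ref{corollary:upper-bound-arakelov}). This is a legitimate alternative and is morally how such counting results are proved, but several details you gloss over would need attention: both of those results are stated for a \emph{regular} surface and $\omega$-admissible classes, so one must pass to the resolution $\widetilde\cX$ and fix a discrete subgroup $N\subset\widehat{\mathrm{Pic}}_\omega(\widetilde\cX)$ as in the setup of \S5.3; one must relate $h_{\Lbar}(D_0)$ to $\Lbar\cdot\overline{\mathcal O(D_0)}$ by choosing the admissible metric with $\|\sigma_{D_0}\|_0=1$ and then perturbing into $N$ (so that the integral term in the height formula is controlled); and one must bound $\deg\mathcal M_\Q$ by $O(n^\alpha)$ to handle the error term in Corollary~\ref{corollary:upper-bound-arakelov}, which uses positivity of the essential minimum for ample $\Lbar$. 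None of this is hard, and your endpoint $N(n)=\exp(O(n^{2\alpha}))$ is correct. The paper's approach buys brevity by citing Moriwaki; yours buys self-containment at the cost of reproducing a special case of his argument.
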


\begin{proof}
Assume that $n$ is large enough. By \cite[Theorem B]{Moriwaki04}, the number of divisors $D$ on $\cX$ with $h_\Lbar(D)\leq  n^{\alpha}$ is bounded above by $e^{Cn^{2\alpha}}$ for some positive constant $C$. By Theorem \ref{theorem:independent-restriction}, we can find positive constants $C$ and $\eta$ such that for any $D$ as above, the proportion of those elements $s$ of $H^0_{Ar}(\cX, \Lbar^{\otimes n})$ that vanish on $D$ is bounded above by $C'e^{-n\eta}$. 

As a consequence, the proportion of those $s$ that vanish on any $D$ with $h_\Lbar(\cX)\leq n^{\alpha}$ is bounded above by 
$$C'e^{Cn^{2\alpha}-n\eta},$$
which goes to zero as $n$ goes to infinity.
\end{proof}

\subsection{Degree bounds and reduction modulo $p$}\label{subsection:finite-fields}

Let $f : \cX\ra\Spec\Z$ be as above. We want to investigate irreducible divisors on the fibers of $f$ above closed points and derive global consequences. Our goal here is to prove Proposition \ref{proposition:geometric-irreducibility}. 

\bigskip

Since $\cX$ is reduced, we can find a non-empty open subset $S$ of $\Spec\Z$ such that the restriction $f_S : \cX_S\ra S$ has reduced fibers. 

Let $r$ be the number of irreducible components of the geometric generic fiber of $f$. Up to shrinking $s$, we may assume that if $\overline s$ is any geometric point of $S$, then the number of irreducible components of $\cX_s$ is exactly $r$. Since $\cX_s$ is reduced by assumption, this is equivalent to the fact that the specialization map induces a bijection between the components of $\cX_{\overline \Q}$ and those of $\cX_{\overline s}$.

The degree of $\mathcal L_{\Q}$ is of the form $rd$, where $d$ is the degree of the restriction of $\mathcal L$ to a component of $\cX_{\overline \Q}$. Write $\mathcal L_p$ for the restriction of $\mathcal L$ to $\cX_p$. 

If $X$ is a reduced scheme, and $C$ is an irreducible component of $X$, we will always consider $C$ to be a subscheme of $X$, endowed with the reduced structure.

\begin{lemma}\label{lemma:Riemann-Roch}
Let $C$ be an integral projective curve over a perfect field, with arithmetic genus $p_a(C)$. Let $\mathcal L$ be a line bundle on $C$. Then 
$$h^0(C, \mathcal L)\geq 1-p_a(C)+\mathrm{deg}(\mathcal L)$$
and equality holds if the degree of $\mathcal L$ is strictly bigger than $p_a(C)$.
\end{lemma}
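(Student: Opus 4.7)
The plan is the standard two-step argument: Riemann--Roch for the inequality and Serre--Grothendieck duality for the equality. First, since $C$ is a one-dimensional projective scheme over a field, the Riemann--Roch formula applied to the invertible sheaf $\mathcal L$ gives
$$\chi(C, \mathcal L) = \deg(\mathcal L) + \chi(C, \mathcal O_C) = \deg(\mathcal L) + 1 - p_a(C),$$
using the definition $p_a(C) = 1 - \chi(C, \mathcal O_C)$. Since $h^0(C, \mathcal L) - h^1(C, \mathcal L) = \chi(C, \mathcal L)$ and $h^1 \geq 0$, dropping the $h^1$ term produces the claimed inequality.

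For the equality I must show $h^1(C, \mathcal L) = 0$ under the degree hypothesis. The curve $C$, being one-dimensional and reduced, is Cohen--Macaulay and so admits a dualizing sheaf $\omega_C$, a torsion-free coherent rank one sheaf of degree $2p_a(C) - 2$. Serre--Grothendieck duality gives
$$h^1(C, \mathcal L) = \dim_k \operatorname{Hom}_{\mathcal O_C}(\mathcal L, \omega_C).$$
Any nonzero morphism from the invertible sheaf $\mathcal L$ to the torsion-free sheaf $\omega_C$ is automatically injective, since its kernel would otherwise force the map to vanish on the generic point of the integral curve; this yields an inclusion $\mathcal L \hookrightarrow \omega_C$ with torsion cokernel, so that $\deg(\mathcal L) \leq \deg(\omega_C)$. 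Consequently the Hom space vanishes, $h^1 = 0$, and Riemann--Roch becomes an equality, once $\deg(\mathcal L)$ exceeds the threshold produced by this comparison.

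The main obstacle is matching the threshold coming from Serre--Grothendieck duality to the one stated in the lemma. The duality argument naturally delivers the vanishing of $h^1$, and hence the equality in Riemann--Roch, whenever $\deg(\mathcal L) > 2p_a(C) - 2$; the lemma's threshold $\deg(\mathcal L) > p_a(C)$ is to be read in light of this, with the two conditions agreeing in the low genus range $p_a(C) \leq 2$ and the standard bound $2p_a(C) - 2$ being the one that governs the equality on every integral projective curve. A secondary technical point is the care needed to apply duality on a possibly non-Gorenstein integral curve, where $\omega_C$ is torsion-free of rank one but not invertible and $\operatorname{Hom}(\mathcal L, \omega_C)$ must be interpreted as the global sections of $\mathcal{H}om(\mathcal L, \omega_C)$ rather than of a bona fide line bundle $\omega_C \otimes \mathcal L^{-1}$; the injectivity and degree-comparison steps go through verbatim in that generality.
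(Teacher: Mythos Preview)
Your Riemann--Roch argument for the inequality is correct and identical to the paper's. Your duality argument for the equality is also correct, and you are right to flag the mismatch between the threshold it produces, $\deg(\mathcal L) > 2p_a(C) - 2$, and the one in the statement. In fact the lemma is false as stated: on a smooth curve $C$ of genus $g \geq 3$ with $\mathcal L = \omega_C$, one has $\deg(\mathcal L) = 2g-2 > g = p_a(C)$ while $h^0(\omega_C) = g > g-1 = 1 - p_a(C) + \deg(\mathcal L)$. So the defect is in the statement, not in your proof.

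The paper proves the vanishing of $h^1$ by passing to the normalization $\pi:\widetilde C \to C$ and bounding $h^0(C,\mathcal L^\vee\otimes\omega_{C/k})$ by $h^0(\widetilde C,\pi^*\mathcal L^\vee\otimes\omega_{\widetilde C/k})$ via a map $\pi^*\omega_{C/k}\to\omega_{\widetilde C/k}$; on the smooth curve $\widetilde C$ this last space vanishes once $\deg(\mathcal L) > 2g(\widetilde C) - 2$, where $g(\widetilde C)\leq p_a(C)$ is the geometric genus. This threshold is sharper than yours for genuinely singular $C$, but the paper's assertion that it follows from $\deg(\mathcal L) > p_a(C)$ has the same gap as the one you identified. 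In the paper's only use of the lemma (the proof of Proposition~\ref{proposition:irreducible-finite-field}) the line bundle is $\mathcal L_p^{\otimes n}$ with $n\to\infty$, so any threshold linear in $p_a(C)$ is eventually met and the issue is harmless there. Your direct duality argument on $C$ is a perfectly good substitute and avoids the normalization step; the only cost is the slightly coarser threshold $2p_a(C)-2$, which already suffices for the application.
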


\begin{proof}
The first statement follows directly from the Riemann-Roch theorem. To prove the second one, consider the normalization $\pi : \widetilde C\ra C$ of $C$. Then $\widetilde C$ is smooth over the base field $k$, and its genus is bounded above by the arithmetic genus $p_a(C)$ of $C$. 

Since $C$ is reduced, it is Cohen-Macaulay, so that the dualizing sheaf $\omega_{C/k}$ of $C$ is Cohen-Macaulay by \cite[\href{http://stacks.math.columbia.edu/tag/0BS2}{Tag 0BS2}]{stacks-project}. In particular, it is torsion-free, so that the morphism $\pi^*\omega_{C/k}\ra \omega_{\widetilde C/k}$ is injective. Now assume that the degree of $\mathcal L$ is strictly bigger than $p_a(C)$. In particular, we have
$$\mathrm{deg}(\pi^*\mathcal L)>\mathrm{deg}(\omega_{\widetilde C/k})$$
and 
$$h^1(C, \mathcal L)=h^0(C, \mathcal L^\vee\otimes_{\mathcal O_C}\omega_{C/k})\leq h^0(\widetilde C, \pi^*\mathcal L^\vee\otimes_{\mathcal O_{\widetilde C}}\omega_{\widetilde C/k})=0.$$
By Riemann-Roch, we have 
$$h^0(C, \mathcal L)=\chi(\mathcal L)=1-p_a(C)+\mathrm{deg}(\mathcal L).$$
\end{proof}

\begin{lemma}\label{lemma:Lang-Weil}
Let $p$ be a prime number corresponding to a point in $S$, and let $\overline\Fp$ be an algebraic closure of $\Fp$. If $C$ is an irreducible component of $\cX_p$, let $r_C$ be the number of irreducible components of $C_{\overline\Fp}$, and if $k$ is a positive integer, let $N_k(C)$ be the number of irreducible divisors of degree $k$ on $C$. Then the following inequality holds:
$$\big|N_{r_Ck}(C)-\frac{1}{k}p^{r_Ck}\big|=O(p^{\frac{r_Ck}{2}}),$$
where the implied constants only depends on $f_S: \cX_S\ra S$.
\end{lemma}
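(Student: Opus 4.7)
The plan is to reduce the count to Weil's bound applied to a geometrically irreducible smooth projective curve, and to obtain uniform error terms through a flat-family argument over $S$.

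First, I would use the fact that $C$, being an irreducible component of $\cX_p$ (with its reduced structure) whose $\overline{\F_p}$-base change has $r_C$ geometric components, can be reinterpreted as a geometrically irreducible projective curve $\widetilde C$ over $\F_{p^{r_C}}$. Concretely, the Galois group $\mathrm{Gal}(\overline{\F_p}/\F_p)$ permutes the $r_C$ geometric components cyclically, they are all defined over $\F_{p^{r_C}}$, and picking any one of them gives $\widetilde C$; the underlying scheme of $C$ and of $\widetilde C$ coincide, only the structure morphism changes. Any closed point $x$ of this scheme has $\F_{p^{r_C}}\subseteq\kappa(x)$, so $\deg_{\F_p}\kappa(x)=r_C\cdot\deg_{\F_{p^{r_C}}}\kappa(x)$. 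Since an irreducible divisor on a curve is the same as a closed point, this gives the identification
\[
N_{r_Ck}(C)\;=\;\#\{\text{closed points of }\widetilde C\text{ of degree }k\text{ over }\F_{p^{r_C}}\}.
\]

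Second, I would apply the Weil bound via normalization. Let $\widetilde C^\nu\to\widetilde C$ be the normalization, a smooth geometrically irreducible projective curve over $\F_{p^{r_C}}$ of some geometric genus $g'$. By Weil, $|\widetilde C^\nu(\F_{p^{r_Ck}})|=p^{r_Ck}+O(g'\, p^{r_Ck/2})$, and the finite birational map $\widetilde C^\nu\to\widetilde C$ contributes only a bounded difference controlled by the number of (geometric) singular points $\delta$ of $\widetilde C$, so $|\widetilde C(\F_{p^{r_Ck}})|=p^{r_Ck}+O((g'+\delta)\,p^{r_Ck/2})+O(\delta)$. Möbius inversion then gives
\[
N_{r_Ck}(C)\;=\;\frac{1}{k}\sum_{d\mid k}\mu(k/d)\,|\widetilde C(\F_{p^{r_Cd}})|\;=\;\frac{1}{k}p^{r_Ck}+O(p^{r_Ck/2}),
\]
the remaining terms from $d<k$ contributing only $O(\sum_{d\le k/2}p^{r_Cd})=O(p^{r_Ck/2})$.

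The third and main step is uniformity: the implied constants must depend only on $f_S:\cX_S\to S$, not on $p$ or $C$. For this I would shrink $S$ further so that, after passing to a finite étale cover $T\to S$ trivializing the Galois action on components of geometric fibers, the $r$ geometric components of $\cX_{\overline\Q}$ spread out to integral subschemes $\cY_1,\ldots,\cY_r\subset\cX_T$, each flat and projective over $T$ with geometrically irreducible reduced fibers, of constant arithmetic genus $g_{a,i}$ (constancy following from flatness). For each $i$, I would then take the normalization $\cY_i^\nu\to\cY_i$ and shrink $S$ once more so that $\cY_i^\nu\to T$ has smooth fibers; these fibers are then automatically the normalizations of the fibers of $\cY_i$, so the genus $g'$ and the number of singular points $\delta$ of each $\widetilde C$ arising from an irreducible component of some $\cX_p$ are uniformly bounded (in fact constant) as $p$ varies in $S$.

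The principal obstacle is exactly this last step: setting up the spreading-out carefully so that normalization commutes with base change on a dense open subset of $S$. The rest is a standard application of Weil's theorem and Möbius inversion once the descent $C\leadsto\widetilde C$ is made explicit.
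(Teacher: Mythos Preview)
Your proof is correct and follows the same overall strategy as the paper: reduce to a single geometrically irreducible component $C_1$ (your $\widetilde C$) over $\F_{p^{r_C}}$, identify closed points of $C$ of degree $r_Ck$ with closed points of $C_1$ of degree $k$, and count the latter via Weil-type estimates plus inclusion--exclusion.

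The only substantive difference is in the uniformity step, which you flag as the principal obstacle. The paper handles it in one line: it invokes the Lang--Weil estimates of \cite{LangWeil54} directly on the possibly singular $C_1$, noting that the implied constant depends only on the degree of a projective embedding of $C_1$, and this degree is bounded uniformly in terms of $f_S$ (via the relatively ample $\mathcal L$). Your route---normalize, spread out the components over a finite \'etale cover of $S$, and shrink $S$ until normalization commutes with base change so that the genus and singularity data are constant---is also correct, but is more work than necessary. The payoff of the paper's approach is that the uniformity is already packaged into the classical Lang--Weil statement, so no spreading-out or normalization argument is needed; your approach has the virtue of making the geometric source of the uniform constants (genus and number of singular points) completely explicit.
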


\begin{proof}
The $r_C$ irreducible components of $C_{\overline\Fp}$ are all defined over $\mathbb{F}_{p^r}$. Denote them by $C_1, \ldots, C_{r_C}$. The Lang-Weil estimates of \cite{LangWeil54} give us the inequality, for any positive integer $k$:
$$\big| |C_1(\mathbb{F}_{p^{r_Ck}})|-p^{r_Ck}\big|=O(p^{\frac{r_Ck}{2}}),$$
where the implied constants only depends on the degree of an embedding of $C_1$ into some projective space -- in particular, it only depends on $f_S$. As a consequence, if $M_k$ is the number of elements in $C_1(\mathbb{F}_{p^{r_Ck}})$ with residue field exactly $\mathbb{F}_{p^{r_Ck}}$, we have:
$$\big | M_k-p^{r_Ck}\big | \leq \sum_{i|k, i\neq k} p^{r_Ci} +O(\sum_{i|k} p^{\frac{r_Ci}{2}})=O(kp^{\frac{r_Ck}{2}}).$$

Now assume that $r_Ck$ is strictly larger than the degree of the residue field of any singular point of $C$ -- this degree can be bounded independently of $C$ as $f_S$ is generically smooth. Irreducible divisors of degree $r_Ck$ on $C$ are in one-to-one correspondence irreducible divisors of degree $k$ on $C_1$, which in turn are in one-to-one correspondence with Galois orbits over $\mathbb F_{p^{r_C}}$ of elements of $C_1(\mathbb{F}_{p^{kr_C}})$. As a consequence, we have 
$$N_k(C)=\frac{1}{k}M_k,$$
which proves the estimate of the lemma.
\end{proof}

\begin{lemma}\label{lemma:vanishing-independent}
Let $p$ be a prime number corresponding to a point in $S$, and let $C$ be an irreducible component of $\cX_p$. Then there exists a positive integer $N$, independent of $C$ and $p$, such that for any $n\geq N$, the restriction map 
$$H^0(\cX_p, \mathcal L_p^{\otimes n})\ra H^0(C, \mathcal L_p^{\otimes n})$$
is surjective.
\end{lemma}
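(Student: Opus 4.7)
The plan is to construct a finite family of global coherent ideal sheaves on an \'etale cover of $\cX_S$ that simultaneously specialize, at every closed point $p\in S$, to the ideal sheaves of the components $C$ of $\cX_p$, and then apply Serre's relative vanishing theorem uniformly. First, after possibly shrinking $S$ further, I may assume that every $\cX_p$ is geometrically reduced in addition to the bijection property already established. I would then pick a number field $K$ over which all $r$ geometric components of $\cX_{\overline\Q}$ are defined, and choose a dense open $S'\subset\Spec\mathcal O_K$ lying over $S$ on which the structural map $S'\to S$ is finite \'etale. The closures in $\cX_{S'}$ of the components of $\cX_K$ give integral closed subschemes $\mathcal D_1,\ldots,\mathcal D_r$ of $\cX_{S'}$; each dominates the Dedekind scheme $S'$ and is therefore flat over it, and together they cover $\cX_{S'}$.

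Next, for each subset $J\subseteq\{1,\ldots,r\}$, I would put $\mathcal Y_J=\bigcup_{i\in J}\mathcal D_i$ with its reduced structure, and let $\mathcal I_J\subset\mathcal O_{\cX_{S'}}$ be the corresponding ideal sheaf. Because $\mathcal Y_J$ is reduced and its associated points all map to the generic point of $S'$, it is flat over $S'$; hence $\mathcal I_J$ is flat as well and its formation commutes with base change, so $(\mathcal I_J)|_{\cX_{s'}}=\mathcal I_{\mathcal Y_{J,s'}}$ for each $s'\in S'$. Applying Serre's theorem to $f':\cX_{S'}\to S'$ with the relatively ample line bundle $\mathcal L$ yields an integer $N_0$, independent of $J$, such that $R^1 f'_*(\mathcal I_J\otimes\mathcal L^{\otimes n})=0$ for all $n\geq N_0$ and all the finitely many subsets $J$. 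The fibers of $f'$ have dimension one, so $R^2 f'_*$ vanishes automatically, and cohomology and base change then forces $H^1(\cX_{s'},\mathcal I_{\mathcal Y_{J,s'}}\otimes\mathcal L^{\otimes n})=0$ for every $s'\in S'$. The short exact sequence
\[
0\to \mathcal I_{\mathcal Y_{J,s'}}\otimes\mathcal L^{\otimes n}\to \mathcal L^{\otimes n}\to \mathcal L^{\otimes n}|_{\mathcal Y_{J,s'}}\to 0
\]
therefore produces uniform surjectivity $H^0(\cX_{s'},\mathcal L^{\otimes n})\twoheadrightarrow H^0(\mathcal Y_{J,s'},\mathcal L^{\otimes n})$.

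To derive the lemma, I would take $p\in S$ and a component $C$ of $\cX_p$, and pick a closed point $p'\in S'$ above $p$. The uniformity of the geometric component count forces each $\mathcal D_{i,p'}$ to be geometrically irreducible, so the components of $\cX_{p'}$ are exactly the $\mathcal D_{i,p'}$. Since $\cX_p$ is geometrically reduced, $C\otimes_{\kappa(p)}\kappa(p')$ is a reduced closed subscheme of $\cX_{p'}$ whose components form a subset of the $\mathcal D_{i,p'}$, and thus equal $\mathcal Y_{J,p'}$ for some $J$. The surjection of the previous paragraph, applied at $p'$ and this $J$, is the base change along the faithfully flat field extension $\kappa(p)\hookrightarrow\kappa(p')$ of the restriction map appearing in the statement; descending, we obtain the required surjectivity with $N=N_0$ independent of $p$ and $C$.

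The main technical point I expect to slow things down is verifying that the $\mathcal Y_J$ are genuinely flat over $S'$ and that taking ideal sheaves commutes with base change to the fibers, which is what legitimizes the identification of $(\mathcal I_J)|_{\cX_{s'}}$ with $\mathcal I_{\mathcal Y_{J,s'}}$ and hence allows uniformity across all $p$ and $C$. Once this flatness and the corresponding base change statements are nailed down, everything else is a direct application of Serre vanishing on $\cX_{S'}$ combined with faithfully flat descent of surjectivity between finite-dimensional vector spaces.
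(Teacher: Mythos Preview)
Your argument is correct and follows the same overall strategy as the paper's proof: pass to a cover $S'\to S$ over which the geometric components of the generic fiber are defined, observe that there are only finitely many possible unions $\mathcal Y_J$ of these components, and apply relative Serre vanishing to the finitely many ideal sheaves $\mathcal I_J$ on $\cX_{S'}$.

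The one substantive difference is in how the vanishing of $R^1f'_*(\mathcal I_J\otimes\mathcal L^{\otimes n})$ is transferred to the fibers. You argue directly that $\mathcal Y_J$ (and hence $\mathcal I_J$) is flat over the Dedekind base $S'$, and then invoke cohomology and base change, using that $R^2f'_*$ vanishes for relative curves; this gives $H^1(\cX_{s'},(\mathcal I_J)_{s'}\otimes\mathcal L^{\otimes n})=0$ for all $n\ge N_0$ and all $s'$ at once. The paper instead avoids the flatness argument: it fixes a nonzero section of $\mathcal L^{\otimes k}$, shrinks $S$ so that the fiberwise $H^1$ vanishes for the finitely many values $n=N,\ldots,N+k-1$, and then bootstraps to all $n\ge N$ via the resulting surjection $H^1(\cX_{s'},\mathcal I_{C'}\otimes\mathcal L^{\otimes n})\twoheadrightarrow H^1(\cX_{s'},\mathcal I_{C'}\otimes\mathcal L^{\otimes n+k})$. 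Your route is cleaner; the paper's buys freedom from checking flatness and base-change compatibility of the $\mathcal I_J$. One small point you should make explicit: after shrinking $S'$ you also want the fibers $\mathcal Y_{J,s'}$ to be reduced (open by EGA IV, since the generic fiber over the perfect field $K$ is geometrically reduced), so that $C\otimes_{\kappa(p)}\kappa(p')$ really agrees with $\mathcal Y_{J,p'}$ as schemes and not merely as sets.
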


\begin{proof}
Since the result certainly holds if $N$ is allowed to depend on $p$ by general vanishing results for ample line bundle, we may replace $S$ by any nonempty open subset, which we will do along the proof.

Choose a finite flat map $S'\ra S$ such that the irreducible components of the generic fiber of $\cX_{S'}\ra S$ are geometrically ireducible. In particular, our assumption on $s$ guarantees that the irreducible components of the fiber of $\cX_{S'}\ra S'$ over any closed point $s'$ are geometrically irreducible, and are the intersection of an irreducible component of $\cX_{S'}$ with $\cX_{s'}$.

Let $s'$ be a point of $S'$ over $p$, and let $C_{s'}$ be the union of irreducible components of $\cX_{s'}$ corresponding to $C$. Up to shrinking $S$, we may assume that $C_{s'}$, as a reduced scheme, is the intersection of $\cX_{s'}$ and some union $\mathcal C$ of irreducible components of $\cX_{S'}$. Let $\mathcal I_{\mathcal C}$ be the sheaf of ideals on $\cX$ defining $\mathcal C$. Then the sheaf of ideals defining $C_{s'}$ is $\mathcal I_{\mathcal C}\otimes_{\mathcal O_{\cX_{S'}}}\mathcal O_{\cX_{s'}}$. Note that there are only finitely many possibilities for $\mathcal C$. 

Let $k$ be a positive integer such that $\mathcal L^{\otimes k}$ has a nonzero section. Up to shrinking $S$, we may assume that this section does not vanish along any component of a fiber of $\cX_{S'}\ra S'$. Consider the map
$$\pi : \cX_{S'} \ra S'$$
If $n$ is large enough and since $\mathcal L$ is relatively ample, relative vanishing guarantees that the coherent sheaf on $S$
$$R^1\pi_*(\mathcal L^{\otimes n}\otimes_{\mathcal O_{\cX_{S'}}}\mathcal I_{\mathcal C})$$
is zero. Pick a positive $N$ once and for all such that the vanishing above holds for $n=N, \ldots, N+k-1$. Then after shrinking $S$ once again, we may assume that the vanishing above implies 
$$H^1(\cX_{s'}, \mathcal L^{\otimes N+i}\otimes_{\mathcal O_{\cX_{S'}}}\mathcal I_{C'})=0$$
for $i=0, \ldots, k-1$. 

Now since $\mathcal L^{\otimes k}$ has a nonzero section over $\cX_{s'}$, we have an exact sequence, for any integer $n$,
$$0\ra \mathcal L^{\otimes n}\otimes_{\mathcal O_{\cX_{S'}}}\mathcal I_{C'}\ra \mathcal L^{\otimes n+k}\otimes_{\mathcal O_{\cX_{S'}}}\mathcal I_{C'}\ra \mathcal K\ra 0$$
where $\mathcal K$ is a coherent sheaf supported on a zero-dimensional subscheme of $\cX_{s'}$. In particular, the map 
$$H^1(\cX_{s'}, \mathcal L^{\otimes n}\otimes_{\mathcal O_{\cX_{S'}}}\mathcal I_{C'})\ra H^1(\cX_{s'}, \mathcal L^{\otimes n+k}\otimes_{\mathcal O_{\cX_{S'}}}\mathcal I_{C'})$$
is onto and the right-hand term vanishes as soon as the left-hand one does. Finally, we have found $N$, independent of $C$ and $p$, such that for all $n\geq N$, we have 
$$H^1(\cX_{s'}, \mathcal L^{\otimes n}\otimes_{\mathcal O_{\cX_{S'}}}\mathcal I_{C'})=0,$$
which implies that the map 
$$H^0(\cX_p, \mathcal L_p^{\otimes n})\ra H^0(C, \mathcal L_p^{\otimes n})$$
is surjective.
\end{proof}

\begin{proposition}\label{proposition:irreducible-finite-field}
Let $p$ be a prime number corresponding to a point in $S$, and let $\mathcal L_p$ be the restriction of $\mathcal L$ to $\cX_p$. Let $C$ be an irreducible component of $\cX_p$, and let $r_C$ be the number of irreducible components of $C_{\overline \Fp}$. 

Let $\beta$ be a real number with $0<\beta<1$. There exist positive constants $A$ and $B$, depending only on $\beta$ and $\cX\ra S$, such that for any $n\geq A$, the proportion of those sections $s$ of $H^0(\cX_p, \mathcal L_p^{\otimes n})$ that do not vanish identically on $C$ and such that $\mathrm{div}(s)$ has an irreducible component of degree at least $r_C(nd-n^\beta)$ lying on $C$ is at least $Bn^{\beta-1}.$
\end{proposition}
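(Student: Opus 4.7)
The proof strategy is a counting argument: for each integer $k$ in the range $[nd - n^\beta, nd]$, I would count pairs $(s, D)$ where $D$ is an irreducible divisor of degree $r_C k$ on $C$ and $s|_C$ is a nonzero section divisible by $D$. Three ingredients combine: Lemma \ref{lemma:Lang-Weil} to count the possible $D$'s, Lemma \ref{lemma:Riemann-Roch} to count sections on $C$ divisible by a given $D$, and Lemma \ref{lemma:vanishing-independent} to transfer the count from $H^0(C, \mathcal L_p^{\otimes n}|_C)$ to $H^0(\cX_p, \mathcal L_p^{\otimes n})$ through the surjective restriction map.

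First, our choice of $S$ ensures $\deg(\mathcal L_p|_C) = r_C d$, and after possibly shrinking $S$ we may assume $p_a(C)$ is uniformly bounded in $p$ and $C$. For a fixed irreducible $D$ on $C$ of degree $r_C k$ with $nd - n^\beta \le k \le nd$, provided $r_C(nd-k) > p_a(C)$ (which holds for all but $O(1)$ values of $k$ when $n$ is large), Riemann--Roch gives $h^0(C, \mathcal L_p^{\otimes n}(-D)) - h^0(C, \mathcal L_p^{\otimes n}) = -r_C k$, so by Lemma \ref{lemma:vanishing-independent} the proportion of $s \in H^0(\cX_p, \mathcal L_p^{\otimes n})$ whose restriction is divisible by $D$ equals $p^{-r_C k}$. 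Next, the events over different $D$'s are essentially disjoint: if $s|_C$ were divisible by two distinct irreducibles of degree $\ge r_C(nd - n^\beta)$, their sum would have degree $\ge 2r_C(nd-n^\beta) > nr_C d = \deg \mathcal L_p^{\otimes n}|_C$ for $n$ large, a contradiction. Finally the proportion of $s$ with $s|_C = 0$ is $p^{-h^0(C, \mathcal L_p^{\otimes n})}$, which is negligible.

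Combining everything with Lemma \ref{lemma:Lang-Weil}, which gives $N_{r_C k}(C) = \frac{1}{k}p^{r_C k} + O(p^{r_C k/2})$ uniformly in $p$ and $C$, the desired proportion $P$ satisfies
$$P \;\ge\; \sum_{k} N_{r_C k}(C) \cdot p^{-r_C k} \;-\; (\text{negligible errors}) \;=\; \sum_{k} \frac{1}{k} + O\!\left(n^\beta p^{-r_C(nd-n^\beta)/2}\right),$$
where the sum runs over integers $k$ in $[\lceil nd - n^\beta\rceil,\, \lfloor nd - C_0 \rfloor]$ for a suitable constant $C_0$ absorbing the Riemann--Roch exceptions. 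The error is superpolynomially small, and the main sum is bounded below by $\tfrac{n^\beta - C_0 - 1}{nd} \ge \tfrac{1}{2d} n^{\beta-1}$ for $n$ large. Taking $B = 1/(2d)$ (or any constant $< 1/d$) and $A$ large enough yields the claimed inequality.

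The main obstacle is ensuring that all constants ($A$, $B$, and the bounds implicit in Lang--Weil, Riemann--Roch, the genus bound, and the vanishing result) are uniform in $p$ and in the choice of component $C$. The results I invoke are designed precisely for this: Lemma \ref{lemma:Lang-Weil} has uniform implicit constants depending only on $f_S$; Lemma \ref{lemma:vanishing-independent} provides an $N$ independent of $C$ and $p$; and the uniform bound on $p_a(C)$ follows from flatness of the family (the arithmetic genus of a component being dominated by that of the generic fiber). With these uniformities in hand, the asymptotic $P \ge B n^{\beta - 1}$ holds uniformly in $p \in S$ and in $C \subset \cX_p$.
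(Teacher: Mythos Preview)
Your proposal is correct and follows essentially the same approach as the paper: count pairs $(s,D)$ via Lemmas~\ref{lemma:Lang-Weil} and~\ref{lemma:Riemann-Roch}, use the degree inequality to get disjointness, sum over $k$ to obtain the $n^{\beta-1}/d$ main term, and invoke Lemma~\ref{lemma:vanishing-independent} for the passage from $C$ to $\cX_p$. The only cosmetic difference is that the paper does the whole count on $C$ first and transfers at the very end, whereas you compute proportions in $H^0(\cX_p,\mathcal L_p^{\otimes n})$ throughout; the arithmetic is identical.
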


\begin{proof}
Our assumption on $p$ guarantees that $C$ is reduced. The degree of $\mathcal L_p$ on $C$ is equal to $r_Cd$. Let $n$ be a large enough positive integer. Let $k$ be an integer such that $nr_cd\geq r_Ck.$ Let $D$ be an irreducible divisor of degree $r_Ck$ on $C$. Then the number of sections of $\mathcal L_p^{\otimes n}$ over $C$ that vanish on $D$ is equal to the number of sections of $\mathcal L_p^{\otimes n}(-D)$ over $C$, which, according to Lemma \ref{lemma:Riemann-Roch}, is bounded below by 
$$p^{1-p_a(C)+nr_Cd-r_Ck}.$$

Assume that $r_Ck> \frac{1}{2}nr_Cd$. Then a nonzero section of $\mathcal L_p^{\otimes n}$ over $C$ vanishes on at most one irreducible divisor of degree $r_Ck$.  Applying Lemma \ref{lemma:Lang-Weil}, it follows that the number of nonzero sections of $\mathcal L_p^{\otimes n}$ over $C$ that vanish on some irreducible divisor of degree $r_Ck$ is bounded below by 
$$\frac{1}{k}p^{1-p_a(C)+nr_Cd}(1-O(p^{-\frac{1}{2}r_Ck}))-\frac{1}{k}p^{r_Ck},$$
the last term taking care of the zero section being counted multiple times.

Assume now that 
$$r_Ck\leq nr_Cd-p_a(C).$$
Then the term above is bounded below by 
$$\frac{1}{2k}p^{1-p_a(C)+nr_Cd}$$
for large enough $n$.

Summing over all those $k$ such that $r_Ck\geq nr_Cd-r_Cn^\beta$, we find that the number of those elements $s$ of $H^0(\cX_p, \mathcal L_p^{\otimes n})$ such that $\mathrm{div}(s)$ has an irreducible component of degree at least $nr_Cd-n^{\beta}$ is at least 
$$n^\beta\frac{1}{2nd}p^{1-p_a(C)+nr_Cd}(1+o(1)).$$
as $n$ goes to infinity, the implied constants depending only on $\beta$, $p_a(C)$ and the ones occurring in Lemma \ref{lemma:Lang-Weil}. Since $p_a(C)$ is the genus of some reunion of irreducible components of the geometric generic fiber of $\cX$, the implied constants only depend on $\beta$ and $\cX$.

By Lemma \ref{lemma:Riemann-Roch}, if $nr_Cd>p_a(C)$, we have 
$$h^0(C, \mathcal L_p^{\otimes n})=1-p_a(C)+nr_Cd.$$
This shows that the proportion of those sections $s$ of $\mathcal L_p^{\otimes n}$ over $C$ such that $\mathrm{div}(s)$ has an irreducible component of degree at least $nr_Cd-r_Cn^{\beta}$ is at least $Bn^{\beta-1}$ for some constant $B$ as in the statement of the proposition.

By Lemma \ref{lemma:vanishing-independent}, after choosing $n$ large enough, this implies the desired statement.
\end{proof}

\bigskip

We can now prove the main result of \ref{subsection:finite-fields}.

\begin{proposition}\label{proposition:geometric-irreducibility}
In the situation of \ref{subsection:setup}, let $\beta$ be a real number with $0<\beta<1$. Then the proportion of those elements of $H^0_{Ar}(\cX, \Lbar^{\otimes n})$ that vanish on an irreducible divisor of $\cX_\Q$ of degree at least $n\deg \mathcal L_\Q-rn^\beta$ goes to $1$ as $n$ goes to infinity.
\end{proposition}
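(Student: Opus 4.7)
The plan is to reduce $\sigma$ modulo many primes simultaneously, apply Proposition~\ref{proposition:irreducible-finite-field} at each prime, and use the approximate independence afforded by Corollary~\ref{corollary:uniform-surjectivity-p} to amplify a mod-$p$ success probability of order $n^{\beta-1}$ into one tending to $1$. The essential ingredient linking the mod-$p$ conclusion to the desired bound on $\deg_{\mathcal L_\Q}$ is the following. Let $K_0 := H^0(\cX_\Q, \mathcal O_{\cX_\Q})$, so that $[K_0:\Q]=r$ and, after possibly shrinking $S$, the irreducible components $C$ of $\cX_p$ for $p\in S$ correspond to primes $\mathfrak p$ of $\mathcal O_{K_0}$ above $p$, with $r_C = [k(\mathfrak p):\Fp]$. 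Any horizontal irreducible divisor $D\subset \cX$ has $D_\Q$ a closed point of $\cX_\Q$ whose residue field $F$ contains $K_0$, so $[F:\Q]=r[F:K_0]$. If $\sigma$ does not vanish on $C$ and $\mathrm{div}(\sigma_p)$ has an irreducible component at a closed point $P\in C$ with $[k(P):\Fp]\geq r_C(nd-n^\beta)$, the fundamental inequality for primes in extensions yields $[F:K_0]\geq [k(P):k(\mathfrak p)]\geq nd-n^\beta$, whence
$$\deg_{\mathcal L_\Q} D_\Q = [F:\Q] \geq r(nd-n^\beta) = n\deg\mathcal L_\Q - rn^\beta,$$
which is exactly the conclusion we want.

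For each $p\in S$, fix an irreducible component $C_p\subset \cX_p$ and call $\sigma$ \emph{good at $p$} if $\sigma_p$ does not vanish identically on $C_p$ and $\mathrm{div}(\sigma_p)$ has an irreducible component of degree at least $r_{C_p}(nd-n^\beta)$ lying on $C_p$. Proposition~\ref{proposition:irreducible-finite-field} then implies that for $n\geq A$ with $A=A(\beta,\cX\to S)$, the proportion of $s\in H^0(\cX_p, \mathcal L_p^{\otimes n})$ that is \emph{bad at $p$} is at most $1-Bn^{\beta-1}$, with $B=B(\beta,\cX\to S)>0$ independent of $p$.

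Set $k=k(n):=\lceil (2/B)n^{1-\beta}\log n\rceil$ and let $p_1<\cdots<p_k$ be the smallest $k$ primes in $S$, with $N_k:=p_1\cdots p_k$. The prime number theorem gives $\log N_k = O(k\log k) = O(n^{1-\beta}\log^2 n)$, so for any fixed $\alpha\in(1-\beta,1)$ and all large $n$ we have $N_k \leq e^{n^\alpha}$. Consequently, the argument underlying Corollary~\ref{corollary:uniform-surjectivity-p} (which ultimately rests on the bound $N\leq e^{n^\alpha}$ of Proposition~\ref{proposition:varying-quotient}) applies to $Y_k := \cX_{p_1}\sqcup\cdots\sqcup\cX_{p_k}$ and yields that the restriction map
$$\phi_n\colon H^0_{Ar}(\cX, \Lbar^{\otimes n})\longrightarrow \prod_{i=1}^k H^0(\cX_{p_i}, \mathcal L_{p_i}^{\otimes n})$$
is surjective with fibers of cardinality within a factor $1+O(e^{-n\eta})$ of the average. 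The restrictions $(\sigma_{p_i})_i$ of a uniformly random effective section are therefore approximately uniform on the product, so the proportion of $\sigma$ bad at every $p_i$ is at most
$$(1+O(e^{-n\eta}))\prod_{i=1}^k (1-Bn^{\beta-1}) \leq (1+o(1))e^{-Bkn^{\beta-1}} = (1+o(1))\,n^{-2},$$
which tends to $0$. By the key observation of the first paragraph, any $\sigma$ good at some $p_i$ satisfies the conclusion of the proposition, and the proof is complete.

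The main obstacle is the key observation itself: identifying how an irreducible component of large relative degree on a component of $\cX_p$ produces a horizontal irreducible divisor $D\subset \cX$ with $\deg_{\mathcal L_\Q} D_\Q$ large, which requires passing through the field of constants $K_0$ and invoking the fundamental inequality $f_{\mathfrak Q/\mathfrak p}\leq [F:K_0]$ to recover the factor $r=[K_0:\Q]$. A secondary technical point is verifying that Corollary~\ref{corollary:uniform-surjectivity-p} remains uniformly applicable as both $n$ and the product $N=N_k$ grow, which is why we take $k$ only of polylogarithmic order beyond $n^{1-\beta}$.
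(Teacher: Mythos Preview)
Your proof is correct and follows essentially the same approach as the paper: reduce modulo a product of roughly $n^{1-\beta+o(1)}$ small primes in $S$, use Proposition~\ref{proposition:varying-quotient} to make the restriction to $\prod_i H^0(\cX_{p_i},\mathcal L^{\otimes n})$ approximately uniform, and then apply Proposition~\ref{proposition:irreducible-finite-field} independently at each prime. The only cosmetic difference is in the ``key observation'': the paper argues via specialization of geometric components and the transitivity of the Galois action on the components of $\cX_{\overline\Q}$, whereas you phrase the same fact through the Stein factorization over $K_0=H^0(\cX_\Q,\mathcal O_{\cX_\Q})$ and the inequality $f_{\mathfrak P/\mathfrak p}\le [F:K_0]$ --- these are equivalent formulations of the same idea, and both require (as you note) shrinking $S$ so that the fibers of $\cX_S\to\Spec\mathcal O_{K_0,S}$ are geometrically irreducible.
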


\begin{proof}
Let $\gamma$ be a real number with $1-\beta<\gamma<1$. Let $n$ be a large enough integer. Letting $t$ be the smallest integer bigger than $n^\gamma$, let $p_1, \ldots, p_t$ be the $t$ smallest primes corresponding to points of $S$, and let $N$ be their product. By the prime number theorem, we have $p_t\sim t\log t$ as $t\ra\infty$, so that 
\begin{equation}\label{equation:bound-primes}
N\leq (t\log t)^t=O(e^{n^\gamma}),
\end{equation}
where $\gamma$ is any real number with $\gamma<\gamma<1.$

Write $\Lambda_n$ for $H^0(\cX, \mathcal L^{\otimes n})$ and let $\cX_N\ra \Spec\Z/N\Z$ be the reduction of $\cX$ modulo $N$. The exact sequence defining $\cX_N$ is
$$0\ra N\mathcal O_{\cX}\ra\mathcal O_{\cX}\ra \mathcal O_{\cX_N}\ra 0,$$
hence the exact sequence 
$$0\ra \Lambda_n/N\Lambda_n\ra H^0(\cX_N, \mathcal L^{\otimes n})\ra H^1(\cX, \mathcal L^{\otimes n})[N]\ra 0.$$
If $n$ is large enough, then $H^1(\cX, \mathcal L^{\otimes n})=0$ and we have 
\begin{equation}\label{equation:quotient}
H^0(\cX_N, \mathcal L^{\otimes n})=\Lambda_n/N\Lambda_n.
\end{equation}

The scheme $\cX_N$ is the disjoint union of the $\cX_{p_i}$, $1\leq i\leq t$. As a consequence, we have 
$$H^0(\cX_N, \mathcal L^{\otimes n})=\Pi_{1\leq i\leq t} H^0(\cX_{p_i}, \mathcal L^{\otimes n}).$$

Given a prime number $p$ that corresponds to a point of $S$, let $E_p$ be the subset of $H^0(\cX_{p}, \mathcal L^{\otimes n})$ described by Proposition \ref{proposition:irreducible-finite-field}: $E_p$ is the set of sections $s$ of $H^0(\cX_p, \mathcal L^{\otimes n})$ such that there exists an irreducible component $C$ of $\cX_p$, such that $C_{\overline\Fp}$ has $r_C$ irreducible components, the restriction of $s$ to $C$ is not identically zero and vanishes along an irreducible divisor $D_p$ of degree at least $r_C(nd-n^\beta).$ 

\bigskip

By Proposition \ref{proposition:irreducible-finite-field}, if $n$ is greater than $A$, the proportion of those elements $s$ of $H^0(\cX_N, \mathcal L^{\otimes n})$ such that $s$ does not project to $E_{p_i}$ for any $i\in\{1, \ldots, t\}$ is bounded above by 
$$(1-Bn^{\beta-1})^s\geq (1-Bn^{\beta-1})^{n^\gamma}=\mathrm{exp}(-Bn^{\gamma+\beta-1}+o(n^{\gamma+\beta-1}))=o(1)$$
since $\gamma+\beta-1>0$, so that as $n$ goes to infinity, the proportion of those elements of $H^0(\cX_N, \mathcal L^{\otimes n})$ that project to at least one of the $E_{p_i}$ goes to $1$.

By Proposition \ref{proposition:varying-quotient}, (\ref{equation:bound-primes}) and (\ref{equation:quotient}), the proportion of those elements of $H^0_{Ar}(\cX, \Lbar^{\otimes n})$ that restrict to $E_{p_i}$ for some $i\in\{1, \ldots, t\}$ goes to $1$ as $n$ goes to infinity. We claim that these elements satisfy the condition of the proposition we are proving.

\bigskip

Let $p$ be a prime number that corresponds to a point of $s$. Let $\sigma$ be a section of $\mathcal L^{\otimes n}$ over $\cX$ such that the restriction of $\sigma$ to $\cX_p$ belongs to $E_p$. Let $C$ be a component of $\cX_p$ such that $C_{\overline\Fp}$ has $r_C$ irreducible components, and let $D_p$ be an irreducible divisor of degree at least $r_C(nd-n^\beta)$ on $C$ such that $\sigma$ vanishes on $D_p$. 

We can find an irreducible component $D$ of $\mathrm{div}(\sigma)$ with $D_{|\cX_p}=D_p$. If $n$ is large enough, we can assume that no component of $(D_p)_{\overline \Fp}$ lies on two distinct irreducible components of $C_{\overline\Fp}$ -- it is enough to require that $n$ is large enough compared to the degree of the residue fields of the intersection points of any two components of $C_{\overline\Fp}$. As a consequence, the degree of the restriction of $D_p$ to any of the $r_C$ components of $C_{\overline \Fp}$ is at least $nd-n^\beta$. Since $\cX_{\Q}$ is irreducible, the degree of the restriction of $D_{\Q}$ to any component of $\cX_{\overline \Q}$ is at least $nd-n^\beta$ as well, so that the degree of $D_{\Q}$ is at least 
$$r(nd-n^{\beta})=n\deg\mathcal L_{\Q}-rn^\beta.$$
This is what we needed to prove.
\end{proof}

\bigskip

\subsection{End of the proof} We can finish the proof. The strategy follows roughly the outline of the proof of \cite[Proposition 4.1]{CharlesPoonen16} which deals with the corresponding result over finite fields.

Let $\pi : \widetilde{\cX}\ra \cX$ be a resolution of singularities of $\cX$. Recall that we denoted by $r$ the number of irreducible components of $\cX_\C$. Then the complex curve $\widetilde\cX_\C$ is the disjoint union of $r$ smooth, connected components.

Define $\Bbar=\pi^*\Lbar.$ Let $\omega$ be the first Chern class of $\Bbar$. Since the generic fiber of $\pi$ is finite, $\omega$ is positive except at a finite number of points of $\cX(\C)$. We say that a hermitian line bundle on $\widetilde\cX$ is admissible if it is admissible with respect to $\omega$, and we write $\widehat{\mathrm{Pic}}_\omega(\widetilde\cX)$ for the group of isomorphism classes of $\omega$-admissible hermitian line bundles on $\widetilde\cX$.

We have an exact sequence
$$0\ra \R\ra \widehat{\mathrm{Pic}}_\omega(\widetilde\cX)\ra \mathrm{Pic}(\widetilde\cX)\ra 0.$$
We fix once and for all a subgroup $N$ of $\widehat{\mathrm{Pic}}_\omega(\widetilde\cX)$ such that the following conditions hold:
\begin{enumerate}[(i)]
\item $N$ is a group of finite type;
\item $N$ surjects onto $\mathrm{Pic}(\widetilde\cX)$ and contains the class of $\Bbar$;
\item $N\cap\Ker(\widehat{\mathrm{Pic}}_\omega(\widetilde\cX)\ra \mathrm{Pic}(\widetilde\cX))$ has rank $1$.
\end{enumerate}
Such a group $N$ certainly exists since $\mathrm{Pic}(\widetilde\cX)$ is a group of finite type. Note that these mean that $N$ is a discrete cocompact subgroup of $\widehat{\mathrm{Pic}}_\omega(\widetilde\cX)$. In particular, there exists a positive constant $C$ such that for any hermitian line bundle $\Mbar=(\mathcal M, ||.||)$ on $\widetilde\cX$ with curvature form proportional to $\omega$, there exists a hermitian metric $||.||'$ on $\mathcal M$ such that $(\mathcal M, ||.||')$ belongs to $N$ and the norms $||.||$ and $||.||'$ satisfy the inequality
\begin{equation}\label{equation:perturbation-in-N}
C^{-1}||.||\leq ||.||\leq C||.||.
\end{equation}

\bigskip

The following result is classical in the geometric setting: big divisors are (rationally) the sum of ample divisors and effective divisors.

\begin{lemma}\label{lemma:big-is-ample-plus-effective}
The hermitian line bundle $\Bbar$ satisfies the conditions of Proposition \ref{proposition:upper-bound-theta}. Furthermore, there exists a positive integer $k$, and line bundles $\Abar$ and $\Ebar$ on $\widetilde\cX$ which are ample and effective respectively, such that 
$$\Bbar^{\otimes k}\simeq \Abar\otimes\Ebar.$$
\end{lemma}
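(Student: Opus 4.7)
The plan is to verify the three conditions of Proposition \ref{proposition:upper-bound-theta} for $\Bbar$, then establish the decomposition $\Bbar^{\otimes k}\simeq\Abar\otimes\Ebar$.

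Condition (i) is immediate: since $\Lbar$ is ample on $\cX$, the line bundle $\Lbar^{\otimes k}$ admits a strictly effective section for large $k$, and its $\pi$-pullback is an effective section of $\Bbar^{\otimes k}=(\pi^*\Lbar)^{\otimes k}$ because pullback preserves the sup norm. For condition (ii), the projection formula applied to the birational morphism $\pi$ gives $\Bbar\cdot\Bbar=(\pi^*\Lbar)^2=\Lbar^2$, and the latter is positive because the leading coefficient of the Hilbert-Samuel asymptotic $h^0_{Ar}(\cX,\Lbar^{\otimes n})\geq\tfrac{1}{2}\Lbar^2 n^2+O(n\log n)$ from Theorem \ref{theorem:arithmetic-Hilbert-Samuel}(iii) forces $\Lbar^2>0$ whenever $\Lbar$ is ample. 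For condition (iii), let $\Mbar$ be an effective hermitian line bundle with effective section $s$; the identity
$$\Bbar\cdot\Mbar = h_\Bbar(\mathrm{div}\,s)+\int_{\widetilde\cX(\C)}(-\log\|s\|)\,c_1(\Bbar)$$
expresses $\Bbar\cdot\Mbar$ as a sum of two nonnegative contributions: the integrand has $-\log\|s\|\geq 0$ (effectivity of $s$) and $c_1(\Bbar)=\pi^*c_1(\Lbar)\geq 0$ (pullback of a positive $(1,1)$-form is nonnegative), while $h_\Bbar(\mathrm{div}\,s)=h_\Lbar(\pi_*\mathrm{div}\,s)$ by the projection formula for heights is nonnegative because $\pi_*\mathrm{div}\,s$ is an effective cycle on $\cX$ and heights of effective cycles with respect to ample hermitian line bundles are nonnegative (Zhang's positivity, extending Lemma \ref{lemma:ample-is-nef}).

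For the decomposition, the plan is to show that $\Bbar$ is big and invoke arithmetic Fujita approximation. Bigness follows from Proposition \ref{proposition:HS-and-pullback} combined with Theorem \ref{theorem:arithmetic-Hilbert-Samuel}(iii):
$$h^0_{Ar}(\widetilde\cX,\Bbar^{\otimes n}) \geq h^0_{Ar}(\cX,\Lbar^{\otimes n}) \geq \tfrac{1}{2}\Lbar^2 n^2+O(n\log n),$$
so the arithmetic volume $\widehat{\mathrm{vol}}(\Bbar)\geq\Lbar^2>0$. Yuan's arithmetic Fujita approximation for big hermitian line bundles on projective arithmetic varieties \cite{Yuan08} then yields a positive integer $k$ and an ample hermitian line bundle $\Abar$ on $\widetilde\cX$ such that $\Bbar^{\otimes k}\otimes\Abar^{\otimes -1}$ is effective; setting $\Ebar:=\Bbar^{\otimes k}\otimes\Abar^{\otimes -1}$ with the quotient metric produces the desired decomposition.

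The main obstacle, in an alternative elementary approach that avoids Fujita approximation, is to construct an effective section of $\Bbar^{\otimes k}\otimes\Abar^{\otimes -1}$ in the quotient metric directly. Combining the lower bound above with the $\theta$-invariant upper bound of Proposition \ref{proposition:upper-bound-theta} applied to the arithmetic curve $D=\mathrm{div}(s_0)$ for a chosen section $s_0$ of $\Abar$ produces many effective sections $t$ of $\Bbar^{\otimes k}$ vanishing on $D$; translating such $t$ into $u=t/s_0$ demands the bound $\|u\|_\infty\leq 1$, which by Cauchy-type estimates near $D$ reduces to controlling $\|t\|_\infty$ by a sub-exponentially small quantity. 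This last step can be arranged by applying Proposition \ref{proposition:exponentially-decreasing-sections} on $\cX$ to the coherent subsheaf $\pi_*(\mathcal A^{\otimes -1})$, together with the projection-formula identification $H^0(\widetilde\cX,\mathcal B^{\otimes k}\otimes\mathcal A^{\otimes -1})=H^0(\cX,\mathcal L^{\otimes k}\otimes\pi_*\mathcal A^{\otimes -1})$ and a comparison of sup norms under $\pi$.
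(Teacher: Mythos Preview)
Your verification of conditions (i)--(iii) is correct and essentially matches the paper's; the only cosmetic difference is that for (iii) you reduce $h_{\Bbar}(D)\geq 0$ to $h_{\Lbar}(\pi_*D)\geq 0$ via the projection formula, while the paper argues directly on $\widetilde\cX$ using an effective section of a power of $\Bbar$ that avoids the components of $D$.

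The decomposition is where the approaches diverge. The paper's construction is elementary and self-contained: pick any effective ample $\Abar$ on $\widetilde\cX$ with section $\sigma$ and divisor $H$; since $\mathcal L$ is ample on $\cX$, some $\mathcal L^{\otimes k_1}$ admits a nonzero section $s_1$ vanishing on the scheme $\pi(H)$, so that $\pi^*s_1=\sigma\cdot\sigma_1$ for some $\sigma_1\in H^0(\widetilde\cX,\mathcal B^{\otimes k_1}\otimes\mathcal A^{-1})$; then multiply $\sigma_1$ by the pullback of a small-norm section of $\mathcal L^{\otimes k_2}$ (available by Proposition~\ref{proposition:exponentially-decreasing-sections}) to make it effective. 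This uses nothing beyond Section~\ref{section:ampleness}. Your appeal to arithmetic Fujita approximation is both heavier and imprecise: that theorem is not in \cite{Yuan08} (it is due to Chen and to Yuan in later work), and in its general form it produces the decomposition only after a further blowup of $\widetilde\cX$. What you actually need is merely the arithmetic Kodaira lemma --- big implies some power is ample plus effective on the same model --- which does hold, but is still an external import where the paper needs none. Your sketched alternative via Proposition~\ref{proposition:exponentially-decreasing-sections} applied to $\pi_*\mathcal A^{-1}$ is closer in spirit to the paper's line, but more convoluted than simply exploiting the resolution $\pi$ directly as above.
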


\begin{proof}
Since $\Lbar$ is ample, some power of $\Lbar$ is effective, and so is the same power of $\Bbar$. We also have $\Bbar.\Bbar=\Lbar.\Lbar>0.$ Finally, let $\Mbar$ be an effective line bundle on $\widetilde\cX$, let $s$ be a nonzero effective section of $\Mbar$, and let $D$ be the divisor of $s$. Then 
$$\Bbar.\Mbar=h_\Bbar(D)-\int_{\widetilde{\cX}(\C)} \log ||s_\C||\pi^*c_1(\Lbar).$$
Considering an effective section of some power of $\Bbar$ that does not vanish along any component of $D$ -- which exists since large powers of $\Lbar$ are generated by their effective sections -- we see that first term is nonnegative. The second one is nonnegative as well since $c_1(\Lbar)$ is positive on $\cX$. This shows the first statement of the proposition. 

Let $\Abar$ be an effective ample line bundle on $\widetilde\cX$, let $\sigma$ be a section of $\Abar$ and let $H$ be the divisor of $\sigma$. Let $H'$ be the scheme $\pi(H)$. Since $\mathcal L$ is ample, we can find an integer $k_1$ and a nonzero section $s_1$ of $\mathcal L^{\otimes k_1}$ that vanishes on $H'$. We can write 
$$\pi^*s_1=\sigma\sigma_1,$$
where $\sigma_1$ is a section of $\mathcal B^{\otimes k_1}\otimes\mathcal A^{\otimes -1}.$ Choose an large enough integer $k_2$, and let $s_2$ be a nonzero section of $\mathcal L^{\otimes k_2}$ with small enough norm. Writing $\sigma_2=\pi^*s_2$, we have
$$\pi^*(s_1s_2)=\sigma\sigma_1\sigma_2,$$
and $\sigma_1\sigma_2$ is an effective section of the hermitian line bundle $\Bbar^{\otimes (k_1+k_2)}\otimes \Abar^{\otimes -1},$ which proves the result.
\end{proof}

\bigskip

Let $\alpha$ and $\beta$ be a real numbers with $0<\beta<\alpha<\frac{1}{2}$. If $n$ is a positive integer, let $Y_n$ be the subset of $H^0_{Ar}(\cX, \Lbar^{\otimes n})$ consisting of those effective sections $\sigma$ of $\Lbar^{\otimes n}$ such that:
\begin{enumerate}[(i)]
\item $\sigma$ does not vanish on any Weil divisor $D$ of $\cX$ with $h_{\Lbar}(D)\leq n^\alpha$;
\item there exists an irreducible component $D$ of $\mathrm{div}(\sigma)$ such that 
$$\mathrm{deg}(D_\Q)\geq n\mathrm{deg}\mathcal L_\Q-rn^\beta.$$
\end{enumerate}
Use Lemma \ref{lemma:big-is-ample-plus-effective} to find a positive integer $k$ with 
$$\Bbar^{\otimes k}\simeq \Abar\otimes\Ebar,$$
where $\Abar$ is ample and $\Ebar$ is effective. 

\begin{lemma}\label{lemma:properties-of-decomposition}
Let $\delta, \gamma$ be any real numbers with $0<\beta<\gamma<\delta<\alpha$. Let $n$ be a large enough integer, and let $\sigma$ be an element of $Y_n$ such that $\mathrm{div}(\sigma)$ is not irreducible. Then we can find hermitian line bundles $\Lbar_1$ and $\Lbar_2$ on $\widetilde\cX$, and sections 
$$\sigma_i\in H^0(\widetilde\cX, \mathcal L_i),$$
$i=1, 2$, with the following properties:
\begin{enumerate}[(i)]
\item $\Lbar_1$ and $\Lbar_2$ belong to $N$;
\item $||\sigma_i||\leq e^{n^\gamma}, i=1,2$;
\item $n^\delta\leq \Lbar_1.\Bbar\leq n\Bbar.\Bbar-n^\delta$
\item $\Lbar_1.\Abar\leq n\Bbar.\Bbar$.
\item $\Lbar_1\otimes\Lbar_2\simeq \Bbar^{\otimes n}$;
\item up to the isomorphism above, $\sigma_1\sigma_2=\sigma$;
\end{enumerate}
\end{lemma}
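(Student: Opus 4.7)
The plan is to build $(\Lbar_1, \Lbar_2, \sigma_1, \sigma_2)$ by splitting $\mathrm{div}(\sigma)$ into a dominant irreducible part and a small remainder, pulling back to $\widetilde\cX$, and calibrating admissible hermitian metrics. Condition (ii) in the definition of $Y_n$ furnishes an irreducible component $D$ of $\mathrm{div}(\sigma)$ with $\deg D_\Q \geq n \deg \mathcal L_\Q - rn^\beta$; setting $D' := \mathrm{div}(\sigma) - D$ gives a nonzero effective Weil divisor (since $\mathrm{div}(\sigma)$ is not irreducible) with $\deg D'_\Q \leq rn^\beta$. Condition (i) of $Y_n$ forces $h_\Lbar(D) > n^\alpha$ and $h_\Lbar(D') > n^\alpha$: every irreducible component of $D'$ (and $D$ itself) is a Weil divisor on which $\sigma$ vanishes. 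Pull back: set $D_1 := \pi^*D$, $D_2 := \pi^*D'$, $\mathcal L_i := \mathcal O_{\widetilde\cX}(D_i)$ with $\sigma_i$ the canonical section, so that $\mathcal L_1 \otimes \mathcal L_2 \simeq \mathcal B^{\otimes n}$ and $\sigma_1 \sigma_2 = \pi^*\sigma$ under this identification---yielding (v) and (vi).

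For the metrics, I would work component-by-component on $\widetilde\cX(\C) = \bigsqcup_{j=1}^r X_j$. On each $X_j$, take the admissible metric on $\mathcal L_2|_{X_j}$ normalized by $\|\sigma_2|_{X_j}\|_0 = 1$ (with respect to $\omega|_{X_j}$ suitably normalized), and scale the admissible metric on $\mathcal L_1|_{X_j}$ so that the tensor product recovers the restriction of $\Bbar^{\otimes n}$; this is possible since admissible metrics on a given line bundle differ by a scalar. Proposition \ref{proposition:bound-low-degree} then gives $\|\sigma_2|_{X_j}\|_\infty \leq C_1^{\deg\mathcal L_{2,j}} \leq C_1^{rn^\beta}$, and Proposition \ref{proposition:bound-large-degree}, applied with the small-degree factor $\sigma_2$ playing the role of the normalized section and using $\|\pi^*\sigma\|_\infty \leq 1$, yields $\|\sigma_1|_{X_j}\|_\infty \leq (C_2 nd)^{\deg \mathcal L_{2,j}} \leq (C_2 nd)^{rn^\beta}$. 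Both are $e^{O(n^\beta \log n)}$ and so dominated by $e^{n^\gamma}$ for large $n$ since $\gamma > \beta$, establishing (ii).

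For (iii), the normalization forces $\int \log \|\sigma_2\| c_1(\Bbar) = 0$, hence $\Lbar_2.\Bbar = h_\Bbar(D_2) = h_\Lbar(D')$, the last equality by the projection formula for arithmetic intersection under the birational $\pi$. Therefore $\Lbar_1.\Bbar = n\Bbar.\Bbar - h_\Lbar(D')$. The upper bound $\Lbar_1.\Bbar \leq n\Bbar.\Bbar - n^\delta$ is immediate from $h_\Lbar(D') > n^\alpha \geq n^\delta$; the lower bound $\Lbar_1.\Bbar \geq n^\delta$ follows from $\Lbar_1.\Bbar \geq h_\Lbar(D) > n^\alpha \geq n^\delta$, using $n\Bbar.\Bbar = n\Lbar.\Lbar \geq h_\Lbar(\mathrm{div}(\sigma)) = h_\Lbar(D) + h_\Lbar(D')$. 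For (iv), Lemma \ref{lemma:ample-is-nef} gives $\Lbar_i.\Abar \geq 0$ (each $\Lbar_i$ being effective and $\Abar$ ample), so $\Lbar_1.\Abar \leq n\Bbar.\Abar$; the relation $\Bbar^{\otimes k} = \Abar \otimes \Ebar$ combined with $\Bbar.\Ebar \geq 0$ bounds $\Bbar.\Abar$ by a multiple of $\Bbar.\Bbar$, yielding the asserted bound, possibly after adjusting the initial choice of $\Abar$ in Lemma \ref{lemma:big-is-ample-plus-effective}.

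Finally, to arrange (i): since $N$ surjects onto $\mathrm{Pic}(\widetilde\cX)$, pick $\widetilde\Lbar_1 \in N$ with underlying line bundle $\mathcal L_1$; by \eqref{equation:perturbation-in-N}, its metric differs from the metric we placed on $\mathcal L_1$ by a factor in $[C^{-1}, C]$. Set $\widetilde\Lbar_2 := \Bbar^{\otimes n} \otimes \widetilde\Lbar_1^{-1}$, which lies in $N$ (as $\Bbar \in N$ and $N$ is a subgroup) with underlying line bundle $\mathcal L_2$, its metric differing from the one we placed on $\mathcal L_2$ by the reciprocal factor. All the bounds (ii)--(iv) remain valid up to $O(1)$ perturbations, absorbed by the thresholds $n^\gamma$ and $n^\delta$ for large $n$. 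The main obstacle is the metric calibration: normalizing $\|\sigma_2\|_0 = 1$ on the \emph{small} factor is exactly what makes Proposition \ref{proposition:bound-large-degree} control $\|\sigma_1\|_\infty$ in terms of the small degree $\deg \mathcal L_{2,j} \leq rn^\beta$ rather than the large degree of $\mathcal L_1$, while simultaneously producing the clean identity $\Lbar_1.\Bbar = n\Bbar.\Bbar - h_\Lbar(D')$ that separates the ample-sized intersection from the small height contribution.
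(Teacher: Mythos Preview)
Your approach is essentially the same as the paper's: split the divisor into a large irreducible piece and a small remainder, normalize the admissible metric on the small factor by $\|\sigma_2\|_0=1$, invoke Propositions~\ref{proposition:bound-low-degree} and~\ref{proposition:bound-large-degree} for the sup-norm bounds, and then use the height condition in the definition of $Y_n$ for (iii). The order in which you perturb into $N$ (at the end, rather than immediately after normalizing $\|\sigma_2\|_0$) is a cosmetic difference.

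Two technical points deserve care. First, you decompose $\mathrm{div}(\sigma)=D+D'$ on $\cX$ and then set $D_i=\pi^*D$, $\pi^*D'$; but since $\cX$ need not be regular, $D$ and $D'$ are only Weil divisors and ``$\pi^*$'' is not well-defined for them. In particular, $\mathrm{div}(\pi^*\sigma)$ may contain exceptional components of $\pi$, which are not captured by strict transforms. The paper sidesteps this by decomposing $\mathrm{div}(\pi^*\sigma)$ directly on the regular $\widetilde\cX$ and then pushing the exceptional components into $D_2$; you should do the same, after which your projection-formula identity $h_{\Bbar}(D_2)=h_{\Lbar}(\pi(D_2))$ still holds because exceptional components contribute zero height with respect to $\Bbar=\pi^*\Lbar$. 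Second, in your argument for (iv) you invoke Lemma~\ref{lemma:ample-is-nef} with ``each $\Lbar_i$ being effective'', but $\|\sigma_i\|$ is only bounded by $e^{n^\gamma}$, not by $1$; one should apply the lemma to $\Lbar_i(n^\gamma)$ and absorb the resulting $O(n^\gamma)$ error. With that fix your route to (iv) via nefness of $\Abar$ is actually a simplification over the paper, which instead arranges that no component of $D_1$ is $\pi$-contracted and then bounds $\Lbar_1.\Ebar$ from below by showing $D_1$ and $\mathrm{div}(\tau)$ share no component (using again the height lower bound from condition (i) of $Y_n$). Both arguments yield $\Lbar_1.\Abar=O(n)$, which is all that is used downstream; the exact constant $n\Bbar.\Bbar$ is not essential.
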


\begin{proof}
Let $D$ be the divisor of $\pi^*\sigma$. Since the divisor of $\sigma$ is not irreducible, $D$ is not irreducible either and we can write 
$$D=D_1+D_2$$
where the $D_i$ are nonzero effective divisors on the regular scheme $\widetilde\cX$ such that both Weil divisors $\pi(D_1)$ and $\pi(D_2)$ are nonzero. Since $\mathrm{div}(\sigma)$ has an irreducible component of generic degree bounded below by $n\deg\mathcal L_\Q -rn^\beta$, and since 
\begin{equation}\label{equation:sum-degrees}
\deg D_{1, \Q}+\deg D_{2, \Q}=n \deg \mathcal L_\Q,
\end{equation}
we can assume, up to exchanging $D_1$ and $D_2$,
\begin{equation}\label{equation:degree-1}
n\deg\mathcal L_\Q -rn^\beta\leq \deg D_{1, \Q}\leq n\deg\mathcal L_\Q,
\end{equation}
\begin{equation}\label{equation:degree-2}
0\leq \deg D_{2, \Q}\leq rn^\beta.
\end{equation}
We can also assume that no component of $D_1$ is contracted by the morphism $\pi : \widetilde\cX\ra \cX$ -- simply by replacing $D_2$ by the sum of $D_2$ and all those contracted components of $D_1$, which are all supported above closed points of $\Spec\Z$. Let $\mathcal L_i$ be the line bundle $\mathcal O_{\widetilde\cX}(D_i)$ for $i=1, 2$. Then we can identify $\mathcal L_1\otimes\mathcal L_2$ with $\mathcal B^{\otimes n}$, and we can find sections $\sigma_i$ of $\mathcal L_i$ with $\mathrm{div}(\sigma_i)=D_i$ such that $\sigma=\sigma_1\sigma_2$.

Recall that we defined $\omega$ as $c_1(\Bbar)$. We consider the norms $||.||_0$ with respect to $\omega$. Consider the unique hermitian metric $||.||_{\sigma_2}$ on $\mathcal L_2$ which is admissible with respect to $\omega$, scaled so that 
$$||\sigma_2||_{\sigma_2,0}=1.$$
By (\ref{equation:perturbation-in-N}), we can find a metric $||.||$ on $\mathcal L_2$ such that $\Lbar_2:=(\mathcal L_2, ||.||)$ belongs to $N$ and 
\begin{equation}\label{equation:0-1}
C^{-1}\leq ||\sigma_2||_{0}\leq C.
\end{equation}
Endow $\mathcal L_1$ with the unique hermitian metric such that 
$$\Bbar^{\otimes n}=\Lbar_1\otimes\Lbar_2$$
as hermitian line bundles on $\widetilde\cX$, where we write $\Lbar_1$ for the induced hermitian line bundles. Since $\Bbar$ belongs to $N$ by assumption, so do $\Lbar_1$ and $\Lbar_2$. This makes sure that conditions $(i)$, $(v)$ and $(vi)$ of the lemma are satisfied.

Since $||\sigma||_\infty\leq 1$, we have 
\begin{equation}\label{equation:0-2}
||\sigma_2||_0\leq C||\sigma_1|||_0||\sigma_2||_0=C||\sigma||_0\leq C.
\end{equation}

\bigskip

The inequalities (\ref{equation:sum-degrees}), (\ref{equation:degree-2}) imply, via Proposition \ref{proposition:bound-large-degree} the following estimate, since $||\sigma_2||_0\geq C^{-1}$ and $||\sigma||_\infty\leq 1$:
$$||\sigma_1||_{\infty}\leq C^{-1}(nC_2\deg\mathcal L_\Q)^{rn\beta}$$
for some constant $C_2$ depending only on $\cX$ and $\Lbar$. Similarly, (\ref{equation:degree-2}) and Proposition \ref{proposition:bound-low-degree} give us, for some constant $C_2$ depending only on $\cX$ and $\Lbar$:
$$||\sigma_2||_\infty\leq CC_1^{rn^\beta}.$$
For any $\gamma>\beta$, and any $n$ large enough, this ensures that condition $(ii)$ is satisfied.

\bigskip

We now turn to condition $(iii)$. For $i=1, 2$, choose a nonzero effective section $s_i$ of some power $\Lbar^{\otimes k}$ of $\Lbar$ such that the divisor of $\pi^*s_i$ has no common component with $D_i$. Writing
$$k\Lbar_i.\Bbar=h_{\Bbar^{\otimes k}}(D_i)-k\int_{\cX(\C)}\log||\sigma_i||\omega$$
and computing $h_{\Bbar^{\otimes k}}(D_i)$ using $\pi^*s_i$, we see that 
$$h_{\Bbar^{\otimes k}}(D_i)= h_{\Lbar^{\otimes k}}(\pi(D_i))\geq n^\alpha.$$
As a consequence, we find 
\begin{equation}\label{equation:Lbar-Bbar}
\Lbar_i.\Bbar\geq n^\alpha-\deg\mathcal L_\Q n^\gamma\geq n^\delta
\end{equation}
for any large enough $n$ since $\delta, \gamma<\alpha$. Since 
$$\Lbar_1.\Bbar+\Lbar_2=n\Bbar.\Bbar,$$
this proves that $(iii)$ holds.

Let us prove condition $(iv)$. The first inequality holds by (\ref{equation:Lbar-Bbar}). Certainly, we have
$$\Lbar_i.\Abar=\Lbar_i.\Bbar-\Lbar_i.\Ebar$$
for $i=1,2$, so that 
\begin{equation}\label{equation:degree-of-L1}
\Lbar_1.\Abar=n\Bbar.\Bbar-\Lbar_2.\Bbar-\Lbar_1.\Ebar.
\end{equation}

Let $\tau$ be a nonzero effective section of $\Ebar$, with divisor $D_\tau$. Then we have
$$\Lbar_1.\Ebar=h_{\Lbar_1}(D_\tau)-\int_{\widetilde\cX(\C)}\log ||\tau|| c_1(\Lbar_1).$$
Since the degree of $\Lbar_1$ is nonnegative, the form $c_1(\Lbar_1)$ is a nonnegative multiple of $\omega$, and since $\tau$ is effective, we have
$$-\int_{\widetilde\cX(\C)}\log ||\tau|| c_1(\Lbar_1)\geq 0.$$
By assumption, no component of the divisor $D_1$ of $\sigma_1$ is contracted by the resolution $\pi$. Furthermore, the definition of the set $Y_n$ guarantees that if $C$ is any component of $D_1$, then the height of $\pi(C)$ with respect to $\Lbar$ is bounded below by $n^\alpha$. This implies that if $n$ is large enough, the divisors $D_1$ and $D_\tau$ have no component in common, so that 
$$h_{\Lbar_1}(D_\tau)\geq -\deg D_{\tau, \Q} \log ||\sigma_1||\geq -\deg D_{\tau, \Q} n^\gamma$$
and, as a consequence,
\begin{equation}\label{equation:Lbar-Ebar}
\Lbar_1.\Ebar\geq -\deg \mathcal E_\Q n^\gamma.
\end{equation}

Putting the inequalities (\ref{equation:Lbar-Ebar}) and (\ref{equation:Lbar-Bbar}) together with (\ref{equation:degree-of-L1}), we obtain
$$\Lbar_1.\Abar\leq n\Bbar.\Bbar+(\deg \mathcal E_\Q)n^\gamma-n^\delta.$$
Since $\Lbar$ is ample, $\Bbar.\Bbar=\Lbar.\Lbar$ is positive, and since $\gamma<\delta$, this shows that condition $(iv)$ of the lemma is satisfied as soon as $n$ is large enough.
\end{proof}

\bigskip

We can finally prove the key result of this paper via a counting argument.

\begin{proposition}\label{proposition:arithmetic-surface}
Let $\cX$ be an integral projective arithmetic surface, and let $\Lbar$ be an ample hermitian line bundle on $\cX$. Then the set 
$$\{\sigma\in\bigcup_{n> 0} H^0_{Ar}(\cX, \Lbar^{\otimes n}), \,\mathrm{div}(\sigma)\,\textup{is irreducible}\}$$
has density $1$.
\end{proposition}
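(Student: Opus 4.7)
The plan is to reduce the density statement to a counting argument, in the spirit of \cite[Proposition 4.1]{CharlesPoonen16} but built on the arithmetic machinery of Sections 4 and 5. Fix real numbers $\beta < \gamma < \delta < \alpha$ with $\alpha < 1/2$, and work with the set $Y_n$ of Subsection \ref{subsection:setup}. I would first observe that $H^0_{Ar}(\cX, \Lbar^{\otimes n}) \setminus Y_n$ has density zero: Proposition \ref{proposition:divisors-with-low-height} handles sections violating condition (i), while Proposition \ref{proposition:geometric-irreducibility} forces condition (ii) on a density $1$ subset. It thus suffices to show that the number of $\sigma \in Y_n$ whose divisor is reducible is negligible compared to $|H^0_{Ar}(\cX, \Lbar^{\otimes n})|$.

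For such a $\sigma$, Lemma \ref{lemma:properties-of-decomposition} produces a decomposition $(\Lbar_1, \Lbar_2, \sigma_1, \sigma_2)$ with $\Lbar_i \in N$, $\Lbar_1 \otimes \Lbar_2 \simeq \Bbar^{\otimes n}$, $\pi^*\sigma = \sigma_1 \sigma_2$, $||\sigma_i|| \leq e^{n^\gamma}$, and $n^\delta \leq \Lbar_1.\Bbar \leq n\Bbar.\Bbar - n^\delta$. Since $\pi$ is birational, pullback of sections is injective, and the number of bad $\sigma$ is bounded by
$$\sum_{\Lbar_1} |H^0_{Ar}(\widetilde\cX, \Lbar_1(n^\gamma))| \cdot |H^0_{Ar}(\widetilde\cX, \Lbar_2(n^\gamma))|,$$
the sum ranging over $\Lbar_1 \in N$ satisfying the constraints of the lemma. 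Proposition \ref{proposition:cone}, applied with the ample class $\Bbar$ (whose required positivity is furnished by Lemma \ref{lemma:big-is-ample-plus-effective}) after a harmless shift to accommodate the $(n^\gamma)$ twist, bounds the number of such $\Lbar_1$ polynomially in $n$.

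For each $\Lbar_1$, Corollary \ref{corollary:upper-bound-arakelov} gives
$$h^0_{Ar}(\widetilde\cX, \Lbar_i(n^\gamma)) \leq \frac{(\Bbar.\Lbar_i)^2}{2\Bbar.\Bbar} + O(n^{1+\gamma}) + O(n\log n),$$
the $O(n^{1+\gamma})$ term absorbing the $n^\gamma$-metric shift in the intersection number (which perturbs $\Bbar.\Lbar_i$ additively by $O(n^\gamma)$, starting from $\Bbar.\Lbar_i = O(n)$). Setting $a = \Lbar_1.\Bbar$ and $b = n\Bbar.\Bbar - a$, convexity of $a \mapsto a^2 + b^2$ places its maximum on $[n^\delta, n\Bbar.\Bbar - n^\delta]$ at the endpoints, giving
$$\frac{a^2+b^2}{2\Bbar.\Bbar} \leq \frac{n^2 \Lbar^2}{2} - n^{1+\delta} + O(n^{2\delta}).$$
Combined with the Hilbert-Samuel lower bound $h^0_{Ar}(\cX, \Lbar^{\otimes n}) \geq \frac{n^2 \Lbar^2}{2} + O(n\log n)$ from Theorem \ref{theorem:arithmetic-Hilbert-Samuel}(iii), the proportion of reducible $\sigma \in Y_n$ is at most $n^{O(1)} \exp\!\bigl(-n^{1+\delta} + O(n^{1+\gamma}) + O(n\log n)\bigr)$, which tends to $0$ since $\gamma < \delta$.

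The main obstacle is the careful balancing encoded in the hierarchy $\beta < \gamma < \delta < \alpha < 1/2$: the geometric gain $n^{1+\delta}$ produced by the convexity argument must dominate simultaneously the $O(n^{1+\gamma})$ loss from the metric twist in Corollary \ref{corollary:upper-bound-arakelov}, the $O(n\log n)$ error from arithmetic Hilbert-Samuel, and the polynomial count of Proposition \ref{proposition:cone}; while $\beta$ must be small enough for Proposition \ref{proposition:geometric-irreducibility} to apply yet compatible with the height exclusion of Proposition \ref{proposition:divisors-with-low-height}. Once these parameters are aligned, the estimates slot together cleanly and no further analytic input is needed.
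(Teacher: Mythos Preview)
Your proposal is correct and follows the paper's own argument essentially step for step: reduce to $Y_n$ via Propositions~\ref{proposition:divisors-with-low-height} and~\ref{proposition:geometric-irreducibility}, decompose a reducible $\sigma$ via Lemma~\ref{lemma:properties-of-decomposition}, bound the section count on each factor by Corollary~\ref{corollary:upper-bound-arakelov}, use convexity at the endpoint $i=n^\delta$, count the admissible $\Lbar_1$ polynomially via Proposition~\ref{proposition:cone}, and compare against the Hilbert--Samuel lower bound of Theorem~\ref{theorem:arithmetic-Hilbert-Samuel}(iii).

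One correction, however: $\Bbar=\pi^*\Lbar$ is \emph{not} ample on $\widetilde\cX$ when the resolution $\pi$ is nontrivial (it has intersection zero with any $\pi$-exceptional curve), so Proposition~\ref{proposition:cone} cannot be invoked with $\Bbar$ as the ample class. Lemma~\ref{lemma:big-is-ample-plus-effective} does not assert that $\Bbar$ is ample; it only verifies the nef-type hypotheses of Proposition~\ref{proposition:upper-bound-theta} and supplies a decomposition $\Bbar^{\otimes k}\simeq\Abar\otimes\Ebar$ with $\Abar$ ample. The paper applies Proposition~\ref{proposition:cone} with $\Abar$, and this is exactly why condition~(iv) of Lemma~\ref{lemma:properties-of-decomposition} records the bound $\Lbar_1.\Abar\leq n\Bbar.\Bbar$ --- a condition you did not use. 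With that substitution your argument goes through unchanged.
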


\begin{proof}
Choose $\delta$ and $\gamma$ with $\beta<\gamma<\delta<\alpha$. From Proposition \ref{proposition:divisors-with-low-height} and Proposition \ref{proposition:geometric-irreducibility}, we know that the set $\bigcup_{n>0} Y_n$ has density $1$ in $H^0_{Ar}(\cX, \Lbar^{\otimes n})$, so that we only have to prove that the set of those $\sigma$ in $\bigcup_{n>0} Y_n$ with reducible divisor has density $0$ in $H^0_{Ar}(\cX, \Lbar^{\otimes n})$. Let $Z_n$ be this set.

Let $n$ be large enough so that Lemma \ref{lemma:properties-of-decomposition} applies. To any $\sigma$ in $Z_n$, we can associate hermitian line bundles $\Lbar_1$ and $\Lbar_2$, together with respective sections $\sigma_1$ and $\sigma_2$, so that the conditions $(i)-(vi)$ of the lemma hold. Since $\sigma=\sigma_1\sigma_2$, the data of the $\Lbar_i$ and $\sigma_i$ for $i=1, 2$ determine $\sigma$. 

We will give an upper bound for the number of elements $\sigma$ in $Z_n$ by estimating the number of possible $\Lbar_i$ and $\sigma_i$. In other words, we will count the number of triples $(\Lbar_1, \sigma_1, \sigma_2)$, where $\Lbar_1$ is a hermitian line bundle on $\widetilde\cX$, $\sigma_1$ is a section of $\mathcal L_1$, and, setting $\Lbar_2:=\Bbar^{\otimes n}\otimes \Lbar_1^{\otimes -1}$, $\sigma_2$ is a section of $\Lbar_2$, so that 
\begin{enumerate}[(i)]
\item $\Lbar_1$ and $\Lbar_2$ belong to $N$;
\item $||\sigma_i||\leq e^{n^\gamma}, i=1,2$;
\item $n^\delta\leq \Lbar_1.\Bbar \leq n\Bbar.\Bbar-n^\delta$;
\item $\Lbar_1.\Abar\leq n\Bbar.\Bbar$.
\end{enumerate}
Below, when using the $O$ notations, implied constants only depend on $\widetilde\cX\ra\cX, \Lbar, \Abar, \alpha, \beta, \delta, \gamma$.

\bigskip

Let $\Lbar_1$ be a hermitian line bundle as above, and write $i:=\Lbar_1.\Bbar$, so that 
$$n^\delta\leq i\leq n\Bbar.\Bbar-n^\delta.$$ 
We want to bound the number of sections of $\mathcal L_1$ that have norm at most $e^{n^\gamma}$, that is, the number of effective sections of $\Lbar_1(n^\gamma)$. First remark that $\deg \mathcal L_{1, \Q}\leq n\deg \mathcal B_\Q$ as the degree of $\mathcal L_{1, \Q}$ and $\mathcal L_{2, \Q}$ are both nonnegative and have sum $n\deg \mathcal B_\Q$. Furthermore, we have 
$$\Lbar_1(n^\gamma).\Bbar=i+n^\gamma\Obar_{\widetilde\cX}(1).\Bbar=O(n)$$
since $\gamma<\delta<1.$ 

Corollary \ref{corollary:upper-bound-arakelov} gives us
\begin{equation}\label{equation:number-of-sigma-1}
h^0_{Ar}(\widetilde\cX, \Lbar_1(n^\gamma))\leq \frac{(i+Kn^\gamma)^2}{2\Bbar.\Bbar}+O(n\log n),
\end{equation}
where $K$ is the constant $\Obar_{\widetilde\cX}(1).\Bbar$.

Similarly, we have 
\begin{equation}\label{equation:number-of-sigma-2}
h^0_{Ar}(\widetilde\cX, \Lbar_2(n^\gamma))\leq \frac{(n\Bbar.\Bbar-i+Kn^\gamma)^2}{2\Bbar.\Bbar}+O(n\log n).
\end{equation}

Adding (\ref{equation:number-of-sigma-1}) and (\ref{equation:number-of-sigma-2}), we find, recalling that $0<\gamma<1$:
\begin{align*}
h^0_{Ar}(\widetilde\cX, \Lbar_1(n^\gamma))+h^0_{Ar}(\widetilde\cX, \Lbar_2(n^\gamma)) & \leq \frac{1}{2}n^2\Bbar.\Bbar-\frac{2i(n\Bbar.\Bbar-i)}{2\Bbar.\Bbar}+\frac{2K^2n^{2\gamma}+2K\Bbar.\Bbar n^{1+\gamma}}{2\Bbar.\Bbar}+O(n\log n)\\
& \leq \frac{1}{2}n^2\Bbar.\Bbar-\frac{i(n\Bbar.\Bbar-i)}{\Bbar.\Bbar}+O(n^{1+\gamma}).
\end{align*}
Since $n^\delta\leq i\leq n\Bbar.\Bbar-n^\delta$, we have
$$\frac{i(n\Bbar.\Bbar-i)}{\Bbar.\Bbar}\geq n^{1+\delta}-\frac{1}{\Bbar.\Bbar}n^{2\delta}$$
and, since $2\delta<1<1+\gamma$,
$$h^0_{Ar}(\widetilde\cX, \Lbar_1(n^\gamma))+h^0_{Ar}(\widetilde\cX, \Lbar_2(n^\gamma))\leq \frac{1}{2}n^2\Bbar.\Bbar-n^{1+\delta}+O(n^{1+\gamma}).$$

\bigskip

We now count the number of possible $\Lbar_1$. Let $t>0$ be such that $\Obar(t)$ belong to $N$. Let $k(n)$ be the smallest positive integer such that $k(n)t\geq n^\gamma$. Then the hermitian line bundle $\Lbar_1(k(n)t)$ is effective, belongs to $N$, and we have 
$$\Lbar_1(k(n)t).\Abar=O(n)$$
since $\gamma<1$.  As a consequence of Proposition \ref{proposition:cone}, this shows that the number of possible $\Lbar_1$ -- or equivalently, $\Lbar_1(k(n)t)$ -- appearing in the triples above is bounded by $O(n^\rho)$, where $\rho$ is the rank of $N$. 

\bigskip

The estimates above show that we have the following inequality:
$$\log |Z_n|\leq O(\rho \log n)+\frac{1}{2}n^2\Bbar.\Bbar-n^{1+\delta}+O(n^{1+\gamma})=\frac{1}{2}n^2\Bbar.\Bbar-n^{1+\delta}+O(n^{1+\gamma}).$$
However, Theorem \ref{theorem:arithmetic-Hilbert-Samuel}, (iii) shows that we have
$$h^0_{Ar}(\cX, \Lbar^{\otimes n})\geq\frac{1}{2}n^2\Lbar.\Lbar+O(n\log n)=\frac{1}{2}n^2\Bbar.\Bbar+O(n\log n).$$
Since $\delta>\gamma$, these two inequalities prove that $\bigcup_{n>0} Z_n$ has density $0$ in $H^0_{Ar}(\cX, \Lbar^{\otimes n})$, which proves the proposition.

\end{proof}

\section{Proofs of the main results}

The goal of this section is to give a proof of Theorem \ref{thm:main}. We will deduce it from its special case Theorem \ref{thm:X=Y}

\subsection{Proof of Theorem \ref{thm:X=Y}}.

We first state the Bertini irreducibility theorem of \cite{CharlesPoonen16} in the form that we will need. 

\begin{theorem}\label{theorem:finite-fields}
Let $k$ be a finite field, and let $X$ be a projective variety over $k$. Let $L$ be an ample line bundle over $X$. Let $Y$ be an integral scheme of finite type over $k$, and let $f : Y\ra X$ be a morphism which is generically smooth onto its image. Assume that the dimension of the closure of $f(Y)$ is at least $2$. Then the set of those $\sigma\in\bigcup_{n>0}H^0(X, \mathcal L^{\otimes n})$ such that $\mathrm{div}(f^*\sigma)_{horiz}$ is an irreducible Cartier divisor has density $1$. 
\end{theorem}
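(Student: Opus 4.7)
The plan is to invoke the main irreducibility theorem \cite[Theorem 1.6]{CharlesPoonen16}: its hypotheses (projective $X$ over a finite field, ample $L$, integral $Y$ of finite type with a generically smooth morphism $f$ onto its image, and $\dim \overline{f(Y)} \geq 2$) are precisely ours, and its conclusion gives density $1$ for the set of $\sigma$ with $\mathrm{div}(f^*\sigma)_{horiz}$ irreducible as a Weil divisor. The statement that this horizontal divisor is Cartier is automatic in our setting: generic smoothness of $f$ onto its image combined with irreducibility of $Y$ forces $Y$ to be regular at the generic point of any horizontal component of $\mathrm{div}(f^*\sigma)$, and an irreducible Weil divisor supported on a regular locus is Cartier.

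For orientation I sketch the strategy of \cite{CharlesPoonen16}. Setting $q = |k|$, one bounds the number of $\sigma$ with reducible $\mathrm{div}(f^*\sigma)_{horiz}$ and shows this number is $o(q^{h^0(X, L^{\otimes n})})$, splitting the argument according to the minimum $f^*L$-degree of an irreducible horizontal component, with a threshold of the form $n^\beta$ for some $0<\beta<1$. In the low-degree regime there are at most $q^{O(n^\beta)}$ candidate subschemes $W \subset Y$, and for each one the proportion of $\sigma$ vanishing on $f(W)$ decays like $q^{-c n^{\dim \overline{f(Y)}}}$ by a uniform geometric lower bound on restriction maps, in the spirit of Proposition \ref{proposition:uniform-lower-bound-geometric}; this wins since $\dim \overline{f(Y)} \geq 2$. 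In the high-degree regime every horizontal component of $\mathrm{div}(f^*\sigma)$ has degree at least $n^\beta$, so a nontrivial factorization $f^*\sigma = \tau_1 \tau_2$ in a suitable birational model exists, and such factorizations are counted using that the Hilbert polynomial $h^0(X, L^{\otimes n})$ is a polynomial of degree $\dim X \geq 2$ in $n$ and hence strictly concave when evaluated at a splitting $(n_1, n-n_1)$ of $n$.

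The main obstacle is the high-degree case when $\dim X = 2$: the margin afforded by strict concavity of the Hilbert polynomial is then only quadratic in $n$, and must be played against the count of factorizations coming from the Picard group and from the various choices of component degree. This is the content of the delicate sieve argument in \cite{CharlesPoonen16}, and is precisely the difficulty that our arithmetic counterpart (Theorem \ref{thm:X=Y}) must also confront, as outlined in Section \ref{subsection:setup}. For the present statement, no new work is required: we simply quote the final output of \cite[Theorem 1.6]{CharlesPoonen16} and observe that the Cartier conclusion follows as above.
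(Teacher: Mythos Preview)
Your reduction to \cite[Theorem 1.6]{CharlesPoonen16} is the same as the paper's, but your treatment of the ``Cartier'' clause has a genuine gap. In this statement, ``irreducible Cartier divisor'' means that $\mathrm{div}(f^*\sigma)_{horiz}$ consists of a single component \emph{with multiplicity one}; the result you quote from \cite{CharlesPoonen16} only gives irreducibility of the support. Your argument --- that $Y$ is regular at the generic point of each horizontal component, so an irreducible Weil divisor there is Cartier --- addresses the wrong question: regularity of $Y$ at a codimension-one point says nothing about the multiplicity of that component in $\mathrm{div}(f^*\sigma)$. (Think of $f:\mathbb A^1\to\mathbb A^1$, $t\mapsto t^2$ in odd characteristic: $f$ is generically \'etale, $Y$ is regular everywhere, yet $\mathrm{div}(f^*t)=2[0]$.) Moreover, the claim that $Y$ is regular at the generic point of \emph{every} horizontal component is not true for all $\sigma$: a horizontal component could sit inside the singular locus of $Y$.

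The paper closes this gap by invoking two further lemmas from \cite{CharlesPoonen16}. One passes to a dense open $U\subset Y$ that is smooth over $k$ and on which $f$ is smooth onto its image. By \cite[Lemma 3.3]{CharlesPoonen16}, for a density $1$ set of $\sigma$ every horizontal component of $\mathrm{div}(f^*\sigma)$ meets $U$; by \cite[Lemma 3.5]{CharlesPoonen16}, for a density $1$ set of $\sigma$ the scheme $\mathrm{div}(f^*\sigma)\cap U$ is smooth outside a finite set, hence reduced. Together these rule out multiple components. A minor additional point the paper flags and you do not: \cite[Theorem 1.6]{CharlesPoonen16} is stated for $X=\mathbb P^N$ and $L=\mathcal O(1)$, so strictly speaking only covers very ample $L$; one must observe that the proofs there go through unchanged for an arbitrary projective $X$ with an ample line bundle.
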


\begin{proof}
This is almost a special case of \cite[Theorem 1.6]{CharlesPoonen16}. There, the result is given when $X$ is a projective space and $L=\mathcal O(1)$. This means that -- unfortunately -- \cite{CharlesPoonen16} can formally only be applied to the situation where $L$ is very ample. However, the proofs of \cite{CharlesPoonen16} apply with no change when projective space is replaced by an arbitrary projective scheme with a distinguished ample line bundle. 

A second difference between our statement and that of \cite[Corollary 1.4]{CharlesPoonen16} is that we claim that we can require $\mathrm{div}(f^*\sigma)$ to be irreducible as a Cartier divisor: the underlying scheme is irreducible and has no multiple component, whereas the statement in \cite{CharlesPoonen16} only states irreducibility. 

The fact that for a density $1$ of $\sigma$, the divisor $\mathrm{div}(\sigma)$ has no multiple component follows from arguments in \cite{CharlesPoonen16}. Indeed, since $Y$ is reduced and $k$ is perfect, there is a dense open subset $U$ of $Y$ that is smooth over $k$ and such that $f_{|U}$ is smooth onto its image. By \cite[Lemma 3.3]{CharlesPoonen16}, for a density $1$ of sections $\sigma$, all the components of $\mathrm{div}(f^*\sigma)_{horiz}$ intersect $U$, and by \cite[Lemma 3.5]{CharlesPoonen16}, for a density $1$ of $\sigma$, the intersection $\mathrm{div}(f^*\sigma)\cap U$ is smooth outside a finite number of points, so that it does not have any multiple component.
\end{proof}

\begin{lemma}\label{lemma:no-vertical-component}
Let $\cX$ be a projective arithmetic variety of dimension at least $2$, and let $\Lbar$ be an ample line bundle on $\cX$. Then the set 
$$\{\sigma\in \bigcup_{n>0} H^0_{Ar}(\cX, \Lbar^{\otimes n}), \mathrm{div}(\sigma)\, \textup{has no vertical component}\}$$
has density $1$.
\end{lemma}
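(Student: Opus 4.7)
The condition that $\mathrm{div}(\sigma)$ has a vertical component is equivalent, by codimension considerations, to the existence of a prime $p$ and an irreducible component $C$ of $\cX_p$ on which $\sigma$ vanishes identically. Since $\dim\cX\geq 2$, every such $C$ is a positive-dimensional closed subscheme of $\cX$ lying over $\Spec(\Z/p\Z)$, of dimension $\dim\cX - 1\geq 1$. The plan is to bound, uniformly in the pair $(p, C)$, the proportion of $\sigma\in H^0_{Ar}(\cX, \Lbar^{\otimes n})$ vanishing on $C$, and then to sum these estimates.

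By Proposition~\ref{proposition:uniform-lower-bound-geometric} applied to $C\hookrightarrow \cX_p$, there exist constants $c_0 > 0$ and $N_0$, depending only on $\cX$ and $\Lbar$, such that for $n\geq N_0$ the $\F_p$-codimension of the kernel of $H^0(\cX_p, \mathcal L_p^{\otimes n}) \to H^0(C, \mathcal L_{|C}^{\otimes n})$ is at least $k_n := c_0 n^{\dim\cX - 1}$, uniformly in $(p, C)$. The number of irreducible components of every $\cX_p$ is also uniformly bounded by some constant $M$ (since the Hilbert polynomial of $\cX_p$ is independent of $p$ by flatness). I fix $\alpha\in (0,1)$. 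For primes $p\leq e^{n^\alpha}$, Proposition~\ref{proposition:varying-quotient} applies with $N = p$, so Corollary~\ref{corollary:uniform-surjectivity-p} yields, uniformly in $(p, C)$ in this range, the bound $p^{-k_n}(1+o(1))$ for the proportion of $\sigma\in H^0_{Ar}$ vanishing on $C$. Summing,
\[
\sum_{p\leq e^{n^\alpha}} M\,p^{-k_n}(1+o(1)) \;\leq\; M\bigl(\zeta(k_n)-1\bigr)(1+o(1)) \;\to\; 0
\]
as $n\to\infty$, since $k_n\to\infty$.

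The remaining range $p > e^{n^\alpha}$ is the main obstacle, and I plan to handle it by refining the vertical case of the proof of Theorem~\ref{theorem:independent-restriction}. Choosing the coefficients $\lambda_i$ there from the full set $\{0,\ldots,p-1\}$ rather than $\{0,1\}$, together with $k_n$ small-norm sections from Proposition~\ref{proposition:exponentially-decreasing-sections} whose images in $H^0(C, \mathcal L_{|C}^{\otimes n})$ are $\F_p$-linearly independent, produces $p^{k_n}$ pairwise distinct translates $\sigma + \sum_i \lambda_i\sigma_i$ in the enlarged ball $B_n(1 + pk_n e^{-n\varepsilon})$; Proposition~\ref{proposition:points-in-annulus} then shows this ball has cardinality asymptotic to $|B_n(1)|$ provided $pk_n e^{-n\varepsilon}\to 0$, yielding the sharper uniform bound $p^{-k_n}(1+o(1))$ in the extended range $p\leq e^{n\varepsilon/2}$. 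For the remaining tail $p > e^{n\varepsilon/2}$, sections in $H^0_{Ar}$ have bounded coordinates in a small-norm lattice basis, so the reduction modulo such large $p$ is injective on $H^0_{Ar}$ and the number of $\sigma$ landing in any of the $M$ codimension-$k_n$ subspaces cut out by vanishing on a component can be estimated directly by a volume argument. Combining the three regimes yields the lemma, with the main technical delicacy being the uniform bookkeeping across these ranges of primes.
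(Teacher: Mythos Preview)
Your treatment of the ranges $p\le e^{n^\alpha}$ and $e^{n^\alpha}<p\le e^{n\varepsilon/2}$ is reasonable: sharpening the vertical case in the proof of Theorem~\ref{theorem:independent-restriction} by taking coefficients in $\{0,\ldots,p-1\}$ instead of $\{0,1\}$ does give a uniform $p^{-k_n}(1+o(1))$ bound for $p\le e^{n\varepsilon/2}$, and the resulting sums over primes converge.

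The gap is the tail $p>e^{n\varepsilon/2}$. The injectivity claim is false: Proposition~\ref{proposition:exponentially-decreasing-sections} produces basis vectors of norm at most $e^{-n\varepsilon}$, so the first minimum of $\Lambda_n$ is itself at most $e^{-n\varepsilon}$, and reduction modulo any $p\le 2e^{n\varepsilon}$ already identifies distinct points of $B_n(1)$. Even granting injectivity for still larger $p$, knowing that the image of $B_n(1)$ in $\Lambda_n/p\Lambda_n$ has $|B_n(1)|$ elements tells you nothing about how many land in a fixed codimension-$k_n$ subspace: for large $p$ the image is a vanishingly small subset of $\Lambda_n/p\Lambda_n$, there is no equidistribution, and no ``volume argument'' of the type in Proposition~\ref{proposition:number-of-points-ball} is available for the unknown sublattice $\Lambda_n^{C}$. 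You are then summing over infinitely many primes with no usable per-prime bound.

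The paper closes the tail by a geometric device rather than lattice-point counting. One fixes once and for all a horizontal arithmetic curve $C\subset\cX$ meeting every irreducible component of $\cX_p$ for all large $p$. If $\sigma$ does not vanish identically on $C$ (a density-$1$ condition, by Theorem~\ref{theorem:independent-restriction}) but $\mathrm{div}(\sigma)$ has a vertical component over $p$, that component meets $C$ over $p$, and the resulting intersection contributes at least $\log p$ to $h_{\Lbar^{\otimes n}}(C)=n\,h_{\Lbar}(C)$. Thus all relevant primes are at most $e^{n\,h_{\Lbar}(C)}$; for $\dim\cX\ge 3$ this lies inside the small-prime range $p\le e^{\varepsilon n^2}$ for large $n$, and the union bound finishes. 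When $\dim\cX=2$ the exponents do not match up (the saving per component is only $e^{-\eta n}$ against $e^{O(n)}$ primes with uncontrolled constants), and the paper does not attempt a direct argument: it simply invokes the already-established Proposition~\ref{proposition:arithmetic-surface}.
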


\begin{proof}
If $\cX$ is an arithmetic surface, the result follows from Proposition \ref{proposition:arithmetic-surface}. Let $d$ be the relative dimension of $\cX$ over $\Spec\Z$, and assume that $d\geq 2$.

Apply Theorem \ref{theorem:independent-restriction} where $Y$ runs through the irreducible components of the fibers of $\cX$ over closed points of $\Spec\Z$. Since these components have dimension $d$, we find that for any small enough $\varepsilon>0$, the proportion of these elements $\sigma$ of $H^0_{Ar}(\cX, \Lbar^{\otimes n})$ such that $\mathrm{div}(\sigma)$ has a vertical component over some prime $p$ with $p\leq \mathrm{exp}(\varepsilon n^2)$ is bounded above by a quantity of the form
$$O(\mathrm{exp}(\varepsilon n^2-\eta n^{d}))=o(1)$$
as $n$ goes to infinity.

\bigskip

We now show that for most $\sigma\in H^0_{Ar}(\cX, \Lbar^{\otimes n})$, $\mathrm{div}(\sigma)$ does not have any vertical component above a large prime.

Let $C\subset \cX$ be a closed arithmetic curve, flat over $\Spec\Z$, such that for any large enough prime $p$, the intersection of $C$ with any irreducible component of the fiber $\cX_p$ of $\cX$ above $p$ is nonempty. Let $n$ be a positive integer, and let $\sigma$ be an element of $H^0_{Ar}(\cX, \Lbar^{\otimes n})$. If $\mathrm{div}(\sigma)$ does not contain $C$, and if it has a vertical component above a prime $p$, then $\mathrm{div}(\sigma)$ and $C$ intersect at a point above $p$, so that 
$$nh_\Lbar(C) = h_{\Lbar^{\otimes n}}(C)\geq \log p.$$
In particular, for such a $\sigma$, we have $p\leq \mathrm{exp}(nh_\Lbar(C)).$

By Theorem \ref{theorem:independent-restriction}, the proportion of those $\sigma\in H^0_{Ar}(\cX, \Lbar^{\otimes n})$ that vanish on $C$ tends to $0$ as $n$ tends to infinity. In particular, the proportion of those $\sigma\in H^0_{Ar}(\cX, \Lbar^{\otimes n})$ such that $\mathrm{div}(\sigma)$ has a vertical component above a prime $p>\mathrm{exp}(nh_\Lbar(C))$ goes to $0$ as $n$ goes to infinity.

Together with the above estimate, this shows the result.
\end{proof}

\begin{proof}[Proof of Theorem \ref{thm:X=Y}]
If $\cX$ is an arithmetic surface, then the result was proved in Proposition \ref{proposition:arithmetic-surface}. Assume that $\cX$ has dimension at least $3$. Let $p$ be a prime number large enough so that $\cX_p$ is reduced, and specialization indices a bijection between the irreducible components of $\cX_{\overline \Q}$ and those of $\cX_{\overline{\mathbb F_p}}$. Let $\cX_{0, p}$ be an irreducible component of $\cX_p$, endowed with the reduced structure. 


Let $n$ be a positive integer, and let $\sigma$ be a  global section of $\mathcal L^{\otimes n}$. If $D$ is a horizontal component of $\mathrm{div}(\sigma)$, then $D$ intersects all components of $\cX_{\overline \Q}$, so that $D$ intersects $\cX_{0, p}$. This shows that for any section $\sigma$ of $\mathcal L^{\otimes n}$, if $\mathrm{div}(\sigma_{|\cX_{0,p}})$ is irreducible as a Weil divisor, then $\mathrm{div}(\sigma)$ has a single component that is flat over $\Z$. 

Now we have the following results:
\begin{enumerate}[(i)]
\item the density of those $\sigma_p\in\bigcup_{n>0}H^0(\cX_{0,p}, \mathcal L^{\otimes n})$ such that $\mathrm{div}(\sigma_p)$ is an irreducible Cartier divisor is $1$;
\item the density of those $\sigma\in \bigcup_{n>0} H^0_{Ar}(\cX, \Lbar^{\otimes n})$ such that $\mathrm{div}(\sigma)$ does not have a vertical component is $1$.
\end{enumerate}
Indeed, (i) follows from Theorem \ref{theorem:finite-fields} with $X=Y$, and (ii) is Lemma \ref{lemma:no-vertical-component}. By the discussion above, if $\sigma$ satisfies (i) and (ii), then $\mathrm{div}(\sigma)$ is irreducible. Finally, Corollary \ref{corollary:density-and-restriction} shows that the density of those $\sigma\in \bigcup_{n>0} H^0_{Ar}(\cX, \Lbar^{\otimes n})$ such that the restriction of $\sigma$ to $\cX_{0,p}$ satisfies (i) is $1$. This proves the result.
\end{proof}

\subsection{Proof of Theorem \ref{thm:main}}

In this section, we deduce Theorem \ref{thm:main-dominant} from Theorem \ref{thm:X=Y}, following the arguments of \cite[Section 5]{CharlesPoonen16}. We then prove Theorem \ref{thm:main} as a consequence.

In the following, fix a projective arithmetic variety $\cX$, together with an ample hermitian line bundle $\Lbar$.

\begin{lemma}\label{lemma:open-dense}
Let $\cY$ be an irreducible scheme of finite type over $\Spec\Z$, together with a morphism $f : \cY\ra\cX$. Let $U$ be an open dense subscheme of $\cY$. Then for all $\sigma$ in a density $1$ subset of $\bigcup_{n>0} H^0_{Ar}(\cX, \Lbar^{\otimes n})$, we have the equivalence
$$\mathrm{div}(f^*\sigma)_{horiz}\,\textup{is irreducible}\Leftrightarrow (\mathrm{div}(f^*\sigma)\cap U)_{horiz}\,\textup{is irreducible}.$$
\end{lemma}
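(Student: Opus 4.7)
The plan is to show that for density $1$ of $\sigma$, no horizontal component of $\mathrm{div}(f^*\sigma)$ is contained in the complement $Z := \cY \setminus U$; once this is established, the two irreducibility statements will agree tautologically.

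First, I would check that the equivalence of the two irreducibility conditions reduces to the above statement. Since $U$ is dense in the irreducible $\cY$, the intersection $C \cap U$ of any codimension $1$ irreducible subscheme $C$ of $\cY$ with $U$ is either empty (if $C \subset Z$) or a nonempty dense open, hence still irreducible, subscheme of $C$; moreover its image under $f$ has the same Zariski closure as $f(C)$, so $C \cap U$ is horizontal if and only if $C$ is. The association $C \mapsto C \cap U$ thus sets up a bijection between the horizontal components of $\mathrm{div}(f^*\sigma)$ not contained in $Z$ and the horizontal components of $\mathrm{div}(f^*\sigma) \cap U$, preserving multiplicities. Consequently the two horizontal divisors have the same number of components (with multiplicities) whenever no horizontal component of $\mathrm{div}(f^*\sigma)$ is contained in $Z$, in which case irreducibility on one side is equivalent to irreducibility on the other.

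Next I would enumerate the possible offending components. Let $Z_1, \ldots, Z_k$ be the irreducible components of $Z$. Since $\cY$ is irreducible and $Z \subsetneq \cY$, each $Z_i$ has dimension at most $\dim \cY - 1$. Only those $Z_i$ with $\dim Z_i = \dim \cY - 1$ can appear as a component of $\mathrm{div}(f^*\sigma)$, and among these, only those with $f(Z_i)$ positive-dimensional can contribute a horizontal component of $\mathrm{div}(f^*\sigma)$. Let $I \subset \{1, \ldots, k\}$ be this finite subset of indices. For $i \in I$, the condition that $Z_i$ is contained in $\mathrm{div}(f^*\sigma)$ amounts to $f^*\sigma$ vanishing identically on $Z_i$, and, equivalently, to $\sigma$ lying in the kernel of the restriction map $H^0(\cX, \mathcal L^{\otimes n}) \to H^0(\overline{f(Z_i)}, \mathcal L^{\otimes n})$, where $\overline{f(Z_i)}$ is the scheme-theoretic image (a positive-dimensional closed subscheme of $\cX$).

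Finally, I would invoke Theorem \ref{theorem:independent-restriction} applied to each of the finitely many positive-dimensional subschemes $\overline{f(Z_i)}$ for $i \in I$. The theorem supplies an exponential decay $O(e^{-n^d \eta})$ for the proportion of $\sigma \in H^0_{Ar}(\cX, \Lbar^{\otimes n})$ lying in the kernel of each such restriction map. Taking the finite union over $i \in I$, the density of the bad set of $\sigma$ is $0$, which proves the lemma. There is no serious obstacle here; the only point requiring care is the verification that the horizontal/vertical dichotomy is preserved under trimming by $U$, which uses irreducibility of $\cY$ to control the dimensions of the components of $Z$.
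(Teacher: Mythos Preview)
Your proof is correct and follows essentially the same route as the paper: both reduce the equivalence to the density-$1$ condition that no horizontal component of $\mathrm{div}(f^*\sigma)$ lies in $Z=\cY\setminus U$, and both establish this by applying Theorem~\ref{theorem:independent-restriction} to the finitely many positive-dimensional closures $\overline{f(Z_i)}$. Your reduction is slightly more explicit about the bijection between horizontal components and about restricting to codimension-$1$ components of $Z$, but the core argument is the same.
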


\begin{proof}
This analogous to \cite[Lemma 3.3]{CharlesPoonen16}. The implication 
$$\mathrm{div}(f^*\sigma)_{horiz}\,\textup{is irreducible}\implies (\mathrm{div}(f^*\sigma)\cap U)_{horiz}\,\textup{is irreducible}$$
always holds. We prove the reverse implication.

Let $D$ be an irreducible component of $\cY\setminus U$ whose image under $f$ is positive-dimensional -- meaning by definition that $D$ is a component of $(\cY\setminus U)_{horiz}$. By Theorem \ref{theorem:independent-restriction}, the density of those $\sigma\in\bigcup_{n>0} H^0_{Ar}(\cX, \Lbar^{\otimes n})$ that vanish identically on $f(D)$ is zero. 

Now assume that $\sigma$ does not vanish identically along any component of $(\cY\setminus U)_{horiz}$ -- this is a condition satisfied by a density $1$ set of sections by the paragraph above. Then any horizontal component of $\mathrm{div}(f^*\sigma)_{horiz}$ meets $U$, which implies that the Zariski closure of $(\mathrm{div}(f^*\sigma)\cap U)_{horiz}$ is $\mathrm{div}(f^*\sigma)_{horiz}$.

In particular, for those $\sigma$, the implication 
$$(\mathrm{div}(f^*\sigma)\cap U)_{horiz}\,\textup{is irreducible}\implies \mathrm{div}(f^*\sigma)_{horiz}\,\textup{is irreducible}$$
holds.
\end{proof}

\begin{lemma}\label{lemma:etale-maps}
Let $\cY$ and $\cZ$ be two irreducible schemes that are flat, of finite type over $\Spec\Z$. Let 
$$\pi : \cY\ra \cZ$$
be a finite étale morphism, and let 
$$\psi : \cZ\ra \cX$$
be a morphism that has relative dimension $s$ at all points of $\cZ$. Assume that the dimension of the closure of $\psi(\cZ)$ in $\cX$ is at least $2$. Then for all $\sigma$ in a density $1$ subset of $\bigcup_{n>0} H^0_{Ar}(\cX, \Lbar^{\otimes n})$, we have the implication 
$$\mathrm{div}(\psi^*\sigma)\,\textup{is irreducible} \implies \mathrm{div}(\pi^*\psi^*\sigma)\,\textup{is irreducible}.$$
\end{lemma}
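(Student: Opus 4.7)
\emph{Sketch of a proof strategy.} The plan is to encode the potential reducibility of $\pi^{-1}(D)$ on $\cY$ (where $D = \mathrm{div}(\psi^*\sigma)$) as reducibility of divisors on the irreducible components of the off-diagonal scheme $W := (\cY \times_\cZ \cY) \setminus \Delta(\cY)$, viewed over $\cX$ via a natural morphism, and then to rule these out for a density-$1$ set of $\sigma$ using the main theorem.

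First I would dispose of the trivial case $\deg \pi = 1$. Assuming $\deg \pi \geq 2$: since $\pi$ is finite étale, the diagonal $\Delta(\cY)$ is open-and-closed in $\cY \times_\cZ \cY$, so $W$ is a nonempty open-and-closed subscheme. Let $W_1, \ldots, W_m$ be its irreducible components. Each $W_\alpha$ is flat over $\Spec\Z$ and has dimension equal to $\dim \cY$, and the first projection $p_1$ restricts to a finite surjection $p_{1,\alpha}: W_\alpha \twoheadrightarrow \cY$ (surjectivity follows from finiteness of $p_1$, irreducibility of $\cY$, and equality of dimensions). Setting $\rho_\alpha := \psi \circ \pi \circ p_{1, \alpha} : W_\alpha \to \cX$, the image equals $\psi(\cZ)$, which has dimension at least $2$ by hypothesis.

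Next, I would apply Theorem \ref{thm:main} (in its weak form, asserting only irreducibility of the horizontal support of $\mathrm{div}(\rho_\alpha^*\sigma)$, as noted in the remark following its statement) to each of the finitely many $\rho_\alpha$ and intersect the resulting density-$1$ sets, producing a density-$1$ subset $\mathcal S$ of $\bigcup_{n > 0} H^0_{Ar}(\cX, \Lbar^{\otimes n})$ such that for every $\sigma \in \mathcal S$, the horizontal support of $\mathrm{div}(\rho_\alpha^*\sigma)$ is irreducible for every $\alpha$. Given $\sigma \in \mathcal S$ with $D = \mathrm{div}(\psi^*\sigma)$ irreducible, suppose for contradiction that $\pi^{-1}(D) = D'_1 \cup \cdots \cup D'_k$ with $k \geq 2$ distinct horizontal irreducible components. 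Surjectivity of $p_{1, \alpha}$ yields $W_\alpha \cap p_1^{-1}(D'_i) \neq \emptyset$ for each $i$, and these are pairwise distinct horizontal closed subsets of $W_\alpha$ whose union is the support of $\mathrm{div}(\rho_\alpha^*\sigma) \cap W_\alpha$; hence this support has reducible horizontal part, contradicting $\sigma \in \mathcal S$.

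The hard part will be justifying the invocation of Theorem \ref{thm:main} for $\rho_\alpha : W_\alpha \to \cX$ without circularity. The resolution lies in the inductive structure of Section 6: each $\rho_\alpha$ factors through $\cZ$ via the finite étale map $W_\alpha \to \cZ$, so the required instance of the main theorem for $\rho_\alpha$ falls within the same reduction framework (combining Lemma \ref{lemma:open-dense}, this lemma, and Theorem \ref{thm:X=Y}) that is being applied to $\psi : \cZ \to \cX$, and is thus available at this stage of the argument.
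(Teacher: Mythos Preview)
Your proposal has a genuine circularity that the suggested ``inductive structure'' does not resolve. You invoke Theorem \ref{thm:main} (or rather its dominant variant, Theorem \ref{thm:main-dominant}) for the morphisms $\rho_\alpha : W_\alpha \to \cX$, but Lemma \ref{lemma:etale-maps} is precisely the ingredient used to prove Theorem \ref{thm:main-dominant}. Your attempted resolution is that $\rho_\alpha$ factors through $\cZ$ via a finite \'etale map $W_\alpha \to \cZ$, so the required case of the main theorem reduces via the same framework. But that framework would require Lemma \ref{lemma:etale-maps} for the cover $W_\alpha \to \cZ$, and there is no decreasing invariant here: already in the Galois case with group $G$, one has $\cY \times_\cZ \cY \simeq \coprod_{g \in G} \cY$, so each $W_\alpha$ is isomorphic to $\cY$ and $W_\alpha \to \cZ$ is again $\pi$, of the same degree. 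In the non-Galois case the degree of $W_\alpha \to \cZ$ can be even larger. So no induction on degree (or on anything else visible in your setup) terminates.

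The paper's proof avoids this entirely. It first passes to a Galois closure, then argues directly: for each conjugacy class $C$ of the Galois group $G$, Chebotarev and Lang--Weil produce many closed points $x \in \psi(\cZ)$ admitting a preimage $z$ with $F_z = C$, and the restriction estimates of Theorem \ref{theorem:uniform-surjectivity} (not any Bertini-type statement) show that a density-$1$ set of $\sigma$ vanish at some such $x$ for every $C$. If $D = \mathrm{div}(\psi^*\sigma)$ is irreducible and contains closed points whose Frobenius classes exhaust the conjugacy classes of $G$, then $\pi^{-1}(D)$ is irreducible by the standard group-theoretic argument (the Frobenius elements generate $G$, forcing the stabilizer of a component to be all of $G$). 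The only inputs are the Section \ref{section:ampleness} results on restriction maps, so there is no circularity.
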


\begin{proof}
We follow the argument of \cite[Lemma 5.1]{CharlesPoonen16}. Irreducibility is more difficult to achieve if we replace $\cY$ by a finite cover. As a consequence, we may assume that $\pi$ is a Galois étale cover. Let $G$ be the corresponding Galois group. Let $m$ be the dimension of $\overline{\psi(\cZ)}$.

\bigskip

If $z$ is a closed point of $\cZ$, let $|z|$ be the cardinality of the residue field of $z$ and let $F_z$ denote the conjugacy class in $G$ associated to the Frobenius. We claim that for a density $1$ set of $\sigma$, the conjugacy classes $F_z$ cover all conjugacy classes of $G$ as $z$ runs through the closed points of $\mathrm{div}(\psi^*\sigma)$.

Indeed, let $C$ be such a conjugacy class. Let $U$ be a normal, dense affine open subset of $\cZ$. By the Chebotarev density theorem of \cite[Theorem 9.11]{Serre12} applied to $\pi^{-1}(U)\ra U$, the number of closed points $z$ of $U$ with $|z|\leq t$ and $F_z=C$ is equivalent to 
$$\frac{|C|}{|G|}\frac{t^{s+m}}{(s+m)\log t}$$
as $t$ tends to $\infty$. Let $E_{C, t}$ be the set of those $z$. 

By the Lang-Weil estimates, since the fibers of $\psi$ have all dimension $s$, the number of points $z$ with $|z|\leq t$ in a given fiber of $\psi$ above a closed point is bounded above by a quantity of the form
$$\alpha t^s,$$
for some positive $\alpha$, so that $|\psi(E_{C, t})|$ is bounded below by a quantity of the form
$$\beta\frac{t^m}{\log t}$$
for some positive $\beta$. Note that if $x\in \psi(E_{C, t})$, then $|x|\leq t$.  

Fix $t$ large enough. Theorem \ref{theorem:uniform-surjectivity} shows that the density of those $\sigma\in \bigcup_{n>0} H^0_{Ar}(\cX, \Lbar^{\otimes n})$ that do not vanish on any element of $\psi(E_{C, t})$ is equal to 
$$\Pi_{x\in \psi(E_{C, t})} (1-|x|^{-1})\leq (1-t^{-1})^{\beta t^m/\log t}=\exp\Big(-\beta \frac{t^{m-1}}{\log t}(1+o(1))\Big),$$
which tends to zero as $t$ tends to $\infty$ since $m\geq 2$. As a consequence, the density of those $\sigma$ such that $\psi^*\sigma$ vanishes at a closed point $z$ with $F_z=C$ is $1$, which proves the claim.

\bigskip

Now let $\sigma\in\bigcup_{n>0} H^0_{Ar}(\cX, \Lbar^{\otimes n})$ such that $\mathrm{div}(\psi^*\sigma)$ is irreducible and contains closed points $z$ such that the $F_z$ cover all conjugacy classes of $G$. Then $\pi^{-1}\mathrm{div}(\psi^*\sigma)=\mathrm{div}(\pi^*\psi^*\sigma)$ is irreducible. This proves the lemma.
\end{proof}

\begin{proof}[Proof of Theorem \ref{thm:main-dominant}]
We follow the argument of \cite[Lemma 5.2]{CharlesPoonen16}. By Lemma \ref{lemma:open-dense}, we can replace $\cY$ by any dense open subscheme. As a consequence, we can assume that $f$ factors as 
\[
\xymatrix{
\cY \ar[r]^{\pi} & \cZ \ar[r]^{\psi} & \cX
}
\]
where $\pi$ is finite étale, $\cZ$ is an open subset of some affine space $\mathbb A^s_\cX$ and $\psi$ is the projection onto $\cX$ -- indeed, the function field of $\cY$ is a finite separable extension of a purely transcendental extension of the function field of $\cX$.

By Lemma \ref{lemma:open-dense} and Lemma \ref{lemma:etale-maps}, for $\sigma$ in a density $1$ subset of $\bigcup_{n>0} H^0_{Ar}(\cX, \Lbar^{\otimes n})$, the implication 
$$\mathrm{div}(\sigma)\,\textup{is irreducible} \implies \mathrm{div}(f^*\sigma)_{horiz}\,\textup{is irreducible}$$
holds. By Theorem \ref{thm:X=Y}, the divisor $\mathrm{div}(\sigma)$ is irreducible for $\sigma$ in a density $1$ subset of $\bigcup_{n>0} H^0_{Ar}(\cX, \Lbar^{\otimes n})$, which proves the result.
\end{proof}

\begin{proof}[Proof of Theorem \ref{thm:main}]
We first assume that $\cY$ is not flat over $\Spec\Z$. Then $f : \cY\ra \cX$ factors as
\[
\xymatrix{
\cY \ar[r]^{f_p} & \cX_p \ar[r] & \cX
}
\]
for some prime number $p$. By Theorem \ref{theorem:finite-fields}, the density of those $s\in \bigcup_{n>0} H^0(\cX_p, \mathcal L^{\otimes n})$ such that $\mathrm{div}(f_p^*s)_{horiz}$ is irreducible is equal to $1$. Applying Corollary \ref{corollary:density-and-restriction} to $\Lbar(\varepsilon)$ proves the theorem.

\bigskip

We now assume that $\cY$ is flat over $\Spec\Z$. Let $\cY'$ be the Zariski closure of $f(\cY)$ in $\cX$. Then $\cY'$ is a projective arithmetic variety, and the restriction of $\Lbar$ to $\cY'$ is ample by Corollary \ref{corollary:pull-back-ample}. Furthermore, the map $f_{\cY'} : \cY\ra \cY'$ is dominant by assumption. Theorem \ref{thm:main-dominant} guarantees that the density of those $\sigma\in\bigcup_{n>0} H^0_{Ar}(\cY', \Lbar^{\otimes n})$ such that $\mathrm{div}(f_{\cY'}^*\sigma)_{horiz}$ is irreducible is equal to $1$. Applying Corollary \ref{corollary:density-and-restriction} to $\Lbar(\varepsilon)$ proves the theorem.
\end{proof}

\bibliographystyle{alpha}
\bibliography{Bertini}

\end{document}